\numberwithin{equation}{section}
\definecolor{burntumber}{rgb}{0.54, 0.2, 0.14}
\definecolor{coolblack}{rgb}{0.0, 0.18, 0.39}
\definecolor{mygreen}{rgb}{0.0, 0.4, 0.0}
\setlist[enumerate]{leftmargin=2pc}
\setlist[itemize]{leftmargin=2pc}
\def\l@subsection{\@tocline{2}{0pt}{2pc}{5pc}{}}
\theoremstyle{plain}
\newtheorem{thm}{Theorem}[section]
\newtheorem{lemma}[thm]{Lemma}
\newtheorem{cor}[thm]{Corollary}
\newtheorem{prop}[thm]{Proposition}
\newtheorem{introthm}{Theorem}
\newtheorem{introcor}[introthm]{Corollary}
\newtheorem{conj}[thm]{Conjecture}
 \theoremstyle{definition}
\newtheorem{defn}[thm]{Definition}
\newtheorem{rmk}[thm]{Remark}
\newtheorem{?}[thm]{Problem}
\newtheorem{ex}[thm]{Example}
\newtheorem{cons}[thm]{Construction}
\DeclareSymbolFontAlphabet{\mathbb}{AMSb} 
\DeclareSymbolFontAlphabet{\mathbbl}{bbold}
\newcommand{\gx}{(\mathbf{G},\mathbf{X})}
\newcommand{\bA}{\mathbb{A}}
\newcommand{\bC}{\mathbb{C}}
\newcommand{\bG}{\mathbb{G}}
\newcommand{\bP}{\mathbb{P}}
\newcommand{\bQ}{\mathbb{Q}}
\newcommand{\bR}{\mathbb{R}}
\newcommand{\bZ}{\mathbb{Z}}
\newcommand{\brqp}{\breve{\bQ}_p}
\newcommand{\ur}{\mathrm{ur}}
\newcommand{\sP}{\mathscr{P}}
\newcommand{\sS}{\mathscr{S}}
\newcommand{\sep}{\mathrm{sep}}
\newcommand{\id}{\textup{id}}
\newcommand{\res}{\big|}
\newcommand{\Spec}{\textup{Spec}\,}
\newcommand{\Spf}{\textup{Spf }}
\newcommand{\Spd}{\textup{Spd}}
\newcommand{\Res}{{\mathrm{Res}}}
\newcommand{\bs}{\backslash}
\newcommand{\ab}{\textup{ab}}
\newcommand{\der}{\textup{der}}
\newcommand{\eE}{{\mathbf{E}}}
\newcommand{\eG}{{\mathbf{G}}}
\newcommand{\eH}{\mathsf{H}}
\newcommand{\eK}{{\sf K}}
\newcommand{\eL}{{\sf L}}
\newcommand{\eZ}{\mathbf{Z}}
\newcommand{\cG}{\mathcal{G}}
\newcommand{\cO}{\mathcal{O}}
\newcommand{\Sh}{\mathrm{Sh}}
\newcommand{\an}{\textup{an}}
\newcommand{\ad}{\textup{ad}}
\newcommand{\Gal}{\textup{Gal}}
\DeclareMathAlphabet{\mathscrbf}{OMS}{mdugm}{b}{n}
\newcommand{\mbbmu}{\mbox{$\raisebox{-0.59ex}
  {$l$}\hspace{-0.18em}\mu\hspace{-0.88em}\raisebox{-0.98ex}{\scalebox{2}
  {$\color{white}.$}}\hspace{-0.416em}\raisebox{+0.88ex}
  {$\color{white}.$}\hspace{0.46em}$}{}}
  \newcommand{\mbbmus}{\scaleobj{.65}{{\mbbmu}}}
\newcommand{\defeq}{\vcentcolon=}
\newcommand{\mc}[1]{\mathcal{#1}}
\newcommand{\mr}[1]{\mathrm{#1}}
\newcommand{\ms}[1]{\mathscr{#1}}
\newcommand{\bb}[1]{\mathbb{#1}}
\newcommand{\wt}[1]{\widetilde{#1}}
\newcommand{\wh}[1]{\widehat{#1}}
\newcommand{\mb}[1]{\mathbf{#1}}
\newcommand{\mf}[1]{\mathfrak{#1}}
\newcommand{\ov}[1]{\overline{#1}}
\newcommand{\stacks}[1]{\cite[\href{https://stacks.math.columbia.edu/tag/#1}{Tag~#1}]{stacks-project}}
    \def\paragraph{\@startsection{paragraph}{4}%
    \z@\z@{-\fontdimen2\font}%
    {\normalfont\bfseries}}
\def\@seccntformat#1{%
  \expandafter\ifx\csname c@#1\endcsname\c@paragraph\else
  \csname the#1\endcsname\quad
  \fi}
\renewcommand\subsubsection{\@secnumfont}{\bfseries}%
\renewcommand\subsubsection{\@startsection{subsubsection}{3}
  \z@{.5\linespacing\@plus.7\linespacing}{-.5em}%
  {\normalfont\bfseries}}
\newcommand*\isomto{%
        \xrightarrow{\raisebox{-0.2 em}{\smash{\ensuremath{\sim}}}}%
    }
    \newcommand{\Q}{\mathbb{Q}}
    \newcommand{\Z}{\mathbb{Z}}
    \newcommand{\A}{\mathbb{A}}
    \newcommand{\vn}{{\mathsmaller{\varnothing}}}
\begin{document}
\begin{abstract}
We prove a conjecture of Pappas and Rapoport for all Shimura varieties of abelian type with parahoric level structure when $p>3$ by showing that the Kisin--Pappas--Zhou integral models of Shimura varieties of abelian type are canonical. In particular, this shows that these models of are independent of the choices made during their construction, and that they satisfy functoriality with respect to morphisms of Shimura data.
\end{abstract}

\title[Canonical integral models of Shimura varieties of abelian type]{Canonical integral models of Shimura varieties of abelian type}

\author{Patrick Daniels} 
\address{Mathematics and Statistics Department, Skidmore College, Saratoga Springs, NY 12866}
\email{pdaniels@skidmore.edu}

\author{Alex Youcis}
\address{Department of Mathematics, National University of Singapore, Level 4, Block S17, 10 Lower Kent Ridge Road, Singapore, 119076}
\email{alex.youcis@gmail.com}

\maketitle
\tableofcontents

\section{Introduction}

In \cite{PR2021}, Pappas and Rapoport conjecture the existence of ``canonical'' integral models with parahoric level structure and establish this conjecture in most cases of Hodge type. In this work, we prove the conjecture of Pappas and Rapoport for (almost all) Shimura varieties of abelian type. In particular, we prove the following theorem.

\begin{introthm}[See Theorem \ref{thm:main}]\label{thm-1}
Let $p >3$. Then the Pappas--Rapoport conjecture holds for Shimura varieties of abelian type with parahoric level structure at $p$.
\end{introthm}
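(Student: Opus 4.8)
The plan is to prove the conjecture by showing that the Kisin--Pappas--Zhou integral models $\mathscr{S}_{\mathsf{K}}\gx$ are \emph{canonical} in the sense of Pappas--Rapoport. Since a canonical integral model, when it exists, is unique up to unique isomorphism and varies functorially in $\gx$, establishing canonicity of the Kisin--Pappas--Zhou models at once yields existence (hence the conjecture), independence of the choices in their construction, and functoriality. So the substance is to verify the Pappas--Rapoport axioms: that $\mathscr{S}_{\mathsf{K}}\gx$ is flat and normal over $\mathcal{O}_{E,v}$, that the transition maps are finite as $\mathsf{K}_p$ shrinks, that over $\mathscr{S}_{\mathsf{K}}\gx\otimes_{\mathcal{O}_{E}}\mathcal{O}_{\breve{E}}$ there is a $\mathcal{G}$-shtuka bounded by $\mu$ compatible with change of level and restricting on the rigid generic fibre to the shtuka attached to the canonical $\mathbf{G}_{\mathbb{Q}_p}$-torsor, and that the resulting morphism of $v$-sheaves to the moduli stack $\Sht_{\mathcal{G},\mu}$ exhibits $\mathscr{S}_{\mathsf{K}}\gx^{\diamond}$ as the canonical $v$-sheaf model --- equivalently, that $\mathscr{S}_{\mathsf{K}}\gx$ is \'etale-locally isomorphic to $\Mloc_{\mathcal{G},\mu}$, compatibly with the shtuka.

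\textbf{Hodge type.} I would first treat a datum of Hodge type, where $\gx\hookrightarrow(\GSp,\mathcal{H})$ and $\mathscr{S}_{\mathsf{K}}\gx$ is the normalization of the Zariski closure of $\Sh_{\mathsf{K}}\gx$ in a Siegel integral model. Over this model lives the $p$-divisible group of the pulled-back universal abelian scheme; prismatic Dieudonn\'e theory (equivalently the $p$-adic shtuka formalism of Scholze--Weinstein) attaches to it a shtuka, and via the Kisin--Pappas local model diagram --- after cutting down by the Hodge tensors --- one identifies this as a $\mathcal{G}$-shtuka bounded by $\mu$ whose local structure is governed by $\Mloc_{\mathcal{G},\mu}$. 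Combining this with the integral comparison between prismatic, \'etale, and de Rham cohomology gives the generic-fibre compatibility and the $v$-sheaf model identification. This step is essentially \cite{PR2021}, with any remaining Hodge type cases treated by the same method.

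\textbf{Abelian type.} Given $\gx$ of abelian type, I would pick an auxiliary Hodge type datum $(\mathbf{G}_1,\mathbf{X}_1)$ with a central isogeny $\mathbf{G}_1^{\der}\to\mathbf{G}^{\der}$ inducing $\mathbf{G}_1^{\ad}\cong\mathbf{G}^{\ad}$, together with compatible parahorics, and take the canonical model $\mathscr{S}_1$ of $(\mathbf{G}_1,\mathbf{X}_1)$ with its universal $\mathcal{G}_1$-shtuka from the previous step. Restricting to a geometrically connected component $\mathscr{S}_1^{+}$ and applying Deligne's twisting construction in the parahoric form of Kisin--Pappas--Zhou, one realizes $\mathscr{S}_{\mathsf{K}}\gx$ as a quotient of $\mathscr{S}_1^{+}\times\mathbf{G}(\mathbb{A}_f^p)$ by the diagonal action of a suitable arithmetic group $\mathscr{A}$ built from the $\mathbb{Z}_{(p)}$-points of $\mathbf{G}$ and $\mathbf{G}^{\ad}$. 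To produce a $\mathcal{G}$-shtuka on the quotient I would transport the $\mathcal{G}_1$-shtuka on $\mathscr{S}_1^{+}$ through the central isogeny using the adjoint $\mathbf{G}^{\ad}$-torsor twist, check $\mathscr{A}$-equivariance, and descend; boundedness by $\mu$ and the local model identification for $(\mathcal{G},\mu)$ would then follow from those for $(\mathcal{G}_1,\mu_1)$, since $\Mloc$ and the $v$-sheaf local model depend only on the adjoint datum together with $\mu$ and since $\mathscr{S}_{\mathsf{K}}\gx$ is \'etale-locally isomorphic to $\mathscr{S}_1$. Normality, flatness, and finiteness of transition maps for the quotient would be checked directly. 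The hypothesis $p>3$ is used along the way to guarantee the good geometry of $\Mloc_{\mathcal{G},\mu}$ (normality, reduced special fibre, its representing the $v$-sheaf local model) and the integral $p$-adic comparison in the needed generality.

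\textbf{The main difficulty.} I expect the crux to be the abelian type step: transporting the universal shtuka across the central isogeny and descending it along the quotient by $\mathscr{A}$ while preserving boundedness by $\mu$, the generic-fibre normalization, and --- most delicately --- the extension (N\'eron-type) property encoded in the $v$-sheaf model identification, a property not obviously stable under normalization or under passage to quotients. Making this precise requires the fine local structure of the Kisin--Pappas--Zhou models together with the good behaviour, for $p>3$, of $\Sht_{\mathcal{G},\mu}$ and of the specialization map on local models.
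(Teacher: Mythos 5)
Your overall strategy — establish the conjecture by proving the Kisin--Pappas--Zhou models are canonical, and reduce from abelian type to Hodge type — agrees with the paper's plan, and your Hodge-type step (cite \cite{PR2021} and \cite{DvHKZ}) is correct. But your abelian-type step has a genuine gap that the paper is specifically engineered to avoid.

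The Pappas--Rapoport conjecture asks for a $\mc{G}^c$-shtuka, where $\mc{G}^c$ is the parahoric model of $\mb{G}^c = \mb{G}/\mb{Z}_{\mr{ac}}$. For Hodge type one has $\mb{G}^c = \mb{G}$, so the distinction is invisible there, but for abelian type $\mb{G}^c$ can be a proper quotient of $\mb{G}$ lying strictly between $\mb{G}$ and $\mb{G}^{\mr{ad}}$. Your proposal to ``transport the $\mc{G}_1$-shtuka on $\ms{S}_1^+$ through the central isogeny using the adjoint $\mb{G}^{\ad}$-torsor twist'' can only produce a $\mc{G}^{\ad}$-shtuka on the quotient, because the twisting datum supplied by Deligne's construction (the $\ms{A}$-group acting through $\mb{G}^{\ad}(\bb{Q})^+$) controls the adjoint bundle and forgets the abelianization. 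You never say $\mc{G}^c$ anywhere, and as stated your argument produces the wrong structure group. The paper flags exactly this as the content of the van Hoften--Sempliner result, explaining in Remark~\ref{rem:vHS} that the $\mc{G}^\ad$-version is strictly weaker and insufficient for the conjecture (and for the motivic applications). So what you've sketched is closer to a proof of their statement than of the theorem at hand.

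What the paper does instead, and what you're missing, is the Lovering construction. One introduces an auxiliary group
\[
\mb{B}_{\mb{E}'} = \mb{G}\times_{\mb{G}^{\ab}}\mb{T}_{\mb{E}'},\qquad \mb{T}_{\mb{E}'} = \Res_{\mb{E}'/\Q}\,\bb{G}_m,
\]
and an accompanying Shimura datum $(\mb{B}_{\mb{E}'},\mb{Y}_{\mb{E}'})$ of abelian type which is simultaneously ad-isomorphic to $(\mb{G},\mb{X})$ and ad-isomorphic to a product of a Hodge-type datum with a toral-type datum. The needed $\mc{B}^c$-shtuka is then built by \emph{gluing} a $\mc{G}_1$-shtuka (from the Hodge-type case) and a ${\mc{T}'_1}^c$-shtuka (from the toral-type case, via \cite{DanielsToral}) over a common $\mc{G}_1^{\ab}$-shtuka, using a general statement about torsors for fiber products of group schemes (Proposition~\ref{prop:torsors-comparison} and Corollary~\ref{cor:fiber-product-of-shtukas}); one then pushes forward to $\mc{G}^c$ via the ad-isomorphism $\mc{B}^c\to\mc{G}^c$ and descends. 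You omit the toral-type ingredient entirely, and you have no mechanism to recover the abelianization direction that distinguishes $\mc{G}^c$ from $\mc{G}^{\ad}$. This is the main new group-theoretic input of the paper, and it is precisely the ``most subtle group theory'' that Remark~\ref{rem:vHS} says the $\mc{G}^{\ad}$-approach avoids.

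Two smaller points: your attribution of the $p>3$ hypothesis to good geometry of $\Mloc$ and integral comparison is slightly off; in the paper it comes from the \emph{acceptability} condition on $G^{\ad}$ imported from the Kisin--Pappas--Zhou construction (vacuous for $p>3$, excludes certain $D_4$ factors at $p=3$). And your sketch of condition (iv) via ``\'etale-locally isomorphic to $\Mloc$'' isn't quite what is required: the axiom asks for an isomorphism between formal completions of the integral model and of the integral local Shimura variety $\mc{M}^{\mr{int}}_{\mc{G}^c,b_x,\mbbmus^c_h}$ carrying the shtuka to the universal one, which is strictly finer information than a local model diagram.
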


Let us begin to contextualize this result. Shimura varieties are algebro-geometric objects which sit at the intersection of number theory and representation theory. They take the form of a projective system $\{\Sh_\eK(\mb{G}, \mb{X})\}_{\eK}$, where $\mb{G}$ is a reductive group over $\bb{Q}$, $\mb{X}$ is some auxiliary Hodge-theoretic datum, and $\eK \subseteq \mb{G}(\bA_f)$ ranges over (neat) compact open subgroups. Shimura varieties have been inextricably linked with the Langlands program from its inception, as the cohomology of $\{\Sh_\eK(\mb{G}, \mb{X})\}_{\eK}$ ought to realize the global Langlands conjecture for $\mb{G}$ (see e.g., \cite{KottwitzAnnArbor}). Central to this cohomological understanding of the Langlands program is a conjecture which posits a ``motivic decomposition'' of the $\bar{\bb{F}}_p$-points of certain ``canonical'' integral models $\{\ms{S}_\eK\}_{\eK}$ of $\{\Sh_\eK(\mb{G}, \mb{X})\}_{\eK}$ (see \cite{LanglandsRapoport}). It is then not surprising that the construction of such natural integral models of Shimura varieties has prominently featured in many of the advances in the Langlands program.

Constructing such canonical integral models, and even characterizing what constitutes a ``canonical'' integral model, is quite difficult. Despite their importance, Shimura varieties in general remain fairly inexplicit in their definition. The variety $\Sh_\eK(\mb{G}, \mb{X})$ ought to parameterize $\mb{G}^c$-motives $M$ of type $\mb{X}$ with $\eK$-level structure on the \'etale realization of $M$ (e.g., see \cite{MilneModuli}). Here $\mb{G}^c$ is a certain quotient of $\mb{G}$, which agrees with $\mb{G}$ in the Hodge-type case, but differs from $\mb{G}$ in some abelian-type cases. While technical in nature, consideration of $\mb{G}^c$ is crucial for such a conjectural motivic interpretation. Unfortunately, such a motivic description is currently out of reach, and instead $\Sh_\eK(\mb{G}, \mb{X})$ is constructed from various abstract algebro-geometric existence results, which are not amenable to the study of integral models.

Despite this, Pappas and Rapoport in \cite{PR2021} (building off previous work of Pappas in \cite{Pappas2022}) made significant headway in a definition of such conjectural canonical integral models when $\eK = \eK_p \eK^p$ where $\eK_p = \mc{G}(\bZ_p)$ is a parahoric subgroup of $\eG(\bQ_p)$. This parahoricity-at-$p$ condition may be thought of as representing the situation where relatively little level structure is imposed at $p$. 

To explicate their work, let $E$ be the completion of the reflex field of $(\mb{G}, \mb{X})$ at a place $v$ lying over $p$. Pappas and Rapoport call a system $\{ \ms{S}_{\eK_p\eK^p}\gx\}_{\eK^p}$ of $\mc{O}_E$-models of $\{\Sh_{\eK_p\eK^p}(\mb{G}, \mb{X})_E\}_{\eK^p}$ \textit{canonical} if 
\begin{enumerate}[(i)]
    \item the transition maps between the varying levels are finite \'etale,
    \item if $R$ is a characteristic $(0,p)$ discrete valuation ring over $\mc{O}_E$ then
    \begin{align*}
        \left(\varprojlim_{\eK^p} \Sh_{\eK_p\eK^p}\gx\right)(R[\nicefrac{1}{p}]) = \left(\varprojlim_{\eK^p} \sS_{\eK_p\eK^p}\gx\right)(R),
    \end{align*}
    \item there exists a $\mc{G}^c$-shtuka on $\ms{S}_{\eK_p\eK^p}\gx$ modeling the \'etale realization functor on $\Sh_{\eK_p\eK^p}(\mb{G}, \mb{X})_E$, 
    \item and for every $\bar{\bb{F}}_p$-point $x$ of $\ms{S}_{\eK_p\eK^p}\gx$, there is an isomorphism 
    \begin{align}
            \Theta_x: \left(\mc{M}^{\mr{int}}_{\cG^c,b_x,{\mbbmus_h^c}}\right)^\wedge_{/x_0} \isomto \left(\sS_{\eK_p\eK^p}\gx^\wedge_{/x}\right)^\lozenge,
    \end{align}
    such that $\Theta_x^\ast(\mathscr{P}_\eK) = \mathscr{P}^\mathrm{univ}$.
\end{enumerate}
Here the \'etale realization is a certain $\mc{G}^c(\bZ_p)$-local system on $\Sh_{\eK_p\eK^p}\gx$ which should be thought of as the \'etale realization of the ``universal motive'' on $\Sh_{\eK_p\eK^p}\gx$, but whose definition is unconditional on such a motivic interpretation. The group $\mc{G}^c$ is a parahoric $\bZ_p$-model for $\mb{G}^c$ which is determined from $\mc{G}$ using Bruhat--Tits theory. Furthermore, when we speak of shtukas here we mean in the sense of \cite{PR2021}, and $\ms{P}_{\eK^p}$ modeling the \'etale realization is meant in terms of the construction in \cite[\S2.6]{PR2021}. Finally, $(\mc{M}^\mr{int}_{\cG^c,b_x,{\mbbmus_h^c}})^\wedge_{/x_0}$ is the completion of the integral local Shimura variety in the sense of \cite[Chapter 25]{SW2020} with its universal shtuka $\ms{P}^\mathrm{univ}$. We refer the reader to \S\ref{ss:PR-conj} for the definition of $b_x$ and $x_0$, but mention that ${\mbbmu_h^c}$ is the geometric conjugacy class of cocharacters of $G^c$ induced by the hodge cocharacter $\mbbmu_h$ of $G$, where we are writing $G=\mb{G}_{\Q_p}$ and $G^c=\mb{G}^c_{\Q_p}$.

To help process this definition, note that, given the motivational moduli description of $\Sh_{\eK_p\eK^p}\gx$, it is reasonable to expect that $\ms{S}_{\eK_p\eK^p}\gx$ should be a moduli space of $\mc{G}^c$-motives. One can view conditions (iii) and (iv) as saying that $\ms{S}_{\eK_p\eK^p}\gx$ possesses a shtuka, which should be the ``shtuka realization'' of the universal $\mc{G}^c$-motive, which, while not provably universal, is universal everywhere formally locally. Furthermore, condition (i) is natural, and condition (ii) is a technical condition ensuring that the members of the system $\{\ms{S}_{\eK_p\eK^p}\gx\}_{\eK^p}$ have sufficiently large special fibers.

Pappas and Rapoport prove that canonical integral models are unique if they exist, hence they do provide a good notion of models at parahoric level. Constructing such models is difficult though, and seems out of reach for general Shimura varieties with current methods. That said, when $\gx$ is of so-called ``abelian type'', the situation is much improved. Shimura varieties of abelian type may be thought of intuitively as moduli spaces of abelian motives with extra structure, but this again largely motivational (although see \cite[Theorem 11.16]{MilneModuli} for a partial justification of this claim).

Building off the work of Kisin (and others), Kisin--Pappas and Kisin--Zhou have constructed integral models for Shimura varieties of abelian type at parahoric level\footnote{The construction of the Kisin--Pappas--Zhou models requires some minor additional assumptions when $p$ is 3, see Remark \ref{rmk:acceptable}}. These models have proven to be quite useful in applications (e.g., see \cite{KZ21}), but they have not yet been well-studied. For instance, there is no a priori functoriality for these models, and it is not even clear that the models are independent of the various ancillary choices made in their construction. Our main theorem clarifies these points.

\begin{introthm}[see Theorem \ref{thm:main}]
    Kisin--Pappas--Zhou models are canonical.
\end{introthm}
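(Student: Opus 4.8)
The plan is to verify directly that the Kisin--Pappas--Zhou (KPZ) models satisfy the four axioms (i)--(iv) defining a canonical integral model. Since Pappas and Rapoport have already shown that a system satisfying (i)--(iv) is unique if it exists, doing so simultaneously yields that the KPZ models are independent of the auxiliary choices made in their construction and that they are functorial in $\gx$. The argument rests on two reductions. First, the KPZ model of an abelian-type datum is manufactured from a KPZ model of an auxiliary Hodge-type datum via Deligne's connected-components construction --- restrict to a connected component, quotient by (an arithmetic subgroup of) $\mb{G}^{\ad}(\bQ)^+$, and reglue --- so it suffices to establish (i)--(iv) in the Hodge-type case and to show that each is preserved under this construction. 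Second, within the Hodge-type case the shtuka is produced from the universal abelian variety.

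\textbf{The Hodge-type case.} Fix a symplectic embedding $\iota\colon \gx \hookrightarrow (\GSp, S^\pm)$ adapted to the parahoric level, so that $\ms{S}_\eK\gx$ is a normalization of the closure of $\Sh_\eK\gx$ in a Siegel integral model and carries an abelian scheme $A$ whose $p$-divisible group $A[p^\infty]$ is equipped with a family of crystalline Tate tensors cutting out $\cG$. First I would package this data into a $\cG^c$-shtuka $\ms{P}_\eK$ on $\ms{S}_\eK\gx$: the $p$-divisible group yields a shtuka via the Scholze--Weinstein/prismatic formalism, the tensors promote it to a $\cG$-shtuka, and pushout along $\cG \to \cG^c$ produces $\ms{P}_\eK$; that it models the étale realization is checked on the generic fiber, where it reduces to the standard comparison of the $p$-adic Tate module of $A$ with the tautological representation. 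This settles (iii). For (iv), I would feed in the local structure theory of KPZ models, which identifies the complete local ring of $\ms{S}_\eK\gx$ at a closed point $x$ with a deformation ring of $(A[p^\infty], \text{crystalline tensors})$; reinterpreting that deformation ring as pro-representing the functor of deformations of $\ms{P}_\eK$ at $x$, and matching it with the integral local Shimura variety $\mc{M}^{\mr{int}}_{\cG^c, b_x, {\mbbmus_h^c}}$, which pro-represents the same functor together with its universal shtuka $\ms{P}^{\mathrm{univ}}$, yields the desired isomorphism $\Theta_x$ with $\Theta_x^\ast \ms{P}_\eK = \ms{P}^{\mathrm{univ}}$. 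Conditions (i) and (ii) are comparatively soft: (i) holds because the prime-to-$p$ tower is assembled from finite étale covers, while (ii) follows from the normality of the KPZ model together with the fact that, over a characteristic $(0,p)$ discrete valuation ring, good reduction of $A[p^\infty]$ with its tensors is detected on the generic fiber (a Néron--Ogg--Shafarevich-type criterion).

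\textbf{Descent to abelian type.} Granting the Hodge-type case, one checks that (i)--(iv) propagate through Deligne's construction. Condition (i) is immediate. For (iii), the shtuka descends because the $\mb{G}^{\ad}(\bQ)^+$-action defining the quotient is compatible with $\ms{P}_\eK$ --- the crystalline tensors, and hence the $\cG^c$-structure, being adjoint-equivariant --- and because formation of the $\cG^c$-shtuka commutes with restriction to connected components. For (iv), passing to complete local rings is insensitive to the étale-local quotient and twisting operations, while the integral local Shimura varieties attached to $\gx$ and to its Hodge-type cover are identified through the central isogeny relating their groups; here the bookkeeping of $\cG$ versus $\cG^c$ must be handled with care, using that $\cG^c$ is produced from $\cG$ by Bruhat--Tits theory. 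Condition (ii) survives because the extension property may be tested after a finite étale base change onto a connected component of a Hodge-type model, where it is already known.

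\textbf{The main obstacle.} The crux is the local comparison in (iv): converting the Kisin--Pappas--Zhou deformation-theoretic description of complete local rings --- phrased via $p$-divisible groups with crystalline tensors and Breuil--Kisin / $\cO_E$-window modules --- into the $v$-sheaf and shtuka language underlying $\mc{M}^{\mr{int}}_{\cG^c, b_x, {\mbbmus_h^c}}$, and verifying that the two pro-representable deformation functors coincide along with their universal objects. Closely tied to this, in the abelian-type step, is the case $\mb{G}^c \neq \mb{G}$: one must show the integral local Shimura variety is unchanged under the relevant central isogenies, and that the object carried by the KPZ model is genuinely a $\cG^c$-shtuka rather than merely a $\cG$-shtuka, which demands an understanding of how the étale realization and the parahoric $\cG^c$ are assembled from their counterparts for $\cG$.
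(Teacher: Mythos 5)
Your overall plan — cite the Hodge-type case as done, then descend through Deligne's connected-components construction — is the natural first attempt, but it has a genuine gap precisely at the point you flag as "the main obstacle." The shtuka produced on the Hodge-type model $\ms{S}^\vn(\mb{G}_1,\mb{X}_1)$ is a $\mc{G}_1$-shtuka. When you restrict to a connected component $\ms{S}_1^+$ and quotient by the group $\Delta$ coming from $\ms{A}(\bm{\mc{G}}_1)^\circ$, the action you can actually exploit is the $\mb{G}_1^\ad(\Q)^+$-action, and so the object that descends to the abelian-type model is only a $\mc{G}^\ad$-shtuka (using $\mc{G}_1^\ad \simeq \mc{G}^\ad$). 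There is in general no map $\mc{G}_1 \to \mc{G}^c$ to push forward along: the isogeny $\mb{G}_1^\der \to \mb{G}^\der$ does not extend to $\mb{G}_1 \to \mb{G}$ or $\mb{G}_1 \to \mb{G}^c$. The quotient $\mc{G}^c$ sits strictly between $\mc{G}$ and $\mc{G}^\ad$, and the lift from a $\mc{G}^\ad$-shtuka to a $\mc{G}^c$-shtuka requires exactly the abelianization data that is lost in passing to a single connected component of the Hodge-type model. Your remark that "the crystalline tensors, and hence the $\cG^c$-structure, [are] adjoint-equivariant" is true but is what makes this reduction lossy rather than what saves it. This is precisely the distinction the paper draws in Remark \ref{rem:vHS}: van Hoften--Sempliner carry out the reduction you propose and prove a $\mc{G}^\ad$-version of the result, which is strictly weaker than the Pappas--Rapoport conjecture with $\mc{G}^c$.

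The paper's actual route inserts an auxiliary Shimura datum — Lovering's $(\mb{B}'_1,\mb{Y}'_1)$ with $\mb{B}'_1 = \mb{G}_1 \times_{\mb{G}_1^\ab} \mb{T}'_1$ — between the Hodge-type datum and the target, and the decisive group-theoretic input is Proposition \ref{prop:Bc}: ${\mc{B}'_1}^c \simeq \mc{G}_1 \times_{\mc{G}_1^\ab} {\mc{T}'_1}^c$. This lets one build the $\mc{B}_1'^c$-shtuka as a fiber product (Corollary \ref{cor:fiber-product-of-shtukas}) of the $\mc{G}_1$-shtuka from the Hodge-type case (where $\mc{G}_1^c = \mc{G}_1$) and a ${\mc{T}'_1}^c$-shtuka from the toral-type case of \cite{DanielsToral}, which is where the anti-cuspidal $(-)^c$-contribution genuinely lives. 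Since $\mc{B}_1'^c \to \mc{G}^c$ is an ad-isomorphism, descent along the finite étale quotient then produces the desired $\mc{G}^c$-shtuka, and Proposition \ref{prop:shtuka-completion-ad-isom} transports the local isomorphisms $\Theta$. So the needed extra input beyond your proposal is the toral-type canonicity theorem threaded through the Lovering fiber product; without it the argument lands at $\mc{G}^\ad$, not $\mc{G}^c$. (Your plan also re-proves the Hodge-type case via $p$-divisible groups; the paper instead cites \cite{PR2021} and \cite{DvHKZ}, which is a matter of scope rather than correctness.)
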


One of the most pleasing consequences of this result is the following.

\begin{introcor}[See Corollary \ref{cor:omnibus-independence} and Theorem \ref{thm:functoriality}]\label{cor-1} Kisin--Pappas--Zhou models are independent of all choices, and are functorial in the triple $(\mb{G}, \mb{X}, \mc{G})$.
\end{introcor}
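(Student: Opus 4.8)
The plan is to deduce both assertions from Theorem~\ref{thm:main}, which says that every Kisin--Pappas--Zhou model is canonical, together with the theorem of Pappas and Rapoport that a canonical model, if it exists, is unique: any two canonical models of a fixed system $\{\Sh_{\eK_p\eK^p}(\mb{G},\mb{X})_E\}_{\eK^p}$ are related by a unique isomorphism which is the identity on generic fibres and matches the shtukas $\ms{P}_{\eK^p}$. For independence of choices, suppose $\{\ms{S}_{\eK_p\eK^p}(\mb{G},\mb{X})\}$ and $\{\ms{S}'_{\eK_p\eK^p}(\mb{G},\mb{X})\}$ are Kisin--Pappas--Zhou models attached to the same triple $(\mb{G},\mb{X},\mc{G})$ but built from two different collections of auxiliary data --- a Hodge embedding and symplectic space, the intermediate Hodge-type datum used in Deligne's abelian-type construction, choices of stabilising lattices, and so on. By Theorem~\ref{thm:main} both systems satisfy (i)--(iv), so uniqueness supplies a canonical identification of the two; hence the Kisin--Pappas--Zhou model depends only on $(\mb{G},\mb{X},\mc{G})$.

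For functoriality, let $f\colon(\mb{G}_1,\mb{X}_1)\to(\mb{G}_2,\mb{X}_2)$ be a morphism of Shimura data carrying the parahoric $\mc{G}_1$ of $\mb{G}_{1,\bQ_p}$ into $\mc{G}_2$. After enlarging $\mc{O}_E$ so that it dominates both local reflex fields, $f$ induces on generic fibres compatible finite morphisms $\Sh_{\eK_1\eK^p}(\mb{G}_1,\mb{X}_1)_E\to\Sh_{\eK_2\eK^p}(\mb{G}_2,\mb{X}_2)_E$. Let $\ms{S}_2$ be the base change to $\mc{O}_E$ of the Kisin--Pappas--Zhou model of the target, and let $\ms{S}_1^\dagger$ be the levelwise normalisation of $\ms{S}_2$ in $\Sh_{\eK_1\eK^p}(\mb{G}_1,\mb{X}_1)_E$; this is a system of normal flat $\mc{O}_E$-models of the source Shimura varieties, with finite \'etale transition maps, equipped with a finite morphism $\ms{S}_1^\dagger\to\ms{S}_2$ lifting $f$ over $E$. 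The heart of the argument is to show that $\ms{S}_1^\dagger$ is canonical for $(\mb{G}_1,\mb{X}_1,\mc{G}_1)$ over $\mc{O}_E$. The $\mc{G}_1^c$-shtuka on $\ms{S}_1^\dagger$ is produced from the pullback of the $\mc{G}_2^c$-shtuka on $\ms{S}_2$ by reducing the structure group along the map $\mc{G}_1^c\to\mc{G}_2^c$ induced by $f$ (one must check that this map exists, using functoriality of $\mb{G}\mapsto\mb{G}^c$ and of the Bruhat--Tits recipe $\mc{G}\mapsto\mc{G}^c$, and that the resulting shtuka models the \'etale realisation of the source, which is the pullback of that of the target); condition (ii) for $\ms{S}_1^\dagger$ is inherited from $\ms{S}_2$ through the extension property of normalisation; and the local condition (iv) at a point $x$ is obtained by composing $\Theta_{f(x)}$ for $\ms{S}_2$ with the morphism of integral local Shimura varieties $\mc{M}^{\mr{int}}_{\mc{G}_1^c,b_x,\mbbmus^c_{h_1}}\to\mc{M}^{\mr{int}}_{\mc{G}_2^c,b_{f(x)},\mbbmus^c_{h_2}}$ induced by $\mc{G}_1^c\to\mc{G}_2^c$. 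Once $\ms{S}_1^\dagger$ is seen to be canonical, uniqueness identifies it with the base change of the Kisin--Pappas--Zhou model of the source, and the composite $\ms{S}_1\otimes_{\mc{O}_{E_1}}\mc{O}_E\cong\ms{S}_1^\dagger\to\ms{S}_2$ is the desired functorial morphism; compatibility with composition and with identities follows from uniqueness, after which one descends along $\mc{O}_E/\mc{O}_{E_1}$ to get the statement over the correct base.

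Given Theorem~\ref{thm:main}, the formal part of this deduction is light; the real work lies in verifying that the normalisation $\ms{S}_1^\dagger$ genuinely carries the structure of a canonical model. I expect the main obstacle to be establishing the required functoriality of the Scholze--Weinstein integral local Shimura varieties along $\mc{G}_1^c\to\mc{G}_2^c$ --- at the level of the $v$-sheaf completions appearing in (iv) --- together with the compatible behaviour of the shtukas under reduction of structure group. The discrepancy between $\mc{G}$ and $\mc{G}^c$, which is genuinely present in the abelian-type setting, and the interaction with connected components and with Deligne's axioms for abelian-type data, are the delicate points here, and handling them requires the explicit description of the Kisin--Pappas--Zhou construction developed earlier in the paper.
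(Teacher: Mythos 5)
Your deduction of \emph{independence of choices} is exactly the paper's argument: invoke Theorem~\ref{thm:main} to see every Kisin--Pappas--Zhou system is canonical, then apply the uniqueness of canonical integral models (Proposition~\ref{prop:uniqueness}). That part is fine.

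Your treatment of \emph{functoriality}, however, has a genuine gap. You assert that a morphism $f\colon(\mb{G}_1,\mb{X}_1)\to(\mb{G}_2,\mb{X}_2)$ of Shimura data induces \emph{finite} morphisms $\Sh_{\eK_1\eK^p}(\mb{G}_1,\mb{X}_1)_E\to\Sh_{\eK_2\eK^p}(\mb{G}_2,\mb{X}_2)_E$ on generic fibers, and then build $\ms{S}_1^\dagger$ as the relative normalization of $\ms{S}_2$ in the source. This is false in general: if $f$ has positive-dimensional kernel (e.g.\ the abelianization map $\mb{G}\to\mb{G}^{\ab}$), the induced map of Shimura varieties has positive-dimensional fibers and is nowhere near finite, so the relative normalization of $\ms{S}_2$ in $\Sh(\mb{G}_1,\mb{X}_1)_E$ is not a model of the source Shimura variety at all. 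Because the construction collapses at this step, the subsequent program of verifying the four axioms for $\ms{S}_1^\dagger$ does not get off the ground.

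The paper's Theorem~\ref{thm:functoriality} sidesteps this by the standard graph trick, following \cite[Cor.~4.3.2]{PR2021}: form $\ms{S}''\defeq\ms{S}_{\eK_{p,1}}(\mb{G}_1,\mb{X}_1)\times_{\Spec\mc{O}_{E_1}}\ms{S}_{\eK_p}(\mb{G},\mb{X})_{\mc{O}_{E_1}}$, check that this product is a canonical model for the product Shimura datum, and then observe that the \emph{graph} of $\alpha$ is a closed embedding $\Sh(\mb{G}_1,\mb{X}_1)\hookrightarrow\Sh(\mb{G}_1\times\mb{G},\mb{X}_1\times\mb{X})$. One can then apply Pappas--Rapoport's extension theorem \cite[Thm.~4.3.1]{PR2021} — which is exactly a statement about extending closed embeddings across canonical integral models — to extend the graph integrally, and finally compose with the projection $\ms{S}''\to\ms{S}_{\eK_p}(\mb{G},\mb{X})_{\mc{O}_{E_1}}$. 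Replacing an arbitrary morphism by its graph is precisely what restores the ``finite/closed embedding'' hypothesis your argument was implicitly relying on, without which the normalization step fails. If you want to salvage your approach, you would have to restrict $f$ to closed embeddings of Shimura data (and even then you would have to reprove much of the content of the paper's Theorem~\ref{thm:main} to check that the normalization is canonical), whereas the graph-plus-uniqueness route gets the full statement essentially for free once the main theorem is in hand.
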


We expect that our main result will have applications beyond this functoriality. For example, in \cite{DvHKZIgusa}, it is shown that Kisin--Pappas--Zhou models of Hodge type admit a fiber product decomposition
\begin{center}
    \begin{tikzcd}
        \ms{S}_{\eK}\gx^\diamond
            \arrow[r] \arrow[d]
        & \mc{G}\text{-}\mb{Sht}_{{\mbbmus_h}} 
            \arrow[d]
        \\ \operatorname{Igs}_{\eK}(\mb{G},\mb{X})
            \arrow[r]
        & \mathrm{Bun}_G
    \end{tikzcd}
\end{center}
at the level of $v$-sheaves. Here $\ms{S}_{\eK_p\eK^p}\gx^\diamond$ denotes the $v$-sheaf associated to the $p$-adic completion of $\ms{S}_{\eK_p\eK^p}\gx$, $\mc{G}\text{-}\mb{Sht}_{\mbbmus_h}$ is the moduli $v$-stack of $\mc{G}$-shtukas bounded by $\mbbmu_h$, $\mathrm{Bun}_G$ is the stack of $G$-bundles on the Fargues-Fontaine curve, and $\mathrm{Igs}_{\eK}$ is the \textit{Igusa stack} defined in \cite{DvHKZIgusa}, extending work of Zhang \cite{Zhang} in the PEL-type case. Such a decomposition provides a crucial ingredient for the proof of torsion vanishing results for the cohomology of Shimura varieties, see \cite{Hamann-Lee} and \cite{DvHKZIgusa}. Our results produce a map of $v$-sheaves $\ms{S}_{\eK}\gx^\diamond \to \mc{G}^c\text{-}\mb{Sht}_{\mbbmus_h^c}$ for all Kisin--Pappas--Zhou models, and we expect that with this it will be possible to extend the results of \cite{Zhang} and \cite{DvHKZIgusa} to the abelian-type case.

Let us close by mentioning a few ideas in the proof of Theorem \ref{thm:main}. A useful heuristic for Shimura data of abelian type is that they are ``spanned'' by two (largely orthogonal) extremes: Shimura data of Hodge type and Shimura data of toral type. While a useful guiding principle, it requires some care to put this idea into practice. The main ingredient in our proof is the development of a precise formalism for realizing this heuristic. We do this by extending ideas of Lovering (see \cite{Lovering}) from the setting of Kisin's canonical integral models at hyperspecial level to the setting of arbitrary Kisin--Pappas--Zhou models. We are then able to prove Theorem \ref{thm-1} by reducing to the Hodge-type case (established in \cite{PR2021} and \cite{DvHKZ}, see Remark \ref{rem:Hodge}) and toral-type case (established in \cite{DanielsToral}). We expect these methods to have applications more generally. 

\begin{rmk}\label{rem:vHS}
    We would like to direct the reader's attention to \cite{vHS}. In the process of this work, van Hoften and Sempliner have occasion to prove a modified version of Theorem \ref{thm-1}. Namely, they establish that Kisin--Pappas--Zhou models are ``canonical'' in an appropriate sense where one replaces $\mc{G}^c$ with the more coarse quotient $\mathcal{G}^\mathrm{ad}$. Their proof avoids much of the most subtle group theory used in proving Theorem \ref{thm-1}, but it is still sufficient to establish Corollary \ref{cor-1} (see \cite{vHS}). That said, the $\mc{G}^\ad$-version of Theorem \ref{thm-1} is insufficient for many ``motivic'' applications, such as extending the results of \cite{DvHKZIgusa} to the abelian-type case.
\end{rmk}

\paragraph{Acknowledgements} The authors would like to thank David Schwein, Gopal Prasad, Pol van Hoften, and Bogdan Zavalyov for several useful conversations. In addition, we would like to thank Pol van Hoften for pointing several subtle points in an earlier draft of this article. Part of this work was conducted during a visit to the Hausdorff Research Institute for Mathematics, funded by the Deutsche Forschungsgemeinschaft (DFG, German Research Foundation) under Germany's Excellence Strategy – EXC-2047/1 – 390685813, and the authors thank the Hausdorff Institute for their hospitality. During a portion of this work, the second author was supported as a JSPS fellow by a KAKENHI Grant-in-aid (grant number 22F22323).

\medskip

\paragraph{Notation and conventions}

\begin{itemize}
    \item For an $S$-scheme $X$, and a morphism of schemes $T\to S$, we shorten the notation $X\times_S T$ to $X_T$. When $T=\Spec(R)$, we shorten this notation further to $X_R$. Similar conventions hold in other geometric categories.
    \item For a scheme $X$ and subset $S$, we always consider the Zariski closure $\ov{S}$ as a reduced subscheme, unless stated otherwise.
    \item Unless stated otherwise, we use the notation and terminology concerning $v$-sheaves, Fargues--Fontaine curve, and shtukas as in \cite[\S2]{PR2021}. In particular, when we speak of a shtuka over a $\bZ_p$-scheme $X$, we mean a shtuka over the $v$-sheaf $X^{\lozenge /}$ in the sense of \cite[Definition 2.3.2]{PR2021}.
    \item We use $\mc{G}\text{-}\mb{Sht}(\ms{X})$ and $\mc{G}\text{-}\mb{Sht}_{\mbbmus}(\ms{X})$ to denote the groupoid of $\mc{G}$-shtukas on $\ms{X}$ and groupoid of $\mc{G}$-sthukas bounded by $\mbbmu$ on $\ms{X}$, respectively.
    \item A reductive group over a field $F$ is always assumed connected.
\end{itemize}

\section{Preliminaries}

We begin by establishing some preliminaries which will be essential for the proofs of the main results. In particular, we review the Bruhat--Tits theory which is necessary for our purposes, and discuss torsors for fiber products of group schemes. 

\subsection{Some basic Bruhat--Tits theory}\label{ss:BT-theory}
In this section we review some basic Bruhat--Tits theory. For more in-depth discussion in this direction we suggest the reader to consult \cite{BTII} or \cite{KalethaPrasad} and the references therein.

Throughout this section we fix a discretely valued field $K$ with perfect residue field $k$. We denote by $K^\ur$ the maximal unramified extension of $K$ inside of $K^\mr{sep}$, and by $\Gamma$ the Galois group $\Gal(K^\ur/K)$. Denote by $I\defeq \ker(\Gal(K^\mr{sep}/K)\to \Gamma)$ the inertia group of $K$.

\begin{rmk} As $K^\ur$ is another discretely valued field with perfect residue field, we will often apply to $K^\ur$ without comment definitions which are initially stated for $K$.
\end{rmk}

\subsubsection{The building and its functorialities}

Assume $G$ is a reductive group over $K$. As in \cite[Definition 7.6.1]{KalethaPrasad} (cf.\@ \cite[Definition 9.2.8]{KalethaPrasad}), associated to $G$ is a building (in the sense of \cite[Definition 1.5.5]{KalethaPrasad}) 
\begin{equation*}
    \ms{B}(G,{K^\ur})\defeq\ms{B}(G_{K^\ur},{K^\ur}),
\end{equation*} 
where the right-hand side is as in \cite[Definition 7.6.1]{KalethaPrasad}. This building comes equipped with an action of $G({K^\ur})\rtimes \Gamma$. Moreover, the action of $G({K^\ur})$ factors through the natural map $G({K^\ur})\to G^\ad({K^\ur})$. 

Additionally, we have a building 
\begin{equation*}
    \ms{B}(G,K)\defeq \ms{B}(G,{K^\ur})^\Gamma,
\end{equation*}
which naturally comes equipped with an action of $G(K)$ which factorizes through the map $G(K)\to G^\ad(K)$. To distinguish these two objects, we call the $\ms{B}(G,{K^\ur})$ the \emph{building associated to $G$ over ${K^\ur}$} and $\ms{B}(G,K)$ the \emph{building associated to $G$ over $K$}.

\begin{rmk} The building(s) we are using here are more often called the \emph{reduced Bruhat--Tits building(s)}. While there exists a notion of the \emph{extended Bruhat--Tits building(s)}, we will not need these notions here and, in fact, this simpler object is better suited to our purposes. See \cite{KalethaPrasad} for more details.
\end{rmk}

Various functorial properties for these buildings have been established in the literature, see \cite{Landvogt2000} and \cite[\S14]{KalethaPrasad}. However, in this article we require only two basic functoriality statements. 

The first concerns quotient maps of reductive groups.

\begin{lemma}[{cf.\@ \cite[Proposition 14.1.1]{KalethaPrasad}}]\label{lem:map-of-buildings} Suppose that $f\colon G\to H$ is a faithfully flat map of reductive groups over $K$. Then, there exists a unique surjective map 
\begin{equation*}
    f_\ast\colon \ms{B}(G,{K^\ur})\to \ms{B}(H,{K^\ur}),
\end{equation*}
which is equivariant for the map $G({K^\ur})\rtimes \Gamma\to H({K^\ur})\rtimes \Gamma$. A similar statement holds for the buildings over $K$. 
\end{lemma}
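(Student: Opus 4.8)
The plan is to reduce the statement to the case where $K$ is replaced by $K^{\ur}$, which is strictly Henselian, and then invoke the standard functoriality of Bruhat--Tits buildings with respect to a central quotient. More precisely, the building $\ms{B}(G, K^{\ur})$ is by definition $\ms{B}(G_{K^{\ur}}, K^{\ur})$ in the sense of \cite[Definition 7.6.1]{KalethaPrasad}, and the action of $G(K^{\ur})$ factors through $G^{\ad}(K^{\ur})$; the same holds for $H$. Since a faithfully flat morphism $f\colon G \to H$ of reductive groups induces a central isogeny (hence an isomorphism on adjoint groups after base change) $G^{\ad} \isomto H^{\ad}$, the buildings $\ms{B}(G, K^{\ur})$ and $\ms{B}(H, K^{\ur})$ have ``the same'' underlying apartments and simplicial structure; the content is to produce the $G(K^{\ur}) \rtimes \Gamma$-equivariant map compatibly with all this data. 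For this I would appeal directly to \cite[Proposition 14.1.1]{KalethaPrasad} (or the toral/central-quotient functoriality developed in \cite[\S14]{KalethaPrasad}), which gives precisely a canonical $f$-equivariant map $f_\ast\colon \ms{B}(G, K^{\ur}) \to \ms{B}(H, K^{\ur})$ when $f$ has central kernel, together with its uniqueness and surjectivity. The surjectivity follows because $f$ is surjective on $K^{\ur}$-points up to the image being a subgroup of finite index acting with the same apartments — more carefully, a maximal $K^{\ur}$-split torus of $G$ maps onto one of $H$ (as $K^{\ur}$ is strictly Henselian every reductive group is quasi-split and the relevant tori are as large as possible), so the image apartment is the full apartment of $H$, and translating by $G(K^{\ur})$ (whose image in $H^{\ad}(K^{\ur})$ has image acting transitively enough) covers $\ms{B}(H, K^{\ur})$.

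Next I would descend to $K$. The building $\ms{B}(G, K)$ is defined as $\ms{B}(G, K^{\ur})^{\Gamma}$, and likewise for $H$. Since $f_\ast$ is $\Gamma$-equivariant, it restricts to a map $\ms{B}(G, K^{\ur})^{\Gamma} \to \ms{B}(H, K^{\ur})^{\Gamma}$, i.e. a map $\ms{B}(G,K) \to \ms{B}(H,K)$, which is automatically $G(K) \rtimes \Gamma$-equivariant (indeed $\Gamma$ acts trivially on both fixed-point sets, and $G(K) \subseteq G(K^{\ur})$). Uniqueness over $K$ follows from uniqueness over $K^{\ur}$. The one genuine point requiring an argument is surjectivity of the restricted map: a priori taking $\Gamma$-fixed points need not preserve surjectivity. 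Here I would use that $\ms{B}(H, K^{\ur})^{\Gamma}$ is non-empty and that, by \cite[\S14]{KalethaPrasad} or a Galois-cohomological argument, the fibers of $f_\ast$ over $K$-rational (i.e. $\Gamma$-fixed) points of $\ms{B}(H, K^{\ur})$ are themselves $\Gamma$-stable affine subspaces (translates of $\ms{B}(\ker f, K^{\ur})$-type spaces, or rather of the kernel torus's apartment), and such a non-empty $\Gamma$-stable affine subspace over a field with $\Gamma \cong \wh{\bZ}$ acting through a finite quotient always has a fixed point by averaging. This gives a $\Gamma$-fixed point in each fiber, hence surjectivity of $f_\ast^{\Gamma}$.

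The main obstacle, as usual with building functoriality, is not the existence of the map in the split/quasi-split setting over $K^{\ur}$ — that is essentially bookkeeping with apartments and is already in \cite{KalethaPrasad} — but rather the surjectivity after passing to $\Gamma$-fixed points, i.e. making sure that every point of $\ms{B}(H,K)$ lifts to $\ms{B}(G,K)$ and not merely to $\ms{B}(G,K^{\ur})$. The cleanest route is the fiber-wise averaging argument sketched above, using that the fibers of a central-quotient map of buildings are affine (so the barycenter of a bounded $\Gamma$-orbit lands back in the fiber and is $\Gamma$-fixed); one must check the orbits in question are bounded, which holds because the fiber over a $\Gamma$-fixed point is stabilized by a group acting properly. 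Once surjectivity over $K$ is in hand, the lemma is complete. I would keep the write-up short by citing \cite[Proposition 14.1.1]{KalethaPrasad} and \cite[\S14]{KalethaPrasad} for the constructions over $K^{\ur}$ and only spelling out the $\Gamma$-descent and the fixed-point averaging in detail.
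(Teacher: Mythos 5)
The central gap is the claim that a faithfully flat morphism $f\colon G\to H$ of reductive groups ``induces a central isogeny (hence an isomorphism on adjoint groups) $G^\ad\isomto H^\ad$.'' This is false. Over a field, faithfully flat is equivalent to surjective for a homomorphism of connected reductive groups, and the kernel of such a surjection is a normal (hence reductive) subgroup that need not be finite or central: the abelianization $G\to G^\ab$ has kernel $G^\der$, and the projection $G_1\times G_2\to G_1$ has kernel $G_2$. In fact the paper invokes this very lemma in \S\ref{sss:parahorics-and-abelianization} for the map $G\to G^\ab$, where your argument would produce a bijection $\ms{B}(G,K)\isomto \ms{B}(G^\ab,K)$; but $\ms{B}(G^\ab,K)$ is a single point (the torus $G^\ab$ has trivial adjoint group), so this can only hold when $G$ is a torus. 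The central-kernel case you actually treat is the content of the \emph{next} lemma in the paper, Lemma \ref{lemma:buildings-ad-isom}, where the building map is indeed a bijection. The present lemma is the strictly more general statement, where $f_\ast$ is a genuine projection (onto the sub-product of simple factors of $G^\ad$ surviving in $H^\ad$), surjective but typically not injective, and the cited \cite[Proposition 14.1.1]{KalethaPrasad} already handles arbitrary surjections; your restriction to ``central kernel'' reads in a hypothesis that is not there and changes the geometry of $f_\ast$ entirely.

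Your $\Gamma$-descent step is the right idea in outline: the fiber of $f_\ast$ over a $\Gamma$-fixed point of $\ms{B}(H,K^\ur)$ is a non-empty $\Gamma$-stable subset on which $\Gamma$ acts with bounded orbits (it acts through a finite quotient), and exhibiting a $\Gamma$-fixed point in each such fiber gives surjectivity of $f_\ast^\Gamma\colon\ms{B}(G,K)\to\ms{B}(H,K)$. But once the false central-kernel hypothesis is dropped, the fiber is a copy of the building $\ms{B}(\ker f, K^\ur)$, which is a CAT(0) space and not an affine subspace, so the correct fixed-point input is the Bruhat--Tits fixed point theorem rather than barycentric averaging in an affine space. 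Note finally that the paper itself supplies no proof for this lemma --- it simply cites \cite[Proposition 14.1.1]{KalethaPrasad} --- so the cleanest write-up is to cite that proposition for existence, uniqueness, and surjectivity over $K^\ur$ without the central-kernel gloss, and then supply the $\Gamma$-descent argument with the Bruhat--Tits fixed point theorem in each fiber.
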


For the second statement, we first make the following definition. For a field $F$, a map $f\colon G\to H$ of reductive groups over $F$ is an \emph{ad-isomorphism} if $f(Z(G))\subseteq Z(H)$, and the induced map $f^\ad\colon G^\ad\to H^\ad$ is an isomorphism. We record for later use the following proposition, whose proof is an exercise in assembling well-known facts about reductive groups over fields (e.g., see \cite{Milne2017}).

\begin{prop}\label{prop:ad-iso-equiv} Let $f\colon G\to H$ be a map of reductive groups over $F$. Then, the following conditions on $f$ are equivalent:
\begin{enumerate}
    \item $f$ is an ad-isomorphism,
    \item $f^{-1}(Z(H))=Z(G)$,
    \item the induced map $f\colon G^\der\to H^\der$ is a central isogeny.\footnote{Any isogeny is central if it is of degree coprime to the characteristic of $F$ (see \cite[Remark 12.39]{Milne2017}).}
\end{enumerate}
\end{prop}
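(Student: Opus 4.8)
The plan is to prove the cyclic chain of implications $(1)\Rightarrow(2)\Rightarrow(3)\Rightarrow(1)$, working throughout with the structure theory of reductive groups over a field, in particular the facts that $G^{\mathrm{der}}$ is the derived subgroup, that $Z(G)^{\circ}$ is a torus with $G = Z(G)^{\circ}\cdot G^{\mathrm{der}}$, that $G^{\mathrm{ad}} = G/Z(G) = G^{\mathrm{der}}/Z(G^{\mathrm{der}})$, and that the center of $G^{\mathrm{der}}$ is $G^{\mathrm{der}}\cap Z(G)$, which is finite. I would first record these standard facts (citing \cite{Milne2017}) so that the argument is just bookkeeping. Note also that since $f$ is a homomorphism of reductive groups, $f(Z(G))$ is contained in a maximal torus of $H$ and is central in the image $f(G)$; one should be slightly careful that "$f$ an ad-isomorphism" as defined requires $f(Z(G))\subseteq Z(H)$, not merely that $f^{\mathrm{ad}}$ is an isomorphism, so condition $(1)$ genuinely packages two pieces of data.

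For $(1)\Rightarrow(2)$: given that $f^{\mathrm{ad}}$ is an isomorphism, the commutative square relating $G\to G^{\mathrm{ad}}$ and $H\to H^{\mathrm{ad}}$ shows $f^{-1}(Z(H)) = f^{-1}(\ker(H\to H^{\mathrm{ad}}))$ is the preimage of $\ker(G^{\mathrm{ad}}\to H^{\mathrm{ad}})$ under $G\to G^{\mathrm{ad}}$, which is trivial since $f^{\mathrm{ad}}$ is injective; hence $f^{-1}(Z(H)) = \ker(G\to G^{\mathrm{ad}}) = Z(G)$. (The inclusion $Z(G)\subseteq f^{-1}(Z(H))$ is exactly the hypothesis $f(Z(G))\subseteq Z(H)$.) For $(2)\Rightarrow(1)$-type surjectivity considerations we will instead route through $(3)$. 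For $(2)\Rightarrow(3)$: from $f^{-1}(Z(H)) = Z(G)$ one gets $\ker(f)\subseteq Z(G)$, so $\ker(f)\cap G^{\mathrm{der}}$ is finite and central in $G^{\mathrm{der}}$; to see $f(G^{\mathrm{der}}) = H^{\mathrm{der}}$ is all of the target I would argue that $f$ induces a map $G^{\mathrm{ad}}\to H^{\mathrm{ad}}$ which is injective (because $f^{-1}(Z(H)) = Z(G)$ forces the kernel of $G^{\mathrm{ad}}\to H^{\mathrm{ad}}$ to be trivial) and then check surjectivity of $G^{\mathrm{ad}}\to H^{\mathrm{ad}}$ — this is the one place requiring a little thought, and I expect to deduce it from the fact that $H$ is generated by $f(G)$ together with... actually this need not hold for arbitrary $f$, so the correct reading is that $(2)$ already forces $f$ to be an isogeny onto a normal subgroup, hence $f(G)\supseteq H^{\mathrm{der}}$ and $f(G^{\mathrm{der}}) = H^{\mathrm{der}}$; the map on derived groups is then an isogeny, and it is central because its kernel lies in $Z(G)\cap G^{\mathrm{der}}\subseteq Z(G^{\mathrm{der}})$. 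For $(3)\Rightarrow(1)$: a central isogeny $G^{\mathrm{der}}\to H^{\mathrm{der}}$ induces an isomorphism on adjoint quotients $G^{\mathrm{ad}} = G^{\mathrm{der}}/Z(G^{\mathrm{der}}) \to H^{\mathrm{der}}/Z(H^{\mathrm{der}}) = H^{\mathrm{ad}}$, and one checks this is compatible with $f^{\mathrm{ad}}$; the condition $f(Z(G))\subseteq Z(H)$ follows since $f$ maps $Z(G)^{\circ}$ (a central torus) into a maximal torus of $H$ commuting with $f(G^{\mathrm{der}}) = H^{\mathrm{der}}$, forcing it into $Z(H)$.

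The main obstacle, as flagged above, is the surjectivity/normality bookkeeping in $(2)\Rightarrow(3)$: one must be careful that the definition of ad-isomorphism and condition $(2)$ each implicitly force $f(G)$ to contain $H^{\mathrm{der}}$ (equivalently, that $f$ is not, say, an inclusion of a proper Levi), which is why the cleanest route is to establish $(1)\Leftrightarrow(2)$ directly via the adjoint quotients and handle $(3)$ on either side. I would therefore restructure slightly: prove $(1)\Leftrightarrow(2)$ first by the kernel computation on adjoint quotients, then $(1)\Rightarrow(3)$ and $(3)\Rightarrow(1)$ using that $G^{\mathrm{ad}}$ depends only on $G^{\mathrm{der}}$ and that central isogenies of semisimple groups induce isomorphisms on adjoint quotients. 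Everything else is a routine assembly of \cite{Milne2017}-type facts, and the footnote about isogenies of degree prime to $\mathrm{char}(F)$ being automatically central is used only to make condition $(3)$ as stated slightly weaker-looking than it is.
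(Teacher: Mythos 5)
Observe first that the paper supplies no proof of this proposition: the text only remarks that the equivalence is ``an exercise in assembling well-known facts'' with a blanket citation of Milne, so there is no paper argument to compare yours against. On the substance, your sketches of $(1)\Rightarrow(2)$, $(1)\Rightarrow(3)$, and $(3)\Rightarrow(1)$ are essentially sound, modulo a small wobble in $(3)\Rightarrow(1)$: the step ``$f$ maps $Z(G)^{\circ}$ into a maximal torus of $H$ commuting with $f(G^{\der})=H^{\der}$'' is not right, since a maximal torus of $H$ does not commute with $H^{\der}$, and in any case it only treats the identity component. The clean observation is that $f(Z(G))$ centralizes $f(G^{\der})=H^{\der}$ and automatically commutes with $Z(H)^{\circ}$, hence centralizes $H=Z(H)^{\circ}\cdot H^{\der}$, giving $f(Z(G))\subseteq Z(H)$ for the full center.

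The real problem is the one you yourself flagged as ``requiring a little thought'': the implication $(2)\Rightarrow(3)$ (and hence $(2)\Rightarrow(1)$) is false as stated, and the patch you offer, that ``$(2)$ already forces $f$ to be an isogeny onto a normal subgroup, hence $f(G)\supseteq H^{\der}$,'' does not hold. Condition $(2)$ controls only $\ker(f^{\ad})$; it says nothing about the image of $f$ or of $f^{\ad}$. Concretely, take $f\colon \mathbb{G}_m\hookrightarrow \mathrm{GL}_2$ the central embedding. Then $f^{-1}(Z(\mathrm{GL}_2))=\mathbb{G}_m=Z(\mathbb{G}_m)$, so $(2)$ holds, while $f^{\ad}\colon 1\to\mathrm{PGL}_2$ is not an isomorphism and $\mathbb{G}_m^{\der}=1\to\mathrm{SL}_2$ is not an isogeny, so $(1)$ and $(3)$ both fail; any central inclusion of a subtorus produces the same phenomenon. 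So your instinct that the surjectivity step is the crux was exactly right, but it is a genuine gap in the statement rather than a bookkeeping subtlety: $(2)$ must be supplemented by a surjectivity hypothesis (say, that $f^{\ad}$ is surjective, or that $f(G)\supseteq H^{\der}$) to become equivalent to $(1)$ and $(3)$. Your restructured plan to ``prove $(1)\Leftrightarrow(2)$ first by the kernel computation on adjoint quotients'' runs into the same wall, since a kernel computation cannot yield surjectivity of $f^{\ad}$. The overclaim is harmless for the paper itself, which only ever invokes $(1)\Leftrightarrow(3)$ and $(1)\Rightarrow(2)$, never an implication going out of $(2)$.
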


The second basic functoriality result is the following. 

\begin{lemma}\label{lemma:buildings-ad-isom}
        Suppose $f\colon G \to H$ is an ad-isomorphism of reductive groups over $K$. Then $f$ induces a bijection of buildings
        \begin{align*}
            f_\ast\colon \mathscr{B}(G, {K^\ur}) \isomto \mathscr{B}(H, {K^\ur}),
        \end{align*}
        equivariant for the map $G({K^\ur})\rtimes \Gamma\to H({K^\ur})\rtimes \Gamma$. Moreover, if $g\colon H\to L$ is another ad-isomorphism, then $(g\circ f)_\ast=g_\ast\circ f_\ast$. A similar statement holds for buildings over $K$.
    \end{lemma}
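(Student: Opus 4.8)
The plan is to deduce Lemma \ref{lemma:buildings-ad-isom} from Lemma \ref{lem:map-of-buildings} by exploiting the equivalent characterizations of ad-isomorphisms in Proposition \ref{prop:ad-iso-equiv}. The key observation is that an ad-isomorphism $f\colon G\to H$ need not be faithfully flat (it could fail to be surjective, e.g. the inclusion of a maximal torus is \emph{not} an ad-isomorphism, but consider rather $\mathrm{SL}_2\to\mathrm{GL}_2$ — not surjective), so Lemma \ref{lem:map-of-buildings} does not apply directly. Instead, I would factor $f$ through a group to which both $G$ and $H$ map faithfully flatly. A natural candidate is $H^\ad=G^\ad$ itself: by Proposition \ref{prop:ad-iso-equiv}, $f$ induces an isomorphism $f^\ad\colon G^\ad\isomto H^\ad$, and the adjoint quotient maps $\pi_G\colon G\to G^\ad$ and $\pi_H\colon H\to H^\ad$ are faithfully flat. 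So Lemma \ref{lem:map-of-buildings} supplies surjections $(\pi_G)_\ast\colon \ms{B}(G,K^\ur)\to\ms{B}(G^\ad,K^\ur)$ and $(\pi_H)_\ast\colon\ms{B}(H,K^\ur)\to\ms{B}(H^\ad,K^\ur)$, together with the isomorphism $(f^\ad)_\ast$ on the adjoint buildings.

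The crux is then to show that $(\pi_G)_\ast$ is actually a \emph{bijection} (and likewise for $H$), for then I can define $f_\ast\defeq (\pi_H)_\ast^{-1}\circ(f^\ad)_\ast\circ(\pi_G)_\ast$. That $(\pi_G)_\ast$ is a bijection is essentially the standard fact that the reduced building depends only on the adjoint group: by construction in \cite[Definition 7.6.1]{KalethaPrasad}, $\ms{B}(G,K^\ur)$ is built from the apartments attached to maximal split tori of $G_{K^\ur}$, and these correspond bijectively to maximal split tori of $G^\ad_{K^\ur}$, with the affine root systems matching up — I would cite \cite{KalethaPrasad} (around \S7.6, or the discussion of functoriality with respect to central isogenies/adjoint quotients, e.g. near Proposition 14.1.1) for this identification rather than reproving it. With $(\pi_G)_\ast$ and $(\pi_H)_\ast$ bijections, the composite $f_\ast$ is a bijection of buildings. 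Equivariance for $G(K^\ur)\rtimes\Gamma\to H(K^\ur)\rtimes\Gamma$ follows by chasing the equivariance of the three maps being composed, using that the $G(K^\ur)$-action factors through $G^\ad(K^\ur)$ and that $\pi_G$, $f^\ad$, $\pi_H$ are compatible with the relevant homomorphisms on points; concretely, $\pi_H\circ f=f^\ad\circ\pi_G$ on the level of group schemes (since $f(Z(G))\subseteq Z(H)$ guarantees $f$ descends compatibly to adjoint quotients), and this identity transports to the building maps.

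For the uniqueness and the compatibility with composition, I would argue as follows. Uniqueness: any map of buildings equivariant for the given group homomorphism is determined on a single apartment by its behavior on the vertices/special points, and by the transitivity of the $G(K^\ur)$-action on apartments; alternatively, I can reduce to the adjoint case where $f_\ast = (f^\ad)_\ast$ is pinned down by \cite[Proposition 14.1.1]{KalethaPrasad} (or its analogue) applied to the isomorphism $f^\ad$. For $(g\circ f)_\ast = g_\ast\circ f_\ast$ when $g\colon H\to L$ is another ad-isomorphism: note $g\circ f$ is again an ad-isomorphism (its adjoint map is $g^\ad\circ f^\ad$, a composite of isomorphisms), and $(g\circ f)^\ad = g^\ad\circ f^\ad$, so
\[
(g\circ f)_\ast = (\pi_L)_\ast^{-1}\circ(g^\ad\circ f^\ad)_\ast\circ(\pi_G)_\ast = (\pi_L)_\ast^{-1}\circ g^\ad_\ast\circ(\pi_H)_\ast\circ(\pi_H)_\ast^{-1}\circ f^\ad_\ast\circ(\pi_G)_\ast = g_\ast\circ f_\ast,
\]
where the middle equality uses the functoriality $(g^\ad\circ f^\ad)_\ast = g^\ad_\ast\circ f^\ad_\ast$ for isomorphisms of adjoint buildings (again from \cite[\S14]{KalethaPrasad}). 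The statement over $K$ is obtained by taking $\Gamma$-fixed points throughout, using that all the maps constructed are $\Gamma$-equivariant. The main obstacle I anticipate is purely bookkeeping: making sure the reference in \cite{KalethaPrasad} genuinely gives that $\pi_G\colon G\to G^\ad$ induces a \emph{bijection} (not merely a surjection) of reduced buildings, and that the required equivariance is stated in the generality needed — if no single citation suffices, I would assemble it from the description of the building via the adjoint group together with the central-isogeny functoriality already used in Lemma \ref{lem:map-of-buildings}.
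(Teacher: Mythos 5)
Your proposal is correct and follows essentially the same route as the paper: the paper likewise passes through the natural $\Gamma$-equivariant bijections $\ms{B}(G,K^\ur)\isomto\ms{B}(G^\ad,K^\ur)$ and $\ms{B}(H,K^\ur)\isomto\ms{B}(H^\ad,K^\ur)$ (citing the discussion on p.~264 of \cite{KalethaPrasad} for the fact that the reduced building depends only on the adjoint group), composes with the bijection induced by $f^\ad$, and gets equivariance from the fact that $G(K^\ur)$ and $H(K^\ur)$ act through their adjoint quotients, with the case over $K$ obtained by taking $\Gamma$-fixed points.
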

\begin{proof} As discussed on \cite[p.\@ 264]{KalethaPrasad}, there are natural $\Gamma$-equivariant bijections
\begin{equation*}
    \ms{B}(G,{K^\ur})\isomto\ms{B}(G^\ad,{K^\ur}),\qquad \ms{B}(H,{K^\ur})\isomto \ms{B}(H^\ad,{K^\ur}),
\end{equation*}
which are equivariant for the maps $G({K^\ur})\to G^\ad({K^\ur})$ and $H({K^\ur})\to H^\ad({K^\ur})$, respectively. These, together with the $\Gamma$-equivariant bijection $\ms{B}(G^\ad,{K^\ur})\isomto \ms{B}(H^\ad,{K^\ur})$ induced by $f^\ad$, gives us a $\Gamma$-equivariant bijection $\ms{B}(G,{K^\ur})\to\ms{B}(H,{K^\ur})$. That this is equivariant for the map $G({K^\ur})\to H({K^\ur})$ is clear from construction as $G({K^\ur})$ and $H({K^\ur})$ act through their images in $G^\ad({K^\ur})$ and $H^\ad({K^\ur})$, respectively. The proof for the buildings over $K$ follows by passing to the fixed points for $\Gamma$.
\end{proof}

\subsubsection{Parahoric groups and group schemes}

Let $G$ be a reductive group over $K$. As in \cite[\S7]{KottwitzIsocrystalsII} (see also \cite[\S11.1]{KalethaPrasad}), one may construct a group homomorphism
\begin{equation*}
    \kappa_G\colon G({K^\ur})\to \pi_1(G)_I,
\end{equation*}
where $\pi_1(G)$ denotes the fundamental group of Borovoi (see \cite[\S1]{Borovoi}), a finitely generated abelian group, and the subscript $(-)_I$ denotes $I$-coinvariants. We call $\kappa_G$ the \emph{Kottwitz homomorphism} associated to $G$. We let $\kappa_G\otimes 1$ denote the composition of $\kappa_G$ with the map $\pi_1(G)_I\to \pi_1(G)_I\otimes_\Z\Q$.

We denote the kernel of $\kappa_G$ and $\kappa_G\otimes 1$ by $G({K^\ur})^0$ and $G(K^\ur)^1$, respectively.\footnote{The groups $G(K^\ur)^0$ and $G(K^\ur)^1$ may be given more concrete definitions. See \cite[Notation 2.6.15 and Definition 2.6.23]{KalethaPrasad} for these definitions, and \cite[Lemma 11.5.2 and Proposition 11.5.4]{KalethaPrasad} for the proofs that these groups coincides with our definition.} The Kottwitz homomorphism (and thus the subgroups $G({K^\ur})^0$ and $G(K^\ur)^1$) is functorial in $G$. More precisely, if $f\colon G\to H$ is a map of reductive groups over $K$, then the diagram
\begin{equation*}
    \begin{tikzcd}[column sep=2.25em]
	{G({K^\ur})} & {\pi_1(G)_I} \\
	{H({K^\ur})} & {\pi_1(H)_I}
	\arrow["f"', from=1-1, to=2-1]
	\arrow["{\kappa_G}", from=1-1, to=1-2]
	\arrow["{\kappa_H}"', from=2-1, to=2-2]
	\arrow["{\pi_1(f)}", from=1-2, to=2-2]
\end{tikzcd}
\end{equation*}
commutes. 

We then have the following definition of a parahoric group scheme.

\begin{defn} A group $\mc{O}_K$-scheme $\mc{G}$ is \emph{parahoric} if:
\begin{enumerate}
    \item $G\defeq \mc{G}_K$ is a reductive group over $K$,
    \item $\mc{G}$ is a smooth affine group $\mc{O}_K$-scheme with connected fibers,
    \item there exists a point $x$ in $\ms{B}(G,{K^\ur})$ (equiv.\@ $\ms{B}(G,K)$) such that 
    \begin{equation}\label{eq:parahoric-defn}
        \mc{G}(\mc{O}_{K^\ur})=G({K^\ur})^0\cap \mr{Stab}_{G({K^\ur})}(x).
    \end{equation}
\end{enumerate}
\end{defn}

\begin{rmk} That this definition agrees with that originally defined in \cite{BTII} is (essentially) the content of \cite[Proposition 3]{HainesRapoport}.
\end{rmk}

As in \cite[\S9.2.6]{KalethaPrasad}, associated to any point $x$ of $\ms{B}(G,K)$ is a parahoric group scheme denoted by $\mc{G}_x^\circ$, characterized by Equation \eqref{eq:parahoric-defn}. Moreover, every parahoric group scheme arises in this way. A subgroup of $G(K)$ is called \emph{parahoric} if it is of the form $\mc{G}(\mc{O}_K)$ for a parahoric group $\mc{O}_K$-scheme $\mc{G}$.

\subsubsection{Parahoric models of tori and $R$-smooth tori}\label{sss:parahoric-models-tori}

Suppose that $T$ is a a torus over $K$. By Lemma \ref{lemma:buildings-ad-isom} it's clear that $\ms{B}(T,K)$ is a singleton, and thus that $T$ has a unique parahoric model which we denote by $\mc{T}$. As $\ms{B}(T,K)$ is a singleton, $\mc{T}(\mc{O}_K)$ is equal to $T(K)^0$.

Let $\mc{T}^\mr{lft}$ be the N\'eron model of $T$ (e.g., see \cite[\S B.7--B.8]{KalethaPrasad} and \cite{BLR}).

\begin{prop}[{see \cite[\S B.7--B.8]{KalethaPrasad}}] There is a functorial identification $\mc{T}\simeq (\mc{T}^\mr{lft})^\circ$.
\end{prop}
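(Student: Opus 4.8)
The plan is to show that $(\mc{T}^\mr{lft})^\circ$ satisfies the three defining conditions of a parahoric $\mc{O}_K$-group scheme with generic fiber $T$. Since $\ms{B}(T,K)$ is a singleton, there is a unique parahoric model $\mc{T}$ to compare against, and it is characterized by Equation \eqref{eq:parahoric-defn}; hence verifying those conditions produces a \emph{canonical} isomorphism $\mc{T}\simeq (\mc{T}^\mr{lft})^\circ$, and canonicity automatically upgrades to functoriality in $T$ (for a map $T_1\to T_2$ both sides receive the induced map, and the two models, being characterized by conditions on $\mc{O}_{K^\ur}$-points, admit a unique isomorphism compatible with it). Condition (1) is immediate from the N\'eron mapping property, which gives $(\mc{T}^\mr{lft})_K = T$ and hence $(\mc{T}^\mr{lft})^\circ_K = T$.

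For condition (2): $(\mc{T}^\mr{lft})^\circ$ is smooth (being the identity component of the smooth group scheme $\mc{T}^\mr{lft}$) and has connected fibers by construction. Affineness is the one nontrivial point, and here I would invoke the structure theory of N\'eron lft models of tori (see \cite[\S B.7--B.8]{KalethaPrasad}, going back to Raynaud and \cite{BLR}): the identity component $(\mc{T}^\mr{lft})^\circ$ is of finite type over $\mc{O}_K$ and its special fiber is a connected smooth commutative algebraic group which is affine (an extension of a torus by a unipotent group). A flat, separated, finite-type group scheme over a Henselian DVR with affine closed fiber is affine, and the general case of $\mc{O}_K$ follows by faithfully flat descent along $\mc{O}_K\to\mc{O}_{K^\ur}$.

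The heart of the matter is condition (3). Since $T$ acts on its (point) building through the trivial group $T^\ad$, the stabilizer term in \eqref{eq:parahoric-defn} is all of $T(K^\ur)$, so what must be shown is
\[
    (\mc{T}^\mr{lft})^\circ(\mc{O}_{K^\ur}) \;=\; T(K^\ur)^0 \;=\; \ker\bigl(\kappa_T\colon T(K^\ur)\to \pi_1(T)_I\bigr).
\]
As formation of N\'eron lft models commutes with the ind-\'etale base change $\mc{O}_K\to\mc{O}_{K^\ur}$, we may assume the residue field $k$ is algebraically closed; then smoothness makes reduction $T(K^\ur)=\mc{T}^\mr{lft}(\mc{O}_{K^\ur})\twoheadrightarrow \mc{T}^\mr{lft}_{k}(k)$ surjective, and $(\mc{T}^\mr{lft})^\circ(\mc{O}_{K^\ur})$ is exactly the kernel of the induced map onto the component group $\Phi_T\defeq\pi_0(\mc{T}^\mr{lft}_{k})(k)$. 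It therefore suffices to identify $\Phi_T$ with $\pi_1(T)_I=X_\ast(T)_I$ \emph{compatibly with $\kappa_T$}. I would do this by d\'evissage: pick a short exact sequence of $K$-tori $1\to S\to \wt T\to T\to 1$ with $\wt T$ an induced torus (so $\mc{T}^\mr{lft}$ for $\wt T$ is the corresponding Weil restriction of $\mathbb{G}_m$, where the statement is a direct computation giving $\Phi_{\wt T}\cong X_\ast(\wt T)_I$), and transport the identification along this sequence using functoriality of N\'eron lft models together with the functoriality of the Kottwitz homomorphism recorded in the commuting square above, chasing the connecting terms with the snake lemma. Alternatively one may cite Kottwitz's computation of the component group of $\mc{T}^\mr{lft}$ in \cite[\S7]{KottwitzIsocrystalsII} (see also \cite{HainesRapoport}).

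I expect this last identification to be the only genuine obstacle. The subtlety is that $X_\ast(T)_I$ can have torsion, so one is not merely matching two quotients of $T(K^\ur)$ abstractly but must pin down the isomorphism on the nose; carrying out the d\'evissage with that precision is exactly what also secures the functoriality of $\mc{T}\simeq(\mc{T}^\mr{lft})^\circ$ in $T$, and hence the final assertion.
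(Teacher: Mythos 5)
The paper gives no proof of this proposition at all: it is cited directly from \cite[\S B.7--B.8]{KalethaPrasad}, and in fact the footnote to the definition of $G(K^\ur)^0$ in the same section of the paper routes the reader to \cite[Lemma 11.5.2 and Proposition 11.5.4]{KalethaPrasad} for the identification of that kernel with concrete integral-points descriptions. So there is no internal argument to compare against; your reconstruction is the proof.

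Your outline matches the standard argument in the cited sources. You correctly isolate the genuine content: reducing to a singleton building (so the stabilizer term is all of $T(K^\ur)$), getting $(\mc{T}^\mr{lft})^\circ(\mc{O}_{K^\ur}) = \ker\bigl(T(K^\ur)\to\Phi_T\bigr)$ after passing to $K^\ur$ and using smoothness plus Henselianity, and then identifying $\Phi_T$ with $X_\ast(T)_I$ \emph{compatibly} with $\kappa_T$ --- which is exactly Kottwitz's computation in \cite[\S 7]{KottwitzIsocrystalsII}, reproduced in \cite[\S 11.5]{KalethaPrasad}. The affineness point and the descent from $\mc{O}_{K^\ur}$ to $\mc{O}_K$ are also handled correctly. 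One small technical remark: in your d\'evissage the sequence $1\to S\to\wt T\to T\to 1$ with $\wt T$ induced need not have $S$ a torus (its character group is $X^\ast(\wt T)/X^\ast(T)$, which can have torsion); to keep it a short exact sequence of tori you should take a flasque-type resolution, or simply work with diagonalizable kernels throughout. This does not affect the outcome, and the references you cite handle it; but since you flag the torsion in $X_\ast(T)_I$ yourself as the subtle point, it is worth being consistent about where that torsion is allowed to live.
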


Fix a finite Galois extension $K_1$ of $K$ splitting $T$, and set $T_1=\mr{Res}_{K_1/K}T_{K_1}$  which admits a natural embedding $T\to T_1$. 

\begin{defn}[{\cite[Definition 2.4.3]{KZ21}}]\label{defn:R-smooth} The torus $T$ is called \emph{$R$-smooth} if its Zariski closure in $\mc{T}_1^\mr{lft}$ is a smooth group $\mc{O}_K$-scheme.\footnote{As in \cite[\S4.4.8]{BTII}, one may show that the definition of being $R$-smooth does not depend on the choice of $K_1$.}
\end{defn}

Consider now a reductive group $G$ over $K$. As $G_{K^\ur}$ is quasi-split by Steinberg's theorem (see \cite[Theorem 1.9]{Steinberg}), there is a unique $G(K^\ur)$-conjugacy class of maximally split maximal tori in $G_{K^\ur}$ which are precisely the centralizers of maximal split tori (see \cite[\S20]{BorelLAG}). 

\begin{defn}\label{defn:R-smooth-G}
    We say that a reductive $K$-group $G$ is \emph{$R$-smooth} if one (equivalently any) maximally split maximal torus of $G_{K^\ur}$ is $R$-smooth. 
\end{defn} 

As the formation of N\'eron models commutes with unramified base change, Definition \ref{defn:R-smooth-G} agrees with that of Definition \ref{defn:R-smooth} when $G$ is a torus.

\subsubsection{Abelianization of parahoric group schemes}\label{sss:parahorics-and-abelianization}

Suppose that $G$ is a reductive group over $\mc{O}_K$, and consider the short exact sequence of reductive groups
\begin{equation*}
    1\to G^\der\xrightarrow{i}G\xrightarrow{f}G^\ab\to 1.
\end{equation*}
By Lemma \ref{lem:map-of-buildings} and Lemma \ref{lemma:buildings-ad-isom} we obtain maps of buildings
\begin{equation*}
    i_\ast\colon \ms{B}(G^\der,K)\isomto \ms{B}(G,K),\qquad f_\ast\colon \ms{B}(G,K)\to\ms{B}(G^\ab,K),
\end{equation*}
where the first map is a bijection and the second is surjective, and the maps are equivariant for $G^\der(K)\to G(K)$ and $G(K)\to G^\ab(K)$, respectively. Similar statements hold for the buildings over ${K^\ur}$.

Let us fix a point $y$ of $\ms{B}(G,K)$ and set $x=i_\ast^{-1}(y)$ and $z=f_\ast(z)$. By the functoriality of the Kottwitz homomorphism and the equivariant properties of these maps we see by \cite[Corollary 2.10.10]{KalethaPrasad} that $i$ and $f$ extend to maps $i\colon \mc{G}_x^\circ\to \mc{G}_y^\circ$ and $f\colon \mc{G}_y^\circ\to\mc{G}_z^\circ$.

\begin{prop}\label{prop:parahoric-adjoint}  If $G^\der$ is $R$-smooth, then we have a short exact sequence
\begin{equation*}
    1\to \mc{G}_x^\circ\to \mc{G}_y^\circ\to \mc{G}_z^\circ\to 1,
\end{equation*}
of group $\mc{O}_K$-schemes. 

\end{prop}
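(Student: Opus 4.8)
The plan is to check exactness of the sequence $1 \to \mc{G}_x^\circ \to \mc{G}_y^\circ \to \mc{G}_z^\circ \to 1$ of smooth affine $\mc{O}_K$-group schemes fppf-locally, or rather by a combination of generic-fiber information, special-fiber information, and a flatness/smoothness argument. First I would record what is already known: on the generic fiber the sequence is $1 \to G^\der \to G \to G^\ab \to 1$, which is exact by definition. The maps $i\colon \mc{G}_x^\circ \to \mc{G}_y^\circ$ and $f\colon \mc{G}_y^\circ \to \mc{G}_z^\circ$ exist by the discussion preceding the statement, via functoriality of the Kottwitz homomorphism and \cite[Corollary 2.10.10]{KalethaPrasad}. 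So the content is: (a) $i$ is a closed immersion identifying $\mc{G}_x^\circ$ with the scheme-theoretic kernel of $f$; and (b) $f$ is faithfully flat (equivalently, fppf-surjective, since both sides are smooth and of finite presentation).

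For (b), the key point is $R$-smoothness. The map $f\colon G \to G^\ab$ restricts to an isogeny on central tori, and after choosing a maximally split maximal torus $T \subseteq G_{K^\ur}$ one can compare $\mc{G}_y^\circ$ and $\mc{G}_z^\circ$ with the parahoric (= connected Néron) models of $T$ and of its image torus in $G^\ab_{K^\ur}$. I would argue that the hypothesis ``$G^\der$ is $R$-smooth'' guarantees that the Zariski closure of $G^\der$ inside $\mc{G}_y^\circ$ is $\mc{O}_K$-smooth, hence (being also flat and of finite type, with reductive — in particular smooth, connected — generic fiber) it is a smooth affine group scheme; this closure is a natural candidate for $\ker(f)$ and also for the image of $i$. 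Granting smoothness of this kernel $N$, the quotient fppf sheaf $\mc{G}_y^\circ / N$ is represented by a smooth affine $\mc{O}_K$-group scheme (using that $N$ is normal and smooth, e.g.\ by \cite[Théorème 4.C]{Anantharaman} or the standard representability of quotients by flat affine normal subgroups), and one gets a faithfully flat map $\mc{G}_y^\circ \to \mc{G}_y^\circ/N$ whose generic fiber is $G \to G^\ab$; then I would identify $\mc{G}_y^\circ / N$ with $\mc{G}_z^\circ$ by showing both satisfy the defining property \eqref{eq:parahoric-defn} for the point $z$, using the equivariance of $f_\ast$ on buildings and the commuting square for the Kottwitz homomorphism to match up the groups $\mc{O}$-integral points. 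Since a parahoric model is determined by its $\mc{O}_{K^\ur}$-points inside $G^\ab(K^\ur)$ (it is the unique smooth affine connected-fibered model with that integral-points description, cf.\ the characterization \eqref{eq:parahoric-defn} and \cite[Proposition 3]{HainesRapoport}), this identification gives surjectivity of $f$ and simultaneously pins down the kernel, yielding (a) as well: $N$ is smooth, connected-fibered, and its $\mc{O}_{K^\ur}$-points are $G^\der(K^\ur) \cap \mathrm{Stab}(x) \cap \ker\kappa$, which is exactly the defining description of $\mc{G}_x^\circ$.

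The main obstacle I expect is precisely the step where $R$-smoothness of $G^\der$ is converted into smoothness of the relevant kernel group scheme over $\mc{O}_K$: the Zariski closure of a reductive subgroup inside a smooth integral model need not be smooth in general (this is exactly the phenomenon $R$-smoothness is designed to control), and one must check that the closure taken inside the parahoric $\mc{G}_y^\circ$ agrees with, or is governed by, the closure inside a Néron model of an induced torus $\mathrm{Res}_{K_1/K}$ as in Definition \ref{defn:R-smooth}. This likely requires a careful comparison using that parahoric group schemes are built from valued root datum data (Bruhat--Tits), that the root subgroups of $\mc{G}_y^\circ$ and $\mc{G}_x^\circ$ coincide, so that the closure of $G^\der$ in $\mc{G}_y^\circ$ has the same underlying ``shape'' as $\mc{G}_x^\circ$ away from the central torus, and that on the central torus part $R$-smoothness is exactly the condition making the closure smooth. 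A secondary, more routine difficulty is checking that everything is compatible after unramified base change $K \to K^\ur$ (so one may assume $G$ quasi-split) and then descending, which is harmless since Néron models, parahorics, and the Kottwitz homomorphism all behave well under such base change.
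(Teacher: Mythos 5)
Your plan takes a genuinely different route from the paper, and it also misplaces where $R$-smoothness is actually needed. In the paper, the faithful flatness of $f\colon \mc{G}_y^\circ\to\mc{G}_z^\circ$ is proved \emph{without} any $R$-smoothness hypothesis: one first shows $\mc{G}_y^\circ(\mc{O}_{K^\ur})\to\mc{G}_z^\circ(\mc{O}_{K^\ur})$ is surjective by picking a maximally split maximal torus $T\subseteq G_{K^\ur}$, factoring through the map $\mc{T}(\mc{O}_{K^\ur})\to\mc{G}_y^\circ(\mc{O}_{K^\ur})$, and invoking \cite[Lemma 2.5.20]{KalethaPrasad} (which applies because $T\cap G^\der$ is a torus); surjectivity of the scheme map then follows from smoothness of $\mc{G}_z^\circ$, and flatness follows from the crit\`ere de platitude par fibres together with \cite[Theorem 3.34]{Milne2017}. $R$-smoothness of $G^\der$ enters \emph{only} afterward, to guarantee via \cite[Proposition 2.4.9]{KZ21} that $\mc{G}_x^\circ\to\mc{G}_y^\circ$ is a closed immersion, after which the kernel identification is checked on the generic fiber using flatness. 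So your premise that $R$-smoothness drives part (b) is off.

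Concerning your proposed route (form $N=$ closure of $G^\der$ in $\mc{G}_y^\circ$, quotient, identify $\mc{G}_y^\circ/N\cong\mc{G}_z^\circ$): this can be made to work, but it is both longer and has a genuine gap as stated. Identifying the abstract quotient with $\mc{G}_z^\circ$ requires establishing the equality $(\mc{G}_y^\circ/N)(\mc{O}_{K^\ur})=G^\ab(K^\ur)^0$, which amounts to exactly the surjectivity of $\mc{G}_y^\circ(\mc{O}_{K^\ur})\to G^\ab(K^\ur)^0$ that the paper proves by the torus argument. Functoriality of the Kottwitz homomorphism, which is all you invoke, only gives the containment of the image in $G^\ab(K^\ur)^0$, not equality; without the torus input (or something equivalent) the identification step fails. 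On the other hand, your worry about smoothness of the Zariski closure of $G^\der$ is exactly the right one, but the paper does not redo the valued-root-datum analysis you sketch; it cites \cite[Proposition 2.4.9]{KZ21}, which is precisely the closed-immersion statement you would be reproving. In summary: your plan is coherent but reorders the logic inefficiently, attributes the use of $R$-smoothness to the wrong step, and leaves the key $\mc{O}_{K^\ur}$-point surjectivity unaddressed, whereas the paper handles it head-on with the maximal-torus reduction.
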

\begin{proof} 

First observe that by the setup, we have a diagram
\begin{equation*}
    \mc{G}_x^\circ \to \mc{G}_y^\circ \to \mc{G}_z^\circ.
\end{equation*}
We first claim that $\mc{G}_y^\circ(\mc{O}_{K^\ur}) \to \mc{G}_z^\circ(\mc{O}_{K^\ur})$ is surjective. Let $T$ be a maximally split maximal torus in $G_{K^\ur}$ as in \cite[\S 8.3.6]{KalethaPrasad}. Then as in loc. cit. there exists a map $\mc{T}(\mc{O}_{K^\ur}) \to \mc{G}_y^\circ(\mc{O}_{K^\ur})$, thus it suffices to show that $\mc{T}(\mc{O}_{K^\ur}) \to \mc{G}_z^\circ(\mc{O}_{K^\ur})$ is surjective. That said, observe that $\ker(T \to G^\ab) = T\cap G^\der$ is a torus and therefore, by \cite[Lemma 2.5.20]{KalethaPrasad} the map $T(K^\ur)^0 \to G^\ab(K^\ur)^0$ is surjective, so the claim follows. 

We now prove that $f\colon \mc{G}_y^\circ\to \mc{G}_z^\circ$ is faithfully flat. To see that $f$ is surjective, we observe that $f$ is surjective over $K$, being the map $G\to G^\ab$. Thus, it suffices to show that $f_{k^\sep}$ contains every point of $\mc{G}_z^\circ(k^\mr{sep})$ in its image. As $\mc{G}_z^\circ$ is smooth, we know that $\mc{G}_z^\circ(\mc{O}_{K^\ur})\to \mc{G}_z^\circ(k^\mr{sep})$ is surjective (e.g., see \cite[Th\`eor\'eme 18.5.17]{EGA4-4}), and, since $\mc{G}_y^\circ(\mc{O}_{K^\ur})\to \mc{G}_z^\circ(\mc{O}_{K^\ur})$ is surjective, the claim follows. To show that $f$ is flat, we may, by the crit\'ere de platitude par fibres (see \cite[Th\`eor\'me 11.3.10]{EGA4-3}), show instead that $f_K$ and $f_k$ are flat. But $\mc{G}_z^\circ\to\Spec(\mc{O}_K)$ is smooth (and so $(\mc{G}_z^\circ)_K$ and $(\mc{G}_z^\circ)_k$ are reduced), so this follows from \cite[Theorem 3.34]{Milne2017}. 

Finally, since $G^\der$ is $R$-smooth, it follows from \cite[Proposition 2.4.9]{KZ21} that the map $\mc{G}_x^\circ\to\mc{G}_y^\circ$ is a closed immersion. Moreover, as $\mc{G}_y^\circ\to\mc{G}^\circ_z$ is flat, the group $\mc{O}_K$-scheme $\ker(\mc{G}_y^\circ\to\mc{G}_z^\circ)$ is flat. So, to show that the closed immersion $\mc{G}_x^\circ\to \mc{G}_y^\circ$ identifies $\mc{G}_x^\circ$ with $\ker(\mc{G}_y^\circ\to\mc{G}_z^\circ)$ it suffices to check this claim on the generic fiber,\footnote{Suppose that $X$ is a flat $\mc{O}_K$-scheme and $Z_i\subseteq X$ are $\mc{O}_K$-flat closed subschemes such that $(Z_1)_K=(Z_2)_K$, then $Z_1=Z_2$. Without loss of generality, $X=\Spec(R)$ for some $\mc{O}_K$-algebra $R$. Write $I_i$ for the ideal defining $Z_i$. Let $x$ belong to $I_1$. Then $(Z_1)_K=(Z_2)_K$, so $x$ belongs to $(I_2)_K$ and thus there exists some $m\geqslant 0$ such that $\pi^m x$ belongs to $I_2$, with $\pi$ a uniformizer of $K$. So, $\pi^m x$ is zero in $R/I_2$ which, as $R/I_2$ is $\mc{O}_K$-flat, implies that $x$ is zero in $R/I_2$, and so belongs to $I_2$. So, $I_1\subseteq I_2$. By symmetry, $I_2\subseteq I_1$.} where it is clear.
\end{proof}

In practice, if we have fixed a parahoric model $\mc{G}$ of $G$, then we write the unique parahoric model of $G^\ab$ by $\mc{G}^\ab$. Given Proposition \ref{prop:parahoric-adjoint} this notational abuse is not too severe. Similarly, we denote the associated parahoric model of $G^\mr{der}$ by $\mc{G}^\mr{der}$.

\subsubsection{Parahoric group schemes and central quotients}\label{sss:parahoric-central-quotient}

Let $f\colon G\to G'$ be a faithfully flat map such that $Z\defeq \ker(f)$ is a torus over $K$. Let $x$ be a point of $\ms{B}(G,K)$ and $x'$ its image under the map the map $f_\ast$ from Lemma \ref{lem:map-of-buildings}. Let us write $\mc{G}=\mc{G}_x^\circ$ and $\mc{G}'=\mc{G}_{x'}^\circ$

As $f_\ast$ is equivariant for the map $G({K^\ur})\to G'({K^\ur})$ and the Kottwitz homomorphism is functorial, we deduce that $f$ maps $\mc{G}(\mc{O}_{K^\ur})$ into $\mc{G}'(\mc{O}_{K^\ur})$. Thus, by \cite[Corollary 2.10.10]{KalethaPrasad} $f$ uniquely lifts to a map $\mc{G}\to\mc{G}'$. When $Z$ is $R$-smooth, one can say more.

\begin{prop}[{{\cite[Proposition 2.4.14]{KZ21}}}]\label{prop:KZ-R-smooth} Suppose that $Z$ is an $R$-smooth torus. Then, there exists a short exact sequence of group $\mc{O}_K$-schemes
\begin{equation*}
    1\to \mc{Z}\to \mc{G}\xrightarrow{f} \mc{G}'\to 1,
\end{equation*}
where $\mc{Z}$ is the Zariski closure of $Z$ in $\mc{G}$. Moreover, $\mc{Z}$ is a smooth group $\mc{O}_K$-scheme. 
\end{prop}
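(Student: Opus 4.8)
The plan is to run the argument of Proposition \ref{prop:parahoric-adjoint} \emph{mutatis mutandis}, with the central torus $Z$ and the quotient $f\colon G\to G'$ playing the roles of $G^\der$ and the abelianization. First I would check that $f\colon\mc{G}(\mc{O}_{K^\ur})\to\mc{G}'(\mc{O}_{K^\ur})$ is surjective. Fix a maximally split maximal torus $T\subseteq G_{K^\ur}$ as in \cite[\S8.3.6]{KalethaPrasad}; then $Z_{K^\ur}\subseteq T$ is central, $T'\defeq T/Z_{K^\ur}$ is maximally split maximal in $G'_{K^\ur}$, and by \emph{loc.\ cit.}\ there are maps $\mc{T}(\mc{O}_{K^\ur})\to\mc{G}(\mc{O}_{K^\ur})$ and $\mc{T}'(\mc{O}_{K^\ur})\to\mc{G}'(\mc{O}_{K^\ur})$ compatible with $f$. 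So it is enough to see that $\mc{T}(\mc{O}_{K^\ur})=T(K^\ur)^0\to T'(K^\ur)^0=\mc{T}'(\mc{O}_{K^\ur})$ is surjective, and since $\ker(T\to T')=Z_{K^\ur}$ is a torus this is \cite[Lemma 2.5.20]{KalethaPrasad}.

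Next I would prove $f\colon\mc{G}\to\mc{G}'$ is faithfully flat exactly as in Proposition \ref{prop:parahoric-adjoint}: it is surjective on the generic fiber (being $G\to G'$) and, using the previous paragraph together with the smoothness of $\mc{G}'$ (so that $\mc{G}'(\mc{O}_{K^\ur})\twoheadrightarrow\mc{G}'(k^\sep)$ by \cite[Th\'eor\`eme 18.5.17]{EGA4-4}), it is surjective on $k^\sep$-points of the special fiber, hence surjective; flatness then follows from the crit\`ere de platitude par fibres \cite[Th\'eor\`eme 11.3.10]{EGA4-3}, since $f_K=(G\to G')$ is flat and $f_k$ is a surjective homomorphism of algebraic groups over a field with reduced target, hence flat by \cite[Theorem 3.34]{Milne2017}. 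Consequently $\mc{N}\defeq\ker(f\colon\mc{G}\to\mc{G}')$ is an $\mc{O}_K$-flat closed subgroup scheme with generic fiber $Z$; as the Zariski closure $\mc{Z}$ of $Z$ in $\mc{G}$ is also $\mc{O}_K$-flat and closed with generic fiber $Z$, the elementary fact that two $\mc{O}_K$-flat closed subschemes of $\mc{G}$ with equal generic fiber coincide (the footnote to the proof of Proposition \ref{prop:parahoric-adjoint}) gives $\mc{Z}=\mc{N}$, so the sequence $1\to\mc{Z}\to\mc{G}\xrightarrow{f}\mc{G}'\to 1$ is exact.

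Finally, the smoothness of $\mc{Z}$ is where the $R$-smoothness hypothesis enters. Since $Z$ is central in $G$ it acts trivially on $\ms{B}(G,K^\ur)$ and $\kappa_G$ carries $Z(K^\ur)^0$ into $G(K^\ur)^0$, so the inclusion $Z\hookrightarrow G$ extends, via \cite[Corollary 2.10.10]{KalethaPrasad} as in \S\ref{sss:parahoric-central-quotient}, to a map $\mc{Z}^{\mathrm{par}}\to\mc{G}$ out of the (unique) parahoric model $\mc{Z}^{\mathrm{par}}$ of $Z$, which is smooth by definition of parahoricity. By \cite[Proposition 2.4.9]{KZ21}, the $R$-smoothness of $Z$ forces $\mc{Z}^{\mathrm{par}}\to\mc{G}$ to be a closed immersion, and its image — being $\mc{O}_K$-flat, closed, with generic fiber $Z$ — must equal $\mc{Z}$ by the same elementary fact as above; hence $\mc{Z}\cong\mc{Z}^{\mathrm{par}}$ is smooth. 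The one step with genuine content, everything else being the bookkeeping already carried out in Proposition \ref{prop:parahoric-adjoint}, is the input \cite[Proposition 2.4.9]{KZ21}: that $R$-smoothness of $Z$ is precisely what prevents the closure of $Z$ in $\mc{G}$ from acquiring nilpotents along the special fiber. This is the main obstacle, and it is exactly why the hypothesis is imposed.
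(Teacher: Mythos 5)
The paper does not prove this statement; Proposition~\ref{prop:KZ-R-smooth} is stated purely by citation to \cite[Proposition 2.4.14]{KZ21}, so there is no in-paper argument to compare against, and your proposal must be judged on its own merits.

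There is a genuine gap in your surjectivity step. You reduce surjectivity of $\mc{G}(\mc{O}_{K^\ur})\to\mc{G}'(\mc{O}_{K^\ur})$ to surjectivity of $\mc{T}(\mc{O}_{K^\ur})\to\mc{T}'(\mc{O}_{K^\ur})$, presenting this as the same bookkeeping as in Proposition~\ref{prop:parahoric-adjoint}. It is not: that reduction works there only because the target $G^\ab$ is a \emph{torus}, so that $\mc{G}_z^\circ(\mc{O}_{K^\ur})=G^\ab(K^\ur)^0=\mc{T}(\mc{O}_{K^\ur})$-image is already everything. Here $G'=G/Z$ is a general reductive group and $\mc{G}'(\mc{O}_{K^\ur})\supsetneq\mc{T}'(\mc{O}_{K^\ur})$; showing the torus piece lifts says nothing about the affine root groups. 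To close the gap you would need either (a) the schematic Iwahori/Bruhat decomposition for parahoric group schemes over $\mc{O}_{K^\ur}$ — that $\mc{G}'(\mc{O}_{K^\ur})$ is generated by $\mc{T}'(\mc{O}_{K^\ur})$ and the affine root group filtrations — combined with the observation that $f$ restricts to isomorphisms on root groups because $Z$ is central; or, more cleanly, (b) lift $g'\in\mc{G}'(\mc{O}_{K^\ur})$ along the surjection $G(K^\ur)\to G'(K^\ur)$ (which holds since $\mr{cd}(K^\ur)\le 1$ and $Z$ is connected), note that the lift automatically stabilizes $x$ since the buildings are equivariantly identified and $Z$ acts trivially, and then correct by an element of $Z(K^\ur)$ to force $\kappa_G$ to vanish, using right-exactness of $\pi_1(Z)_I\to\pi_1(G)_I\to\pi_1(G')_I$ and surjectivity of $\kappa_Z$. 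Since your Steps 2 and 3 (flatness of $f$, identification of $\mc{Z}$ with $\ker f$) feed off this surjectivity, the gap propagates. A secondary concern: you apply \cite[Proposition 2.4.9]{KZ21} to the inclusion $Z\hookrightarrow G$, but that map is not an ad-isomorphism (unless $G$ is a torus), and the paper's remarks after Proposition~\ref{prop:ad-isomorphism-parahoric} make clear that neither the paper's wrapper nor the literal statement of \cite[Proposition 2.4.9]{KZ21} was formulated for this situation; you should check which hypothesis of that proposition the closed embedding $Z\hookrightarrow G$ actually satisfies rather than inferring applicability from the pattern of the other proofs.
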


\subsubsection{Parahoric group schemes and ad-isomorphisms}

Suppose that $i\colon G\to H$ is an injective ad-isomorphism of reductive groups over $K$.  Suppose that $\mc{H}$ is a parahoric model of $H$, and $y$ is a point of $\ms{B}(H,k)$ such that $\mc{H}$ is isomorphic to $\mc{H}_y^\circ$. By Lemma \ref{lemma:buildings-ad-isom} there is a unique point $x$ of $\ms{B}(G,K)$ such that $y=i_\ast(x)$. 

The following basic proposition provides a recognition principle for $\mc{G}_x^\circ$.

    \begin{prop}\label{prop:ad-isomorphism-parahoric}
        A model $\mc{G}$ of $G$ is isomorphic to $\mc{G}_x^\circ$ if it is a smooth affine group $\mc{O}_K$-scheme with connected fibers and
        \begin{equation}\label{eq:ad-isomorphism-parahoric}
            \mathcal{G}(\cO_{K^\ur}) = \mathcal{H}(\cO_{K^\ur}) \cap G({K^\ur}). 
        \end{equation}
        Moreover, $i$ extends to a map of $\cO_k$-group schemes $\mathcal{G} \to \mathcal{H}$ which is a closed embedding if $G$ is $R$-smooth.
        
    \end{prop}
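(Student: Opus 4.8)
The plan is to prove the statement in three steps: extend $i$ to the parahoric $\mc{G}_x^\circ$, deduce the recognition principle from a boundedness argument, and then obtain the closed‑embedding assertion from $R$‑smoothness. For the setup, since $i$ is \emph{injective} the central isogeny $G^\der\to H^\der$ of Proposition~\ref{prop:ad-iso-equiv}(3) is injective, hence an isomorphism; in particular $i\colon G\hookrightarrow H$ is a closed immersion whose cokernel $Q\defeq H/G$ is a torus. By Lemma~\ref{lemma:buildings-ad-isom}, $i_\ast\colon\ms{B}(G,K^\ur)\isomto\ms{B}(H,K^\ur)$ is an equivariant bijection carrying $x$ to $y$. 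Combining the equivariance of $i_\ast$ with the functoriality $\kappa_H\circ i=\pi_1(i)_I\circ\kappa_G$, one checks that $i$ maps $\mc{G}_x^\circ(\cO_{K^\ur})=G(K^\ur)^0\cap\mr{Stab}_{G(K^\ur)}(x)$ into $\mc{H}(\cO_{K^\ur})=H(K^\ur)^0\cap\mr{Stab}_{H(K^\ur)}(y)$, so by \cite[Corollary~2.10.10]{KalethaPrasad} the morphism $i$ extends uniquely to a homomorphism $i\colon\mc{G}_x^\circ\to\mc{H}$ of $\cO_K$-group schemes, which already gives the first half of the ``moreover''.

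For the recognition principle, suppose $\mc{G}$ is a smooth affine $\cO_K$-group scheme with connected fibres, with generic fibre $G$, for which \eqref{eq:ad-isomorphism-parahoric} holds. The key claim is that $\mc{G}(\cO_{K^\ur})=\mc{G}_x^\circ(\cO_{K^\ur})$ inside $G(K^\ur)$. The inclusion $\mc{G}_x^\circ(\cO_{K^\ur})\subseteq\mc{G}(\cO_{K^\ur})$ is exactly the containment $i(\mc{G}_x^\circ(\cO_{K^\ur}))\subseteq\mc{H}(\cO_{K^\ur})$ just established. For the reverse inclusion, note that $\mc{G}(\cO_{K^\ur})=\mc{H}(\cO_{K^\ur})\cap G(K^\ur)$ is bounded in $G(K^\ur)$ (being contained in the bounded subgroup $\mc{H}(\cO_{K^\ur})$ along the closed immersion $G(K^\ur)\hookrightarrow H(K^\ur)$), that $\mc{G}(\cO_{K^\ur})\subseteq\mr{Stab}_{H(K^\ur)}(y)\cap G(K^\ur)=\mr{Stab}_{G(K^\ur)}(x)$ since $i_\ast(x)=y$ and $i$ is injective, and that $\mc{G}(\cO_{K^\ur})\subseteq\ker(\kappa_G)=G(K^\ur)^0$ because $\mc{G}$ is a connected smooth affine $\cO_K$-model, the Kottwitz homomorphism being trivial on the $\cO_{K^\ur}$-points of any such model (cf.\ \cite[\S11.5--11.6]{KalethaPrasad}). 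Hence $\mc{G}(\cO_{K^\ur})\subseteq G(K^\ur)^0\cap\mr{Stab}_{G(K^\ur)}(x)=\mc{G}_x^\circ(\cO_{K^\ur})$. With the two groups of $\cO_{K^\ur}$-points now equal, applying \cite[Corollary~2.10.10]{KalethaPrasad} to $\id_G$ in both directions yields mutually inverse isomorphisms $\mc{G}\isomto\mc{G}_x^\circ$. It is worth stressing that this argument never uses the a priori unclear identity $\mc{H}(\cO_{K^\ur})\cap G(K^\ur)=\mc{G}_x^\circ(\cO_{K^\ur})$ — the connectedness of the special fibre of $\mc{G}$ is precisely what forces the inclusion into $G(K^\ur)^0$.

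For the closed embedding, it remains to show $i\colon\mc{G}_x^\circ\to\mc{H}$ is a closed immersion when $G$ is $R$-smooth. Generically $i$ is the closed immersion $G\hookrightarrow H$, so by $\cO_K$-flatness of $\mc{G}_x^\circ$ it factors through the schematic closure $\mc{G}'$ of $G$ in $\mc{H}$ — an $\cO_K$-flat closed subgroup scheme of $\mc{H}$ with generic fibre $G$ and $\mc{G}'(\cO_{K^\ur})=\mc{H}(\cO_{K^\ur})\cap G(K^\ur)$. One then uses the $R$-smoothness of $G$ — reducing, via Proposition~\ref{prop:parahoric-adjoint} applied to $G^\der\subseteq G$ and to $G^\der=H^\der\subseteq H$ together with the torus case $G^\ab\hookrightarrow H^\ab$, to \cite[Proposition~2.4.9]{KZ21} — to conclude that $\mc{G}'$ is smooth over $\cO_K$ with connected fibres; the recognition principle applied to $\mc{G}'$ then identifies it with $\mc{G}_x^\circ$, so $i$ is the closed immersion $\mc{G}'\hookrightarrow\mc{H}$.

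The main obstacle I expect is this last step: showing that $R$-smoothness forces the schematic closure of $G$ in $\mc{H}$ to be smooth \emph{with connected special fibre}, and not merely a smooth $\cO_K$-group scheme containing $\mc{G}_x^\circ$ with finite index. Without an $R$-smoothness hypothesis the closure can fail to be smooth, and even when it is smooth its special fibre can be disconnected — as happens already for the norm-one torus of a ramified extension, a prototypical non-$R$-smooth group. This is exactly the phenomenon the $R$-smoothness hypothesis and the results of \cite{KZ21} on $R$-smooth groups are designed to control, and carrying out the reduction carefully is where the real work lies.
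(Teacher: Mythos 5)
Your proposal is correct and follows essentially the same route as the paper's proof. The recognition step is the same in substance: show $\mc{G}(\cO_{K^\ur}) = G(K^\ur)^0 \cap \mr{Stab}_{G(K^\ur)}(x)$ by combining the equivariant bijection $i_\ast$ (which gives $\mr{Stab}_{G(K^\ur)}(x) = \mr{Stab}_{H(K^\ur)}(y)\cap G(K^\ur)$), the triviality of the Kottwitz homomorphism on $\cO_{K^\ur}$-points of a connected smooth affine model (the paper cites \cite[Proposition 8.3.15]{KalethaPrasad} for this), and functoriality of $\kappa$ for the reverse containment — then conclude via \'etoffe/\'etale rigidity (\cite[Corollary 2.10.10]{KalethaPrasad}) that $\mc{G}\isomto\mc{G}_x^\circ$. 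The extension of $i$ to $\mc{G}\to\mc{H}$ is identical: from the identity of $\cO_{K^\ur}$-points and the same corollary.

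The only place you genuinely diverge is the closed-embedding step, where the paper simply appeals to \cite[Proposition 2.4.9]{KZ21} and remarks that its proof goes through verbatim under the ad-isomorphism hypothesis, whereas you route through the schematic closure $\mc{G}'$ of $G$ in $\mc{H}$, show $\mc{G}'(\cO_{K^\ur})=\mc{H}(\cO_{K^\ur})\cap G(K^\ur)$, deduce smoothness with connected fibres from $R$-smoothness, and invoke the recognition principle to identify $\mc{G}'$ with $\mc{G}_x^\circ$. This is a valid and somewhat more self-contained way to see the claim, and your discussion of the norm-one-torus counterexample is a useful illustration of why $R$-smoothness is needed. But the specific reduction you sketch "via Proposition~\ref{prop:parahoric-adjoint} applied to $G^\der\subseteq G$ and $H^\der\subseteq H$ together with the torus case $G^\ab\hookrightarrow H^\ab$" is imprecise: you would need to make explicit why the smoothness/connectedness of the closures of $G^\der$ and $G^\ab$ in $\mc{H}^\der$ and $\mc{H}^\ab$ assembles into smoothness and connectedness of $\mc{G}'$, and the paper sidesteps this by invoking the proof of \cite[Proposition 2.4.9]{KZ21} directly. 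Since you ultimately cite the same proposition, the difference is cosmetic rather than substantive.
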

    \begin{proof}
        By the definition of parahoric group schemes, and the definition of $\mc{G}_x^\circ$, it suffices to show that
        \begin{align}\label{eq:HR-for-G}
            \mathcal{G}(\cO_{K^\ur}) = G({K^\ur})^0 \cap \mr{Stab}_{G({K^\ur})}(x).
        \end{align}
        Since $i_\ast$ defines a  bijection $\mathscr{B}(G,{K^\ur}) \isomto \mathscr{B}(H, {K^\ur})$
        which is equivariant for the map $G({K^\ur}) \to H({K^\ur})$, we deduce that
        \begin{align}\label{eq:fixer-intersection}
            \mr{Stab}_{G({K^\ur})}(x) = \mr{Stab}_{H({K^\ur})}(y) \cap G({K^\ur}).
        \end{align}
        The inclusion $\mathcal{G}(\cO_{K^\ur}) \subseteq \mr{Stab}_{G({K^\ur})}(x)$ then follows immediately from \eqref{eq:ad-isomorphism-parahoric}. Additionally, as $\mathcal{G}$ is a smooth affine group $\mc{O}_K$-scheme with connected fibers, it follows from \cite[Proposition 8.3.15]{KalethaPrasad} that $\mathcal{G}(\cO_{K^\ur}) \subseteq G({K^\ur})^0$. Thus, $\mc{G}(\mc{O}_{K^\ur})\subseteq G({K^\ur})^0\cap\mr{Stab}_{G({K^\ur})}(x)$.
        
        On the other hand, if $g$ belongs to $G({K^\ur})^0 \cap \mr{Stab}_{G({K^\ur})}(x)$, then $i(g)$ belongs to $\mathcal{H}(\cO_{K^\ur})$ by functoriality of the Kottwitz homomorphism along with the identity (\ref{eq:fixer-intersection}). Hence \eqref{eq:ad-isomorphism-parahoric} implies that $g$ belongs to  $\mathcal{G}(\cO_{K^\ur})$.

        Finally, the fact that $i$ extends to a morphism $i\colon\mc{G}\to\mc{H}$ follows from \cite[Corollary 2.10.10]{KalethaPrasad} by \eqref{eq:ad-isomorphism-parahoric}. That $i$ is a closed embedding if $G$ is $R$-smooth follows from \cite[Proposition 2.4.9]{KZ21}, whose proof works under the assumption that $i$ is an ad-isomorphism.
    \end{proof}

\subsection{Fiber products of groups} In this subsection we aim to collect some facts concerning fiber products of groups, in various contexts, that will be useful below.

\subsubsection{Smoothness and connectedness of fiber products of group schemes} Fix a connected scheme $S$ and consider a Cartesian diagram of group $S$-schemes
\begin{equation*}
    \begin{tikzcd}
	{\mathcal{D}} & {\mathcal{A}} \\
	{\mathcal{C}} & {\mathcal{B}.}
	\arrow["f", from=1-2, to=2-2]
	\arrow["g"', from=2-1, to=2-2]
	\arrow["q"', from=1-1, to=2-1]
	\arrow["p", from=1-1, to=1-2]
	\arrow["\lrcorner"{anchor=center, pos=0.125}, draw=none, from=1-1, to=2-2]
\end{tikzcd}
\end{equation*}
Set $\mc{K}$ to be the group $S$-scheme $\ker(f)$.

\begin{prop}\label{prop:fiber-product-properties} Suppose that $f$ is faithfully flat and quasi-compact, and that $\mc{K}$ and $\mc{C}$ are smooth group $S$-schemes with connected fibers. Then, $\mc{D}$ is a smooth group $S$-scheme with connected fibers.
\end{prop}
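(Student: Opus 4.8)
The claim is that in the Cartesian square, if $f$ is faithfully flat quasi-compact and both $\mc{K}=\ker(f)$ and $\mc{C}$ are smooth with connected fibers, then $\mc{D}=\mc{C}\times_{\mc{B}}\mc{A}$ is smooth with connected fibers. The plan is to handle smoothness and connectedness of fibers separately, and in both cases exploit the fact that the projection $q\colon\mc{D}\to\mc{C}$ is a base change of $f\colon\mc{A}\to\mc{B}$ along $g\colon\mc{C}\to\mc{B}$.

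\textbf{Smoothness.} First I would note that, since $f$ is faithfully flat and quasi-compact, so is its base change $q\colon\mc{D}\to\mc{C}$. I do not want to assume $\mc{A}$ or $\mc{B}$ is smooth, so instead I would argue fppf-locally. The key point is that $f$ faithfully flat quasi-compact with smooth kernel $\mc{K}$ forces $f$ itself to be smooth: indeed, a faithfully flat quasi-compact homomorphism of group schemes over $S$ realizes $\mc{A}$ as an $\mc{K}$-torsor over $\mc{B}$ for the fppf topology (the map $\mc{A}\times_S\mc{K}\to\mc{A}\times_{\mc{B}}\mc{A}$, $(a,x)\mapsto(a,ax)$, is an isomorphism), so $f$ becomes isomorphic to the projection $\mc{K}_{\mc{A}}\to\mc{A}$ after the faithfully flat base change $\mc{A}\to\mc{B}$; since $\mc{K}\to S$ is smooth, this projection is smooth, and smoothness descends along fppf covers, so $f$ is smooth. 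Then $q\colon\mc{D}\to\mc{C}$, being a base change of the smooth morphism $f$, is smooth; composing with the smooth structure morphism $\mc{C}\to S$ shows $\mc{D}\to S$ is smooth. (Alternatively, one can bypass torsors: $f$ faithfully flat quasi-compact implies the fibers $\mc{A}_s\to\mc{B}_s$ are faithfully flat homomorphisms of group schemes over a field with smooth kernel $\mc{K}_s$, hence smooth, so $f$ is smooth by the fibral criterion for smoothness combined with flatness of $f$.)

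\textbf{Connectedness of fibers.} Fix a geometric point $\bar s$ of $S$; I must show $\mc{D}_{\bar s}=\mc{C}_{\bar s}\times_{\mc{B}_{\bar s}}\mc{A}_{\bar s}$ is connected. Work over the algebraically closed field $\kappa=\kappa(\bar s)$ and drop subscripts. The projection $q\colon\mc{D}\to\mc{C}$ is faithfully flat (base change of $f$) with all fibers isomorphic, as schemes, to $\mc{K}$ — more precisely, each fiber $q^{-1}(c)$ over a $\kappa$-point $c\in\mc{C}(\kappa)$ is a coset of $\mc{K}$ inside $\mc{A}_{g(c)}$, hence isomorphic to $\mc{K}_{\bar s}$, which is connected by hypothesis. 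Now $q$ is a smooth surjective morphism with connected base $\mc{C}_{\bar s}$ (smooth group scheme over an algebraically closed field, with connected fibers — here the fiber over $\bar s$ is connected by hypothesis, so $\mc{C}_{\bar s}$ is a connected group) and connected fibers; such a morphism has connected total space. Concretely: $q$ is open (smooth, hence flat and locally of finite presentation) and closed would require properness which we lack, so instead I use the standard fact that if $h\colon Y\to Z$ is a faithfully flat, finitely presented, open morphism with $Z$ connected and all fibers connected and non-empty, then $Y$ is connected — any clopen $U\subseteq Y$ has open image $h(U)$ and open image $h(Y\setminus U)$; these cover $Z$, and if $z$ lies in both then $h^{-1}(z)$ meets both $U$ and $Y\setminus U$, contradicting connectedness of the fiber; hence $h(U)$ and $h(Y\setminus U)$ are disjoint, so by connectedness of $Z$ one is empty, giving $U=Y$ or $U=\varnothing$. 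Applying this to $q$ over $\mc{C}_{\bar s}$ gives that $\mc{D}_{\bar s}$ is connected.

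\textbf{Main obstacle.} The substantive point — and the one I would be most careful about — is establishing that $f$ is smooth from the hypotheses, since we are given nothing about $\mc{A}$ or $\mc{B}$ individually; the torsor argument (or the fibral-smoothness argument over each $\mc{B}_s$) is what makes this work, and everything downstream is formal descent/base-change bookkeeping plus the elementary topological lemma on connectedness of total spaces of fibrations. One should also double-check the quasi-compactness hypothesis is used precisely where fppf descent of smoothness and the open-map argument require finite presentation, which is why it is listed among the assumptions.
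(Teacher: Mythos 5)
Your proof is correct and takes essentially the same route as the paper: establish smoothness of $f$ via the isomorphism $\mc{A}\times_S\mc{K}\isomto\mc{A}\times_{\mc{B}}\mc{A}$ and fppf descent, then deduce connectedness of the fibers of $\mc{D}\to S$ from the open-surjection-with-connected-fibers topological lemma applied to $q_s\colon\mc{D}_s\to\mc{C}_s$. The only cosmetic difference is that you work over a geometric point and phrase the fiber connectedness in terms of cosets of $\mc{K}$ over $\kappa$-points, whereas the paper phrases it as $q_s$ having geometrically connected fibers because the fibers of $f$ are translates of geometric fibers of $\mc{K}\to S$.
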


One may replace all instances of ``connected'' in the assumptions and conclusion of the above statement by ``geometrically connected'', ``irreducible'', or ``geometrically irreducible'' by standard theory concerning algebraic groups (see \cite[Summary 1.36]{Milne2017}). We use this observation freely below. 
\begin{proof}[Proof of Proposition \ref{prop:fiber-product-properties}] The morphism $\mc{A}\times_\mc{B}\mc{A}\to \mc{K}\times_S \mc{A}$ given by $(g,h)\mapsto (gh^{-1},h)$ is an isomorphism. But, as $\mc{K}\to S$ is smooth, thus so is $\mc{K}\times_S \mc{A}\to\mc{A}$, and thus so is $\mc{A}\times_\mc{B}\mc{A}\to \mc{A}$. As $f$ is faitfully flat and quasi-compact, we deduce by fpqc descent for smoothness (e.g., see \stacks{0429}) that $f$ is smooth. By stability of smoothness under base change we deduce that $q\colon \mathcal{D}\to \mc{C}$ is smooth, and as $\mc{C}\to S$ is smooth thus so is the composition $\mc{D}\to S$.

To prove that $\mathcal{D}\to S$ has connected fibers, let $s$ be an element of $S$ and consider the map $q_s\colon \mc{D}_s\to \mc{C}_s$. By assumption we have that $\mc{C}_s$ is connected and $q_s\colon \mc{D}_s\to \mc{C}_s$ is surjective. Moreover, by the argument in the previous paragraph we have that $q_s\colon \mc{D}_s\to \mc{C}_s$ is smooth, and thus open. On the other hand, as $\mc{K}\to S$ has geometrically connected fibers, the same holds true for $f$ as these fibers are translations of geometric fibers of $\mc{K}\to S$. Thus, the map $q_s$ has geometrically connected fibers (see \stacks{055E}). Therefore we deduce that $\mc{D}_s$ is connected as desired from the simple fact that if $f\colon Y\to X$ be an open surjective map of topological spaces with connected fibers and $X$ is connected, then $Y$ is connected.
\end{proof}

\subsubsection{Some fiber products of parahoric group schemes} Suppose now that, as in \S\ref{ss:BT-theory}, $K$ is a discretely valued field with perfect residue field. Suppose further that we have a Cartesian diagram of group $\mc{O}_K$-schemes

\begin{equation*}
    \begin{tikzcd}
	{\mathcal{D}} & {\mathcal{A}} \\
	{\mathcal{C}} & {\mathcal{B},}
	\arrow["f", from=1-2, to=2-2]
	\arrow["g"', from=2-1, to=2-2]
	\arrow["q"', from=1-1, to=2-1]
	\arrow["p", from=1-1, to=1-2]
	\arrow["\lrcorner"{anchor=center, pos=0.125}, draw=none, from=1-1, to=2-2]
\end{tikzcd}
\end{equation*}
where $\mc{A}$, $\mc{B}$, and $\mc{C}$ are parahoric group $\mc{O}_K$-schemes. For simplicity we denote the generic fiber of $\mc{A}$, $\mc{B}$, $\mc{C}$, and $\mc{D}$, by $A,B,C,$ and $D$, respectively.

\begin{prop}\label{prop:fiber-product-parahoric} Suppose that $f$ is faithfully flat and quasi-compact, both $B$ and $C$ are tori, and $\ker(f)$ is a smooth group $\mc{O}_K$-scheme with connected fibers. Then, $\mc{D}$ is a parahoric group $\mc{O}_K$-scheme. Moreover, if $\mc{A}=\mc{G}_x^\circ$, then $\mc{D}=\mc{G}^\circ_{(x,\ast)}$ under the isomorphism
\begin{equation*}
    \ms{B}(D,K)\to \ms{B}(A\times_{\Spec(K)}C,K)\simeq \ms{B}(A,K)\times\{\ast\}
\end{equation*}
from Proposition \ref{lemma:buildings-ad-isom}.
\end{prop}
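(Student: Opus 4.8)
The plan is to deduce the statement from Proposition~\ref{prop:ad-isomorphism-parahoric}, applied to the closed immersion $i\colon\mc{D}\hookrightarrow\mc{A}\times_{\mc{O}_K}\mc{C}$ obtained by base-changing the (closed) diagonal of $\mc{B}$ along $(f,g)$. Since every parahoric group $\mc{O}_K$-scheme is of the form $\mc{G}_x^\circ$, we may assume from the start that $\mc{A}=\mc{G}_x^\circ$ for some $x\in\ms{B}(A,K)$, and so it suffices to prove the ``moreover'' clause.

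First I would record that $\mc{D}$ is a smooth affine group $\mc{O}_K$-scheme with connected fibers: it is affine as a closed subscheme of the affine scheme $\mc{A}\times_{\mc{O}_K}\mc{C}$, and it is smooth with connected fibers by Proposition~\ref{prop:fiber-product-properties} applied to the given Cartesian square (with $S=\Spec(\mc{O}_K)$), using that $\ker(f)$ is smooth with connected fibers by hypothesis and that the parahoric $\mc{C}$ is so by definition. In particular $D=\mc{D}_K$ is a smooth connected affine $K$-group with a closed immersion $i_K\colon D\hookrightarrow A\times_{\Spec(K)}C$, and I would next show $D$ is reductive with $D^{\der}=A^{\der}$. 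Since $B$ is a torus, the semisimple group $A^{\der}$ maps trivially to $B$ under $f$, so $A^{\der}\times\{e\}$ lies in $D=A\times_BC$; identifying $A^{\der}$ with this subgroup, $A^{\der}\subseteq D$, and as $A^{\der}$ is perfect, $A^{\der}=[A^{\der},A^{\der}]\subseteq[D,D]=D^{\der}$, while $D^{\der}\subseteq(A\times C)^{\der}=A^{\der}$ is automatic; hence $D^{\der}=A^{\der}$. Then $D/A^{\der}$ is a smooth connected group embedding in the torus $(A\times C)/A^{\der}=A^{\ab}\times C$, hence a torus, and it follows that $D$ is reductive, since its geometric unipotent radical maps trivially to $D/A^{\der}$, thus lies in $A^{\der}_{\ov{K}}$, where, being smooth connected unipotent and normal, it is trivial.

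Now $i_K$ is an injective ad-isomorphism: it is a closed immersion inducing an isomorphism $D^{\der}\isomto(A\times C)^{\der}$, in particular a central isogeny, so Proposition~\ref{prop:ad-iso-equiv} applies. I would also check that $\mc{H}\defeq\mc{A}\times_{\mc{O}_K}\mc{C}$ is a parahoric model of $A\times C$: it is smooth affine with connected fibers, and the defining identity~\eqref{eq:parahoric-defn} holds at the point $y\defeq(x,\ast)$ of $\ms{B}(A\times C,K^{\ur})=\ms{B}(A,K^{\ur})\times\ms{B}(C,K^{\ur})$, because buildings, Kottwitz homomorphisms, stabilizers, and groups of $\mc{O}_{K^{\ur}}$-points all decompose as products, with $\ms{B}(C,K^{\ur})=\{\ast\}$ and $\mc{C}(\mc{O}_{K^{\ur}})=C(K^{\ur})^0$; thus $\mc{H}=(A\times C)^\circ_y$. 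By Lemma~\ref{lemma:buildings-ad-isom} there is a unique $x'\in\ms{B}(D,K)$ with $i_\ast(x')=y$.

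Finally I would apply Proposition~\ref{prop:ad-isomorphism-parahoric} with $G=D$, $H=A\times_{\Spec(K)}C$, this $\mc{H}$, and this $y$. Its two hypotheses hold: $\mc{D}$ is smooth affine with connected fibers, and $\mc{D}(\mc{O}_{K^{\ur}})=\mc{H}(\mc{O}_{K^{\ur}})\cap D(K^{\ur})$, since both sides equal $\{(a,c)\in\mc{A}(\mc{O}_{K^{\ur}})\times\mc{C}(\mc{O}_{K^{\ur}}):f(a)=g(c)\}$ — the only point to note being that $\mc{B}(\mc{O}_{K^{\ur}})\hookrightarrow B(K^{\ur})$ is injective, so the equality $f(a)=g(c)$ can be tested either over $\mc{O}_{K^{\ur}}$ or over $K^{\ur}$. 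The proposition then produces an isomorphism between $\mc{D}$ and the parahoric group scheme of $D$ at $x'$; transported along the bijection $\ms{B}(D,K)\isomto\ms{B}(A\times_{\Spec(K)}C,K)\simeq\ms{B}(A,K)\times\{\ast\}$ of Lemma~\ref{lemma:buildings-ad-isom}, this is exactly the asserted identity $\mc{D}=\mc{G}^\circ_{(x,\ast)}$. I expect the main obstacle to be the group theory of the middle steps — checking that $D$ is reductive with $D^{\der}=A^{\der}$ (hence $i_K$ an ad-isomorphism) and that $\mc{H}$ is parahoric at $(x,\ast)$; once these are in hand, everything else is formal, coming from Propositions~\ref{prop:fiber-product-properties} and~\ref{prop:ad-isomorphism-parahoric}.
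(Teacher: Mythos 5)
Your proposal is correct and follows essentially the same route as the paper: show $\mc{D}$ is smooth affine with connected fibers via Proposition~\ref{prop:fiber-product-properties}, observe that $D\hookrightarrow A\times_{K}C$ is an ad-isomorphism of reductive groups, check the identity of $\mc{O}_{K^\ur}$-points, and conclude with the recognition criterion of Proposition~\ref{prop:ad-isomorphism-parahoric}. The only noticeable divergence is in the reductivity of $D$: the paper obtains it in one line from the short exact sequence $1\to\mc{D}\to\mc{A}\times\mc{C}\to\mc{B}\to 1$ (so $D$ is normal in the reductive group $A\times C$, and normal subgroups of reductive groups are reductive), whereas you reprove it by hand by identifying $D^{\der}=A^{\der}$ and controlling the unipotent radical — both are fine, and your explicit verification that $\mc{A}\times_{\mc{O}_K}\mc{C}$ is parahoric at $(x,\ast)$ and that the $\mc{O}_{K^\ur}$-point identity really holds usefully fills in details the paper declares ``evident.''
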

\begin{proof} By Proposition \ref{prop:fiber-product-properties}, we know that $\mc{D}$ is a smooth group $\mc{O}_K$-scheme with connected fibers. Moreover, observe that we have a natural short exact sequence of group $\mc{O}_K$-schemes
\begin{equation*}
    1\to \mc{D}\to \mc{A}\times_{\Spec(\mc{O}_K)}\mc{C}\to \mc{B}\to 1,
\end{equation*}
where the map $\mc{A}\times_{\Spec(\mc{O}_K)}\mc{C}\to \mc{B}$ is given by sending $(a,c)$ to $f(a)g(c)^{-1}$. This implies that $D$ is a normal subgroup of the reductive group $A\times_{\Spec(K)}C$ and so reductive (see \cite[Corollary 21.53]{Milne2017}).

On the other hand, this exact sequence also implies that ${D}\to {A}\times_{\Spec(K)}{C}$ induces an isomorphism on adjoint subgroups as ${B}$ is abelian. Moreover, observe that evidently
\begin{equation*}
    \mc{D}(\mc{O}_{K^\ur})=(\mc{A}(\mc{O}_{K^\ur})\times \mc{C}(\mc{O}_{K^\ur}))\cap D(K^\ur).
\end{equation*}
The claim then follows directly from Proposition \ref{prop:ad-isomorphism-parahoric}. 
\end{proof}

\subsubsection{Torsors for fiber products of groups}
We next are interested in understanding the relationship between torsors for a fiber product of groups, and torsors for their constituent groups. The correct generality for this discussion is that of topoi. We follow the notation and conventions concerning torsors and topoi as found in \cite[\S A.1]{ImaiKatoYoucis}.

Let us fix a topos $\mathscr{T}$ with final object $\ast$, and a Cartesian diagram of group objects
\begin{equation*}
    \begin{tikzcd}
	{\mathcal{D}} & {\mathcal{A}} \\
	{\mathcal{C}} & {\mathcal{B}.}
	\arrow["f", from=1-2, to=2-2]
	\arrow["g"', from=2-1, to=2-2]
	\arrow["q"', from=1-1, to=2-1]
	\arrow["p", from=1-1, to=1-2]
	\arrow["\lrcorner"{anchor=center, pos=0.125}, draw=none, from=1-1, to=2-2]
\end{tikzcd}
\end{equation*}
Consider the $2$-fiber product of groupoids\begin{equation*}
    \mathbf{Tors}(\mc{A})\times_{f_\ast,\mathbf{Tors}(\mc{B}),g_\ast}\mathbf{Tors}(\mc{C}),
\end{equation*}
(e.g., see \stacks{02X9}). Observe that there is a natural equivalence of functors
\begin{equation*}
    \theta_\mr{nat}\colon f_\ast\circ p_\ast\simeq (f\circ p)_\ast=(g\circ q)_\ast\simeq g_\ast\circ q_\ast,
\end{equation*}
and thus the triple $(p_\ast,q_\ast,\theta_\mr{nat})$ determines an object of the $2$-fiber product. Thus, 
\begin{equation}\label{eq:torsors-comparison}
    \Psi\colon \mathbf{Tors}_{\mc{D}}(\ms{T})\to \mathbf{Tors}_{\mc{A}}(\ms{T})\times_{f_\ast,\mathbf{Tors}_{\mc{B}}(\ms{T}),g_\ast}\mathbf{Tors}_{\mc{C}}(\ms{T}),
\end{equation}
given by $\Psi(\mc{P})=(p_\ast\mc{P},q_\ast\mc{P},\theta_\mr{nat})$ is a morphism of groupoids.

Conversely, suppose that $(\mc{Q}_\mc{A},\mc{Q}_\mc{C},\theta)$ is an object of $\mathbf{Tors}_{\mc{A}}(\ms{T})\times_{f_\ast,\mathbf{Tors}_{\mc{B}}(\ms{T}),g_\ast}\mathbf{Tors}_{\mc{C}}(\ms{T})$. Let us observe that we have a natural map
\begin{equation*}
    \mc{Q}_\mathcal{A}\to f_\ast\mathcal{Q}_\mathcal{A}=(\mathcal{B}\times\mathcal{Q}_\mathcal{A})/\mathcal{A},
\end{equation*}
induced by the map $(e,\mr{id})\colon \mc{Q}_\mc{A}\to \mc{B}\times\mc{Q}_\mc{A}$, where $e$ denotes the identity section of $\mc{B}$. We similarly have a map $\mc{Q}_\mc{C}\to g_\ast\mc{Q}_\mc{C}$. Let us define 
\begin{equation*}
    \mc{Q}_\mc{A}\times_\theta\mc{Q}_\mc{C}\defeq \lim\left(\begin{tikzcd}
	{\mathcal{Q}_\mathcal{A}} \\
	{f_\ast\mathcal{Q}_\mathcal{A}} & {g_\ast\mathcal{Q}_\mathcal{C}} & {\mathcal{Q}_\mathcal{C}}
	\arrow[from=1-1, to=2-1]
	\arrow["\theta", from=2-1, to=2-2]
	\arrow[from=2-3, to=2-2]
\end{tikzcd}\right),
\end{equation*}
where this limit is taken in $\ms{T}$. It is simple to check that there is a unique $\mc{D}$-action on $\mc{Q}_\mc{A}\times_\theta\mc{Q}_\mc{C}$ for which the natural map $\mc{Q}_\mc{A}\times_{\theta}\mc{Q}_\mc{C}\to \mc{Q}_\mc{A}\times\mc{Q}_\mc{C}$ is equivariant for the map $\mc{D}\to\mc{A}\times\mc{C}$. This $\mc{D}$-object of $\ms{T}$ is evidently functorial in the triple $(\mc{Q}_\mc{A},\mc{Q}_\mc{C},\theta)$. 

Note that it is not a priori clear that $\mc{Q}_\mc{A}\times_\theta\mc{Q}_\mc{C}$ is a $\mc{D}$-torsor. That said, we do have the following affirmative result in this direction under the assumption that $f$ is an epimorphism.
\begin{prop}\label{prop:torsors-comparison} Suppose that $f$ is an epimorphism. Then, the functor
\begin{equation*}
    \Phi\colon \mathbf{Tors}_{\mc{A}}(\ms{T})\times_{f_\ast,\mathbf{Tors}_{\mc{B}}(\ms{T}),g_\ast}\mathbf{Tors}_{\mc{C}}(\ms{T})\to \mathbf{Tors}_{\mc{D}}(\ms{T})
\end{equation*}
given by $\Phi(\mc{Q}_\mc{A},\mc{Q}_\mc{C},\theta)=\mc{Q}_\mc{A}\times_\theta\mc{Q}_\mc{C}$, is a well-defined quasi-inverse to $\Psi$. Moreover, the equivalences $\Psi$ and $\Phi$ are compatible with pullbacks along a morphism of topoi $\ms{T}'\to\ms{T}$.
\end{prop}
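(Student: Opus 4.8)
The plan is to establish in turn that the object $\mc{Q}_{\mc{A}}\times_\theta\mc{Q}_{\mc{C}}$ produced by $\Phi$ is genuinely a $\mc{D}$-torsor (so that $\Phi$ is well-defined as a functor into $\mathbf{Tors}_{\mc{D}}(\ms{T})$), that $\Psi$ and $\Phi$ are mutually quasi-inverse by exhibiting natural isomorphisms $\Phi\circ\Psi\simeq\mathrm{id}$ and $\Psi\circ\Phi\simeq\mathrm{id}$, and that all of this is compatible with pullback. The organizing observation is that every sheaf in sight --- the contracted product $f_\ast\mc{Q}=(\mc{B}\times\mc{Q})/\mc{A}$ defining pushforward of torsors, and the limit defining $\times_\theta$ --- is assembled from finite limits and coequalizers, hence is preserved by the inverse image $u^\ast$ of any morphism of topoi $u\colon\ms{T}'\to\ms{T}$ (a left adjoint which is moreover left exact, so it preserves both). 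Consequently it suffices to prove each of the required isomorphisms of sheaves in $\ms{T}$, and to check that a map of sheaves is an isomorphism one may work locally, i.e.\@ after pulling back to a covering object; there all torsors trivialize and everything reduces to the tautology $\mc{D}=\mc{A}\times_{\mc{B}}\mc{C}$. This same observation will simultaneously yield the asserted compatibility with pullback, since the comparison functors and the natural isomorphisms between them are assembled entirely from $u^\ast$-stable data.

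To see that $\mc{Q}_{\mc{A}}\times_\theta\mc{Q}_{\mc{C}}$ is a $\mc{D}$-torsor, I would pass to a covering object $U$ over which both $\mc{Q}_{\mc{A}}$ and $\mc{Q}_{\mc{C}}$ become trivial and fix trivializations. Restricted to $U$ the isomorphism $\theta$ is an automorphism of the trivial $\mc{B}$-torsor, hence left translation by a section $b\in\mc{B}(U)$. This is the one place the hypothesis enters: because $f$ is an epimorphism, $b$ lifts, after refining $U$, to a section of $\mc{A}(U)$, and modifying the chosen trivialization of $\mc{Q}_{\mc{A}}|_U$ by that section we may assume $\theta|_U=\mathrm{id}$. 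Since $u^\ast$ for the localization $\ms{T}/U\to\ms{T}$ preserves the defining finite limit, $(\mc{Q}_{\mc{A}}\times_\theta\mc{Q}_{\mc{C}})|_U$ is then identified $\mc{D}$-equivariantly with $\mc{A}|_U\times_{\mc{B}|_U}\mc{C}|_U=\mc{D}|_U$, so it is a $\mc{D}$-torsor. Functoriality of $\Phi$ is then automatic: a morphism in the source groupoid induces, by functoriality of the limit, a $\mc{D}$-equivariant morphism of $\mc{D}$-torsors, which is necessarily an isomorphism.

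For the quasi-inverse assertions, given a $\mc{D}$-torsor $\mc{P}$ the natural maps $\mc{P}\to p_\ast\mc{P}$ and $\mc{P}\to q_\ast\mc{P}$ (induced as in the construction above) become equal after mapping to $(f\circ p)_\ast\mc{P}=(g\circ q)_\ast\mc{P}$, which is precisely the statement that they are matched by $\theta_{\mathrm{nat}}$; they therefore factor through a canonical $\mc{D}$-equivariant map $\mc{P}\to p_\ast\mc{P}\times_{\theta_{\mathrm{nat}}}q_\ast\mc{P}=\Phi(\Psi(\mc{P}))$, natural in $\mc{P}$, which one checks is an isomorphism after trivializing $\mc{P}$ locally (there it is the identification $\mc{D}=\mc{A}\times_{\mc{B}}\mc{C}$). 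Dually, given $(\mc{Q}_{\mc{A}},\mc{Q}_{\mc{C}},\theta)$ with $\mc{P}\defeq\mc{Q}_{\mc{A}}\times_\theta\mc{Q}_{\mc{C}}$, the two projections are equivariant for $p$ and $q$ and so induce morphisms of torsors $p_\ast\mc{P}\to\mc{Q}_{\mc{A}}$ and $q_\ast\mc{P}\to\mc{Q}_{\mc{C}}$; that these intertwine $\theta_{\mathrm{nat}}$ with $\theta$ is built into the definition of the limit, and that they are isomorphisms is again a local check (reducing to $(\mc{A}\times\mc{D})/\mc{D}\cong\mc{A}$). Exhibiting these two natural isomorphisms is exactly what it means for $\Psi$ and $\Phi$ to be quasi-inverse, and their compatibility with pullback along $\ms{T}'\to\ms{T}$ follows from the organizing observation of the first paragraph. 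The only step with real content, as opposed to bookkeeping, is the verification that $\mc{Q}_{\mc{A}}\times_\theta\mc{Q}_{\mc{C}}$ is a torsor --- specifically the use of the assumption that $f$ is an epimorphism to trivialize $\theta$ after a further cover; everything else is an instance of the split case.
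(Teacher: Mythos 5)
Your proof is correct and follows essentially the same route as the paper's: both arguments establish that $\mc{Q}_{\mc{A}}\times_\theta\mc{Q}_{\mc{C}}$ is a $\mc{D}$-torsor by localizing $\ms{T}$ and using the epimorphism hypothesis on $f$ to rigidify $\theta$ into the identity, thereby reducing to the tautological identification $\mc{D}=\mc{A}\times_{\mc{B}}\mc{C}$; and both produce the natural comparison maps $\mc{P}\to p_\ast\mc{P}\times_{\theta_{\mr{nat}}}q_\ast\mc{P}$ and (via the torsor adjunction) $p_\ast(\mc{Q}_{\mc{A}}\times_\theta\mc{Q}_{\mc{C}})\to\mc{Q}_{\mc{A}}$, $q_\ast(\mc{Q}_{\mc{A}}\times_\theta\mc{Q}_{\mc{C}})\to\mc{Q}_{\mc{C}}$, checking each to be an isomorphism after localizing. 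The only cosmetic difference is tactical: the paper trivializes $\mc{Q}_{\mc{C}}$ and lifts $\theta^{-1}(x)$ along the epimorphism $\mc{Q}_{\mc{A}}\to f_\ast\mc{Q}_{\mc{A}}$, whereas you trivialize both torsors and use the epimorphism of $f$ to absorb the resulting $\mc{B}$-valued automorphism $b$ into a change of trivialization of $\mc{Q}_{\mc{A}}$ --- these are plainly equivalent. Your opening observation that all constructions in sight are assembled from finite limits and coequalizers, hence commute with inverse-image functors, is a clean way of discharging the pullback-compatibility claim which the paper merely states as ``clear by construction.''
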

\begin{proof} To show that $\mc{Q}_\mc{A}\times_\theta\mc{Q}_\mc{C}$ is a $\mathcal{D}$-torsor we are free to localize $\ms{T}$. In particular, we may assume that $\mc{Q}_\mc{C}$ is trivializable. Choose a trivialization corresponding to an element $x$ of $\mc{Q}_\mc{C}(\ast)$. Consider then the induced element of $g_\ast\mc{Q}_\mc{C}(\ast)$, which we also denote by $x$, and the resulting element $\theta^{-1}(x)$ of $f_\ast\mc{Q}_\mc{A}(\ast)$. As $f$ is an epimorphism we may, after possibly localizing $\ms{T}$ further, assume that there exists some element $y$ of $\mc{Q}_\mc{A}(\ast)$ mapping to $\theta^{-1}(x)$. These compatible elements define an isomorphism of diagrams
\begin{equation*}
    \begin{tikzcd}
	{\mathcal{Q}_\mathcal{A}} \\
	{f_\ast\mathcal{Q}_\mathcal{A}} & {g_\ast\mathcal{Q}_\mathcal{C}} & {\mathcal{Q}_\mathcal{B}}
	\arrow[from=1-1, to=2-1]
	\arrow["\theta", from=2-1, to=2-2]
	\arrow[from=2-3, to=2-2]
\end{tikzcd}\,\,\simeq\,\,\,\,\, \begin{tikzcd}
	{\mathcal{A}} \\
	{\mathcal{B}} & {\mathcal{B}} & {\mathcal{C}.}
	\arrow["f"', from=1-1, to=2-1]
	\arrow["{\mathrm{id}}", from=2-1, to=2-2]
	\arrow["g", from=2-3, to=2-2]
\end{tikzcd}
\end{equation*}
As $\mc{A}\times_{\mr{id}}\mc{C}$ is evidently the trivial $\mc{D}$-torsor, the claim follows.

To prove that $\Phi\circ \Psi$ is naturally isomorphic to the identity, it suffices to prove the following claim: for a $\mc{D}$-torsor $\mc{P}$ the natural map
\begin{equation*}
    \mc{P}\to f_\ast\mc{P}\times g_\ast\mc{P},
\end{equation*}
is equivariant for the map $\mc{D}\to \mc{A}\times\mc{C}$, and factorizes uniquely through a $\mc{D}$-equivariant map $\mc{P}\to f_\ast\mc{P}\times_\theta g_\ast\mc{P}$. But this claim can be checked after localizing on $\ms{T}$, which reduces us to the trivial case as in the previous paragraph. 

To prove that $\Psi\circ \Phi$ is naturally isomorphic to the identity, let us fix an object $(\mc{Q}_\mc{A},\mc{Q}_\mc{C},\theta)$ of the $2$-fiber product. Let us observe that we have a natural projection map 
\begin{equation*}
    \mc{Q}_\mc{A}\times_\theta\mc{Q}_\mc{C}\to \mc{Q}_\mc{A}
\end{equation*}
which is equivariant for the map $\mc{D}\to \mc{A}$. By the adjunction in \cite[Chapitre III, Proposition 1.3.6 (iii)]{Giraud} we obtain a unique $\mc{A}$-equivariant map
\begin{equation*}
    \alpha\colon f_\ast(\mc{Q}_\mc{A}\times_\theta\mc{Q}_\mc{C})\to \mc{Q}_\mc{A},
\end{equation*}
which is necessarily an isomorphism of $\mc{A}$-torsors. Similarly, we obtain an isomorphism
\begin{equation*}
    \beta\colon g_\ast(\mc{Q}_\mc{A}\times_\theta\mc{Q}_\mc{C})\to \mc{Q}_\mc{C}.
\end{equation*}
It is then easy to check that $(\alpha,\beta)$ defines a natural morphism
\begin{equation*}
    \Psi(\Phi(\mc{Q}_\mc{A},\mc{Q}_\mc{C},\theta))\to (\mc{Q}_\mc{A},\mc{Q}_\mc{C},\theta),
\end{equation*}
where the matching of $\theta_\mr{nat}$ in the source and $\theta$ in the target may be checked locally on $\ms{T}$, from where we may reduce to the trivial case where it is clear. As the $2$-fiber product is a groupoid, we deduce that $(\alpha,\beta)$ is a natural isomorphism.

The final compatibility claim is clear by construction.
\end{proof}

\subsubsection{Shtukas for fiber products of groups} 

We finally record the natural implication of Proposition \ref{prop:torsors-comparison} for shtukas over a pre-adic space in the sense of \cite[\textsection 2.3]{PR2021}.

To this end, let us fix a fiber product of group $\Z_p$-schemes 
\begin{equation*}
    \begin{tikzcd}
	{\mathcal{D}} & {\mathcal{A}} \\
	{\mathcal{C}} & {\mathcal{B},}
	\arrow["f", from=1-2, to=2-2]
	\arrow["g"', from=2-1, to=2-2]
	\arrow["q"', from=1-1, to=2-1]
	\arrow["p", from=1-1, to=1-2]
	\arrow["\lrcorner"{anchor=center, pos=0.125}, draw=none, from=1-1, to=2-2]
\end{tikzcd}
\end{equation*}
where $\mc{A}$, $\mc{B}$, and $\mc{C}$ are smooth group $\Z_p$-schemes with connected fibers, $f$ is faithfully flat and quasi-compact, and $\ker(f)$ is a smooth group $\Z_p$-scheme with connected fibers. We then see by Proposition \ref{prop:fiber-product-properties} that $\mc{D}$ is a smooth group $\Z_p$-scheme with connected fibers. Further fix a discretely valued algebraic extension $E$ of $\Q_p$ with residue field $k_E$, 
and fix a pre-adic space $\ms{X}$ over $\mc{O}_{E}$. We then have the following result.

\begin{cor}\label{cor:fiber-product-of-shtukas} There is an equivalence of categories
\begin{equation*}
    \mc{D}\text{-}\mb{Sht}(\ms{X})\isomto \mc{A}\text{-}\mathbf{Sht}(\ms{X})\times_{f_\ast,\mc{B}\text{-}\mb{Sht}(\ms{X}),g_\ast}\mc{C}\text{-}\mb{Sht}(\ms{X}).
\end{equation*}

\end{cor}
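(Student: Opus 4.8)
The plan is to deduce this from Proposition \ref{prop:torsors-comparison} by unwinding the definition of a shtuka. Recall from \cite[\S2.3]{PR2021} that a $\mc{D}$-shtuka over $\ms{X}$ consists of a $\mc{D}$-torsor $\mc{P}$ on (the $v$-sheaf associated to) the analytic space $\mc{Y}\defeq\ms{X}\,\dot{\times}\,\Spa\,\bZ_p$, together with an isomorphism $\phi_\mc{P}\colon(\Frob^\ast\mc{P})\big|_{\mc{U}}\isomto\mc{P}\big|_{\mc{U}}$ over the open complement $\mc{U}\subseteq\mc{Y}$ of the Cartier divisor cut out by the untilt, meromorphic along that divisor in the sense of loc.\@ cit. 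Writing $\ms{T}$, resp.\@ $\ms{T}'$, for the $v$-topos of $\mc{Y}$, resp.\@ of $\mc{U}$, and letting $j\colon\ms{T}'\to\ms{T}$, $\sigma\colon\ms{T}\to\ms{T}$ denote the morphisms of topoi induced by the open immersion and by Frobenius, I would regard $\mc{D}\text{-}\mb{Sht}(\ms{X})$ as the groupoid of pairs $(\mc{P},\phi_\mc{P})$ with $\mc{P}\in\mathbf{Tors}_{\mc{D}}(\ms{T})$ and $\phi_\mc{P}\colon j^\ast\sigma^\ast\mc{P}\isomto j^\ast\mc{P}$ satisfying the meromorphy condition, and similarly for $\mc{A}$, $\mc{B}$, $\mc{C}$.

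First I would pull the Cartesian square of group $\bZ_p$-schemes back to $\mc{Y}$, obtaining a Cartesian square of group objects of $\ms{T}$; since $f$ is faithfully flat and quasi-compact, its pullback to $\mc{Y}$ is a $v$-cover, hence an epimorphism of $v$-sheaves, so Proposition \ref{prop:torsors-comparison} applies and gives an equivalence $\Psi$ from $\mathbf{Tors}_{\mc{D}}(\ms{T})$ to $\mathbf{Tors}_{\mc{A}}(\ms{T})\times_{f_\ast,\mathbf{Tors}_{\mc{B}}(\ms{T}),g_\ast}\mathbf{Tors}_{\mc{C}}(\ms{T})$, with quasi-inverse $\Phi$, together with the analogous equivalence over $\ms{T}'$, compatibly with $j^\ast$ and $\sigma^\ast$. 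Next I would transport the Frobenius datum in both directions. For $(\mc{P},\phi_\mc{P})$ a $\mc{D}$-shtuka, the functors $p_\ast,q_\ast$ commute with $j^\ast$ and $\sigma^\ast$ up to canonical isomorphism, so $p_\ast\phi_\mc{P}$ and $q_\ast\phi_\mc{P}$ are Frobenius isomorphisms on $p_\ast\mc{P}$ and $q_\ast\mc{P}$, they satisfy the meromorphy condition because it is inherited along the homomorphisms $p,q$, and $\theta_\mr{nat}$ intertwines them; this upgrades $\Psi$ to a functor $\mc{D}\text{-}\mb{Sht}(\ms{X})\to\mc{A}\text{-}\mb{Sht}(\ms{X})\times_{f_\ast,\mc{B}\text{-}\mb{Sht}(\ms{X}),g_\ast}\mc{C}\text{-}\mb{Sht}(\ms{X})$. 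Conversely, given compatible data $\big((\mc{P}_\mc{A},\phi_{\mc{P}_\mc{A}}),(\mc{P}_\mc{C},\phi_{\mc{P}_\mc{C}}),\theta\big)$, the fact that $\theta$ is an isomorphism of $\mc{B}$-shtukas says exactly that $(\phi_{\mc{P}_\mc{A}},\phi_{\mc{P}_\mc{C}})$ is a morphism between the two objects of the $2$-fiber product over $\ms{T}'$ obtained by applying $j^\ast\sigma^\ast$ and $j^\ast$ to the input; applying $\Phi$ over $\ms{T}'$ to this morphism, and using the compatibility of $\Phi$ with $j^\ast$ and $\sigma^\ast$, produces the required Frobenius isomorphism on $\mc{P}\defeq\mc{Q}_\mc{A}\times_\theta\mc{Q}_\mc{C}$. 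Since $\Psi$ and $\Phi$ are mutually quasi-inverse on torsors, the refined functors are as well, which gives the asserted equivalence.

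The torsor-level input is immediate from Proposition \ref{prop:torsors-comparison} together with the $v$-sheaf epimorphism property of $f$, so the only real work is the bookkeeping around the Frobenius structure, and I expect the main (minor) obstacle to be checking that the meromorphy condition is preserved in \emph{both} directions under $\Psi$ and $\Phi$. Preservation under $\Psi$ should follow from pushforward along the homomorphisms $\mc{D}\to\mc{A}$ and $\mc{D}\to\mc{C}$; for the reverse direction I would use that $\mc{D}\hookrightarrow\mc{A}\times_{\Spec\bZ_p}\mc{C}$ is a closed immersion, so that the meromorphy of $\phi_\mc{P}$ may be tested after this closed immersion and hence reduces to the meromorphy of $\phi_{\mc{P}_\mc{A}}$ and $\phi_{\mc{P}_\mc{C}}$ — an essentially formal property of modifications under morphisms of group schemes.
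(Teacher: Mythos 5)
Your approach is essentially the paper's: apply Proposition \ref{prop:torsors-comparison} to the ambient analytic space and to the complement of the untilt divisor, then transport the Frobenius datum. The one notational adjustment is that the paper works pointwise, fixing a morphism $S\to\ms{X}^{\lozenge /}$ from a perfectoid space $S$ of characteristic $p$, applying Proposition \ref{prop:torsors-comparison} to the \'etale topoi of $S\,\dot{\times}\,\bZ_p$ and $S\,\dot{\times}\,\bZ_p\setminus S^\sharp$, and then letting $S$ vary by the $2$-functoriality of the equivalences in the topos — whereas you write down a ``global'' space $\ms{X}\,\dot{\times}\,\Spa\,\bZ_p$, which is not directly defined for a general pre-adic space $\ms{X}$ over $\mc{O}_E$ (the $\dot{\times}$ construction takes a characteristic-$p$ perfectoid space or a $v$-sheaf as input, not $\ms{X}$ itself, and a shtuka over $\ms{X}$ is \emph{defined} as a compatible family over all such test objects $S$). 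This is easily repaired by the paper's pointwise formulation and is cosmetic rather than a gap. Your discussion of the meromorphy condition, which the paper's proof glosses over with the word ``immediately'', is a correct and welcome elaboration: preservation under $\Psi$ is just pushforward along $\mc{D}\to\mc{A}$ and $\mc{D}\to\mc{C}$, and preservation under $\Phi$ follows because the closed embedding $\mc{D}\hookrightarrow\mc{A}\times_{\Spec\Z_p}\mc{C}$ identifies the pushforward of $\mc{Q}_\mc{A}\times_\theta\mc{Q}_\mc{C}$ with $\mc{Q}_\mc{A}\times\mc{Q}_\mc{C}$, so meromorphy of the induced Frobenius can be tested there and reduces to meromorphy of $\phi_{\mc{Q}_\mc{A}}$ and $\phi_{\mc{Q}_\mc{C}}$.
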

\begin{proof} 
Fix an morphism $\alpha\colon S\to\ms{X}^{\lozenge /}$, where $S$ is an object of $\mb{Perf}_{k_E}$, with corresponding untilt $S^\sharp$ over $\mc{O}_{E}$. Then, it immediately by applying Proposition \ref{prop:torsors-comparison} to the \'etale topoi of $S\dot{\times}\Z_p$ and $S\dot{\times}\Z_p\setminus S^\sharp$, that there are equivalences 
\begin{equation*}
     \mc{D}\text{-}\mb{Sht}(S)\isomto \mc{A}\text{-}\mathbf{Sht}(S)\times_{f_\ast,\mc{B}\text{-}\mb{Sht}(S),g_\ast}\mc{C}\text{-}\mb{Sht}(S).
\end{equation*}
As this equivalence is $2$-functorial in the topos, the claim then follows by letting $S$ vary.
\end{proof}

\section{Integral models of Shimura varieties at parahoric level}\label{s:Kisin-Zhou-models}

In this section we study the Kisin--Pappas--Zhou models of Shimura varieties of abelian type at parahoric level as constructed in \cite{KP2018} and \cite{KZ21}. 

\subsection{Existence of models and their properties}

In this subsection we precisely state the existence of integral models at parahoric level and list some of their properties which are most relevant for our purposes.

\subsubsection{Shimura varieties} 

We begin by recalling some definitions and notation from the theory of Shimura varieties. Let $\gx$ be a Shimura datum, meaning that $\eG$ is a reductive group over $\bQ$, and $\mb{X}$ is a $\eG(\bR)$-conjugacy class of homomorphisms
\begin{align*}
    h: \mathbb{S}\to \eG_\bR,
\end{align*}
(where $\mathbb{S} = \Res_{\bC/\bR}\,\bG_{m/\bb{C}}$ is the Deligne torus), satisfying the axioms (SV1)-(SV3) in \cite[Definition 5.5]{Milne2005}. Associated to $\mb{X}$ is a unique conjugacy class $\mbbmu_h$ of coharacters $\bb{G}_{m,\bb{C}}\to \mb{G}_\bb{C}$ (see \cite[p.\@ 344]{Milne2005}).  We denote the field of definition of $\mbbmu_h$ by $\eE$, and call it the reflex field of $\gx$ (see \cite[Definition 12.2]{Milne2005}).  

We often fix a place $v$ of $\mb{E}$ dividing $p$, and write $E$ for the completion $\mb{E}_v$. Using \cite[Lemma 1.1.3]{KotShtw}, $\mbbmu_h$ gives rise to a unique conjugacy class of cocharacters $\bb{G}_{m,\ov{E}}\to G_{\ov{E}}$, which we also denote by $\mbbmu_h$. One may check that this conjugacy class has field of definition $E$, and so we call this the local reflex field.

For any neat compact open subgroup $\eK \subseteq \eG(\bA_f)$ (see \cite[p.\@ 288]{Milne2005}), we attach to $\gx$ and $\eK$ the Shimura variety $\Sh_\eK\gx$, which is a quasi-projective variety over $\bC$, whose $\bC$-points are given by the equality
\begin{align*}
    \Sh_\eK\gx(\bC) = \eG(\bQ)\bs \mathbf{X} \times \eG(\bA_f) / \eK. 
\end{align*}
We write an element of this double quotient as $[x,g]_\eK$. As in \cite{DeligneModulaire} (cf. \cite{Moonen1998}), $\Sh_\eK\gx$ admits a canonical model over the reflex field $\eE$. Hereafter we will use $\Sh_\eK\gx$ to denote the canonical model over $\eE$. 

For a containment $\eK \subseteq \eK'$ of neat compact open subgroups of $\eG(\bA_f)$ and $g$ in $\eG(\bA_f)$ such that $g^{-1}\eK g \subseteq \eK'$, there is a unique finite \'etale morphism of $\eE$-schemes 
\begin{align*}
    t_{\eK,\eK'}(g): \Sh_\eK\gx \to \Sh_{\eK'}\gx,
\end{align*}
which is given on $\bC$-points by the identity
\begin{align*}
    t_{\eK,\eK'}(g)[x,g']_\eK = [x,g'g]_{\eK'}. 
\end{align*}
We write $\pi_{\eK,\eK'}$ for $t_{\eK,\eK'}(\id)$, and $[g]_\eK$ for $t_{\eK, g^{-1}\eK g}(g)$. 

The morphisms $\pi_{\eK,\eK'}$ form a projective system $\{\Sh_\eK\gx\}_\eK$ with finite \'etale transition morphisms. We define
\begin{align}\label{eq:Sh(G,X)}
    \Sh\gx \defeq \varprojlim_\eK \Sh_\eK\gx,
\end{align}
where the limit is taken over all neat compact open subgroups $\eK$ of $\eG(\bA_f)$. Since each $\pi_{\eK,\eK'}$ is affine, the limit in (\ref{eq:Sh(G,X)}) exists as an $\eE$-scheme (see \stacks{01YX}). The morphisms $[g]_\eK$ endow $\Sh\gx$ with a continuous action of $\eG(\bA_f)$ in the sense of \cite[2.7.1]{DeligneModulaire}.

We will often fix a prime $p$ and consider neat compact open subgroups $\eK = \eK_p \eK^p$ with $\eK_p \subseteq \eG(\bQ_p)$ and $\eK^p \subseteq \eG(\bA_f^p)$. We define 
\begin{align*}
    \Sh_{\eK_p}\gx \defeq \varprojlim_{\eK^{p}} \Sh_{\eK_p\eK^{p}}\gx,
\end{align*}
where $\eK^{p}$ varies over neat compact open subgroups of $\eG(\bA_f^p)$. As in the case of $\Sh\gx$, we see that $\Sh_{\eK_p}\gx$ is a scheme over $\eE$ with a continuous action of $\eG(\bA_f^p)$. 

 \begin{rmk}
When several Shimura data are simultaneously under consideration, we will differentiate them with numerical subscripts (e.g.,\@ $(\mb{G}_1,\mb{X}_1)$) and use the same numerical subscripts to denote the objects defined above (or below) for this Shimura datum (e.g., $\Sh_{\mathsf{K}_{p,1}\mathsf{K}^p_1}(\mb{G}_1,\mb{X}_1)$). 
\end{rmk}

A morphism of Shimura data $\alpha\colon (\mb{G}_1,\mb{X}_1)\to (\mb{G},\mb{X})$ is a morphism of group $\Q$-schemes $\alpha\colon \mb{G}_1\to\mb{G}$ with the property that $\alpha_\bb{R}\circ h_1$ belongs to $\mb{X}$ for $h_1$ in $\mb{X}_1$. We say that $\alpha$ is an embedding if $\mb{G}_1\to\mb{G}$ is a closed embedding. From \cite[\S5]{DeligneModulaire}, for a morphism of Shimura data $\alpha$, one has that $\mathbf{E}\subseteq \mathbf{E}_1$ and there is a unique morphism $\Sh(\mb{G}_1,\mb{X}_1)\to \Sh(\mb{G},\mb{X})_{\mathbf{E}_1}$ of $\mb{E}_1$-schemes equivariant for the map $\alpha\colon \mb{G}_1(\A_f)\to \mb{G}(\A_f)$ and such that if $\alpha(\mathsf{K}_1)\subseteq \mathsf{K}$ then the induced map on the quotients $\alpha_{\mathsf{K}_1,\mathsf{K}}\colon \Sh_{\mathsf{K}_1}(\mb{G},\mb{X})\to \Sh_{\mathsf{K}}(\mb{G},\mb{X})_{E_1}$ is given on $\bb{C}$-points by 
\begin{equation*}
    \alpha_{\mathsf{K}_1,\mathsf{K}}\left([x_1,g_1]_{\mathsf{K}_1}\right)=[\alpha\circ x_1,\alpha(g_1)]_{\mathsf{K}}.
\end{equation*}

We say that $\alpha$ is an \emph{ad-isomorphism} if $\alpha\colon \mb{G}_1\to\mb{G}$ is an ad-isomorphism and moreover, that $\alpha\colon (\mb{G}_1^\mr{ad},\mb{X}_1^\mr{ad})\to (\mb{G}^\mr{ad},\mb{X}^\mr{ad})$ is an isomorphism.

\subsubsection{Parahoric Shimura data}\label{sss:parahoric-shim-data}

We say that a triple $(\mb{G},\mb{X},\mc{G})$ is a \emph{parahoric Shimura datum} if $(\mb{G},\mb{X})$ is a Shimura datum, and $\mc{G}$ is a parahoric model of $G\defeq \mb{G}_{\Q_p}$. By a morphism $\alpha\colon (\mb{G}_1,\mb{X}_1,\mc{G}_1)\to (\mb{G},\mb{X},\mc{G})$ of parahoric Shimura data we mean a morphism $\alpha\colon (\mb{G}_1,\mb{X}_1)\to (\mb{G},\mb{X})$ of Shimura data together with a specified model $\mc{G}_1\to\mc{G}$ of $G_1\to G$, which  we also denote $\alpha$. By \cite[Corollary 2.10.10]{KalethaPrasad}, such an $\alpha$ is unique, and it exists if and only if $\alpha\colon G(\breve{\Q}_p)\to G_1(\breve{\Q}_p)$ maps $\mc{G}_1(\breve{\Z}_p)$ into $\mc{G}(\breve{\Z}_p)$. We say that $\alpha$ is an embedding or an ad-isomorphism if $\alpha\colon (\mb{G}_1,\mb{X}_1)\to (\mb{G},\mb{X})$ is.

Given a parahoric Shimura datum $(\mb{G},\mb{X},\mc{G})$ we often use the following shorthand
\begin{equation*}
    \mathsf{K}_p\defeq \mc{G}(\Z_p).
\end{equation*}
Choosing a point $x$ of $\ms{B}(G,\Q_p)$ such that $\mc{G}=\mc{G}_x^\circ$, we write
\begin{equation*}
    \wt{\mathsf{K}}_p\defeq G(\Q_p^\ur)^1\cap \mr{Stab}_{G(\Q_p)}(x),
\end{equation*}
which is a a supergroup of $\mathsf{K}_p$ of finite index.

The following lemma will be important for later constructions. Let us denote by $\mb{E}^p$ the maximal extension of $\mb{E}$ unramified at places dividing $p$.

\begin{lemma}[{\cite[Corollary 4.3.9]{KP2018}}] \label{lemma:Defined-Over-E^p} The natural map 
\begin{equation*}
\pi_0(\Sh_{\mathsf{K}_p}(\mb{G},\mb{X})_{\ov{\mb{E}}})\to \pi_0(\Sh_{\mathsf{K}_p}(\mb{G},\mb{X})_{\mb{E}^p})
\end{equation*}
is a bijection.
\end{lemma}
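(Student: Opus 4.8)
The plan is to reinterpret the statement as a triviality assertion for a Galois action on $\pi_0$, and then verify it by appealing to the reciprocity law describing that action and checking it is unramified at $p$ because $\mathsf{K}_p$ is parahoric. The map in question is automatically surjective, so what must be shown is that $\Gal(\ov{\mb{E}}/\mb{E}^p)$ acts trivially on the profinite set $\pi_0(\Sh_{\mathsf{K}_p}\gx_{\ov{\mb{E}}}) = \varprojlim_{\mathsf{K}^p}\pi_0(\Sh_{\mathsf{K}_p\mathsf{K}^p}\gx_{\ov{\mb{E}}})$. First I would recall Deligne's description of connected components and their Galois action (see \cite{DeligneModulaire}, or \cite[\S12--13]{Milne2005}): writing $\mb{T} = \mb{G}/\mb{G}^{\mr{der}}$, $\nu\colon \mb{G}\to\mb{T}$ for the projection, and $\mu_{\mb{T}}\in X_*(\mb{T})$ for the image of the Hodge cocharacter (an honest cocharacter, defined over $\mb{E}$), there is a $\Gal(\ov{\mb{E}}/\mb{E})$-equivariant bijection $\pi_0(\Sh_{\mathsf{K}_p}\gx_{\ov{\mb{E}}}) \simeq \mb{T}(\bQ)^{\dagger}\backslash\mb{T}(\bA_f)/\nu(\mathsf{K}_p)$, where $\mb{T}(\bQ)^{\dagger}$ is the closure in $\mb{T}(\bA_f)$ of a suitable subgroup of $\mb{T}(\bQ)$, and where the $\Gal(\ov{\mb{E}}/\mb{E})$-action factors through $\Gal(\mb{E}^{\mr{ab}}/\mb{E})$ and is given by translation via the composite of $\mathrm{Art}_{\mb{E}}^{-1}$ with the reciprocity morphism $r(\mb{G},\mb{X}) = N_{\mb{E}/\bQ}\circ\mu_{\mb{T}}\colon \Res_{\mb{E}/\bQ}\bG_m\to\mb{T}$.

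Next I would reduce to a local computation at $p$. Since $\mb{E}^p$ is the maximal extension of $\mb{E}$ unramified at all finite places over $p$, the subgroup $\Gal(\ov{\mb{E}}/\mb{E}^p)$ maps, inside $\Gal(\mb{E}^{\mr{ab}}/\mb{E}) = \mathrm{Art}_{\mb{E}}(\bA_{\mb{E}}^\times)$, onto the closed subgroup generated by the local units $\mc{O}_{\mb{E}_w}^\times$ for $w\mid p$; as the $\pi_0$-action already factors through $\Gal(\mb{E}^{\mr{ab}}/\mb{E})$, it therefore suffices to show that $r(\mb{G},\mb{X})(\mc{O}_{\mb{E}_w}^\times)$ acts trivially for each $w\mid p$. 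Because $r(\mb{G},\mb{X})$ is a morphism of $\bQ$-groups, $r(\mb{G},\mb{X})(\mc{O}_{\mb{E}_w}^\times)$ lies in the $p$-component $\mb{T}(\bQ_p)\subseteq\mb{T}(\bA_f)$ and equals $N_w(\mc{O}_{\mb{E}_w}^\times)$, where $N_w = N_{\mb{E}_w/\bQ_p}\circ\mu_{\mb{T}}\colon \Res_{\mb{E}_w/\bQ_p}\bG_m\to T$ is the $w$-component ($T := \mb{T}_{\bQ_p} = G^{\mr{ab}}$); so it is enough to prove $N_w(\mc{O}_{\mb{E}_w}^\times)\subseteq\nu(\mathsf{K}_p)$. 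For this one checks that $\Res_{\mc{O}_{\mb{E}_w}/\bZ_p}\bG_m$ is the (unique) parahoric $\bZ_p$-model of the induced torus $\Res_{\mb{E}_w/\bQ_p}\bG_m$, with $\bZ_p$-points $\mc{O}_{\mb{E}_w}^\times$; by functoriality of the Kottwitz homomorphism, $N_w$ sends $(\Res_{\mb{E}_w/\bQ_p}\bG_m)(\breve{\bQ}_p)^0$ into $T(\breve{\bQ}_p)^0$, so by \cite[Corollary 2.10.10]{KalethaPrasad} it extends to a morphism of parahoric $\bZ_p$-group schemes $\Res_{\mc{O}_{\mb{E}_w}/\bZ_p}\bG_m\to\mc{G}^{\mr{ab}}$, whence $N_w(\mc{O}_{\mb{E}_w}^\times)\subseteq\mc{G}^{\mr{ab}}(\bZ_p)$. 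Finally, $\nu(\mathsf{K}_p) = \nu(\mc{G}(\bZ_p)) = \mc{G}^{\mr{ab}}(\bZ_p)$: surjectivity of $\nu$ on $\breve{\bZ}_p$-points is proved inside Proposition \ref{prop:parahoric-adjoint}, and (assuming, as there, that $G^{\mr{der}}$ is $R$-smooth) it descends to $\bZ_p$-points since $\mc{G}^{\mr{der}}$ has connected special fibre, so $H^1(\bZ_p,\mc{G}^{\mr{der}}) = 0$ by Lang's theorem. This yields $N_w(\mc{O}_{\mb{E}_w}^\times)\subseteq\nu(\mathsf{K}_p)$, completing the argument.

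I expect the main obstacle to be the bookkeeping needed to match Deligne's adelic formula and reciprocity law for $\pi_0$ with the Bruhat--Tits/parahoric formalism of \S\ref{ss:BT-theory} — in particular, identifying $\Res_{\mc{O}_{\mb{E}_w}/\bZ_p}\bG_m$ as the parahoric model and verifying that the reflex norm respects the integral structures, together with the descent of the surjection $\nu\colon\mc{G}(\breve{\bZ}_p)\twoheadrightarrow\mc{G}^{\mr{ab}}(\breve{\bZ}_p)$ to $\bZ_p$-points (where the $R$-smoothness hypothesis enters). Since the statement is precisely \cite[Corollary 4.3.9]{KP2018}, one may alternatively just cite \emph{loc.\ cit.}; the foregoing records the strategy of that proof.
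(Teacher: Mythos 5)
Your overall strategy is the right one, and it is essentially the argument of \cite[Corollary~4.3.9]{KP2018}: reduce to Deligne's description of $\pi_0$ together with the reciprocity law, then show that the reflex norm carries the local units $\mc{O}_{\mb{E}_w}^\times$ ($w\mid p$) into $\nu(\mathsf{K}_p)$ by extending $N_w$ to a morphism of parahoric group schemes via the Kottwitz homomorphism and \cite[Corollary~2.10.10]{KalethaPrasad}.

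However, there is a genuine gap in the last step, which you flag yourself. You deduce $\nu(\mathsf{K}_p) = \mc{G}^{\ab}(\bZ_p)$ from Proposition~\ref{prop:parahoric-adjoint} together with Lang's theorem applied to $\mc{G}^{\der}$, and both halves of that deduction depend on $G^{\der}$ being $R$-smooth: Proposition~\ref{prop:parahoric-adjoint} explicitly assumes it, and your application of Lang's theorem requires knowing that the kernel of $\mc{G}(\breve{\bZ}_p)\to\mc{G}^{\ab}(\breve{\bZ}_p)$ is literally $\mc{G}^{\der}(\breve{\bZ}_p)$ (rather than the a priori larger group $G^{\der}(\breve{\bQ}_p)\cap\mc{G}(\breve{\bZ}_p)$), which is again part of the same $R$-smoothness story. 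But Lemma~\ref{lemma:Defined-Over-E^p} is stated, and later invoked, for an arbitrary parahoric Shimura datum of abelian type, with no $R$-smoothness hypothesis anywhere in sight. Note that it is not enough to land in $\mc{G}^{\ab}(\bZ_p)$ unless you separately know $\mc{G}^{\ab}(\bZ_p)\subseteq\nu(\mathsf{K}_p)$. The surjectivity $\mc{G}(\bZ_p)\twoheadrightarrow\mc{G}^{\ab}(\bZ_p)$ does hold unconditionally for parahoric $\mc{G}$, but this is exactly what is established by the lemmas preceding \cite[Corollary~4.3.9]{KP2018} by a more careful group-theoretic argument (not via the $R$-smooth closed-immersion route); you should either cite that result directly or supply an argument that does not pass through Proposition~\ref{prop:parahoric-adjoint}. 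Two smaller points worth making explicit if you want a self-contained proof: the identification of $\Res_{\mc{O}_{\mb{E}_w}/\bZ_p}\bG_m$ as the connected N\'eron (hence parahoric) model of $\Res_{\mb{E}_w/\bQ_p}\bG_m$ deserves a reference even in the ramified case, and the precise form of $\mb{T}(\bQ)^{\dagger}$ and of Deligne's equivariant bijection at the level $\mathsf{K}_p$ (rather than at a neat compact open level or in the full limit) should be pinned down, since that is where the $\nu(\mathsf{K}_p)$-coset structure enters.
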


In other words, this result says that the action of $\Gal(\ov{\mb{E}}/\mb{E})$ on $\pi_0(\Sh_{\mathsf{K}_p}(\mb{G},\mb{X})_{\ov{\mb{E}}})$ factorizes through $\Gal(\mb{E}^p/\mb{E})$ (cf.\@ \stacks{038D}).

\subsubsection{Some conditions on a Shimura datum} The construction of Kisin--Pappas--Zhou models involves several conditions on a Shimura datum that we presently recall. 

We start with the following standard definitions, in increasing level of generality.

\begin{defn} We say that $(\mb{G},\mb{X})$ is 
\begin{itemize}
    \item of \emph{Siegel type} if it is of the form $(\mr{GSp}(\mb{V}),\mf{h}^{\pm})$ where $\mb{V}$ is a symplectic $\Q$-space, and $\mf{h}^{\pm}$ is the union of the upper and lower Siegel half-spaces (e.g., see \cite[\S6]{Milne2005}),
    \item of \emph{Hodge type} if there exists an embedding of Shimura data $\iota\colon (\mb{G},\mb{X})\to (\mr{GSp}(\mb{V}),\mf{h}^{\pm})$ (called a \emph{Hodge embedding}),
    \item of \emph{abelian type} if there exists a Shimura datum $(\mb{G}_1,\mb{X}_1)$ of Hodge type and an isogeny $\alpha\colon\mb{G}_1^\der\to\mb{G}^\der$ inducing an isomorphism $(\mb{G}_1^\ad,\mb{X}^\ad)\to (\mb{G}^\ad,\mb{X}^\ad)$, in which case we say that $(\mb{G}_1,\mb{X}_1)$ is \emph{adapted} to $(\mb{G},\mb{X})$, leaving the $\alpha$ implicit.
\end{itemize}
\end{defn}

We have the following examples of Shimura data of abelian type.\footnote{An essentially full classification of Shimura varieties of abelian type is given in the appendix of \cite{MilneShih}.}

\begin{ex}
Shimura data of PEL type (see \cite[\S8]{Milne2005}) are of Hodge type.
\end{ex}

\begin{ex}\label{gspin} Let $\mb{V}$ be a quadratic space over $\Q$ of signature $(n,2)$. The group $\mathrm{GSpin}(\mb{V})$ acts transitively on the space $\mb{X}$ of oriented negative definite $2$-planes in $\mb{V}_\bb{R}$, and $\mb{X}$ can be identified with a $\mathrm{GSpin}(\mb{V})(\bb{R})$-conjugacy class of morphisms $\bb{S}\to \mathrm{GSpin}(\mb{V})_\bb{R}$ (see \cite[\S 1]{PeraSpin}). The pair $(\mathrm{GSpin}(\mb{V}),\mb{X})$ is a Shimura datum of Hodge type which is not of PEL type (see \cite[\S3]{PeraSpin}).
\end{ex}

\begin{ex} Let $F\supsetneq \Q$ be a totally real field, $B$ a quaternion algebra over $F$, and $\mb{G}_B$ the algebraic $\Q$-group $B^\times$. There is a Shimura datum $(\mb{G}_B,\mb{X}_B)$ associated to $B$ (see \cite[Example 5.24]{Milne2005}). If $B$ is not $\bb{R}$-split then $(\mb{G}_B,\mb{X}_B)$ is of abelian type, but not of Hodge type. The Shimura varieties associated to such $(\mb{G}_B,\mb{X}_B)$ include Shimura curves.
\end{ex}

\begin{ex}\label{ex:toral-type} For a $\Q$-torus $\mb{T}$ any homomorphism $h\colon \bb{S}\to\mb{T}_\bb{R}$ defines a Shimura datum $(\mb{T},\{h\})$ of abelian type, which is rarely of Hodge type, and which we refer to as being of \emph{toral type}.
\end{ex}

Kisin--Pappas--Zhou models are constructed when $(\mb{G},\mb{X})$ is of abelian type and when the following group-theoretic property holds.

\begin{defn}[{\cite[Definition 3.3.2]{KZ21}}]We say that $(\mb{G},\mb{X})$ is \emph{acceptable} if $G^\mr{ad}$ is isomorphic to a group of the form $\prod_{i=1}^r\mathrm{Res}_{F_i/\Q_p}\, H_i$, where each $F_i$ is a finite extension of $\Q_p$, and $H_i$ is an adjoint group over $F_i$ which is split over a tame extension of $F_i$. 
\end{defn}

\begin{rmk}\label{rmk:acceptable}
The acceptability condition is very mild. For instance, if $p>3$ the condition is vacuous, and when $p=3$ it only excludes those Shimura data $(\mb{G}, \mb{X}$) for which $\mb{G}$ has an almost simple factor of type $D_4$ (see \cite[\S3.3.1]{KZ21}).
\end{rmk}

The construction of Kisin--Pappas--Zhou models will be an iterative process, starting with a much more restrictive class of Shimura data. For convenience, we give name to this class.

\begin{defn} We say that $(\mb{G},\mb{X})$ is \emph{very good} if:
\begin{itemize}
    \item $(\mb{G},\mb{X})$ is acceptable,

    \item $Z(\mb{G})$ is a torus, and
    \item $Z(G)$, $G^\der$, and $G$ are $R$-smooth (see Definitions \ref{defn:R-smooth} and \ref{defn:R-smooth-G}).

\end{itemize}
\end{defn}

We say that a parahoric Shimura datum $(\mb{G},\mb{X},\mc{G})$ is of Hodge type, abelian type, acceptable, or very good if the underlying Shimura datum $(\mb{G},\mb{X})$ is.
\begin{defn}A parahoric Shimura datum $(\mb{G}_1,\mb{X}_1,\mc{G}_1)$ of Hodge type is \emph{well-adapted} to $(\mb{G},\mb{X},\mc{G})$ when given an isogeny $\alpha\colon\mb{G}_1^\der\to\mb{G}^\der$ (often left implicit) such that
\begin{enumerate}
    \item $\alpha$ adapts $(\mb{G}_1,\mb{X}_1)$ to $(\mb{G},\mb{X})$,
    \item if the isomorphism
\begin{equation*}
    \ms{B}(G_1,\Q_p)\isomto \ms{B}(G,\Q_p)
\end{equation*}
induced from the isogeny $\alpha\colon G_1^\der\to G^\der$ and Lemma \ref{lemma:buildings-ad-isom} has the property that if $\mc{G}\simeq \mc{G}_x^\circ$ for $x$, and $x_1$ denotes the corresponding point in $\ms{B}(G_1,\Q_p)$, then $\mc{G}_1\simeq\mc{G}_{x_1}^\circ$,
\item if $\mb{E}'\defeq \mb{E}\mb{E}_1$, then $\mb{E}'$ splits completely at every prime of $\mb{E}$ lying over $p$.
\end{enumerate} 
\end{defn}

Finally, we set some terminology used later on in the article. Let $(\mb{G},\mb{X},\mc{G})$ be a parahoric Shimura datum of Hodge type. We call a map of Shimura data
\begin{equation*} 
\iota\colon (\mb{G},\mb{X},\mc{G})\to (\mr{GSp}(\mb{V}),\mf{h}^\pm,\mr{GSp}(\Lambda)),
\end{equation*}
where $\mb{V}$ a symplectic $\Q$-space and $\Lambda\subseteq \mb{V}_{\Q_p}$ is a self-dual $\Z_p$-lattice, a \emph{Hodge embedding of parahoric Shimura data} if the underling map $\iota\colon \mb{G}\to\mr{GSp}(\mb{V})$ is a closed embedding. We say that $\iota$ \emph{respects stabilizers} if $\wt{\mathsf{K}}_p=\mr{GSp}(\Lambda)\cap \mb{G}(\Q_p)$.

\subsubsection{Existence and properties of models}

We are now prepared to state the existence of Kisin--Pappas--Zhou models, as well as the major properties we need of  these models. 

\begin{thm}[{\cite[Theorem 4.6.23]{KP2018} and \cite[Theorem 5.2.12]{KZ21}}]\label{thm:KPZ-model-properties} Suppose that $(\mb{G},\mb{X},\mc{G})$ is an acceptable parahoric Shimura datum of abelian type and. Then, there exists a system $\{\ms{S}^\mf{d}_{\mathsf{K}_p\mathsf{K}^p}(\mb{G},\mb{X})\}$ of normal flat quasi-projective $\mc{O}_E$-models of $\{\Sh_{\mathsf{K}_p\mathsf{K}^p}(\mb{G},\mb{X})_E\}$ such that the following properties hold.
\begin{enumerate}
    \item Fix neat compact open subgroups $\mathsf{K}^p$ and ${\mathsf{K}^{p}}'$ in $\mb{G}(\A_f^p)$ and an element $g$ of $\mb{G}(\A_f^p)$ with $g^{-1}\eK^p g\subseteq {\mathsf{K}^{p}}'$. Write $\mathsf{K}=\mathsf{K}_p\mathsf{K}^p$ and $\mathsf{K}'=\mathsf{K}_p{\mathsf{K}^p}'$. Then, the morphism
    \begin{equation*}
    t_{\mathsf{K},\mathsf{K}'}(g): \Sh_\eK\gx_E\to \Sh_{\eK'}\gx_E,
    \end{equation*}
    admits a (necessarily unique) finite \'etale extension
    \begin{equation*}
    t_{\mathsf{K},\mathsf{K}'}(g): \ms{S}_{\mathsf{K}}^\mf{d}(\mb{G},\mb{X})\to \ms{S}_{\mathsf{K}'}^\mf{d}(\mb{G},\mb{X}).
    \end{equation*}
    \item Writing 
    \begin{equation*}\ms{S}_{\mathsf{K}_p}^\mf{d}(\mb{G},\mb{X})\defeq \varprojlim_{\mathsf{K}^p}\ms{S}_{\mathsf{K}_p\mathsf{K}^p}^\mf{d}(\mb{G},\mb{X}),
    \end{equation*}
    one has the following \emph{extension property} relative to a characteristic $(0,p)$ discrete valuation ring $R$ over $\mc{O}_E$: the natural map
    \begin{equation*}
       \ms{S}_{\mathsf{K}_p}^\mf{d}(\mb{G},\mb{X})(R)\to \Sh_{\mathsf{K}_p}(\mb{G},\mb{X})(R[\nicefrac{1}{p}]),
    \end{equation*}
    is a bijection.
\end{enumerate}
\end{thm}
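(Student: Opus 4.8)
The plan is to organize the cited constructions: after unwinding the bookkeeping encoded in the superscript $\mf{d}$ — which records the ancillary choices (a well-adapted Hodge-type datum, a Hodge embedding of parahoric Shimura data, a lattice, and so on) entering the construction — the statement is \cite[Theorem 4.6.23]{KP2018} together with \cite[Theorem 5.2.12]{KZ21}. I would recall the three-stage structure of their argument, checking (1) and (2) at each stage in turn.

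The base case is the Siegel datum $(\mr{GSp}(\mb{V}),\mf{h}^\pm,\mr{GSp}(\Lambda))$, where $\ms{S}^\mf{d}_{\mathsf{K}}$ is taken to be (the normalization of) the moduli space of polarized abelian schemes carrying $\mathsf{K}^p$-level structure together with the parahoric chain structure at $p$. Property (1) is the classical fact that varying $\mathsf{K}^p$ adds prime-to-$p$ level structure through finite \'etale covers; property (2) follows from the N\'eron--Ogg--Shafarevich criterion, since an abelian scheme over $R[\nicefrac{1}{p}]$ whose prime-to-$p$ Tate module is (in the limit over $\mathsf{K}^p$) trivialized has good reduction, after which the polarization and chain data extend by normality of $\ms{S}^\mf{d}_{\mathsf{K}_p}$. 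For the Hodge-type case one fixes a Hodge embedding of parahoric Shimura data respecting stabilizers and defines $\ms{S}^\mf{d}_{\mathsf{K}}$ as the normalization of the Zariski closure of $\Sh_{\mathsf{K}}(\mb{G},\mb{X})_E$ inside the Siegel model; then (1) is pulled back from the Siegel side, and (2) follows because a point of $\Sh_{\mathsf{K}_p}(\mb{G},\mb{X})(R[\nicefrac{1}{p}])$ extends over $R$ in the Siegel model, necessarily lands in the closure (its generic fibre does, and $\Spec R$ is irreducible), and lifts to the normalization (as $R$ is normal and the lift already exists over the dense open $\Spec R[\nicefrac{1}{p}]$).

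The abelian-type case is the delicate one, and is where I expect the main obstacle to lie. One chooses $(\mb{G}_1,\mb{X}_1,\mc{G}_1)$ of Hodge type well-adapted to $(\mb{G},\mb{X},\mc{G})$ and builds $\ms{S}^\mf{d}_{\mathsf{K}}(\mb{G},\mb{X})$ from $\ms{S}^\mf{d}_{\mathsf{K}_1}(\mb{G}_1,\mb{X}_1)$ by the twisting construction of Deligne and Kisin — descending a product of copies of the Hodge-type model along the action of a congruence subgroup of $\mb{G}^\ad(\bQ)^+$ and keeping track of connected components via Lemma \ref{lemma:Defined-Over-E^p}. Properties (1) and (2) survive this descent: finite \'etaleness can be checked after the pro-(finite \'etale) cover by the Hodge-type model, and the extension property passes to the quotient because at the level of $\ms{S}^\mf{d}_{\mathsf{K}_p}$ (the limit over $\mathsf{K}^p$) the construction is manifestly compatible with $R$-points. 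The real difficulty — and the reason this step rests on the Bruhat--Tits input of \S\ref{ss:BT-theory} — is to match the parahoric $\mc{G}$ against $\mc{G}_1$ on the Hodge-type side at the level of $\bZ_p$-points, of connected components, and of local models, which is exactly what the ``well-adapted'' hypotheses together with Lemma \ref{lemma:buildings-ad-isom} and Propositions \ref{prop:parahoric-adjoint}--\ref{prop:fiber-product-parahoric} are designed to control.
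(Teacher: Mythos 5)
The paper itself does not prove this theorem; it states it with a citation to \cite[Theorem 4.6.23]{KP2018} and \cite[Theorem 5.2.12]{KZ21}, and \S\ref{ss:KPZ-construction} only recalls the \emph{construction} of the models $\ms{S}^\mf{d}_{\mathsf{K}}(\mb{G},\mb{X})$, not the verification of (1) and (2). Your decision to sketch the three-stage argument from the cited sources is therefore a sensible reading of the situation, and the outline you give — Siegel, then Hodge type via normalization of a Zariski closure, then abelian type via the Deligne--Kisin connected-components-and-twisting descent — is the correct skeleton.

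Two points are stated in a way that does not match the setup here, and a third is glossed over in a way that would not survive scrutiny. First, you describe the Siegel model as carrying ``the parahoric chain structure at $p$.'' In this paper, following Kisin--Zhou, the Hodge embedding is required to land in $\mathrm{GSp}(\Lambda)$ for a \emph{self-dual} lattice $\Lambda$, so the Siegel datum is hyperspecial at $p$ and the base object in \hyperref[subsub:Step-1-Construction]{Step 1} is Mumford's moduli of principally polarized abelian schemes with prime-to-$p$ level structure; no chain data enter, and the non-hyperspecial behavior is entirely carried by $\mc{G}$ inside $\mathrm{GSp}(\Lambda)$. Second, and more substantively, the finite \'etaleness of the transition maps at the Hodge-type stage is not ``pulled back from the Siegel side.'' The model $\pmb{\ms{S}}^\vn_{\wt{\mathsf{K}}_p\mathsf{K}^p}$ is the normalization of a Zariski closure, and there is a further relative normalization passing from $\wt{\mathsf{K}}_p$ to $\mathsf{K}_p$; neither operation commutes with restriction of a finite \'etale cover in a way that makes (1) automatic. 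What KP/KZ actually prove, and what \S\ref{ss:KPZ-construction} records, is that the $\mb{G}(\A_f^p)$-action on the generic fibre extends to $\pmb{\ms{S}}^\vn_{\mathsf{K}_p}(\mb{G},\mb{X})$ with $\pmb{\ms{S}}^\vn_{\mathsf{K}_p}/\mathsf{K}^p \simeq \pmb{\ms{S}}^\vn_{\mathsf{K}_p\mathsf{K}^p}$, from which (1) is deduced; establishing that extension is a genuine step. Third, your claim that at the abelian-type stage the extension property ``passes to the quotient because \dots the construction is manifestly compatible with $R$-points'' conceals the hard part: one must show that the group by which one twists and quotients acts in a way that lets $R$-points of the quotient lift, and this is exactly the content of the delicate lemma in KP2018 (and its KZ21 analogue) that makes (2) hold in the abelian-type case. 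If you are reconstructing the cited proof, these are precisely the places where the cited sources do work, and flagging them as such would make the sketch a faithful summary rather than an oversimplification.
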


We will recall below, fairly precisely, the construction of the models $\ms{S}_{\mathsf{K}}^\mf{d}(\mb{G},\mb{X})$. The superscript $\mf{d}$, which is either a very good parahoric Shimura datum of Hodge type well-adapted to $(\mb{G},\mb{X},\mc{G})$, or possibly $\varnothing$ when $(\mb{G},\mb{X},\mc{G})$ is of Hodge type. Its role is to emphasize various choices made in the construction of these models.

\subsection{The $\ms{A}$-group and the $\ms{E}$-group}\label{ss:Shim-vars-and-A-group}

Let $\eG$ be a reductive $\bQ$-group. In this section we associate to $\eG$ groups $\ms{A}(\eG)$ and $\ms{A}(\eG)^\circ$, and the so-called $\ms{E}$-group to a parahoric Shimura datum, following \cite{DeligneModulaire}. We also define analogs of these groups for $\bZ_{(p)}$-valued points following \cite{KP2018} and \cite{KZ21}, and discuss some functorial properties of these groups. These groups play an important role in the construction of the models $\ms{S}^\mf{d}_{\mathsf{K}_p\mathsf{K}^p}(\mb{G},\mb{X})$ from Theorem \ref{thm:KPZ-model-properties}.

\subsubsection{The $\ast$-product} Let us begin by recalling the $\ast$-product of Deligne (see \cite{DeligneModulaire}).

Suppose $H$ and $\Gamma$ are abstract groups equipped with actions of a group $\Delta$ by group homomorphisms. Let 
\begin{align*}
    \psi: \Gamma \to H,\quad \text{and}\quad\varphi: \Gamma \to \Delta
\end{align*}
be $\Delta$-equivariant group homomorphisms, where $\Delta$ acts on itself by inner automorphisms, and assume that $\psi$ and $\varphi$ are compatible in the sense that, for all $\gamma$ in $\Gamma$, we have the identity
\begin{align*}
    \varphi(\gamma) \cdot h = \psi(\gamma)^{-1}h\psi(\gamma),
\end{align*}
where $\cdot$ denotes the action of $\Delta$ on $H$. Then the semi-direct product $H \rtimes \Delta$ admits a quotient 
\begin{align*}
    H \ast_\Gamma \Delta \defeq H \rtimes \Delta / \{(\psi(\gamma), \varphi(\gamma)^{-1})\},
\end{align*}
where one easily checks that the subgroup $\{(\psi(\gamma), \varphi(\gamma)^{-1})\}$ of all the elements of the form $(\psi(\gamma), \varphi(\gamma)^{-1})$ for $\gamma$ in $\Gamma$ is a normal subgroup.

For the purposes of this section, we will refer to a datum $(H, \Gamma, \Delta, \psi, \varphi)$ as above as a \textit{tuple}. A morphism of tuples $(H, \Gamma, \Delta, \psi, \varphi) \to (H', \Gamma', \Delta', \psi', \varphi')$ is a triple $(f,g,h)$, consisting of group homomorphisms $f: H\to H'$ and $g: \Gamma \to \Gamma'$ equivariant relative to a group homomorphism $h: \Delta \to \Delta'$ and such that $\psi' \circ g = f \circ \psi$ and $h\circ \varphi = \varphi' \circ f$. 

The $\ast$-product satisfies the following elementary functoriality. 

\begin{lemma} \label{lemma:ast-product-functoriality}
    Any morphism of tuples 
    \begin{align*}
        (f,g,h)\colon (H, \Gamma, \Delta, \varphi) \to (H', \Gamma', \Delta', \varphi')
    \end{align*}
    induces a homomorphism of groups
    \begin{align*}
        H \ast_\Gamma \Delta \to H' \ast_{\Gamma'} \Delta'.
    \end{align*}
\end{lemma}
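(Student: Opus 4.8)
The plan is to construct the induced homomorphism directly from the map on semidirect products and then check that it descends to the quotient. First I would observe that the morphism of tuples provides group homomorphisms $f\colon H\to H'$ and $h\colon\Delta\to\Delta'$ with $f$ being $\Delta$-equivariant relative to $h$ (this is part of the definition of a morphism of tuples, since $f$ is equivariant relative to $h$ and $h$ intertwines the inner-automorphism actions). Hence the pair $(f,h)$ induces a homomorphism of semidirect products
\begin{equation*}
    f\rtimes h\colon H\rtimes\Delta\to H'\rtimes\Delta',\qquad (x,\delta)\mapsto(f(x),h(\delta)).
\end{equation*}

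Next I would verify that $f\rtimes h$ carries the normal subgroup $N\defeq\{(\psi(\gamma),\varphi(\gamma)^{-1}):\gamma\in\Gamma\}$ into $N'\defeq\{(\psi'(\gamma'),\varphi'(\gamma')^{-1}):\gamma'\in\Gamma'\}$. For $\gamma$ in $\Gamma$ we compute
\begin{equation*}
    (f\rtimes h)(\psi(\gamma),\varphi(\gamma)^{-1})=(f(\psi(\gamma)),h(\varphi(\gamma))^{-1})=(\psi'(g(\gamma)),\varphi'(g(\gamma))^{-1}),
\end{equation*}
using precisely the two compatibility identities $\psi'\circ g=f\circ\psi$ and $h\circ\varphi=\varphi'\circ g$ from the definition of a morphism of tuples. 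This element lies in $N'$, so $(f\rtimes h)(N)\subseteq N'$. Composing $f\rtimes h$ with the quotient map $H'\rtimes\Delta'\to H'\ast_{\Gamma'}\Delta'$ therefore kills $N$ and so factors through $H\ast_\Gamma\Delta=(H\rtimes\Delta)/N$, yielding the desired homomorphism $H\ast_\Gamma\Delta\to H'\ast_{\Gamma'}\Delta'$.

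There is no real obstacle here; the statement is genuinely elementary, and the only thing to be careful about is bookkeeping — in particular making sure that the compatibility conditions packaged into the definition of a morphism of tuples are exactly what is needed to send generators of $N$ to generators of $N'$, and that $f$ being equivariant relative to $h$ is what makes $f\rtimes h$ a homomorphism in the first place. (One should also note in passing that the argument is functorial: the construction respects composition of morphisms of tuples and sends the identity morphism to the identity, although the lemma as stated does not require this.)
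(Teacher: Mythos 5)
Your proof is correct, and it is the natural argument. The paper does not give a proof of this lemma (it is labeled as elementary functoriality and stated without justification), so there is no alternate route to compare against. Your verification — that equivariance of $f$ relative to $h$ makes $f\rtimes h$ a homomorphism of semidirect products, and that the two compatibility identities $\psi'\circ g = f\circ\psi$ and $h\circ\varphi = \varphi'\circ g$ send generators of $N$ to generators of $N'$ so that the map descends to the quotient — is exactly what is needed. One small observation: the paper's definition of a morphism of tuples contains a typo, writing $h\circ\varphi = \varphi'\circ f$, which does not even type-check ($\varphi'$ has source $\Gamma'$, not $H'$); you have correctly read this as $h\circ\varphi = \varphi'\circ g$, which is what the argument requires.
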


For future use, we also remark that if $X$ is a set equipped with a right action of $H \rtimes \Delta$, then the action descends to the an action of $H \ast_\Gamma \Delta$ on the quotient $X / \Gamma$, where here $\Gamma$ acts on $X$ via
\begin{equation}\label{eq:action}
    x \cdot \gamma = x \cdot (\psi(\gamma), \varphi(\gamma)^{-1}).
\end{equation}

\subsubsection{The groups $\ms{A}(\eG)$ and $\ms{A}(\eG)^\circ$} Let us now recall some notation from \cite{DeligneModulaire}.

If $\eG$ is a reductive group over $\bQ$, then we denote by $\eG(\bR)^+$ the connected component of the identity in $\eG(\bR)$ with respect to the real topology. For a subgroup $\eH$ of $\eG(\bR)$, we denote by $\eH_+$ the inverse image of $\eG^\ad(\bR)^+$ in $\eH$. We write also $\eG^\ad(\bQ)^+ = \eG^\ad(\bQ) \cap \eG^\ad(\bR)^+$.

Now suppose $\gx$ is a Shimura datum. Let $\eZ$ denote the center of $\eG$, and let $\eZ(\bQ)^-$ denote the closure of $\eZ(\bQ)$ in $\eG(\bA_f)$. We define 
\begin{align*}
    \ms{A}(\eG) \defeq \eG(\bA_f) / \eZ(\bQ)^- \ast_{\eG(\bQ)_+ / \eZ(\bQ)} \eG^\ad(\bQ)^+,
\end{align*}
where $\eG^\ad(\bQ)^+$ acts by conjugation on $\eG(\bA_f) / \eZ(\bQ)^-$ and  $\eG(\bQ)_+ / \eZ(\bQ)\to \eG(\bA_f) / \eZ(\bQ)^-$ and $\eG(\bQ)_+ / \eZ(\bQ)\to \eG^\ad(\bQ)^+$ are the obvious maps.

By \cite[Proposition 5.1]{Milne2005}, the map $\eG(\bR)^+ \to \eG^\ad(\bR)^+$ is surjective with kernel $\eZ(\bR) \cap \eG(\bR)^+$, which is contained in the center of $\eG(\bR)^+$, so the conjugation action of $\eG$ on itself induces an action of $\eG^\ad(\bQ)^+$ on $\Sh\gx$. Combining this with the action of $\eG(\bA_f)$ on $\Sh\gx$ determines a right action of $\ms{A}(\eG)$ on $\Sh\gx$. 

Denote by $\eG(\Q)_+^-$ the closure of $\eG(\bQ)_+$ in $\eG(\bA_f)$. Define
\begin{align*}
    \ms{A}(\eG)^\circ = \eG(\bQ)_+^- / \eZ(\bQ)^- \ast_{\eG(\bQ)_+/\eZ(\bQ)} \eG^\ad(\bQ)^+,
\end{align*}
where $\eG^\ad(\bQ)^+$ acts by conjugation on $\eG(\bQ)_+^- / \eZ(\bQ)^-$ and $\eG(\bQ)_+ / \eZ(\bQ)\to  \eG(\bQ)_+^- / \eZ(\bQ)^- $ and $\eG(\bQ)_+ / \eZ(\bQ)\to \eG^\ad(\bQ)^+$ are the obvious maps. Evidently $ \ms{A}(\eG)^\circ$ is a subgroup of $\ms{A}(\eG)$. But, the group $\ms{A}(\eG)^\circ$ depends only on $\eG^\der$ and not on $\eG$, since by \cite[2.1.15]{DeligneModulaire} it is given by the completion of $\eG^\ad(\bQ)^+$ with respect to the topology whose open sets are images of congruence subgroups in $\eG^\der(\bQ)$.

It is clear that both the associations $\mb{G}\mapsto\ms{A}(\mb{G})$ and $\mb{G}\mapsto\ms{A}(\mb{G})^\circ$ are functorial in maps of reductive groups $f\colon \mb{G}\to \mb{H}$ such that $f(Z(\mb{G}))\subseteq Z(\mb{H})$.

\subsubsection{The groups $\ms{A}(\bm{\cG})$ and $\ms{A}(\bm{\cG})^\circ$}
We define also analogs of $\ms{A}(\eG)$ and $\ms{A}(\eG)^\circ$ for $\bZ_{(p)}$-valued points, following \cite[\textsection 4.5.6]{KP2018} (see also \cite[\textsection 5.2.2]{KZ21}).

To begin, we record the following well-known Beauville--Lazslo-type lemma (which is a largely basic case of \stacks{0F9Q}). For a ring $A$, let us denote by $\mb{AlgGrp}_A=\mb{AlgGrp}_A^\vn$ the category of finite type affine group $A$-schemes, and by $\mb{AlgGrp}^\mr{fl}_A$ and $\mb{AlgGrp}^\mr{sm}_A$ the full subcategory of $A$-flat and $A$-smooth objects, respectively.

\begin{prop}\label{prop:BL-for-al-grps} Let $R$ be a Noetherian ring and $f$ a non-zerodivisor of $R$. Denote the $f$-adic completion of $R$ by $\wh{R}$. Then the functor
\begin{equation*}
    \mb{AlgGrp}^P_R\to \mb{AlgGrp}^P_{R[\nicefrac{1}{f}]}\times_{\mb{AlgGrp}^P_{\wh{R}[\nicefrac{1}{f}]}}\mb{AlgGrp}^P_{\wh{R}},\quad \mc{H}\mapsto (\mc{H}_{R[\nicefrac{1}{f}]},\mc{H}_{\wh{R}},\theta_\mr{nat}),
\end{equation*}
is an equivalence, where the target is the $2$-fiber product $($with the projection maps the obvious base change$)$, $\theta_\mr{nat}$ is the natural isomorphism, and $P\in\{\vn,\mr{fl},\mr{sm}\}$.
\end{prop}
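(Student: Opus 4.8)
The plan is to reduce this to a statement about modules (and algebras with extra structure) over the Beauville--Laszlo square, and then to track the group-scheme structure through that equivalence. Since everything in sight is affine, I would first observe that for any Noetherian $R$ and non-zerodivisor $f$, the Beauville--Laszlo theorem (e.g.\ \stacks{0BP2} or \stacks{05E5} in the Noetherian case, cf.\ \stacks{0F9Q}) gives an equivalence between finitely presented $R$-modules (resp.\ finite $R$-algebras, resp.\ finitely presented $R$-algebras) and the corresponding $2$-fiber product of categories over $R[\nicefrac 1f]$, $\wh R$, $\wh R[\nicefrac 1f]$, compatibly with tensor product. Applying this to Hopf algebras: a finite type affine group $R$-scheme is $\Spec$ of a finitely presented Hopf $R$-algebra, and the comultiplication, counit, and antipode are morphisms of $R$-algebras (or modules) between $\mathcal{O}(\mathcal H)$ and $\mathcal{O}(\mathcal H)\otimes_R\mathcal{O}(\mathcal H)$, $R$, $\mathcal{O}(\mathcal H)$. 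Because the base-change functors are monoidal, a descent datum for the underlying algebra together with compatible descent data for these structure maps glues uniquely; the Hopf axioms are equalities of algebra maps, hence can be checked after the faithfully flat (indeed, covering in the relevant sense) base change to $R[\nicefrac 1f]\times\wh R$. This gives full faithfulness and essential surjectivity of the functor on $\mb{AlgGrp}_R$, i.e.\ the case $P=\vn$.

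Next I would handle the properties $P\in\{\mr{fl},\mr{sm}\}$. The point is that both flatness and smoothness of a finitely presented morphism satisfy fpqc (indeed fppf) descent and are stable under base change (see \stacks{02L2}, \stacks{02VL}, \stacks{02VM}), and the pair of maps $\Spec R[\nicefrac 1f]\sqcup\Spf\text{-style completion}\to\Spec R$ is ``jointly faithfully flat'' in the precise sense needed for Beauville--Laszlo (this is exactly the content that makes the module-level statement work: $R\to R[\nicefrac 1f]\times\wh R$ is faithfully flat). So: if $\mathcal H$ over $R$ corresponds to $(\mathcal H_1,\mathcal H_2,\theta)$, then $\mathcal H\to\Spec R$ is flat (resp.\ smooth) if and only if both $\mathcal H_1\to\Spec R[\nicefrac 1f]$ and $\mathcal H_2\to\Spec\wh R$ are — the ``only if'' is base change, and the ``if'' is fpqc descent along $R\to R[\nicefrac 1f]\times\wh R$, using that $\mathcal H$ is already known to be finitely presented from the $P=\vn$ case. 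Hence the functor restricts to an equivalence on the full subcategories cut out by $P$.

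The main obstacle, and the only place requiring genuine care, is the gluing of the underlying finitely presented algebras and modules along the Beauville--Laszlo square when $R$ is merely Noetherian rather than, say, having bounded $f$-power torsion handled by hand. I would simply cite the appropriate form of the statement (\stacks{0F9Q}, or its module/algebra incarnations \stacks{0BNR}, \stacks{0BP2}) rather than reprove it; the paper already flags this as ``a largely basic case of \stacks{0F9Q}.'' Once the algebra-level equivalence is in hand, propagating it to Hopf algebras and then to the flat/smooth subcategories is formal, as sketched above. One should also remark that the comparison isomorphism $\theta_\mr{nat}$ is the evident one and that the functor is compatible with the monoidal structures so that group objects go to compatible triples of group objects; this is immediate from the construction of base change and needs no separate argument.

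\begin{proof}
Since every group scheme in sight is affine, write $\mathcal H=\Spec\mathcal{O}(\mathcal H)$ with $\mathcal{O}(\mathcal H)$ a finitely presented Hopf $R$-algebra. By the Beauville--Laszlo theorem in the Noetherian setting (see \stacks{0F9Q}, or its module- and algebra-level incarnations), the base-change functor
\begin{equation*}
    \mathrm{Mod}^{\mathrm{fp}}_R\to \mathrm{Mod}^{\mathrm{fp}}_{R[\nicefrac 1f]}\times_{\mathrm{Mod}^{\mathrm{fp}}_{\wh R[\nicefrac 1f]}}\mathrm{Mod}^{\mathrm{fp}}_{\wh R}
\end{equation*}
is an equivalence, and likewise on finitely presented $R$-algebras; moreover all these functors are symmetric monoidal, so they preserve tensor products and hence carry (co)algebra and Hopf-algebra structures to compatible triples of such. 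Unwinding, a finitely presented Hopf $R$-algebra is the same datum as a triple $(\mathcal{O}(\mathcal H_1),\mathcal{O}(\mathcal H_2),\theta)$ of finitely presented Hopf algebras over $R[\nicefrac 1f]$ and $\wh R$ together with a Hopf-algebra isomorphism $\theta$ over $\wh R[\nicefrac 1f]$: indeed, the comultiplication, counit and antipode are morphisms of algebras (resp.\ modules) and the Hopf axioms are equalities of such morphisms, so they descend uniquely and the axioms may be checked after the faithfully flat base change $R\to R[\nicefrac 1f]\times\wh R$. This proves the statement for $P=\vn$, with quasi-inverse given by gluing.

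For $P\in\{\mr{fl},\mr{sm}\}$, note first that flatness and smoothness of the finitely presented morphism $\mathcal H\to\Spec R$ are stable under base change, so if $\mathcal H$ lies in $\mb{AlgGrp}^P_R$ then $\mathcal H_{R[\nicefrac 1f]}$ and $\mathcal H_{\wh R}$ lie in the corresponding subcategories. Conversely, suppose $\mathcal H\in\mb{AlgGrp}_R$ corresponds to $(\mathcal H_1,\mathcal H_2,\theta)$ with $\mathcal H_1\to\Spec R[\nicefrac 1f]$ and $\mathcal H_2\to\Spec\wh R$ flat (resp.\ smooth). Since $R\to R[\nicefrac 1f]\times\wh R$ is faithfully flat and $\mathcal H\to\Spec R$ is already known to be of finite presentation, flatness (resp.\ smoothness) of $\mathcal H\to\Spec R$ follows by fpqc descent (\stacks{02L2}, \stacks{02VL}). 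Hence the equivalence of the $P=\vn$ case restricts to an equivalence between the full subcategories cut out by $P$, as claimed. The identification of the comparison isomorphism with $\theta_\mr{nat}$ is immediate from the construction.
\end{proof}
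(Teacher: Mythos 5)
Your proof is correct and follows essentially the same route as the paper's: both reduce to the Beauville--Laszlo/formal-glueing equivalence along the pair $(R[\nicefrac{1}{f}],\wh{R})$ for the underlying affine objects, then descend the group structure (you via monoidality of base change, the paper by descending the multiplication map and identity section directly), and both handle $P\in\{\mr{fl},\mr{sm}\}$ by base change in one direction and faithfully flat descent of properties in the other. The only cosmetic difference is that the paper packages the glueing step as fpqc descent along the cover $\Spec(\wh{R})\sqcup\Spec(R[\nicefrac{1}{f}])\to\Spec(R)$, whereas you cite the module/algebra-level theorem and transport the Hopf structure through the monoidal equivalence.
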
 
\begin{proof} Observe that $\Spec(\wh{R})\to\Spec(R)$ is flat (e.g., see \stacks{00MB}) with image precisely the vanishing locus $V(f)$ of $f$ (e.g., as follows from combining \stacks{00MC} and \stacks{00HP}). Thus, 
\begin{equation*}
    \Spec(\wh{R})\sqcup \Spec(R[\nicefrac{1}{f}])\to \Spec(R)
\end{equation*}
is an fpqc cover. Moreover, 
\begin{equation*}
    \Spec(\wh{R})\times_{\Spec(R)}\Spec(R[\nicefrac{1}{f}])\simeq \Spec(\wh{R}[\nicefrac{1}{f}]).
\end{equation*}
The fully faithfulness follows from \stacks{023Q}. The essential surjectivity follows from \stacks{0245} to descend to an affine scheme, \stacks{023Q} to descend the multiplication map and identity section, and finally \stacks{02KZ}, \stacks{02L2}, and \stacks{02VL} to descend the property $P$.
\end{proof}
 
Suppose now that that $\mb{G}$ is a reductive group over $\Q$ and $\mc{G}$ is a parahoric model of $\mb{G}_{\Q_p}$. Let $\cG^\ad$ denote the parahoric $\bZ_p$-model of $G^\ad$ as discussed in \S\ref{sss:parahoric-central-quotient}. By Proposition \ref{prop:BL-for-al-grps} there exists a unique morphism of smooth group $\Z_{(p)}$-schemes $\bm{\mc{G}}\to \bm{\mc{G}}^\mr{ad}$ modeling $\mb{G}\to\mb{G}^\mr{ad}$ and $\mc{G}\to\mc{G}^\mr{ad}$. Let $\eZ_\eG$ denote the center of $\eG$, and denote by $\bm{\mc{Z}}_{\eG}$ the closure of $\eZ_\eG$ in $\bm{\cG}$. 

Let $\bm{\mc{Z}}_{\eG}(\bZ_{(p)})^-$ and $\bm{\cG}(\bZ_{(p)})_+^-$ denote the closures of $\bm{\mc{Z}}_{\eG}(\bZ_{(p)})$ and $\bm{\cG}(\bZ_{(p)})_+$ in $\eG(\bA_f^p)$, respectively. Following \cite[\textsection 4.5.6]{KP2018} and \cite[\textsection 5.2.2]{KZ21}, we set
\begin{align*}
    \ms{A}(\bm{\cG}) \defeq \eG(\bA_f^p)/\bm{\mc{Z}}_\eG(\bZ_{(p)})^- \ast_{\bm{\cG}(\bZ_{(p)})_+ / \bm{\mc{Z}}_\eG(\bZ_{(p)})} \bm{\cG}^\ad(\bZ_{(p)})^+,
\end{align*}
and 
\begin{align*}
    \ms{A}(\bm{\cG})^\circ \defeq \bm{\cG}(\bZ_{(p)})_+^-/\bm{\mc{Z}}_\eG(\bZ_{(p)})^- \ast_{\bm{\cG}(\bZ_{(p)})_+ / \bm{\mc{Z}}_\eG(\bZ_{(p)})} \bm{\cG}^\ad(\bZ_{(p)})^+,
\end{align*}
where in the first definition we have that $\bm{\cG}^\ad(\bZ_{(p)})^+$ acts on $\eG(\bA_f^p)/\bm{\mc{Z}}_\eG(\bZ_{(p)})^-$ by conjugation and 
\begin{equation*}
    \bm{\cG}(\bZ_{(p)})_+ / \bm{\mc{Z}}_\eG(\bZ_{(p)})\to \eG(\bA_f^p)/\bm{\mc{Z}}_\eG(\bZ_{(p)})^-\quad\text{and}\quad \bm{\cG}(\bZ_{(p)})_+ / \bm{\mc{Z}}_\eG(\bZ_{(p)})\to \bm{\cG}^\ad(\bZ_{(p)})^+,
\end{equation*} 
are the obvious maps, and similarly for the second definition.

By \cite[Lemma 4.6.4 (2)]{KP2018}, $\ms{A}(\bm{\mc{G}})^\circ$ is a subgroup of $\ms{A}(\bm{\mc{G}})$ but depends only on $\bm{\mc{G}}^\der$, where $\mc{G}^\der$ denotes the parahoric model of $\mb{G}_{\Q_p}^\der$ obtained as in \S\ref{sss:parahorics-and-abelianization} and $\bm{\mc{G}}^\der$ is obtained from Proposition \ref{prop:BL-for-al-grps}.

\begin{lemma}\label{lem:functoriality-of-A}
    Let $\eG$ and $\mb{H}$ be reductive groups over $\bQ$ and set $G=\mb{G}_{\Q_p}$ and $H=\mb{H}_{\Q_p}$. Fix parahoric models $\mc{G}$ and $\mc{H}$ of $G$ and $H$ corresponding to points $x$ and $y$ in the buildings $\ms{B}(G,\bQ_p)$ and $\ms{B}(H,\bQ_p)$, respectively. Suppose $f: \eG \to \mb{H}$ is a homomorphism of reductive $\bQ$-groups carrying $\eZ_\eG$ into $\eZ_{\mb{H}}$, and suppose that
    \begin{align*}
        f_\ast: \ms{B}(G,\bQ_p^\ur) \to \ms{B}(H, \bQ_p^\ur)
    \end{align*}
    is a map of buildings which is equivariant for the map $f: G(\bQ_p^\ur) \to H(\bQ_p^\ur)$ and which sends $x$ to $y$. Then $f$ induces group homomorphisms
    \begin{align*}
        \ms{A}(f)\colon \ms{A}(\bm{\cG}) \to \ms{A}(\bm{\mc{H}}) \quad \text{and}\quad \ms{A}(f)^\circ\colon \ms{A}(\bm{\mc{G}})^\circ \to \ms{A}(\bm{\mc{H}})^\circ.
    \end{align*}
\end{lemma}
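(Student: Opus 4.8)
The plan is to present both $\ms{A}(\bm{\cG})$ and $\ms{A}(\bm{\mc{H}})$, and likewise their $\circ$-variants, as $\ast$-products of \emph{tuples} in the sense of \S\ref{ss:Shim-vars-and-A-group}, to produce a morphism of tuples induced by $f$, and then to invoke Lemma~\ref{lemma:ast-product-functoriality}. Concretely this reduces the statement to checking that $f$ induces compatible group homomorphisms on each constituent group appearing in the definitions --- namely $\eG(\bA_f^p)/\bm{\mc{Z}}_{\mb{G}}(\Z_{(p)})^-$ (resp.\ $\bm{\cG}(\Z_{(p)})_+^-/\bm{\mc{Z}}_{\mb{G}}(\Z_{(p)})^-$ for the $\circ$-variant), $\bm{\cG}(\Z_{(p)})_+/\bm{\mc{Z}}_{\mb{G}}(\Z_{(p)})$, and $\bm{\cG}^\ad(\Z_{(p)})^+$ --- in a way compatible with the structure maps $\psi$ and $\varphi$ and with the conjugation actions.

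First I would construct, from $f$, a morphism of integral models $\bm{\cG}\to\bm{\mc{H}}$ together with a compatible map $\bm{\cG}^\ad\to\bm{\mc{H}}^\ad$. Since $f_\ast$ carries $x$ to $y$ equivariantly for $f\colon G(\brqp)\to H(\brqp)$ and the Kottwitz homomorphism is functorial, the parahoric presentation $\mc{G}(\brzp)=G(\brqp)^0\cap\mr{Stab}_{G(\brqp)}(x)$ gives $f(\mc{G}(\brzp))\subseteq\mc{H}(\brzp)$, so by \cite[Corollary 2.10.10]{KalethaPrasad} (exactly as in \S\ref{sss:parahoric-central-quotient}) the map $f\colon G\to H$ lifts uniquely to a morphism $\mc{G}\to\mc{H}$ of parahoric $\Z_p$-group schemes. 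Using the canonical identifications $\ms{B}(G,\brqp)=\ms{B}(G^\ad,\brqp)$ and $\ms{B}(H,\brqp)=\ms{B}(H^\ad,\brqp)$ from the proof of Lemma~\ref{lemma:buildings-ad-isom}, the induced building map $f^\ad_\ast$ sends the image of $x$ to the image of $y$, so the same argument lifts $f^\ad$ to $\mc{G}^\ad\to\mc{H}^\ad$; moreover uniqueness of parahoric lifts forces the square with vertical arrows the canonical maps $\mc{G}\to\mc{G}^\ad$, $\mc{H}\to\mc{H}^\ad$ to commute, since its two composites are lifts of the same map $G\to H^\ad$. Applying Proposition~\ref{prop:BL-for-al-grps} with $R=\Z_{(p)}$ and the non-zerodivisor $p$ to glue these $\Z_p$-morphisms with the $\Q$-morphisms $\mb{G}\to\mb{H}$ and $\mb{G}^\ad\to\mb{H}^\ad$ along $\Spec\Q_p$, I obtain a commuting square of smooth group $\Z_{(p)}$-schemes
\begin{equation*}
\begin{tikzcd}
\bm{\cG}\arrow[r]\arrow[d] & \bm{\mc{H}}\arrow[d] \\
\bm{\cG}^\ad\arrow[r] & \bm{\mc{H}}^\ad
\end{tikzcd}
\end{equation*}
lifting $f$ and $f^\ad$. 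In particular $f$ carries $\bm{\cG}(\Z_{(p)})$ into $\bm{\mc{H}}(\Z_{(p)})$ and $\bm{\cG}^\ad(\Z_{(p)})$ into $\bm{\mc{H}}^\ad(\Z_{(p)})$; and since $f(\eZ_{\mb{G}})\subseteq\eZ_{\mb{H}}$, taking Zariski closures in these models yields $f(\bm{\mc{Z}}_{\mb{G}})\subseteq\bm{\mc{Z}}_{\mb{H}}$, hence $f(\bm{\mc{Z}}_{\mb{G}}(\Z_{(p)}))\subseteq\bm{\mc{Z}}_{\mb{H}}(\Z_{(p)})$.

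Next I would check that $f$ respects the remaining topological and component decorations. As a homomorphism of algebraic $\Q$-groups, $f$ is continuous for the $p$-adic, adelic and real topologies and carries identity components to identity components, so it restricts to $\eG(\bA_f^p)\to\mb{H}(\bA_f^p)$, carries $\bm{\cG}(\Z_{(p)})_+$ into $\bm{\mc{H}}(\Z_{(p)})_+$ and $\bm{\cG}^\ad(\Z_{(p)})^+$ into $\bm{\mc{H}}^\ad(\Z_{(p)})^+$, and carries the closures $\bm{\mc{Z}}_{\mb{G}}(\Z_{(p)})^-$ and $\bm{\cG}(\Z_{(p)})_+^-$ into $\bm{\mc{Z}}_{\mb{H}}(\Z_{(p)})^-$ and $\bm{\mc{H}}(\Z_{(p)})_+^-$, respectively. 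That these induced maps commute with the structure maps $\psi$, $\varphi$ and with the adjoint conjugation actions is immediate from the fact that $f$ is a group homomorphism compatible with $f^\ad$. Thus one obtains a morphism of tuples from the tuple computing $\ms{A}(\bm{\cG})$ to the one computing $\ms{A}(\bm{\mc{H}})$, and a sub-morphism between the tuples computing the $\circ$-variants, and Lemma~\ref{lemma:ast-product-functoriality} produces $\ms{A}(f)$ and $\ms{A}(f)^\circ$; functoriality of the $\ast$-product makes $\ms{A}(f)^\circ$ the restriction of $\ms{A}(f)$ along the inclusions $\ms{A}(\bm{\cG})^\circ\hookrightarrow\ms{A}(\bm{\cG})$ and $\ms{A}(\bm{\mc{H}})^\circ\hookrightarrow\ms{A}(\bm{\mc{H}})$.

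The step I expect to require the most care is the construction in the second paragraph: producing $\bm{\cG}\to\bm{\mc{H}}$ \emph{together with} the compatible adjoint-level map $\bm{\cG}^\ad\to\bm{\mc{H}}^\ad$ and the commutativity of the square, which is where the building hypothesis is genuinely used and where one must combine the uniqueness of parahoric lifts with the Beauville--Laszlo gluing of Proposition~\ref{prop:BL-for-al-grps}. Once that square is available, passage to $\Z_{(p)}$-points combined with the continuity of $f$ on adelic and real points makes the rest a formal verification.
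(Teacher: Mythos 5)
Your proposal is correct and follows essentially the same route as the paper's proof: lift $f$ to a morphism $\mc{G}\to\mc{H}$ of parahoric models via the stabilizer presentation, functoriality of the Kottwitz homomorphism, and \cite[Corollary 2.10.10]{KalethaPrasad}; lift $f^\ad$ similarly and use uniqueness of parahoric extensions together with the equivariant bijections of Lemma~\ref{lemma:buildings-ad-isom} to get the commuting square; then pass to $\Z_{(p)}$-points, check compatibility with the structure maps and the conjugation action, and invoke Lemma~\ref{lemma:ast-product-functoriality}. Your write-up is in places slightly more explicit than the paper (e.g.\ invoking Proposition~\ref{prop:BL-for-al-grps} to produce the $\Z_{(p)}$-model of the morphism, and spelling out that both the $G(\brqp)^0$-condition and the stabilizer condition are respected), but the argument is the same.
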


\begin{proof}
    Since $f$ carries $\eZ_\eG$ into $\eZ_\mb{H}$, it follows that $f$ induces a homomorphism of groups
    \begin{align*}
        f\colon \eG(\bA_f^p)/ \bm{\mc{Z}}_\eG(\bZ_{(p)})^- \to \mb{H}(\bA_f^p)/ \bm{\mc{Z}}_{\mb{H}}(\bZ_{(p)})^-.
    \end{align*}
     The equivariance of $f_\ast$ implies that $f(\operatorname{Stab}_{G(\bQ_p^\ur)}(x)) \subseteq \operatorname{Stab}_{H(\bQ_p^\ur)}(y)$, and hence that $f(\mc{G}(\bZ_p^\ur))\subseteq \mc{H}(\bZ_p^\ur)$. Thus $f$ extends to a morphism $f: \mc{G} \to \mc{H}$ by \cite[Corollary 2.10.10]{KalethaPrasad}. We similarly obtain an extension $f^\ad: \cG^\ad \to \mc{H}^\ad$ of $f^\ad:G^\ad \to H^\ad$, which induces 
    \begin{align*}
        f^\ad: \bm{\mc{G}}^\ad(\bZ_{(p)})^+ \to \bm{\mc{H}}^\ad(\bZ_{(p)})^+.
    \end{align*}
    Moreover, as $f$ extends to a morphism $\mc{G} \to \mc{H}$, it follows that $f$ restricts to homomorphisms
    \begin{align*}
        f' \colon \bm{\mc{G}}(\bZ_{(p)})_+/\bm{\mc{Z}}_\mb{G}(\bZ_{(p)}) \to \bm{\mc{H}}(\bZ_{(p)})_+/\bm{\mc{Z}}_\mb{H}(\bZ_{(p)}),
    \end{align*}
    and
    \begin{align*}
        f^\circ \colon \bm{\mc{G}}(\bZ_{(p)})_+^-/\bm{\mc{Z}}_\mb{G}(\bZ_{(p)})^- \to \bm{\mc{H}}(\bZ_{(p)})_+^-/\bm{\mc{Z}}_\mb{H}(\bZ_{(p)})^-.
    \end{align*}
    By Lemma \ref{lemma:ast-product-functoriality}, it remains only to check that $(f, f', f^\ad)$ and $(f^\circ, f', f^\ad)$ induce morphisms of tuples. By uniqueness of the extension of $G \to H \to H^\ad$ to parahoric models and equivariance of the bijections $\ms{B}(G,\bQ_p^\ur) \isomto \ms{B}(G^\ad, \bQ_p^\ur)$ and $\ms{B}(H, \bQ_p^\ur) \isomto \ms{B}(H^\ad, \bQ_p^\ur)$ from Lemma \ref{lemma:buildings-ad-isom}, we see that the diagram  
    
    \begin{equation*}
        \begin{tikzcd}
        \bm{\mc{G}}(\bZ_{(p)})_+/\bm{\mc{Z}}_\mb{G}(\bZ_{(p)})
            \arrow[r, "f"] \arrow[d]
        & \bm{\mc{H}}(\bZ_{(p)})_+/\bm{\mc{Z}}_\mb{H}(\bZ_{(p)})
            \arrow[d]
        \\ \bm{\mc{G}}^\ad(\bZ_{(p)})^+
            \arrow[r, "f^\ad"]
        & \bm{\mc{H}}^\ad(\bZ_{(p)})^+
        \end{tikzcd}
    \end{equation*}
    commutes. 
    Thus it remains only to show that $f$ is equivariant for the action of $\bm{\mc{G}}^\ad(\bZ_{(p)})^+$. But this is clear since the action of $\bm{\mc{G}}^\ad(\bZ_{(p)})^+ \subseteq \mb{G}(\bQ)$ on $\mb{G}(\bA_f^p)/\bm{\mc{Z}}_\mb{G}(\bZ_{(p)})^-$ is via conjugation, and $f\colon \eG(\bA_f^p)/ \bm{\mc{Z}}_\eG(\bZ_{(p)})^- \to \mb{H}(\bA_f^p)/ \bm{\mc{Z}}_{\mb{H}}(\bZ_{(p)})^-$ is a group homomorphism.
\end{proof}

\subsubsection{The group $\ms{E}(\bm{\mc{G}})$}

Suppose now that $(\mb{G},\mb{X}, \mc{G})$ is a parahoric Shimura datum with reflex field $\mb{E}$. Let $\eK_p = \mc{G}(\bZ_p)$. Fix a connected component $\mb{X}^+$ of $\mb{X}$, which determines connected Shimura varieties $\Sh\gx^+ \subseteq \Sh\gx$ and $\Sh_{\eK_p}\gx^+ \subseteq \Sh_{\eK_p}\gx$. 

By Lemma \ref{lemma:Defined-Over-E^p}, the action of $\Gal(\ov{\eE}/ \eE)$ on $\Sh_{\eK_p}\gx^+$ factors through $\Gal(\eE^p/ \eE)$. By abuse of notation, we will use $\Sh_{\eK_p}\gx^+$ to refer to the $\eE^p$-scheme obtained by descent.

Let $\ms{E}(\bm{\mc{G}}) \subseteq \ms{A}(\bm{\mc{G}}) \times \Gal(\eE^p / \eE)$ denote the stabilizer of $\Sh_{\eK_p}\gx^+$ in $\Sh_{\eK_p}\gx$. By \cite[Lemma 4.6.6]{KP2018}, $\ms{E}(\bm{\mc{G}})$ is an extension of $\Gal(\eE^p/\eE)$ by $\ms{A}(\bm{\mc{G}})^\circ$, and there is a canonical isomorphism
\begin{equation*}
    \ms{A}(\bm{\mc{G}})\ast_{\ms{A}(\bm{\mc{G}})^\circ} \ms{E}(\bm{\mc{G}}) \isomto \ms{A}(\bm{\mc{G}}) \times \Gal(\eE^p / \eE),
\end{equation*}
where an element of $\ms{E}(\bm{\mc{G}})$ acts on $\ms{A}(\bm{\mc{G}})$ via conjugation by its image in $\ms{A}(\bm{\mc{G}})$.\footnote{Explicitly, the isomorphism  $\ms{A}(\bm{\mc{G}}) \times \Gal(\mb{E}^p / \mb{E}) \to \ms{A}(\bm{\mc{G}}) \ast_{\ms{A}(\bm{\mc{G}})^\circ} \ms{E}(\bm{\mc{G}})$ is given by the association $(a, \sigma) \mapsto (a \bar{e}^{-1}, e)$, where $e$ denotes a lift of $\sigma$ to $\ms{E}(\bm{\mc{G}})$, and $\bar{e}$ denotes the image of $e$ in $\ms{A}(\bm{\mc{G}})$.}

The $\ms{E}$-group satisfies a functoriality akin to that of the $\ms{A}$-groups. 
\begin{lemma}\label{lem:functoriality-of-E}
    Let $f: (\mb{G}_1, \mb{X}_1,\mc{G}_1) \to (\mb{G}, \mb{X}, \mc{G})$ be a morphism of parahoric Shimura data. Suppose $\mc{G}_1$ and $\mc{G}$ correspond to points $x_1$ and $x$ in the buildings $\ms{B}(G_1, \bQ_p)$ and $\ms{B}(G,\bQ_p)$, respectively. Suppose the induced $f: \mb{G}_1 \to \mb{G}$ carries $\mb{Z}_\mb{G}$ into $\mb{Z}_\mb{H}$, and suppose that
    \begin{align*}
        f_\ast\colon \ms{B}(G_1, \bQ_p) \to \ms{B}(G,\bQ_p)
    \end{align*}
    is a map of buildings which is equivariant for the map $f\colon G_1(\bQ_p^\ur) \to G(\bQ_p^\ur)$ and which sends $x_1$ to $x$. Then $f$ induces a group homomorphism
    \begin{align*}
        \ms{E}(f): \ms{E}(\bm{\mc{G}}_1) \to \ms{E}(\bm{\mc{G}}).
    \end{align*}
\end{lemma}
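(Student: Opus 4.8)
The plan is to construct $\ms{E}(f)$ as the restriction to $\ms{E}(\bm{\mc{G}}_1)$ of the product $\ms{A}(f)\times\mathrm{res}$, where $\ms{A}(f)\colon\ms{A}(\bm{\mc{G}}_1)\to\ms{A}(\bm{\mc{G}})$ is the homomorphism supplied by Lemma \ref{lem:functoriality-of-A} and $\mathrm{res}$ is an appropriate restriction map on Galois groups. First note that the hypotheses on $f$ and on $f_\ast$ here are exactly those of Lemma \ref{lem:functoriality-of-A}, so $\ms{A}(f)$ and $\ms{A}(f)^\circ$ exist, with $\ms{A}(f)$ carrying $\ms{A}(\bm{\mc{G}}_1)^\circ$ into $\ms{A}(\bm{\mc{G}})^\circ$. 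Next, since $\mb{E}\subseteq\mb{E}_1$ and $\mb{E}^p/\mb{E}$ is unramified at every place over $p$, the compositum $\mb{E}^p\mb{E}_1$ is unramified over $\mb{E}_1$ at every place over $p$, being a base change of $\mb{E}^p/\mb{E}$; hence $\mb{E}^p\subseteq\mb{E}_1^p$, so restriction of field automorphisms descends to $\mathrm{res}\colon\Gal(\mb{E}_1^p/\mb{E}_1)\to\Gal(\mb{E}^p/\mb{E})$. Thus $\ms{A}(f)\times\mathrm{res}$ is a homomorphism $\ms{A}(\bm{\mc{G}}_1)\times\Gal(\mb{E}_1^p/\mb{E}_1)\to\ms{A}(\bm{\mc{G}})\times\Gal(\mb{E}^p/\mb{E})$, and everything reduces to showing that it carries the subgroup $\ms{E}(\bm{\mc{G}}_1)$ into $\ms{E}(\bm{\mc{G}})$.

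For this, recall from \cite[\S5]{DeligneModulaire} that $f$ induces a morphism of $\mb{E}_1$-schemes $\phi\colon\Sh_{\mathsf{K}_{p,1}}(\mb{G}_1,\mb{X}_1)\to\Sh_{\mathsf{K}_p}(\mb{G},\mb{X})_{\mb{E}_1}$, given on $\mathbb{C}$-points by $[x_1,g_1]\mapsto[f\circ x_1,f(g_1)]$. Being defined over $\mb{E}_1$, this $\phi$ is equivariant for $\mathrm{res}$ on Galois groups; and it is equivariant for the $\ms{A}(\bm{\mc{G}}_1)$-action through $\ms{A}(f)$, since $\phi$ is Hecke-equivariant for $\mb{G}_1(\A_f^p)\to\mb{G}(\A_f^p)$ and equivariant for the conjugation actions $\bm{\mc{G}}_1^{\ad}(\Z_{(p)})^+\to\bm{\mc{G}}^{\ad}(\Z_{(p)})^+$, these being precisely the two actions assembled into $\ms{A}(f)$ via Lemma \ref{lemma:ast-product-functoriality}. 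Assuming, as we may, that the fixed connected components satisfy $f(\mb{X}_1^+)\subseteq\mb{X}^+$, the $\mathbb{C}$-point formula shows that $\phi$ carries $\Sh_{\mathsf{K}_{p,1}}(\mb{G}_1,\mb{X}_1)^+$ into the connected component $\Sh_{\mathsf{K}_p}(\mb{G},\mb{X})^+$.

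Now take $(a_1,\sigma_1)\in\ms{E}(\bm{\mc{G}}_1)$, so that $(a_1,\sigma_1)$ maps $\Sh_{\mathsf{K}_{p,1}}(\mb{G}_1,\mb{X}_1)^+$ onto itself. Applying $\phi$ and using its equivariance, $(\ms{A}(f)(a_1),\mathrm{res}(\sigma_1))$ maps the nonempty set $\phi(\Sh_{\mathsf{K}_{p,1}}(\mb{G}_1,\mb{X}_1)^+)$ onto itself; since this set lies in the connected component $\Sh_{\mathsf{K}_p}(\mb{G},\mb{X})^+$, the component $(\ms{A}(f)(a_1),\mathrm{res}(\sigma_1))\cdot\Sh_{\mathsf{K}_p}(\mb{G},\mb{X})^+$ meets $\Sh_{\mathsf{K}_p}(\mb{G},\mb{X})^+$ and therefore equals it. Hence $(\ms{A}(f)(a_1),\mathrm{res}(\sigma_1))\in\ms{E}(\bm{\mc{G}})$, and $\ms{E}(f)\defeq(\ms{A}(f)\times\mathrm{res})|_{\ms{E}(\bm{\mc{G}}_1)}$ is the sought homomorphism; by construction it is a morphism of the extensions of \cite[Lemma 4.6.6]{KP2018}, inducing $\ms{A}(f)^\circ$ on $\ms{A}(\bm{\mc{G}}_1)^\circ$ and $\mathrm{res}$ on $\Gal(\mb{E}_1^p/\mb{E}_1)$.

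The one genuinely delicate point is the handling of the fixed connected components $\mb{X}^+$, $\mb{X}_1^+$: one must either build the compatibility $f(\mb{X}_1^+)\subseteq\mb{X}^+$ into the hypotheses (it holds for the $f$ arising in our applications) or observe that a change of $\mb{X}^+$ alters $\ms{E}(\bm{\mc{G}})$ only by an inner automorphism, and one must use that the group action defining $\ms{E}(\bm{\mc{G}})$ is the induced action on the set of geometric connected components, so that $\phi$ mapping one connected piece into another is enough to transport stabilizers. The remaining inputs — the equivariance of $\phi$ and the inclusion $\mb{E}^p\subseteq\mb{E}_1^p$ — are routine.
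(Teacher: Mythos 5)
Your proposal is correct and is essentially the paper's own argument, just written out in full: the paper's proof is the one-liner ``this follows from the definition of $\ms{E}(\bm{\mc{G}})$ by Lemma \ref{lem:functoriality-of-A}, along with the fact that $f$ is a morphism of Shimura data,'' which is exactly your construction of $\ms{E}(f)$ as the restriction of $\ms{A}(f)\times\mathrm{res}$ to the stabilizer of the chosen component, verified via the equivariance of the induced map of Shimura varieties. Your flagged point about the compatibility of the chosen connected components $\mb{X}_1^+$ and $\mb{X}^+$ is handled in the paper by fixing components that correspond in $\mb{X}_1^{\ad}\simeq\mb{X}^{\ad}$ wherever the lemma is invoked.
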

\begin{proof}
    This follows from the definition of $\ms{E}(\bm{\mc{G}})$ by Lemma \ref{lem:functoriality-of-A}, along with the fact that $f$ is a morphism of Shimura data.
\end{proof}

Suppose now we have two parahoric Shimura data $(\mb{G}_1, \mb{X}_1, \mc{G}_1)$ and $(\mb{G}, \mb{X}, \mc{G})$, and that there is a central isogeny $\alpha\colon \mb{G}_1^\der \to \mb{G}^\der$ which induces an isomorphism of Shimura data
\begin{align*}
    (\mb{G}_1^\ad, \mb{X}_1^\ad) \isomto (\mb{G}^\ad, \mb{X}^\ad).
\end{align*}
Let $x_1$ in $\ms{B}(G_1, \bQ_p)$ and $x$ in $\ms{B}(G,\bQ_p)$ denote points in the buildings of $G_1$ and $G$, respectively, which correspond to $\mc{G}_1$ and $\mc{G}$. Let us further assume that $x_1$ and $x$ correspond to the same point $x_1^\ad = x^\ad$ in the building $\ms{B}(G_1^\ad, \bQ_p) \simeq \ms{B}(G^\ad, \bQ_p)$.

Fix a connected component $\mb{X}_1^+$ as above, which determines a group $\ms{E}(\bm{\mc{G}}_1)$. By the real approximation theorem, we may assume that the image of $\mb{X} \subseteq \mb{X}^\ad$ contains $\mb{X}_1^+$ by replacing $\mb{X}$ by its conjugate by some element of $\mb{G}^\ad(\bQ)$. We set $\mb{E}' = \mb{E}_1\mb{E}$, and define $\ms{E}_{\mb{E}'}(\bm{\mc{G}}_1)$ to be the fiber product
\begin{equation}
    \ms{E}_{\mb{E}'}(\bm{\mc{G}}_1) \defeq \ms{E}(\bm{\mc{G}}_1) \times_{\Gal(\mb{E}_1^p/\mb{E}_1)} \Gal(\mb{E}'^p/\mb{E}').
\end{equation}
Then $\ms{E}_{\mb{E}'}(\bm{\mc{G}}_1)$ is an extension of $\Gal(\mb{E}'^p/\mb{E}')$ by $\ms{A}(\bm{\mc{G}}_1)^\circ$, and by \cite[Lemma 4.6.9]{KP2018}, there is a natural map of extensions
\begin{equation}\label{eq:msE-map}
    \ms{E}_{\mb{E}'}(\bm{\mc{G}}_1) \to \ms{E}(\bm{\mc{G}}).
\end{equation}
In particular, it follows that there is an isomorphism
\begin{equation}\label{eq:star-isom}
    \ms{A}(\bm{\mc{G}}) \ast_{\ms{A}(\bm{\mc{G}}_1)^\circ} \ms{E}_{\mb{E}'}(\bm{\mc{G}}_1) \isomto \ms{A}(\bm{\mc{G}}) \times \Gal(\mb{E}'^p / \mb{E}').
\end{equation}

\subsection{Construction of Kisin--Pappas--Zhou models}\label{ss:KPZ-construction} We now briefly recall the construction of the Kisin--Pappas--Zhou models $\ms{S}_{\mathsf{K}}^\mf{d}(\mb{G},\mb{X})$ from Theorem \ref{thm:KPZ-model-properties}, focusing on the aspects most relevant for our purposes. We encourage the reader to consult the relevant parts of \cite{KP2018} and \cite{KZ21} for details.

Throughout this section we fix the following data/notation:
\vspace*{3 pt}

\begin{multicols}{2}
\begin{itemize}[leftmargin=.4in]
\item $(\mb{G},\mb{X})$ is a Shimura datum,
\item $\mb{E}$ is the reflex field of $(\mb{G},\mb{X})$,
\item $v$ is a place of $\mb{E}$ lying over $p$,

\item $\mc{O}_{(v)}$ is the localization of $\mc{O}_{\mb{E}}$ at $v$,
\item $E=\mb{E}_v$,
\item $G=\mb{G}_{\Q_p}$,
\item $\mc{G}=\mc{G}_x^\circ$ is a parahoric model of $G$,
\item $\mathsf{K}_p\defeq \mc{G}(\bb{Z}_p)$,
\end{itemize}
\end{multicols}

\vspace*{3 pt}

\noindent We refer to \S\ref{ss:BT-theory} and \S\ref{ss:Shim-vars-and-A-group} for the meaning of these terms, as well as other notation related to Bruhat--Tits theory and Shimura varieties.

\medskip

The construction of Kisin--Pappas--Zhou models is carried out in a four step process.

\medskip

\subsubsection*{Step 1: Global construction for Siegel type} \label{subsub:Step-1-Construction} Suppose that $(\mr{GSp}(\mb{V}),\mf{h}^\pm)$ is a Siegel datum. Choose a $\Z_p$-lattice $\Lambda\subseteq \mb{V}_{\Q_p}$ and set
\begin{equation*}
    K_p=\mr{GSp}(\Lambda)\subseteq \mr{GSp}(\mb{V}\otimes_\Q \Q_p).
\end{equation*}
For $\mathsf{K}^p$ a neat compact open subgroups of $\mathrm{GSp}(\mathbf{V}\otimes_\Q\bb{A}^f_p)$, we set $\pmb{\mathscr{S}}_{\mathsf{K}_p\eK^p}(\mr{GSp}(\mb{V}),\mf{h}^\pm)$ to be the $\Z_{(p)}$-scheme given by Mumford's moduli of principally polarized abelian varieties with $\mathsf{K}^p$-level structure (see \cite[\S4]{DeligneModulaire}). These form a $\mr{GSp}(\mb{V}\otimes_\Q \A_f^p)$-system of $\Z_{(p)}$-schemes.

\medskip

\subsubsection*{Step 2: Construction for Hodge type data} \label{subsub:Step-2-Construction}

Let us now assume that $(\mb{G},\mb{X},\mc{G})$ is a parahoric Hodge type datum and let us fix a parahoric Hodge embedding
\begin{equation*}
    \iota\colon (\mb{G},\mb{X},\mc{G})\to (\mr{GSp}(\mb{V}),\mf{h}^\pm,\mr{GSp}(\Lambda)),
\end{equation*}
which respects stabilizers. As in \cite[Lemma 2.10]{Kisin2010} we may choose a neat compact open subgroup $\mathsf{K}_1^p\subseteq \mathbf{GSp}(\mathbf{V}\otimes_\Q\bb{A}_f^p)$ such that $\iota(\wt{\mathsf{K}}_p\mathsf{K}^p)\subseteq \mr{GSp}(\Lambda)\mathsf{K}^p_1$, and such that the map
\begin{equation}\label{eq:very-good-Hodge-emb}
    \Sh_{\wt{\mathsf{K}}_p\mathsf{K}^p}(\mb{G},\mb{X})\to \Sh_{\mr{GSp}(\Lambda)\mathsf{K}_1^p}(\mr{GSp}(\mb{V}),\mf{h}^\pm)_{\mb{E}},
\end{equation}
is a closed embedding. 

We then set 
\begin{itemize}
    \item $\pmb{\ms{S}}_{\wt{\mathsf{K}}_p\mathsf{K}^p}^\vn(\mb{G},\mb{X})$ to be the normalization of the Zariski closure of $\Sh_{\wt{\mathsf{K}}_p\mathsf{K}^p}(\mb{G},\mb{X})$ in the scheme $\pmb{\ms{S}}_{\mr{GSp}(\Lambda)\mathsf{K}_1^p}(\mr{GSp}(\mb{V}),\mf{h}^\pm)_{\mc{O}_{(v)}}$, via the embedding in \eqref{eq:very-good-Hodge-emb},
    \item $\pmb{\ms{S}}_{\mathsf{K}_p\mathsf{K}^p}^\vn(\mb{G},\mb{X})$ to be the normalization (in the sense of \stacks{0BAK}) of $\pmb{\ms{S}}_{\wt{\mathsf{K}}_p\mathsf{K}^p}^\vn(\mb{G},\mb{X})$ relative to finite map 
    \begin{equation*}
        \Sh_{\mathsf{K}_p\mathsf{K}^p}(\mb{G},\mb{X})\to \Sh_{\wt{\mathsf{K}}_p\mathsf{K}^p}(\mb{G},\mb{X}).
    \end{equation*}
\end{itemize}

\begin{rmk}\label{rem:qproj} As each $\pmb{\ms{S}}_{\mr{GSp}(\Lambda)\mathsf{K}_1^p}(\mr{GSp}(\mb{V}),\mf{h}^\pm)_{\mc{O}_{(v)}}$ is quasi-projective over $\Z_{(p)}$ (see \cite[Theorem 7.9]{MumfordGIT}), Zariski closed embeddings are quasi-projective, normalization maps are finite in our setting (see \stacks{035R} and \stacks{07QW}) and thus quasi-projective, we deduce that each $\pmb{\ms{S}}_{\mathsf{K}_p\mathsf{K}^p}^\vn(\mb{G},\mb{X})$ is a quasi-projective $\mc{O}_{(v)}$-scheme.
\end{rmk}

As Zariski closure and normalization are functorial constructions, we see that $\{\pmb{\ms{S}}_{\mathsf{K}_p\mathsf{K}^p}^\vn(\mb{G},\mb{X})\}_{\eK^p}$ form a projective system. We then further set 
\begin{equation*}
\pmb{\ms{S}}_{\mathsf{K}_p}^\vn(\mb{G},\mb{X})\defeq \varprojlim_{\mathsf{K}^p}\pmb{\ms{S}}_{\mathsf{K}_p\mathsf{K}^p}^\vn(\mb{G},\mb{X}).
\end{equation*}
The continuous $\mb{G}(\A_f^p)$-action of $\Sh_{\mathsf{K}_p}(\mb{G},\mb{X})_E$ extends to a continuous action on $\pmb{\ms{S}}_{\mathsf{K}_p}^\vn(\mb{G},\mb{X})$ with the property that
\begin{equation*}
\pmb{\ms{S}}_{\mathsf{K}_p}^\vn(\mb{G},\mb{X})/\mathsf{K}^p\simeq \pmb{\ms{S}}_{\mathsf{K}_p\mathsf{K}^p}^\vn(\mb{G},\mb{X}),
\end{equation*}
compatibly in $\mathsf{K}^p$. We finally set
\begin{equation*}
    \ms{S}_{\mathsf{K}_p}^\vn(\mb{G},\mb{X})\defeq \pmb{\ms{S}}_{\mathsf{K}_p}^\vn(\mb{G},\mb{X})_{\mc{O}_v} \ \text{ and } \ \ms{S}_{\mathsf{K}_p\mathsf{K}^p}^\vn(\mb{G},\mb{X})\defeq \pmb{\ms{S}}_{\mathsf{K}_p\mathsf{K}^p}^\vn(\mb{G},\mb{X})_{\mc{O}_v},
\end{equation*}
which is a scheme over $\mc{O}_v$ with $\mb{G}(\A_f^p)$-action and a scheme over $\mc{O}_v$, respectively.

A priori, these schemes depend on the choice of parahoric Hodge embedding which respects stabilizers. That said, the following shows that the not case, thus justifying the omission of the parahoric Hodge embedding from the notation. 

\begin{prop}[{\cite[Corollary 5.3.3]{KZ21}, \cite[Theorem 4.5.2]{PR2021}, and \cite{DvHKZ}}] The objects  $\ms{S}_{\mathsf{K}_p}^\vn(\mb{G},\mb{X})$, $ \pmb{\ms{S}}_{\mathsf{K}_p}^\vn(\mb{G},\mb{X})$, $\ms{S}_{\mathsf{K}_p\mathsf{K}^p}^\vn(\mb{G},\mb{X})$, and $\pmb{\ms{S}}_{\mathsf{K}_p\mathsf{K}^p}^\vn(\mb{G},\mb{X})$ are canonically independent of parahoric Hodge embedding.
\end{prop}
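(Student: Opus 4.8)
The plan is to make a sequence of reductions and then settle one clean case. Since $\ms{S}^\vn_{\mathsf{K}_p\mathsf{K}^p}(\mb{G},\mb{X})$ is the base change to $\mc{O}_v$ of $\pmb{\ms{S}}^\vn_{\mathsf{K}_p\mathsf{K}^p}(\mb{G},\mb{X})$, and the $\mathsf{K}_p$-level objects are inverse limits over $\mathsf{K}^p$ of the $\mathsf{K}_p\mathsf{K}^p$-level ones, it suffices to treat $\pmb{\ms{S}}^\vn_{\mathsf{K}_p\mathsf{K}^p}(\mb{G},\mb{X})$. As the passage from $\wt{\mathsf{K}}_p$-level to $\mathsf{K}_p$-level is the relative normalization along the fixed finite morphism of generic fibres $\Sh_{\mathsf{K}_p\mathsf{K}^p}(\mb{G},\mb{X})\to\Sh_{\wt{\mathsf{K}}_p\mathsf{K}^p}(\mb{G},\mb{X})$, which does not involve the auxiliary embedding, it is in turn enough to prove that the model $\pmb{\ms{S}}^{\iota}_{\wt{\mathsf{K}}_p\mathsf{K}^p}(\mb{G},\mb{X})$ built from a parahoric Hodge embedding $\iota$ respecting stabilizers --- namely the normalization of the Zariski closure of $\Sh_{\wt{\mathsf{K}}_p\mathsf{K}^p}(\mb{G},\mb{X})$ in the corresponding Siegel model --- is, together with the tautological identification of its generic fibre, independent of $\iota$.

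Given two such embeddings $\iota_j\colon (\mb{G},\mb{X},\mc{G})\to (\mr{GSp}(\mb{V}_j),\mf{h}_j^{\pm},\mr{GSp}(\Lambda_j))$ for $j=1,2$, I would pass to a common refinement. Equip $\mb{V}\defeq\mb{V}_1\oplus\mb{V}_2$ with the orthogonal direct-sum symplectic form (the two similitude characters $\mb{G}\to\bG_m$ coincide, being intrinsic to $(\mb{G},\mb{X})$), so that $\Lambda\defeq\Lambda_1\oplus\Lambda_2\subseteq\mb{V}_{\Q_p}$ is self-dual, and set $\iota\defeq(\iota_1,\iota_2)\colon\mb{G}\hookrightarrow\mr{GSp}(\mb{V})$. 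This is again a Hodge embedding, and since an element $g$ of $\mb{G}(\Q_p)$ stabilizes $\Lambda$ precisely when $\iota_1(g)$ stabilizes $\Lambda_1$ and $\iota_2(g)$ stabilizes $\Lambda_2$, the hypothesis that $\iota_1$ and $\iota_2$ respect stabilizers gives $\mr{GSp}(\Lambda)\cap\mb{G}(\Q_p)=\wt{\mathsf{K}}_p$; thus $\iota$ respects stabilizers too. Inside $\mb{V}$ the subspace $\mb{V}_1$ is a $\mb{G}$-stable symplectic direct summand on which $\mb{G}$ acts through $\iota_1$, carrying the self-dual lattice $\Lambda_1$. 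By symmetry, the assertion is therefore reduced to the following: adjoining such a $\mb{G}$-stable orthogonal self-dual summand does not change the model, i.e. $\pmb{\ms{S}}^{\iota}_{\wt{\mathsf{K}}_p\mathsf{K}^p}(\mb{G},\mb{X})$ is canonically isomorphic to $\pmb{\ms{S}}^{\iota_1}_{\wt{\mathsf{K}}_p\mathsf{K}^p}(\mb{G},\mb{X})$ compatibly with generic fibres.

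To produce the isomorphism one uses that the idempotent projection $\mb{V}\to\mb{V}_1$ is fixed by $\mb{G}$: by the theory of Hodge and de Rham tensors on the Kuga--Sato family over Siegel integral models together with normality --- exactly as in Kisin's treatment of the hyperspecial case \cite{Kisin2010} --- the universal abelian scheme restricted to $\pmb{\ms{S}}^{\iota}_{\wt{\mathsf{K}}_p\mathsf{K}^p}(\mb{G},\mb{X})$ decomposes, up to prime-to-$p$ isogeny, as a product $A_1\times A_2$, where $A_1$ (with a suitable polarization and prime-to-$p$ level structure) is pulled back from the Siegel space of $(\mr{GSp}(\mb{V}_1),\mf{h}_1^{\pm})$. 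The datum $A_1$ then defines a morphism $\pmb{\ms{S}}^{\iota}_{\wt{\mathsf{K}}_p\mathsf{K}^p}(\mb{G},\mb{X})\to\pmb{\ms{S}}^{\iota_1}_{\wt{\mathsf{K}}_p\mathsf{K}^p}(\mb{G},\mb{X})$ extending the identity on generic fibres. The main obstacle is to see this morphism is an isomorphism, which amounts to the converse direction: over $\pmb{\ms{S}}^{\iota_1}_{\wt{\mathsf{K}}_p\mathsf{K}^p}(\mb{G},\mb{X})$ one must reconstruct $A_2$ from $A_1$ by realizing $\Lambda_2$, up to prime-to-$p$ isogeny, inside tensor constructions on $\Lambda_1$ cut out by $\mc{G}$-fixed tensors, and then extending the resulting sub-abelian-scheme from the generic fibre over a possibly ramified base with parahoric level --- the technical heart of \cite{KP2018} and \cite{KZ21}, relying on integral $p$-adic Hodge theory and Bruhat--Tits theory. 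The resulting isomorphisms are assembled compatibly in $\iota_1,\iota_2$ in \cite[Corollary~5.3.3]{KZ21}, and the additional refinements needed to pin them down canonically are supplied by \cite[Theorem~4.5.2]{PR2021} and \cite{DvHKZ}; I expect the reconstruction of $A_2$ to remain the crux throughout, as it is what separates the parahoric case from the hyperspecial one.
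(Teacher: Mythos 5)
The paper does not include a proof of this statement: it is stated as a citation to \cite[Corollary 5.3.3]{KZ21}, \cite[Theorem 4.5.2]{PR2021}, and \cite{DvHKZ}. Your sketch reconstructs the direct route of \cite[Corollary 5.3.3]{KZ21}: reduce to a common refinement $\iota=(\iota_1,\iota_2)\colon \mb{G}\hookrightarrow \mr{GSp}(\mb{V}_1\oplus\mb{V}_2)$, use the $\mb{G}$-fixed projector onto $\mb{V}_1$ to split the universal abelian scheme up to prime-to-$p$ isogeny, and then verify by integral $p$-adic Hodge theory that the resulting comparison morphism between the two normalizations is an isomorphism. Your reduction chain (to $\wt{\mathsf{K}}_p$-level, to the common refinement, intrinsicality of the similitude character, inheritance of the stabilizer condition, self-duality of $\Lambda_1\oplus\Lambda_2$) is sound and matches the classical argument, and the crux you identify is the genuine one in that approach.

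One correction, though: \cite[Theorem 4.5.2]{PR2021} and \cite{DvHKZ} are not ``additional refinements needed to pin down'' the Kisin--Zhou isomorphisms --- they constitute an independent and conceptually cleaner proof, and that proof is the one the rest of this paper leans on. Those results (recorded here as Theorem \ref{thm:PR-conj-for-Hodge-type}) show that the system built from any single parahoric Hodge embedding respecting stabilizers already satisfies the Pappas--Rapoport axioms. The uniqueness of canonical integral models (Proposition \ref{prop:uniqueness}, which rests on the uniqueness of shtuka extensions, \cite[Corollary 2.7.10]{PR2021}) then yields canonical, transition-compatible isomorphisms between the models attached to any two embeddings. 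That route bypasses the step you flag as the technical heart --- reconstructing $A_2$ from $A_1$ integrally over a ramified base --- and makes the canonicity of the identification automatic rather than something to be verified by hand across all levels and group actions. Your sketch is viable as an account of the Kisin--Zhou approach, but be aware that the two routes are genuinely distinct and the second one carries less local weight.
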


While the action of $\mb{G}(\A_f^p)$ extends to actions on $\ms{S}_{\mathsf{K}_p}^\vn(\mb{G},\mb{X})$ and $ \pmb{\ms{S}}_{\mathsf{K}_p}^\vn(\mb{G},\mb{X})$, it is not a priori clear that these can be extended further to actions of $\ms{A}(\bm{\mc{G}})$. That said, this does hold true when $(\mb{G},\mb{X},\mc{G})$ is very good, as explained in \cite[\S4.6]{KP2018}.\footnote{Indeed, the fact that $(\mb{G},\mb{X})$ is very good implies, in particular, that $Z(\mb{G})$ is an $R$-smooth torus, and so Proposition \ref{prop:KZ-R-smooth} applies. This means that there is a group map $\ms{A}(\bm{\mc{G}})\to \ms{B}(\bm{\mc{G}}_x)$, where, if $\mc{G}=\mc{G}_x^\circ$, the group scheme $\mc{G}_x$ is the stabilizer Bruhat--Tits group scheme as in \cite[\S1.1.2]{KP2018}, and $\ms{B}(\bm{\mc{G}}_x)$ is as in \cite[\S4.5.6]{KP2018}. Then, one can define the action precisely as in  \cite[Lemma 4.5.7]{KP2018}.}

\medskip

\subsubsection*{Step 3: Global construction relative to a very good Hodge type datum} \label{subsub:Step-3-Construction}

Suppose now that $(\mb{G},\mb{X},\mc{G})$ is a general acceptable parahoric Shimura datum of abelian type. The key proposition which allows us to bootstrap from the previous cases is the following.

\begin{prop}[{\cite[Proposition 5.2.6]{KZ21}}] There exists a very good parahoric Shimura datum $(\mb{G}_1,\mb{X}_1,\mc{G}_1)$ of Hodge type well-adapted to $(\mb{G},\mb{X},\mc{G})$.
\end{prop}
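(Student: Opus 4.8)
The plan is to manufacture $(\mb{G}_1,\mb{X}_1,\mc{G}_1)$ out of an arbitrary Hodge-type datum adapted to $(\mb{G},\mb{X})$ by correcting, in turn, its derived group (to secure $R$-smoothness and acceptability), its center (to make it a torus with good $p$-adic behaviour), and finally its parahoric level at $p$ (to secure condition (2) of well-adaptedness). This is the strategy of \cite[\S4.6]{KP2018}, carried out in detail in \cite[\S5.2]{KZ21}; I sketch the main steps.

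First, since $(\mb{G},\mb{X})$ is of abelian type, by definition and by Deligne's analysis in \cite[\S2.3]{DeligneModulaire} there is a Hodge-type datum $(\mb{G}_0,\mb{X}_0)$ with a central isogeny $\mb{G}_0^\der\to\mb{G}^\der$ inducing an isomorphism $(\mb{G}_0^{\ad},\mb{X}_0^{\ad})\isomto(\mb{G}^{\ad},\mb{X}^{\ad})$; this may moreover be taken with $\mb{G}_0^\der$ simply connected (the standard Hodge realizations occurring in abelian type — symplectic, spin, and unitary similitude groups over totally real or CM fields — have simply connected derived group). Since the class of Hodge-type data and the adjoint datum are insensitive to this choice, and since $G_0^{\ad}=G^{\ad}$ is, by acceptability, of the form $\prod_i\mr{Res}_{F_i/\Q_p}H_i$ with each $H_i$ split over a tame extension of $F_i$, the datum $(\mb{G}_0,\mb{X}_0)$ is acceptable and $G_0^\der\cong\prod_i\mr{Res}_{F_i/\Q_p}H_i^{\mathrm{sc}}$ is a product of Weil restrictions of tamely ramified simply connected groups, which is $R$-smooth. (Establishing this last $R$-smoothness is one of the two genuinely technical inputs, and it uses the tameness built into the acceptability hypothesis.)

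Next I would replace the center by the center-modification construction of Deligne and Kisin (\cite[\S2.3]{DeligneModulaire}, \cite{Kisin2010}): choose a $\Q$-torus $\mb{T}$ equipped with a surjection onto $\mb{G}_0^{\ab}=\mb{G}_0/\mb{G}_0^\der$ and form an appropriate fiber product of $\mb{G}_0$ and $\mb{T}$ to obtain $(\mb{G}_1,\mb{X}_1)$, still of Hodge type, with $\mb{G}_1^\der=\mb{G}_0^\der$, $(\mb{G}_1^{\ad},\mb{X}_1^{\ad})=(\mb{G}^{\ad},\mb{X}^{\ad})$, and $Z(\mb{G}_1)$ a torus. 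The torus $\mb{T}$ should be chosen, in addition, so that (a) the real homomorphism $h_0$ lifts along the projection, so that $(\mb{G}_1,\mb{X}_1)$ is a genuine Shimura datum of Hodge type; (b) $\mb{T}_{\Q_p}$, and hence $Z(G_1)$, is $R$-smooth — e.g.\@ by building $\mb{T}$ from Weil restrictions along a number field split at all places above $p$, so that $\mb{T}_{\Q_p}$ is a product of induced tori; and (c) the reflex field $\mb{E}_1$ of $(\mb{G}_1,\mb{X}_1)$ splits completely over every place of $\mb{E}$ above $p$ — the field of definition of the Hodge cocharacter changes only through the toral factor, which the chosen splitting field controls. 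One then deduces that $G_1$ itself is $R$-smooth from the $R$-smoothness of $G_1^\der$ and of $Z(G_1)$, using that a maximally split maximal torus of $G_{1,\Q_p^\ur}$ is generated by one of $G_1^\der$ together with $Z(G_1)^\circ$ (cf.\@ the discussion around Definition \ref{defn:R-smooth-G}). Thus $(\mb{G}_1,\mb{X}_1)$ is very good and satisfies conditions (1) and (3) of well-adaptedness.

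Finally, writing $\mc{G}=\mc{G}_x^\circ$ for $x\in\ms{B}(G,\Q_p)$, let $x^{\ad}\in\ms{B}(G^{\ad},\Q_p)=\ms{B}(G_1^{\ad},\Q_p)$ be its image; via the bijections $\ms{B}(G_1^\der,\Q_p)\isomto\ms{B}(G_1,\Q_p)$ and $\ms{B}(G_1^\der,\Q_p)\isomto\ms{B}(G_1^{\ad},\Q_p)$ of Lemma \ref{lem:map-of-buildings} and Lemma \ref{lemma:buildings-ad-isom}, $x^{\ad}$ corresponds to a point $x_1\in\ms{B}(G_1,\Q_p)$, and we set $\mc{G}_1\defeq\mc{G}_{x_1}^\circ$. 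By construction the isomorphism $\ms{B}(G_1,\Q_p)\isomto\ms{B}(G,\Q_p)$ induced by $\mb{G}_1^\der\to\mb{G}^\der$ carries $x_1$ to $x$, so $\mc{G}_1$ matches $\mc{G}$ in the sense of condition (2), and $(\mb{G}_1,\mb{X}_1,\mc{G}_1)$ is the required very good parahoric Shimura datum of Hodge type well-adapted to $(\mb{G},\mb{X},\mc{G})$. I expect the main obstacle to be the coordination of the $p$-adic constraints in the third step — producing a single torus $\mb{T}$ satisfying (a), (b), and (c) at once, and then verifying $R$-smoothness of the (possibly disconnected-center) group $G_1$ — together with the $R$-smoothness of the tamely ramified simply connected group appearing in the second step; these are the technical heart of \cite[\S5.2]{KZ21}, relying on \cite[\S4.6]{KP2018}.
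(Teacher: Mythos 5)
Your proposal is correct and follows essentially the same route as the paper: both rely on the construction of \cite[\S5.2]{KZ21}. The paper's proof is in fact just a citation of \cite[Proposition 5.2.6]{KZ21} together with the one observation that the $R$-smoothness of $G^\der$ (the only condition in ``very good'' not explicitly asserted in KZ21) already drops out of their construction, which is precisely the point you unpack in your second paragraph.
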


\begin{proof}
    The only claim that needs to be checked is that $G^\der$ may be taken to be $R$-smooth. But this follows by the construction in the proof of loc. cit. (cf. \cite[\S 5.3.10]{KZ21}). 
\end{proof}

Throughout the remainder of this section, we fix $(\mb{G}_1, \mb{X}_1, \mc{G}_1)$ as in the proposition above. Choose connected components $\mb{X}^+ \subseteq \mb{X}$ and $\mb{X}_1^+\subseteq\mb{X}_1$ which correspond in $\mb{X}^\mr{ad}\simeq\mb{X}_1^\mr{ad}$. Using Lemma \ref{lemma:Defined-Over-E^p}, we obtain connected components $\Sh_{\mathsf{K}_p}(\mb{G},\mb{X})^+$ and $\Sh_{\mathsf{K}_{p,1}}(\mb{G}_1,\mb{X}_1)^+$ defined over $\mb{E}^p$ and $\mb{E}_1^p$, respectively. We also obtain subgroups
\begin{equation*}
    \ms{E}(\bm{\mc{G}})\subseteq \ms{A}(\bm{\mc{G}})\times\Gal(\mb{E}^p/\mb{E}),\qquad \ms{E}(\bm{\mc{G}}_1)\subseteq \ms{A}(\bm{\mc{G}}_1)\times\Gal(\mb{E}_1^p/\mb{E}),
\end{equation*}
and the group 
\begin{equation*}
    \ms{E}_{\mb{E}'}(\bm{\mc{G}}_1)=\ms{E}(\bm{\mc{G}}_1)\times_{\Gal(\mb{E}_1^p/\bm{E})}\Gal(\mb{E}'^p/\bm{E}),
\end{equation*}
as defined in \S\ref{ss:Shim-vars-and-A-group}.

We have a bijection
\begin{equation}\label{eq:pi0-bij}
    \pi_0(\Sh_{\mathsf{K}_{p,1}}(\mb{G}_1,\mb{X}_1)_{\bm{E}_1^p})\isomto \pi_0(\pmb{\ms{S}}^\vn_{\mathsf{K}_{p,1}}(\mb{G}_1,\mb{X}_1)_{\mc{O}_{\mb{E}_1^p}}),
\end{equation}
as follows from combining the following general results.
\begin{lemma}\label{lem:pi0-omnibus} Let $\{S_i\}$ be a projective system of spectral spaces with quasi-compact surjective transition maps. Write $S=\varprojlim S_i$. Then, 
\begin{enumerate}[$($1$)$]
    \item $S$ is a spectral space,
     \item the $($dense$)$ quasi-compact open subsets of $S$ are precisely those of the form $U=\varprojlim U_i$ with each $U_i\subseteq S_i$ a $($dense$)$ quasi-compact open subset such that $U_i$ is the preimage of $U_j$ under $S_i\to S_j$,
    \item the natural map $\pi_0(S)\to \pi_0(S_i)$ is surjective for all $i$,
    \item the natural map $\pi_0(S)\to\lim \pi_0(S_i)$ is a homeomorphism.
\end{enumerate}
Moreover, if each $S_i$ is a disjoint union of finitely many irreducible spaces, then for every dense quasi-compact open $U\subseteq S$ the natural map $\pi_0(U)\to\pi_0(S)$ is a homeomorphism.
\end{lemma}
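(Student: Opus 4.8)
The plan is to deduce everything from the structure theory of cofiltered limits of spectral spaces. First I would observe that a quasi-compact continuous map between spectral spaces is spectral, so the transition maps here are spectral; then part~(1), together with the statement that the projections $f_i\colon S\to S_i$ are spectral, is exactly the standard theory of limits of spectral spaces (see \stacks{0A2Z} and its neighbours), and \emph{surjectivity} of the $f_i$ follows from surjectivity of the transition maps by the usual quasi-compactness argument. Throughout I would also freely use two standard facts about a spectral space $X$: that $\pi_0(X)$ is a profinite (hence compact Hausdorff) set, and that the connected component of a point of $X$ coincides with its quasi-component, i.e.\@ with the intersection of its clopen neighbourhoods, where ``clopen'' means ``quasi-compact open with quasi-compact open complement'' (see \stacks{0906} and nearby).

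For part~(2) I would use that every quasi-compact open $U\subseteq S$ has the form $f_{i_0}^{-1}(V)$ for some index $i_0$ and quasi-compact open $V\subseteq S_{i_0}$; taking $U_i$ to be the preimage of $V$ under $S_i\to S_{i_0}$ for the (cofinal) indices $i$ above $i_0$ produces a cartesian system of quasi-compact opens with $f_i^{-1}(U_i)=U$ for all such $i$, so $U=\varprojlim U_i$. Conversely, for a cartesian system $\{U_i\}$ one checks directly that $\varprojlim U_i=f_{i_0}^{-1}(U_{i_0})$ is a quasi-compact open of $S$. The parenthetical ``dense'' versions then come for free from the elementary remark that, for a continuous surjection $g\colon X\to Y$, a subset $B\subseteq Y$ is dense if and only if $g^{-1}(B)$ is dense in $X$, applied to $g=f_i$.

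For part~(3), given a component $C_i\subseteq S_i$ I would pick $y\in C_i$, lift it to $x\in S$ using surjectivity of $f_i$, and note that the component $C$ of $x$ has connected image, hence maps into $C_i$. For part~(4), both $\pi_0(S)$ and $\varprojlim\pi_0(S_i)$ are compact Hausdorff and the natural comparison map is continuous, so it is enough to see it is bijective. The crux is to show that the component of any $x\in S$ equals $\bigcap_i f_i^{-1}(C_i^x)$, with $C_i^x$ the component of $f_i(x)$ in $S_i$; this follows from the quasi-component description of components together with the fact --- proved exactly as in part~(2), now using surjectivity of $f_i$ to transfer complements --- that every clopen subset of $S$ is pulled back from a clopen subset of some $S_i$. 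Injectivity is then immediate, and for surjectivity I would take a compatible family of components $(C_i)$, observe that $\{f_i^{-1}(C_i)\}$ is a downward-directed family of nonempty closed subsets of the quasi-compact space $S$, and send any point of the (nonempty) intersection to $(C_i)$.

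Finally, for the last assertion, writing $S_i=\bigsqcup_k Z_{i,k}$ with each $Z_{i,k}$ irreducible, a dense quasi-compact open $U\subseteq S$ gives by part~(2) a cartesian system of dense quasi-compact opens $U_i\subseteq S_i$ with $U=\varprojlim U_i$. Each $U_i\cap Z_{i,k}$ is a nonempty open in the irreducible $Z_{i,k}$, hence irreducible, so $U_i$ is again a finite disjoint union of irreducibles and the inclusion $U_i\hookrightarrow S_i$ induces a bijection $\pi_0(U_i)\isomto\pi_0(S_i)$ of finite discrete sets, compatibly in $i$. Since $\{U_i\}$ again consists of spectral spaces with surjective spectral transition maps, part~(4) gives $\pi_0(U)\isomto\varprojlim\pi_0(U_i)$, and taking the limit of the bijections $\pi_0(U_i)\isomto\pi_0(S_i)$ together with part~(4) for $\{S_i\}$ identifies $\pi_0(U)\to\pi_0(S)$ with a homeomorphism. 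The main obstacle I anticipate is the bookkeeping in part~(4) around clopen subsets of the limit and their pullbacks to finite stages; everything else is a formal consequence of the cited structure theory.
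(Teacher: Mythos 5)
Your proof is correct in outline and takes a genuinely different route from the paper's. The paper disposes of (1), (2) and (4) by citing the structure theory in Fujiwara--Kato and a lemma of \cite{ALY1P2}, and deduces (3) from (4) by Tychonoff; you instead rebuild the needed facts directly from the Stacks Project's limits-of-spectral-spaces machinery (every quasi-compact open of $S$ is pulled back from a finite stage, components equal quasi-components in spectral spaces, clopens of $S$ descend to clopens of some $S_i$ using surjectivity of the projections). Your treatment of (4) via the identity $C_x=\bigcap_i f_i^{-1}(C_i^x)$ and a directed-intersection argument for surjectivity is exactly the content of the cited lemma, and your handling of the final claim (reduce to a cartesian system of dense opens, observe $\pi_0(U_i)\to\pi_0(S_i)$ is a bijection, pass to the limit) matches the paper's. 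The payoff of your route is self-containedness; the cost is the clopen-descent bookkeeping you correctly identify as the crux.

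One justification is wrong as stated, though the step it supports is true. You claim that for a continuous surjection $g\colon X\to Y$, a subset $B\subseteq Y$ is dense if and only if $g^{-1}(B)$ is dense. Only the ``if'' direction holds in general: take $Y=\{\eta,s\}$ the Sierpi\'nski (spectral) space with generic point $\eta$, let $X=Y\sqcup\{\ast\}$ with $g|_Y=\mathrm{id}$ and $g(\ast)=s$; then $B=\{\eta\}$ is dense in $Y$ but $g^{-1}(B)=\{\eta\}$ is not dense in $X$. So the direction you need for the characterization in (2) --- that a cartesian system of dense $U_i$ yields a dense $U=\varprojlim U_i$ --- does not follow from this remark. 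It is nevertheless true in your setting, because the $f_j^{-1}(W_j)$ with $W_j\subseteq S_j$ quasi-compact open form a basis of $S$: given a nonempty basic open $f_j^{-1}(W_j)$ with $j$ above the index defining $U$, surjectivity of $f_j$ forces $W_j\neq\emptyset$, density of $U_j$ gives $W_j\cap U_j\neq\emptyset$, and surjectivity of $f_j$ again gives $f_j^{-1}(W_j\cap U_j)=f_j^{-1}(W_j)\cap U\neq\emptyset$. With that substitution (note that the remaining uses of density elsewhere in your argument only invoke the valid direction), the proof goes through.
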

\begin{proof}
Claim (1) follows from \cite[Chapter 0, Theorem 2.2.10]{FujiwaraKato}. Claim (2) follows from \cite[Chapter 0, Proposition 2.2.9 and Lemma 2.2.19]{FujiwaraKato}. Claim (4) follows from \cite[Lemma 4.1.6]{ALY1P2}), and claim (3) then follows from claim (4), as each projection $\pi_0(S_i)\to\pi_0(S_j)$ is a surjection of compact Hausdorff spaces (see \stacks{0906}), and so $\varprojlim \pi_0(S_i)\to \pi_0(S_i)$ is surjective by Tychonoff's theorem. For the final claim write $U=\varprojlim U_i$ with $U_i\subseteq S_i$ a dense quasi-compact open. Then, each $\pi_0(U_i)\to \pi_0(S_i)$ is clearly a bijection, and so the claim follows by passing to the limit using (4).
\end{proof}
\begin{lemma}\label{lem:normal-components} Let $X\to Y$ be a flat and finite type morphism of Noetherian schemes where $X$ is normal. Then, $X$ is a finite disjoint union of irreducible spaces, and for any dense open $U\subseteq Y$, the open $X_U\subseteq X$ is dense.
\end{lemma}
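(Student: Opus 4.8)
The plan is to treat the two assertions in turn. For the first, I would appeal to the local structure of normal Noetherian schemes: normality is a local condition and a Noetherian normal local ring is a domain, so every point of $X$ has an open neighbourhood isomorphic to the spectrum of a normal domain, which is irreducible. Hence the irreducible components of $X$ are open; being a pairwise disjoint open cover of $X$, they are also closed, and Noetherianity bounds their number. Thus $X=\coprod_{i=1}^{n}X_i$ with each $X_i$ irreducible (indeed integral and normal), and I write $\eta_i$ for the generic point of $X_i$.

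For the second assertion, write $f\colon X\to Y$ for the structure morphism. The key input is that $f$, being flat, is generizing, i.e.\ satisfies going-down (for flat ring maps; see, e.g., \stacks{00HS}). Fix $i$ and set $y_i\defeq f(\eta_i)$. Since $X_i$ is clopen and irreducible with generic point $\eta_i$, the point $\eta_i$ has no proper generization in $X$; unwinding the definition of generizing --- every generization of $y_i$ lifts to a generization of $\eta_i$, hence to $\eta_i$ itself --- we conclude that $y_i$ has no proper generization in $Y$, so $y_i$ is the generic point of an irreducible component of $Y$. On the other hand, any dense open $U\subseteq Y$ contains the generic point of every irreducible component of $Y$: otherwise its closed complement, which has empty interior, would contain such a generic point, hence its entire closure, which is a component of $Y$ and thus has nonempty interior, a contradiction. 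Therefore $y_i\in U$, so $\eta_i\in f^{-1}(U)=X_U$; as $X_U$ then contains the generic point of every irreducible component of $X$, and $X$ is the union of the closures of these points, $X_U$ is dense in $X$.

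I do not expect a serious obstacle. The one step deserving care is the passage from ``$\eta_i$ has no proper generization in $X$'' to ``$y_i$ has no proper generization in $Y$'', which is exactly where flatness is used and is immediate once the going-down property is stated correctly; everything else is point-set topology on Noetherian schemes together with the structure of normal Noetherian schemes recalled above. (The finite-type hypothesis on $f$ plays no role in this argument.)
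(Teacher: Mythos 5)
Your proof is correct, and for the second assertion it takes a genuinely different route from the paper. The paper proves density of $X_U$ in one line: a flat, finite-type morphism of Noetherian schemes is open (\stacks{01UA}), and the preimage of a dense open under an open map is dense --- if $V\subseteq X$ were a nonempty open disjoint from $f^{-1}(U)$, its image $f(V)$ would be a nonempty open disjoint from $U$. You instead invoke going-down for flat morphisms to show that each generic point $\eta_i$ of a component of $X$ maps to a maximal point $y_i$ of $Y$, then use the Noetherian hypothesis on $Y$ (finitely many components, each with nonempty interior) to place each such $y_i$ in any dense open. Your argument is a bit longer but, as you note, does not use the finite-type hypothesis at all, only flatness together with Noetherianity of $X$ and $Y$; the paper's openness argument is shorter but does rely on $f$ being of finite type. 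For the first assertion you reprove by hand the content of \stacks{033N} (normal Noetherian local rings are domains, hence $X$ is locally irreducible, hence its components are clopen and finite in number), which is exactly what the paper cites. Both proofs are valid.
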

\begin{proof} The first claim follows from \stacks{033N}, and the latter is clear as $f$ is open by \stacks{01UA}.
\end{proof}

The bijection \eqref{eq:pi0-bij} shows that $\Sh_{\mathsf{K}_{p,1}}(\mb{G}_1,\mb{X}_1)^+$ corresponds uniquely to a connected component $\pmb{\ms{S}}^\vn_{\mathsf{K}_{p,1}}(\mb{G}_1,\mb{X}_1)^+$ of $\pmb{\ms{S}}^\vn_{\mathsf{K}_{p,1}}(\mb{G}_1,\mb{X}_1)$. Set $\mc{O}'^{p}_{(v)}=\mc{O}_{\mb{E}'^p}\otimes_{\mc{O}_{\mb{E}}}\mc{O}_{(v)}$. Consider then 
\begin{equation}\label{eq:product-set}
    \pmb{\ms{S}}^\vn_{\mathsf{K}_{p,1}}(\mb{G}_1,\mb{X}_1)^+_{\mc{O}'^{p}_{(v)}}\times \ms{A}(\bm{\mc{G}}).
\end{equation}
As observed in \hyperref[subsub:Step-2-Construction]{\textbf{Step 2}}, we have that the right $\ms{A}(\bm{\mc{G}}_1)$-action extends to $\pmb{\ms{S}}^\vn_{\mathsf{K}_{p,1}}(\mb{G}_1,\mb{X}_1)$ and by the bijection \eqref{eq:pi0-bij} we deduce that $\ms{E}(\bm{\mc{G}}_1)$ stabilizes $\pmb{\ms{S}}^\vn_{\mathsf{K}_{p,1}}(\mb{G}_1,\mb{X}_1)^+$. We then get an induced action of $\ms{E}_{\mb{E}'}(\bm{\mc{G}}_1)$ on $\pmb{\ms{S}}^\vn_{\mathsf{K}_{p,1}}(\mb{G}_1,\mb{X}_1)^+_{\mc{O}'^{p}_{(v)}}$. We then finally let $\ms{E}_{\mb{E}'}(\bm{\mc{G}}_1)$ act on the right of \eqref{eq:product-set} by setting
\begin{equation}\label{eq:msE-action}
    (s,a)\cdot e=(se,\ov{e}^{-1}a\ov{e}),
\end{equation}
where $\ov{e}$ denotes the image of $e$ under the homomorphisms
\begin{equation}\label{eq:E-to-A}
    \ms{E}_{\mb{E}'}(\bm{\mc{G}}_1)\to \ms{E}(\bm{\mc{G}})\to\ms{A}(\bm{\mc{G}}),
\end{equation}
where the first map is as in \eqref{eq:msE-map}, and the second map is projection.

We then consider $\ms{A}(\bm{\mc{G}})\rtimes \ms{E}_{\mathbf{E}'}(\bm{\mc{G}}_1)$, where $\ms{E}_{\mathbf{E}'}(\bm{\mc{G}}_1)$ acts again via the homomorphism from \eqref{eq:E-to-A}. Let the semi-direct product $\ms{A}(\bm{\mc{G}})\rtimes \ms{E}_{\mathbf{E}'}(\bm{\mc{G}}_1)$ act on \eqref{eq:product-set} by having $\ms{A}(\bm{\mc{G}})$ act by right multiplication on the $\ms{A}(\bm{\mc{G}})$-factor of \eqref{eq:product-set}, i.e.,
\begin{equation}\label{eq:msA-action}
    (s,a)\cdot a' = (s, aa').
\end{equation} 
As in \eqref{eq:action} we obtain an action of $\ms{A}(\bm{\mc{G}})\ast_{\ms{A}(\bm{\mc{G}}_1)^\circ}\ms{E}_{\mathbf{E}'}(\bm{\mc{G}}_1)$ on
\begin{equation}\label{eq:product-set-2}
    \left[\pmb{\ms{S}}^{\vn}_{\mathsf{K}_{p,1}}(\mb{G}_1,\mb{X}_1)^+_{\mc{O}'^{p}_{(v)}}\times \ms{A}(\bm{\mc{G}})\right]/\ms{A}(\bm{\mc{G}}_1)^\circ,
\end{equation}
with notation as in loc.\@ cit. Since 
\begin{equation*}
    \ms{A}(\bm{\mc{G}})\ast_{\ms{A}(\bm{\mc{G}}_1)^\circ}\ms{E}_{\mathbf{E}'}(\bm{\mc{G}}_1)\simeq \ms{A}(\bm{\mc{G}})\times\Gal(\mb{E}'^p/\mb{E}), 
\end{equation*}
(see (\ref{eq:star-isom})), we obtain an action of $\ms{A}(\bm{\mc{G}})\times\Gal(\mb{E}'^p/\mb{E})$, and hence of $\mb{G}(\A_f^p)\times\Gal(\mb{E}'^p/\mb{E})$, on \eqref{eq:product-set-2}.

Now, let $J$ denote the image of the natural map 
    \begin{equation*}
        \ms{A}(\bm{\mc{G}}_1)^\circ\backslash \ms{A}(\bm{\mc{G}})\to \ms{A}(\mb{G}_1)^\circ\backslash \ms{A}(\mb{G})/K_p,
    \end{equation*}
where the map $\ms{A}(\bm{\mc{G}}_1)^\circ\to \ms{A}(\bm{\mc{G}})$ is as  in Lemma \ref{lem:functoriality-of-A}. Then $J$ is finite. Let $\mf{d} = (\mb{G}_1, \mb{X}_1, \mc{G}_1)$, and consider 
\begin{equation*}
    \pmb{\ms{S}}_{\mathsf{K}_p}^\mf{d}(\mb{G},\mb{X})_{\mc{O}'^{p}_{(v)}}\defeq \left([\pmb{\ms{S}}_{\mathsf{K}_{p,1}}^\vn(\mb{G}_1,\mb{X}_1)_{\mc{O}'^{p}_{(v)}}^+\times \ms{A}(\bm{\mc{G}})]/\ms{A}(\bm{\mc{G}}_1)^\circ\right)^{J},
\end{equation*}
which we endow with the diagonal (relative to the $J$-indexed factors) action of the group $\mb{G}(\A_f^p)\times\Gal(\mb{E}'^p/\mb{E})$. This action is continuous, and so by Galois descent gives to a unique scheme $\pmb{\ms{S}}_{\mathsf{K}_p}^\mf{d}(\mb{G},\mb{X})_{\mc{O}_{(v)}'}$ with continuous action of $\mb{G}(\A_f^p)$, where $\mc{O}_{(v)}'=\mc{O}_{\mb{E}'}\otimes_{\mc{O}_{\mb{E}}}\mc{O}_{(v)}$.\footnote{That Galois descent applies here follows from the continuity of the action, the quasi-projectivity observation made in Remark \ref{rem:qproj}, and the fact that quasi-projective morphisms satisfy effective descent relative to finite locally free maps (see \stacks{0CCJ}).}

\medskip

\subsubsection*{Step 4: The construction in the local setting}\label{subsub:Step-4-Construction}

Consider 
\begin{equation*}
    (\pmb{\ms{S}}_{\mathsf{K}_p}^\mf{d}(\mb{G},\mb{X})_{\mc{O}'_{(v)}})_{\mc{O}_E}.
\end{equation*}
That $\mb{E}'$ is completely split over $\mb{E}$ at every prime over $p$ implies, in particular, that this decomposes as several copies of the same $\mc{O}_v$-scheme with a continuous action of $\mb{G}(\A_f^p)$, indexed by by the place of $\mb{E}'$ lying over $v$. Choose any of these copies, and call it $\ms{S}_{\mathsf{K}_p}^\mf{d}(\mb{G},\mb{X})$. We then set
\begin{equation*}
    \ms{S}_{\mathsf{K}_p\mathsf{K}^p}^\mf{d}(\mb{G},\mb{X})\defeq \ms{S}_{\mathsf{K}_p}^\mf{d}(\mb{G},\mb{X})/\mathsf{K}^p,
\end{equation*}
for any neat compact open subgroup $\mathsf{K}^p$ of $\mb{G}(\A_f^p)$. That these quotients exist and are quasi-projective can be deduced from Remark \ref{rem:qproj} in conjunction with \stacks{01ZY} and \stacks{07S6}.

\medskip

\subsection{Some properties and functoriality results}

In this subsection we establish some compatibilities satisfied by the Kisin--Pappas--Zhou integral models, and we prove the minimal amount of functoriality needed to prove our main result (see Theorem \ref{thm:main}). This main result will then establish functoriality in general (see Theorem \ref{thm:functoriality}).

\subsubsection{Kisin--Pappas--Zhou integral models of toral type}
To utilize the results of \cite{DanielsToral}, it is useful to explicitly describe Kisin--Pappas--Zhou integral models of tori. 

Recall from Example \ref{ex:toral-type} that a Shimura datum $(\mb{T},\{h\})$ is of toral type if $\mb{T}$ is a torus. In this case, for every compact open subgroup $\eK = \eK_p\eK^p \subseteq \mb{T}(\bA_f^p)$, the Shimura variety $\Sh_{\eK}(\mb{T},\{h\})$ is zero-dimensional, i.e.,
\begin{equation}\label{eq:shim-var-of-torus}
    \Sh_{\eK}(\mb{T}, \{h\})_E = \bigsqcup_{i \in I} \Spec(E_i)
\end{equation}
for some finite index set $I$. In fact, each field $E_i$ is the same field, and is an unramified extension of $\Q_p$ (see \cite[Lemma 4.2]{DanielsToral} and the proof of \cite[Lemma 4.8]{DanielsToral}).

Set $T=\mb{T}_{\Q_p}$. As discussed in \S\ref{sss:parahoric-models-tori}, there is a unique parahoric model $\mc{T}$ of $T$, given by the connected component $(\mc{T}^\mr{lft})^\circ$ of the N\'eron model $\mc{T}^\mr{lft}$ of $T$. 

\begin{prop}\label{prop:toral-type}
    Suppose $(\mb{T},\{h\}, \mc{T})$ is parahoric Shimura datum of toral type, and let $\mf{d} = (\mb{G}_1, \mb{X}_1, \mc{G}_1)$ be a very good parahoric Shimura datum of Hodge type which is well-adapted to $(\mb{T}, \{h\}, \mc{T})$. Then, 
    \begin{align*}
        \ms{S}_\eK^\mf{d}(\mb{T}, \{h\}) = \bigsqcup_{i \in I} \Spec(\mc{O}_{E_i}),
    \end{align*}
    for any neat compact open subgroup $\mathsf{K}^p\subseteq \mb{T}(\A_f^p)$ and with $\eK = \eK_p\eK^p$. The same holds true for $\ms{S}_\eK^\vn(\mb{T}, \{h\})$ if $(\mb{T}, \{h\})$ is of Hodge type.
\end{prop}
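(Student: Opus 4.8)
The plan is to reduce both assertions to the single claim that the model $\ms{S}\defeq\ms{S}_{\eK}^{\mf{d}}(\mb{T},\{h\})$ — where $\mf{d}$ is the given datum, or $\mf{d}=\vn$ in the second assertion — is \emph{finite} over $\mc{O}_E$. Once this is known, the proposition follows by a short commutative-algebra argument. By Theorem \ref{thm:KPZ-model-properties} the scheme $\ms{S}$ is a normal flat quasi-projective $\mc{O}_E$-scheme whose generic fibre is $\Sh_{\eK}(\mb{T},\{h\})_E=\bigsqcup_{i\in I}\Spec(E_i)$ as in \eqref{eq:shim-var-of-torus}, so writing $\ms{S}=\Spec(A)$ with $A$ a normal finite flat $\mc{O}_E$-algebra, normality forces $A=\prod_j A_j$ with each $A_j$ a normal domain finite flat over $\mc{O}_E$, each $A_j[1/p]=A_j\otimes_{\mc{O}_E}E$ a domain finite over $E$ (hence a field), and the decomposition $\prod_j A_j[1/p]=\prod_{i\in I}E_i$ matches these factors bijectively. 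Relabelling so that $A_j[1/p]=E_j$, the ring $A_j$ is a normal domain finite over $\mc{O}_E$ with fraction field $E_j$, hence equals the integral closure $\mc{O}_{E_j}$ of $\mc{O}_E$ in $E_j$; thus $\ms{S}=\bigsqcup_{i\in I}\Spec(\mc{O}_{E_i})$. (By the cited structure of $\Sh_{\eK}(\mb{T},\{h\})_E$ each $E_i/\Q_p$ is unramified, so each $\mc{O}_{E_i}$ is even unramified over $\mc{O}_E$, though this is not needed for the displayed identity.)

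To prove that $\ms{S}$ is finite over $\mc{O}_E$ I would use the extension property in Theorem \ref{thm:KPZ-model-properties}(2). One first upgrades that statement from $\eK_p$-level to the finite level $\eK=\eK_p\eK^p$: given a characteristic $(0,p)$ discrete valuation ring $R$ over $\mc{O}_E$ and a point $x\in\Sh_{\eK}(\mb{T},\{h\})(R[1/p])$, lift $x$ along the pro-(finite \'etale) cover $\Sh_{\eK_p}(\mb{T},\{h\})\to\Sh_{\eK}(\mb{T},\{h\})$ to a point defined over a finite extension of discrete valuation rings $R\hookrightarrow R'$, extend it over $R'$ using Theorem \ref{thm:KPZ-model-properties}(2) and push it to $\ms{S}(R')$, and then descend to $\ms{S}(R)$ by fpqc descent along the cover $\Spec(R')\sqcup\Spec(R[1/p])\to\Spec(R)$ — the cocycle condition holds because $\ms{S}$ is separated over $\mc{O}_E$, $p$ is a non-zerodivisor on $R'\otimes_R R'$, and the extensions are unique by Theorem \ref{thm:KPZ-model-properties}(2). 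With the finite-level extension property in hand, $\ms{S}$ satisfies the valuative criterion of properness over $\mc{O}_E$: a discrete valuation ring over $\mc{O}_E$ of residue characteristic $p$ is exactly the situation of that property, while a discrete valuation ring of residue characteristic $0$ maps to $\Spec(\mc{O}_E)$ through $\Spec(E)$, over which $\Sh_{\eK}(\mb{T},\{h\})_E$ is finite, hence proper. As $\ms{S}$ is of finite type and separated over $\mc{O}_E$ it is therefore proper; being flat over the discrete valuation ring $\mc{O}_E$ with zero-dimensional generic fibre, no irreducible component of $\ms{S}$ lies in the special fibre, so the special fibre is zero-dimensional and $\ms{S}\to\Spec(\mc{O}_E)$ is quasi-finite. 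A proper quasi-finite morphism is finite, which completes the argument.

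The step that genuinely needs care is the fpqc-descent reduction of the extension property to finite level; the remaining points are formal. There is also a more hands-on alternative for the $\mf{d}$-case, which I mention but would not prefer: the isogeny $\mb{G}_1^{\der}\to\mb{T}^{\der}=1$ in the definition of well-adaptedness forces $\mb{G}_1^{\der}=1$, so $\mb{G}_1$ is a torus, the groups $\ms{A}(\bm{\mc{G}}_1)^{\circ}$ and $\ms{A}(\bm{\mc{T}})^{\circ}$ are trivial, and the twisted product appearing in \hyperref[subsub:Step-3-Construction]{\textbf{Step 3}} of \S\ref{ss:KPZ-construction} degenerates to a disjoint union of copies of $\pmb{\ms{S}}_{\mathsf{K}_{p,1}}^{\vn}(\mb{G}_1,\mb{X}_1)^{+}$; the subsequent operations — restriction to the finite index set $J$, Galois descent along $\Gal(\mb{E}'^p/\mb{E})$, base change to $\mc{O}_E$ in \hyperref[subsub:Step-4-Construction]{\textbf{Step 4}}, and passage to $\eK^p$-quotients — all preserve the class of finite \'etale $\mc{O}_E$-schemes, so the $\vn$-case applied to $(\mb{G}_1,\mb{X}_1)$ again yields the claim. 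The extension-property argument is uniform in $\mf{d}$ and shorter, so that is the route I would take.
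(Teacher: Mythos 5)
Your preferred route --- establish properness of $\ms{S}\defeq\ms{S}_{\eK}^{\mf d}(\mb{T},\{h\})$ via the valuative criterion and then deduce finiteness and the product decomposition from normality and flatness --- is a genuinely different strategy from the paper's. The paper instead traces the construction: since $\mb{G}_1$ is a torus, the image of a Hodge embedding at finite level is zero-dimensional, so Lemma~\ref{lem:closure-of-point} shows each $\pmb{\ms{S}}^{\vn}_{\eK_{p,1}\eK^p_1}(\mb{G}_1,\mb{X}_1)$ is a disjoint union of semi-local localizations of rings of integers; this is propagated through Steps~3--4 of \S\ref{ss:KPZ-construction}, and the extension property is used only at the very end to rule out a spectrum-of-fields degeneration. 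Both arguments ultimately lean on the extension property, but they use it in quite different places, and your version, if repaired, would be shorter and avoid unwinding the construction.

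There is, however, a real gap at precisely the step you flag. The cover $\Sh_{\eK_p}(\mb{T},\{h\})\to\Sh_{\eK}(\mb{T},\{h\})$ is pro-finite \'etale, and for a torus the source is a genuinely profinite scheme: the residue fields of its closed points are in general \emph{infinite} algebraic extensions of $E$. Thus a lift of an $R[1/p]$-point of $\Sh_\eK$ to $\Sh_{\eK_p}$ need not exist over any finite extension of $R$; the claim ``to a point defined over a finite extension of discrete valuation rings $R\hookrightarrow R'$'' is simply false. Over an infinite algebraic extension, the integral closure of $R$ is not a priori a discrete valuation ring, so Theorem~\ref{thm:KPZ-model-properties}(2) cannot be applied directly. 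The argument can be saved: by Lemma~\ref{lemma:Defined-Over-E^p} the entire prime-to-$p$ tower $\{\Sh_{\eK_p\eK'^p}(\mb{T},\{h\})_E\}_{\eK'^p}$ has residue fields unramified over $E$, so the relevant extension $L$ of $R[1/p]$ is unramified at the maximal ideal of $R$, and the integral closure of $R$ in $L$ is still a (now non-finite) characteristic-$(0,p)$ discrete valuation ring, to which the extension property does apply. But this unramifiedness observation has to be made and justified; as written the step does not go through. A smaller omission in the same part of the argument: the valuative criterion must a priori also be tested against equicharacteristic-$p$ discrete valuation rings, which your case split does not address; this is fixed by the standard reduction to DVRs whose generic point hits a generic point of $\ms{S}$, noting that by flatness all generic points of $\ms{S}$ lie over $E$.

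Finally, your ``hands-on alternative'' is close to the paper's proof in spirit, but it only reduces the $\mf d$-case to the $\vn$-case for the torus $\mb{G}_1$; it does not establish that base case, so on its own it is circular. The paper supplies exactly that input via the Hodge embedding and Lemma~\ref{lem:closure-of-point}.
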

\begin{proof}
    First, observe that as $(\mb{G}_1, \mb{X}_1)$ is adapted to the datum $(\mb{T}, \{h\})$, we have an isogeny $\mb{G}_1^\der \to \mb{T}^\der = \{e\}$, and hence $\mb{G}_1^\der$ is trivial, since it is connected. Thus, $\mb{G}_1$ is a torus.\footnote{Indeed, if $H$ is a commutative reductive group over a field $K$, and $T$ is a maximal torus of $H$, then $H=Z_H(T)=T$ (see \cite[Corollary 17.84]{Milne2017}).} Consider an arbitrary neat compact open subgroup $\eK^p_1\subseteq \mb{G}_1(\A_f^p)$.

    Choose a parahoric Hodge embedding $\iota\colon(\mb{G}_1, \mb{X}_1, \mc{G}_1) \to (\mb{GSp}(\mb{V}),\mf{h}^\pm,\mr{GSp}(\Lambda))$ which respects stabilizers, and a neat compact open subgroup $\eL_1^p \subseteq \mr{GSp}(\mb{V}\otimes_\bQ \bA_f^p)$ such that $\iota(\eK_{1,p}\mathsf{K}^p_1) \subseteq \mr{GSp}(\Lambda)\eL_1^p$, inducing a closed embedding 
    \begin{equation}
        \Sh_{\eK_{1,p}\mathsf{K}^p_1}(\mb{G}_1, \mb{X}_1) \to \Sh_{\mr{GSp}(\Lambda)\eL_1^p}(\mb{GSp}(\mb{V}),\mf{h}^\pm)_{\mb{E}_1}.
    \end{equation}
    Note that $\pmb{\ms{S}}_{\mr{GSp}(\Lambda)\eL_1^p}(\mb{GSp}(\mb{V}),\mf{h}^\pm)$ is flat, separated, and finite over $\Z_{(p)}$. Thus, considering \eqref{eq:shim-var-of-torus}, and using notation from \hyperref[subsub:Step-2-Construction]{\textbf{Step 2}} of \S\ref{ss:KPZ-construction}, it follows from Lemma \ref{lem:closure-of-point} below that $\pmb{\ms{S}}^\vn_{\wt{\mathsf{K}}_{p,1}\eK^p_1}(\mb{G}_1,\mb{X}_1)$ and thus $\pmb{\ms{S}}^\vn_{\mathsf{K}_{p,1}\eK^p_1}(\mb{G}_1,\mb{X}_1)$ are disjoint unions of semi-local localizations of $\mc{O}_\mb{F}$ for some finite extension $\mb{F}$ of $\mb{E}$. 
    
    This implies that, using notation now from \hyperref[subsub:Step-3-Construction]{\textbf{Step 3}} of \S\ref{ss:KPZ-construction}, that $\pmb{\ms{S}}_{\mathsf{K}_{p,1}}^\vn(\mb{G}_1,\mb{X}_1)_{\mc{O}_E}^+$ is either the spectrum of a field or of a semi-local Dedekind domain. Thus, from the construction in \hyperref[subsub:Step-4-Construction]{\textbf{Step 4}} of \S\ref{ss:KPZ-construction} we deduce that for each neat compact open $\mathsf{K}^p\subseteq\mb{G}(\A_f^p)$ one has that $\ms{S}_{\eK_p\mathsf{K}^p}^\mf{d}(\mb{G},\mb{X})$ is either the disjoint union of spectra of extensions of $E$, or the disjoint union of spectra of the rings of integers of finite extensions of $E$. That said, the former is impossible as it is then clear that such a model cannot satisfy the extension property from (2) of Theorem \ref{thm:KPZ-model-properties}, and so the latter must hold. The fist claim follows. Moreover, the second claim follows from similar arguments.

\end{proof}

Following \stacks{035E}, if $X$ is a scheme with the property that every quasi-compact open has finitely many connected components, then we denote the normalization of $X$ by $X^\nu$.

\begin{lemma}\label{lem:closure-of-point} Let $\mc{O}$ be an excellent Dedekind domain and $\ms{X}$ be a separated, finite type, flat $\mc{O}$-scheme. Let $x$ is a closed point of $\ms{X}_\eta$ and let $\ov{x}$ be the Zariski closure of $x$ in $\ms{X}$. Then, $\ov{x}^\nu$ is an open subscheme of $\Spec(\mc{O}')$ for a Dedekind domain $\mc{O}'$ finite flat over $\mc{O}$.
\end{lemma}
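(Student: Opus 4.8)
The plan is to take $\mc{O}'$ to be the integral closure of $\mc{O}$ in the residue field $L\defeq\kappa(x)$, and to realize $\ov{x}^\nu$ as an open subscheme of $\Spec(\mc{O}')$ via Zariski's Main Theorem. First I would record the routine structural facts. Since $\ms{X}_\eta$ is of finite type over $K\defeq\operatorname{Frac}(\mc{O})$ and $x$ is a closed point, $L/K$ is finite; as $\mc{O}$ is excellent it is Nagata, so the integral closure $\mc{O}'$ of $\mc{O}$ in $L$ is finite over $\mc{O}$, torsion-free (hence flat) over the Dedekind domain $\mc{O}$, and normal Noetherian of dimension $\leqslant 1$. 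Thus $\mc{O}'$ is a Dedekind domain (or a field, when $\mc{O}$ is) finite flat over $\mc{O}$, and $\Spec(\mc{O}')$ is the normalization of $\Spec(\mc{O})$ in $L$. On the other side, the reduced closed subscheme $\ov{x}\subseteq\ms{X}$ is integral and of finite type over $\mc{O}$, and because $x$ is closed in $\ms{X}_\eta$ its generic fibre $\ov{x}\times_{\Spec(\mc{O})}\Spec(K)=\ov{x}\cap\ms{X}_\eta$ is the reduced closure of $x$ in $\ms{X}_\eta$, namely $\Spec(L)$; hence the function field of $\ov{x}$ is $L$, the morphism $\ov{x}\to\Spec(\mc{O})$ is dominant, and $\ov{x}$ --- therefore also $\ov{x}^\nu$ --- is flat over $\mc{O}$. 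Excellence of $\mc{O}$ makes $\ov{x}$ excellent, so $\ov{x}^\nu\to\ov{x}$ is finite and $\ov{x}^\nu$ is a normal integral scheme of finite type over $\mc{O}$ with function field $L$. Affine-locally the elementary fact that a normal domain $A$ with $\operatorname{Frac}(A)=L$ containing $\mc{O}$ must contain the integral closure of $\mc{O}$ in $L$ shows that the composite $\ov{x}^\nu\to\ov{x}\to\Spec(\mc{O})$ factors uniquely, compatibly with $K\hookrightarrow L$, as $\ov{x}^\nu\xrightarrow{\,j\,}\Spec(\mc{O}')\to\Spec(\mc{O})$.

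The main step is to check that $j$ is an open immersion. It is separated and of finite type, obtained by cancelling the (separated, resp.\ finite) morphism $\Spec(\mc{O}')\to\Spec(\mc{O})$ out of the composite $\ov{x}^\nu\to\Spec(\mc{O})$, which is itself separated of finite type (here we use that $\ms{X}/\mc{O}$, hence $\ov{x}^\nu/\mc{O}$, is separated). It is also quasi-finite: $\ov{x}^\nu\to\Spec(\mc{O})$ is flat and of finite type with irreducible source, so its fibre dimension is locally constant, hence constant, and equal to $\dim\Spec(L)=0$ since the morphism is dominant; thus all its fibres are $0$-dimensional, and a fortiori $j$ is quasi-finite. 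Now Zariski's Main Theorem factors $j$ as $\ov{x}^\nu\hookrightarrow Y\to\Spec(\mc{O}')$ with the first arrow an open immersion and the second finite. Replacing $Y$ by the reduced Zariski closure of $\ov{x}^\nu$ in it, and then by its normalization (finite over $\Spec(\mc{O}')$, which is finite over the excellent $\mc{O}$), and lifting the open immersion through the normalization map --- legitimate since $\ov{x}^\nu$ is normal and is an open of $Y$ over which the normalization is an isomorphism --- we reduce to the case that $Y$ is a normal integral scheme, finite and birational over $\Spec(\mc{O}')$. A finite birational morphism onto a normal scheme is an isomorphism (locally: a module-finite domain extension $R\subseteq B$ with $\operatorname{Frac}(R)=\operatorname{Frac}(B)$ and $R$ normal forces $R=B$), so $\ov{x}^\nu\hookrightarrow Y\cong\Spec(\mc{O}')$ is an open immersion, which is what we wanted.

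I do not expect a serious obstacle here: the statement is essentially a standard fact about closures of closed points in arithmetic schemes. The points that require a moment's care are the two small flatness/dimension observations --- that $\ov{x}$ is automatically flat over $\mc{O}$, and that the fibre dimension of a flat finite-type morphism with irreducible source is constant, which forces the quasi-finiteness needed to apply Zariski's Main Theorem --- together with the input that excellence of $\mc{O}$ guarantees that all the normalizations appearing (of $\Spec(\mc{O})$ in $L$, of $\ov{x}$, and of $Y$) are finite.
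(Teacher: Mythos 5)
Your proof is correct, and it takes a genuinely different route from the paper's. You identify $\mc{O}'$ directly as the integral closure of $\mc{O}$ in $L=\kappa(x)$ and then exhibit $\ov{x}^\nu$ as an open of $\Spec(\mc{O}')$ by applying Zariski's Main Theorem to the quasi-finite separated morphism $j\colon\ov{x}^\nu\to\Spec(\mc{O}')$ (the quasi-finiteness coming from local constancy of fibre dimension for flat finite-type morphisms, plus the fact that the generic fibre is a point), followed by a closure-and-normalize-and-cancel-a-finite-birational-map argument. The paper instead first chooses a Nagata compactification $\ov{\ms{X}}$ of $\ms{X}$ over $\mc{O}$, flattens it, takes the closure $Z$ of $x$ there, shows $Z\to\Spec(\mc{O})$ is finite directly by combining properness with a dimension count (quasi-finite plus proper implies finite), concludes $Z^\nu=\Spec(\mc{O}')$, and finally restricts to the open $\ov{x}=\ms{X}\cap Z$ using that normalization commutes with open immersions. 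What the compactification buys is a shortcut past ZMT and past your iterated closure/normalization step, since the finiteness of $Z\to\Spec(\mc{O})$ is immediate once properness is in hand; what your approach buys is that you never need to invoke Nagata compactifications of non-proper finite-type schemes. Both proofs lean on the same underlying facts (excellence controlling finiteness of normalization, flatness over a Dedekind domain from torsion-freeness, and the $0$-dimensionality of the generic fibre propagating to all fibres).
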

\begin{proof} Let $\ov{X}$ be a compactification of $\ms{X}\to\Spec(\mc{O})$ (see \stacks{0ATT} and \stacks{0F41}). Replacing $\ov{\ms{X}}$ with the closed suscheme cut out by the $\mc{O}$-torsion in $\mc{O}_{\ms{X}}$, we may assume that $\ov{\ms{X}}$ is flat over $\mc{O}$ (see \cite[Proposition 14.14]{GortzWedhorn}). 

Let $Z$ denote the Zariski closure of $x$ in $\ov{\ms{X}}$. We claim that $Z^\nu=\Spec(\mc{O}')$ for a Dedekind ring $\mc{O}'$ finite flat over $\mc{O}$. As $\ov{x}=\ms{X}\cap Z$, the lemma will then follow from \stacks{035K}. To prove the claim, we observe that $Z$ is evidently irreducible and reduced, and thus integral. But, as $Z\to \mathrm{Spec}(\mc{O})$ is dominant, we deduce that it is flat by \cite[Proposition 14.14]{GortzWedhorn}. But, since $\ov{\ms{X}}$ is a proper $\mc{O}$-scheme, so is $Z$. Thus, by \stacks{0D4J} we deduce that $Z_s$ is zero dimensional. Thus, $Z\to \Spec(\mc{O})$ is proper and quasi-finite, and so finite (see \stacks{02OG}). It follows that $Z^\nu$ is also an integral scheme and $ Z^\nu\to \Spec(\mc{O})$ is dominant and finite (using \stacks{035R}) over $\mc{O}$ and so flat again by \cite[Proposition 14.14]{GortzWedhorn}. Write $Z^\nu=\Spec(\mc{O}')$, then $\mc{O}'$ is $1$-dimensional Noetherian normal domain, and so a Dedekind domain as desired.
\end{proof}

Given Proposition \ref{prop:toral-type}, we see that the definition of $\ms{S}_{\mathsf{K}}^\mf{d}=(\mb{T},\{h\})$ is independent of the choice of $\mf{d}$. We denote by $\ms{S}_{\mathsf{K}}(\mb{T},\{h\})$ the common object.

\subsubsection{Functoriality for ad-isomorphisms}

Suppose that $\alpha\colon (\mb{G}_1,\mb{X}_1,\mc{G}_1)\to (\mb{G},\mb{X},\mc{G})$ is an ad-isomorphism of parahoric Shimura datum. Take a very good parahoric Shimura datum $\mf{d}=(\mb{G}_2,\mb{X}_2,\mc{G}_2)$ of Hodge type well-adapted to $(\mb{G}_1,\mb{X}_1,\mc{G}_1)$. Then, since $\alpha$ is an ad-isomorphism, one sees that $\mf{d}$ is also well-adapted to $(\mb{G},\mb{X},\mc{G})$ in the obvious way.

\begin{prop}\label{prop:funct-for-ad-isom} There exists a unique quotient-finite \'etale morphism 
\begin{equation*}
    \alpha\colon \ms{S}^\mf{d}_{\mathsf{K}_{p,1}}(\mb{G}_1,\mb{X}_1)\to \ms{S}_{\mathsf{K}_p}^\mf{d}(\mb{G},\mb{X})_{\mc{O}_{E_1}},
\end{equation*} 
which is equivariant for the map $\alpha\colon \mb{G}_1(\A_f^p)\to \mb{G}(\A_f^p)$ and models 
\begin{equation*} \alpha\colon \Sh_{\mathsf{K}_{p,1}}(\mb{G}_1,\mb{X}_1)\to \Sh_{\mathsf{K}_p}(\mb{G},\mb{X})_{E_1}.
\end{equation*}
\end{prop}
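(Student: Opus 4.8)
The plan is to construct the morphism $\alpha$ directly from the inductive construction recalled in \S\ref{ss:KPZ-construction}, exploiting that both models are built from the \emph{same} auxiliary very good Hodge-type datum $\mf{d}=(\mb{G}_2,\mb{X}_2,\mc{G}_2)$. As observed just before the statement, $\mf{d}$ is well-adapted to $(\mb{G},\mb{X},\mc{G})$ as well as to $(\mb{G}_1,\mb{X}_1,\mc{G}_1)$: the composite isogeny $\mb{G}_2^\der\to\mb{G}_1^\der\to\mb{G}^\der$ again adapts $(\mb{G}_2,\mb{X}_2)$, and the point $x_2\in\ms{B}(G_2,\bQ_p)$ cutting out $\mc{G}_2$ maps to $x_1$ and thence to $x$ under the identifications of Lemma~\ref{lemma:buildings-ad-isom}, using that $\alpha$ is an ad-isomorphism. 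Consequently $\ms{S}^\mf{d}_{\mathsf{K}_{p,1}}(\mb{G}_1,\mb{X}_1)$ and $\ms{S}^\mf{d}_{\mathsf{K}_p}(\mb{G},\mb{X})$ arise from Steps~3 and~4 of \S\ref{ss:KPZ-construction} applied to the single scheme $\pmb{\ms{S}}^\vn_{\mathsf{K}_{p,2}}(\mb{G}_2,\mb{X}_2)^+$, the only difference being the replacement of the pair $\bigl(\ms{A}(\bm{\mc{G}}_1),\ms{E}_{\mb{E}_1\mb{E}_2}(\bm{\mc{G}}_2)\bigr)$ by $\bigl(\ms{A}(\bm{\mc{G}}),\ms{E}_{\mb{E}\mb{E}_2}(\bm{\mc{G}}_2)\bigr)$; crucially the group $\ms{A}(\bm{\mc{G}}_2)^\circ$ by which one quotients is the same in both cases.

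Granting this, the first step is to feed $\alpha$ into the functoriality of the $\ms{A}$- and $\ms{E}$-groups. Since $\alpha$ is an ad-isomorphism it carries $\mb{Z}_{\mb{G}_1}$ into $\mb{Z}_{\mb{G}}$ (Proposition~\ref{prop:ad-iso-equiv}) and its induced building map sends $x_1$ to $x$, so Lemmas~\ref{lem:functoriality-of-A} and~\ref{lem:functoriality-of-E} produce compatible homomorphisms $\ms{A}(\alpha)\colon\ms{A}(\bm{\mc{G}}_1)\to\ms{A}(\bm{\mc{G}})$ and $\ms{E}(\alpha)\colon\ms{E}(\bm{\mc{G}}_1)\to\ms{E}(\bm{\mc{G}})$, fitting into commuting triangles with the structure maps out of $\ms{A}(\bm{\mc{G}}_2)^\circ$ and $\ms{E}(\bm{\mc{G}}_2)$, and hence a map $\ms{E}_{\mb{E}_1\mb{E}_2}(\bm{\mc{G}}_2)\to\ms{E}_{\mb{E}\mb{E}_2}(\bm{\mc{G}}_2)$ compatible with \eqref{eq:msE-map}. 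The map $\id\times\ms{A}(\alpha)$ on $\pmb{\ms{S}}^\vn_{\mathsf{K}_{p,2}}(\mb{G}_2,\mb{X}_2)^+\times\ms{A}(\bm{\mc{G}}_1)$ then descends through the $\ms{A}(\bm{\mc{G}}_2)^\circ$-quotient, respects the finite double-coset index sets $J$ appearing in Step~3 (as $\ms{A}(\alpha)$ induces a map between them), intertwines the $\mb{G}_1(\bA_f^p)$- and $\mb{G}(\bA_f^p)$-actions via \eqref{eq:star-isom}, and finally descends through the place-selection and Galois descent of Step~4 to a morphism $\alpha\colon\ms{S}^\mf{d}_{\mathsf{K}_{p,1}}(\mb{G}_1,\mb{X}_1)\to\ms{S}^\mf{d}_{\mathsf{K}_p}(\mb{G},\mb{X})_{\mc{O}_{E_1}}$. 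By construction it is equivariant for $\alpha\colon\mb{G}_1(\bA_f^p)\to\mb{G}(\bA_f^p)$, and on generic fibres it reproduces the functoriality of canonical models of \cite{DeligneModulaire}, hence equals the given map of Shimura varieties. Uniqueness is then immediate: $\ms{S}^\mf{d}_{\mathsf{K}_{p,1}}(\mb{G}_1,\mb{X}_1)$ is reduced (a cofiltered limit of normal schemes along affine transition maps) and flat over the discrete valuation ring $\mc{O}_{E_1}$ with schematically dense generic fibre, and the target is separated.

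It remains to check that $\alpha$ is quotient-finite \'etale, which I expect to be the main obstacle. It suffices to show that for a cofinal system of $\mathsf{K}^p$ with $\alpha(\mathsf{K}_1^p)\subseteq\mathsf{K}^p$ the morphism $\ms{S}^\mf{d}_{\mathsf{K}_{p,1}\mathsf{K}_1^p}(\mb{G}_1,\mb{X}_1)\to\ms{S}^\mf{d}_{\mathsf{K}_p\mathsf{K}^p}(\mb{G},\mb{X})_{\mc{O}_{E_1}}$ is finite \'etale, the general case then following from the finite \'etaleness of the transition maps in Theorem~\ref{thm:KPZ-model-properties}(1). On generic fibres this is the map $\Sh_{\mathsf{K}_{p,1}\mathsf{K}_1^p}(\mb{G}_1,\mb{X}_1)\to\Sh_{\mathsf{K}_p\mathsf{K}^p}(\mb{G},\mb{X})_{E_1}$, which is finite \'etale because $\alpha$ is an ad-isomorphism (the induced map of Hermitian symmetric domains is an isomorphism, so on complex points it is a finite covering onto a union of connected components of a variety of the same dimension). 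To propagate this to the special fibre one uses the Step~3 presentations: fppf-locally on $\pmb{\ms{S}}^\vn_{\mathsf{K}_{p,2}}(\mb{G}_2,\mb{X}_2)^+$ both $\mathsf{K}^p$-level models become finite disjoint unions of open subschemes of $\pmb{\ms{S}}^\vn_{\mathsf{K}_{p,2}}(\mb{G}_2,\mb{X}_2)^+$, indexed by the relevant double-coset sets for $\ms{A}(\bm{\mc{G}}_1)$ and $\ms{A}(\bm{\mc{G}})$, and $\ms{A}(\alpha)$ induces a map of index sets with finite fibres. Thus $\alpha$ is fppf-locally a finite disjoint union of open immersions, hence flat, unramified and quasi-finite, so \'etale; finiteness then follows from properness, for which one has the valuative criterion over characteristic $(0,p)$ discrete valuation rings from the extension property of Theorem~\ref{thm:KPZ-model-properties}(2) applied to the source, the remaining equal-characteristic case being handled by quasi-finiteness and flatness (alternatively, one identifies $\alpha$ fppf-locally with a finite disjoint union of isomorphisms followed by the fold map of a $J$-indexed union). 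The delicate input is precisely this control of the special fibre through the Step~3 presentation; the rest is formal.
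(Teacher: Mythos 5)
Your existence and uniqueness arguments match the paper's closely: both exploit that the same auxiliary datum $\mf{d}=(\mb{G}_2,\mb{X}_2,\mc{G}_2)$ underlies both models, feed $\alpha$ through Lemmas~\ref{lem:functoriality-of-A} and~\ref{lem:functoriality-of-E}, descend $\mathrm{id}\times\ms{A}(\alpha)$ through the $\ms{A}(\bm{\mc{G}}_2)^\circ$-quotient and the index sets $J_1\to J$, and then invoke schematic density of the generic fibre of a flat $\mc{O}_{E_1}$-scheme together with separatedness of the target for uniqueness.

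Where you diverge from the paper is in verifying the quotient-finite \'etale property, and here your argument is both more laborious than necessary and imprecisely phrased. The paper's route is short: a connected component $\ms{S}_1^+$ of $\ms{S}^\mf{d}_{\mathsf{K}_{p,1}}(\mb{G}_1,\mb{X}_1)$ mapping to a component $\ms{S}^+$ of $\ms{S}^\mf{d}_{\mathsf{K}_p}(\mb{G},\mb{X})_{\mc{O}_{E_1}}$ is, by the Step~3 construction, the quotient of $\ms{S}^\vn_{\mathsf{K}_{p,2}}(\mb{G}_2,\mb{X}_2)^+_{\mc{O}_{E_1}}$ by $\Delta_1\defeq\ker\big(\ms{A}(\bm{\mc{G}}_2)^\circ\to\ms{A}(\bm{\mc{G}}_1)\big)$, while $\ms{S}^+$ is the quotient by $\Delta\defeq\ker\big(\ms{A}(\bm{\mc{G}}_2)^\circ\to\ms{A}(\bm{\mc{G}})\big)$; since $\Delta_1\subseteq\Delta$, the map $\ms{S}_1^+\to\ms{S}^+$ is the quotient by the finite group $\Delta/\Delta_1$, and one is done. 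Your argument instead passes through an fppf-local description, a valuative criterion, and Zariski's main theorem, and it contains two issues worth flagging. First, ``both $\mathsf{K}^p$-level models become finite disjoint unions of open subschemes of $\pmb{\ms{S}}^\vn_{\mathsf{K}_{p,2}}(\mb{G}_2,\mb{X}_2)^+$'' is not the right picture: after base change to the torsor cover they become finite disjoint unions of copies (not open subschemes) of that cover, and a finite disjoint union of open immersions is \'etale but not finite (think of two overlapping open subsets of a curve), so as written this does not give finiteness; it is only the stronger statement --- finite disjoint union of isomorphisms --- from your parenthetical alternative that yields the conclusion. Second, the properness argument ``the remaining equal-characteristic case being handled by quasi-finiteness and flatness'' is not a self-contained reason. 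The correct way to close this, if you want the valuative-criterion route, is: the map is \'etale and quasi-finite, so by Zariski's main theorem factors as an open immersion into a finite scheme over the target; both are flat over $\mc{O}_{E_1}$, so any putative missing point in the special fibre of the finite hull lies in the closure of the generic fibre, hence sits under a characteristic $(0,p)$ valuation ring, and the extension property then forces the open immersion to be an isomorphism. This works, but you should say it. I recommend replacing the entire third paragraph with the two-line quotient-by-$\Delta/\Delta_1$ observation, which is both shorter and does not require the extension property at all.
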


Here by \emph{quotient-finite \'etale} we mean that if $\mathsf{K}_1^p\subseteq \mb{G}_1(\A_f^p)$ and $\mathsf{K}^p\subseteq\mb{G}(\A_f^p)$ are neat compact open subgroups such that $\alpha(\mathsf{K}_1^p)\subseteq\mathsf{K}^p$, then the induced map 
\begin{equation*} 
\alpha\colon \ms{S}^\mf{d}_{\mathsf{K}^p_1}(\mb{G}_1,\mb{X}_1)\to \ms{S}^\mf{d}_{\mathsf{K}^p}(\mb{G},\mb{X})_{\mc{O}_{E_1}},
\end{equation*}
is a finite \'etale morphism.

\begin{proof}[Proof of Proposition \ref{prop:funct-for-ad-isom}] The fact that such a map is unique is clear by the separatedness of $\alpha$ and the flatness of $\ms{S}^\mf{d}_{\mathsf{K}^p_1}(\mb{G}_1,\mb{X}_1)$, which guarantees that its generic fiber is Zariski dense (e.g., see \cite[Proposition 14.14]{GortzWedhorn}).

To see the existence, we note that by definition we have 
\begin{equation*}
    \ms{S}_{\mathsf{K}_{p,1}}^\mf{d}(\mb{G}_1,\mb{X}_1)=\left(\left[\ms{S}_{\mathsf{K}_{p,2}}^\vn(\mb{G}_2,\mb{X}_2)^+_{\mc{O}_{E_1}}\times \ms{A}(\bm{\mc{G}}_1)\right]/\ms{A}(\bm{\mc{G}}_2)^\circ\right)^{J_1},
\end{equation*}
and
\begin{equation*}
    \ms{S}_{\mathsf{K}_{p}}^\mf{d}(\mb{G},\mb{X})_{\mc{O}_{E_1}}=\left(\left[\ms{S}_{\mathsf{K}_{p,2}}^\vn(\mb{G}_2,\mb{X}_2)^+_{\mc{O}_{E_1}}\times \ms{A}(\bm{\mc{G}})\right]/\ms{A}(\bm{\mc{G}}_2)^\circ\right)^{J}.
\end{equation*}
Now, by Lemma \ref{lem:functoriality-of-A} and Lemma \ref{lem:functoriality-of-E} we get morphisms 
\begin{equation*}
    \alpha\colon \ms{A}(\bm{\mc{G}}_1)\to \ms{A}(\bm{\mc{G}}),\quad \alpha\colon \ms{A}(\bm{\mc{G}}_1)^\circ\to \ms{A}(\bm{\mc{G}}),\ \text{ and } \ \alpha\colon \ms{E}(\bm{\mc{G}}_1)\to \ms{E}(\bm{\mc{G}}).
\end{equation*}
Because $\alpha(\mathsf{K}_{p,1})\subseteq\mathsf{K}_p$, we have a map of sets $\alpha\colon J_1\to J$. Since the isomorphism from \eqref{eq:star-isom} is natural in these operations, we deduce the existence of a map 
\begin{equation*}
    \left(\left[\ms{S}_{\mathsf{K}_{p,2}}^\vn(\mb{G}_2,\mb{X}_2)^+_{\mc{O}_{E_1}}\times \ms{A}(\bm{\mc{G}}_1)\right]/\ms{A}(\bm{\mc{G}}_2)^\circ\right)^{J_1}\to \left(\left[\ms{S}_{\mathsf{K}_{p,2}}^\vn(\mb{G}_2,\mb{X}_2)^+_{\mc{O}_{E_1}}\times \ms{A}(\bm{\mc{G}})\right]/\ms{A}(\bm{\mc{G}}_2)^\circ\right)^{J},
\end{equation*}
equivariant for $\alpha\colon \mb{G}_1(\A_f^p)\to\mb{G}(\A_f^p)$. That the generic fiber of this map agrees with 
\begin{equation*} \alpha\colon \Sh_{\mathsf{K}_{p,1}}(\mb{G}_1,\mb{X}_1)_{E_1}\to \Sh_{\mathsf{K}_p}(\mb{G},\mb{X})_{E_1}.
\end{equation*}
follows as in \cite[Lemma 4.6.13]{KP2018}.

Finally, we show that $\alpha$ is quotient-finite \'etale. Let $\ms{S}_1^+$ denote a connected component of $\ms{S}^\mf{d}_{\eK_{p,1}}(\mb{G}_1, \mb{X}_1)$ mapping to a connected component $\ms{S}^+$ of $\ms{S}^\mf{d}_{\eK_p}(\mb{G}, \mb{X})_{\mc{O}_{E_1}}$. By construction, we have 
\begin{align*}
    \ms{S}_2^+ = \ms{S}^\vn_{\eK_{p,2}}(\mb{G}_2, \mb{X}_2)^+_{\mc{O}_{E_1}}/\Delta_1 \quad\text{and}\quad \ms{S}^+ = \ms{S}^\vn_{\eK_{p,2}}(\mb{G}_2, \mb{X}_2)^+_{\mc{O}_{E_1}}/\Delta, 
\end{align*}
where 
\begin{align*}
    \Delta_1 = \ker(\ms{A}(\bm{\mc{G}}_2)^\circ \to \ms{A}(\mb{\bm{\mc{G}}})) \quad\text{and}\quad\Delta = \ker(\ms{A}(\bm{\mc{G}}_2)^\circ \to \ms{A}(\bm{\mc{G}})),
\end{align*}
(see also the arguments of \cite[Corollary 4.6.15]{KP2018}). Thus $\ms{S}^+$ is the quotient of $\ms{S}_1^+$ by the finite group $\Delta / \Delta_1$, and the result follows.
\end{proof}

In particular, the proposition implies functoriality for morphisms $(\mb{T}_1, \{h_1\}, \mc{T}_1) \to (\mb{T}, \{h\}, \mc{T})$ of parahoric Shimura data of toral type.

\medskip

\subsubsection{Functoriality for abelianizations}

Suppose now that $(\mb{G},\mb{X},\mc{G})$ is a parahoric Shimura datum of abelian type. Consider the parahoric Shimura datum $(\mb{G}^\ab, \mb{X}^\ab, \mc{G}^\ab)$, where $\mb{X}^\ab$ is the result of post-composing any element $h$ of $\mb{X}$ with the canonical map $\delta: \mb{G} \to \mb{G}^\ab$. We have the obvious associated morphism of parahoric Shimura data
\begin{equation*}
    \alpha\colon (\mb{G}, \mb{X}, \mc{G}) \to (\mb{G}^\ab, \mb{X}^\ab, \mc{G}^\ab).
\end{equation*}
Write $\eK_p^\ab = \mc{G}^\ab(\bZ_p)$. 

\begin{prop}\label{prop:funct-for-ab} There exists a unique 
\begin{equation*}
    \alpha\colon \ms{S}^\mf{d}_{\mathsf{K}_p}(\mb{G},\mb{X})\to \ms{S}_{\mathsf{K}_p^\ab}(\mb{G}^\ab,\mb{X}^\ab)_{\mc{O}_{E}},
\end{equation*} 
which is equivariant for the map $\alpha\colon \mb{G}_1(\A_f^p)\to \mb{G}(\A_f^p)$ and models 
\begin{equation*} \alpha\colon \Sh_{\mathsf{K}_{p,1}}(\mb{G}_1,\mb{X}_1)_{E_1}\to \Sh_{\mathsf{K}_p}(\mb{G},\mb{X})_{E_1}.
\end{equation*}
\end{prop}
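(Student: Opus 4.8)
The plan is to deduce this from the explicit description of Kisin--Pappas--Zhou models of toral type, exploiting that $\mb{G}^\ab$ is a torus. (We read the statement as asserting that $\alpha$ is equivariant for the map $\alpha\colon \mb{G}(\A_f^p)\to \mb{G}^\ab(\A_f^p)$ and restricts on generic fibres to the morphism $\Sh_{\mathsf{K}_p}(\mb{G},\mb{X})_E\to \Sh_{\mathsf{K}_p^\ab}(\mb{G}^\ab,\mb{X}^\ab)_E$ coming from the morphism of Shimura data.) First I would dispense with uniqueness exactly as in the proof of Proposition~\ref{prop:funct-for-ad-isom}: the target is separated over $\mc{O}_E$ while $\ms{S}^\mf{d}_{\mathsf{K}_p}(\mb{G},\mb{X})$ is reduced (being normal) and flat over $\mc{O}_E$, so its generic fibre is Zariski dense (see \cite[Proposition~14.14]{GortzWedhorn}), and hence two $\mc{O}_E$-morphisms agreeing on generic fibres coincide. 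By the same density it suffices to construct $\alpha$ at each finite level, i.e.\ for neat compact open subgroups $\mathsf{K}^p\subseteq \mb{G}(\A_f^p)$ and $\mathsf{K}^{p,\ab}\subseteq \mb{G}^\ab(\A_f^p)$ with $\alpha(\mathsf{K}^p)\subseteq \mathsf{K}^{p,\ab}$, and then to pass to the limit over $\mathsf{K}^p$ (the $\mathsf{K}^{p,\ab}$ can be taken cofinal as $\mathsf{K}^p$ shrinks).

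The key input is Proposition~\ref{prop:toral-type}. Since $\mb{G}^\ab$ is a torus, $(\mb{G}^\ab,\mb{X}^\ab)$ is of toral type, so $\ms{S}_{\mathsf{K}_p^\ab\mathsf{K}^{p,\ab}}(\mb{G}^\ab,\mb{X}^\ab)$ is a finite disjoint union of spectra of rings of integers of finite extensions of $E^\ab$, where $E^\ab\subseteq E$ denotes the local reflex field of $(\mb{G}^\ab,\mb{X}^\ab)$; in particular it is finite over $\mc{O}_{E^\ab}$, so $\ms{S}_{\mathsf{K}_p^\ab\mathsf{K}^{p,\ab}}(\mb{G}^\ab,\mb{X}^\ab)_{\mc{O}_E}$ is finite --- in particular affine --- over $\mc{O}_E$, say $\ms{S}_{\mathsf{K}_p^\ab\mathsf{K}^{p,\ab}}(\mb{G}^\ab,\mb{X}^\ab)_{\mc{O}_E}=\Spec(\mc{A})$ with $\mc{A}$ finite over $\mc{O}_E$. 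On the other hand $\ms{S}^\mf{d}_{\mathsf{K}_p\mathsf{K}^p}(\mb{G},\mb{X})$ is normal and flat over $\mc{O}_E$ by Theorem~\ref{thm:KPZ-model-properties}, with generic fibre $\Sh_{\mathsf{K}_p\mathsf{K}^p}(\mb{G},\mb{X})_E$. Covering $\ms{S}^\mf{d}_{\mathsf{K}_p\mathsf{K}^p}(\mb{G},\mb{X})$ by connected normal affine opens $\Spec(R)$, the given morphism of generic fibres corresponds on each such open to an $E$-algebra map $\mc{A}\otimes_{\mc{O}_E}E\to R[\nicefrac{1}{p}]$; since every element of $\mc{A}$ is integral over $\mc{O}_E\subseteq R$ and $R$ is integrally closed, this map carries $\mc{A}$ into $R$, and the resulting local morphisms $\Spec(R)\to\Spec(\mc{A})$ glue to the desired $\alpha\colon \ms{S}^\mf{d}_{\mathsf{K}_p\mathsf{K}^p}(\mb{G},\mb{X})\to \ms{S}_{\mathsf{K}_p^\ab\mathsf{K}^{p,\ab}}(\mb{G}^\ab,\mb{X}^\ab)_{\mc{O}_E}$ (this is just the universal property of normalization applied to the normal scheme $\ms{S}^\mf{d}_{\mathsf{K}_p\mathsf{K}^p}(\mb{G},\mb{X})$, cf.\ \stacks{0BAK}). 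Uniqueness makes these maps compatible in $\mathsf{K}^p$, so taking the limit yields $\alpha\colon \ms{S}^\mf{d}_{\mathsf{K}_p}(\mb{G},\mb{X})\to \ms{S}_{\mathsf{K}_p^\ab}(\mb{G}^\ab,\mb{X}^\ab)_{\mc{O}_E}$, and restricting to generic fibres recovers the Shimura-variety morphism by construction.

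Finally, equivariance is formal. For $g\in \mb{G}(\A_f^p)$, with $g$ also denoting its action on $\ms{S}^\mf{d}_{\mathsf{K}_p}(\mb{G},\mb{X})$ and $\alpha(g)$ its action on the target, the two $\mc{O}_E$-morphisms $\alpha\circ g$ and $\alpha(g)\circ\alpha$ restrict on generic fibres to the same map (the generic-fibre morphism is $\mb{G}(\A_f^p)$-equivariant by the construction of morphisms of Shimura varieties recalled in \S\ref{sss:parahoric-shim-data}), hence coincide by the uniqueness above; this gives the asserted equivariance. The only substantive point is the one supplied by Proposition~\ref{prop:toral-type} --- that toral-type Kisin--Pappas--Zhou models are finite over the ring of integers of the local reflex field --- since this is exactly what makes the extension of the generic-fibre morphism automatic from normality of $\ms{S}^\mf{d}_{\mathsf{K}_p\mathsf{K}^p}(\mb{G},\mb{X})$; everything else is routine bookkeeping with levels and limits, so I do not expect a genuine obstacle here.
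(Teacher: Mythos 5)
Your proof is correct and takes essentially the same route as the paper's: both reduce to finite level, use Proposition~\ref{prop:toral-type} to see that the toral target is finite (integral) over $\mc{O}_E$, and then extend the generic-fibre morphism using normality and flatness of the source. The paper packages the extension step as Lemma~\ref{lem:normal-lemma}, whereas you unwind the same integrality argument directly; you also correctly noted (and silently repaired) the apparent copy-paste typo in the statement, which as written refers to the data of the previous proposition rather than to $\mb{G}\to\mb{G}^\ab$.
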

\begin{proof}
    Given the descriptions of $\Sh_{\eK_p^\ab}(\mb{G}^\ab, \mb{X}^\ab)$ and $\ms{S}_{\eK_p^\ab}(\mb{G}^\ab, \mb{X}^\ab)_{\mc{O}_E}$ from \eqref{eq:shim-var-of-torus} and Proposition \ref{prop:toral-type}, respectively, the result follows from Lemma \ref{lem:normal-lemma}. More precisely, it suffices to construct maps at every finite level. But, by Proposition \ref{prop:toral-type}, the Kisin--Pappas--Zhou model $S'$ for $(\mb{G}^\ab, \mb{X}^\ab)$ base-changed to $\mc{O}_E$ is finite flat over $S = \Spec(\mc{O}_E)$. On the other hand, the finite levels $X$ of the Kisin--Pappas--Zhou models $X$ for $(\mb{G}, \mb{X})$ are normal and quasi-projective and flat over $S$. Thus may apply Lemma \ref{lem:normal-lemma} to the map at finite levels on the generic fiber.
\end{proof}

\begin{lemma}\label{lem:normal-lemma}
    Suppose that $S$ is a scheme and $U$ is an open subscheme of $S$ which is schematically dense and for which the inclusion $U \hookrightarrow S$ is quasi-compact. Given a diagram
    \begin{center}
        \begin{tikzcd}
            X_U 
                \arrow[r, hook] \arrow[d, "\alpha"]
            & X
                \arrow[d, dashed] \arrow[dd, bend left = 35, "f"]
            \\ S'_U
                \arrow[r, hook] \arrow[d]
            & S'
                \arrow[d, "g"]
            \\ U 
                \arrow[r, hook]
            & S
        \end{tikzcd}
    \end{center}
    with $X$ a normal scheme, $f$ quasi-compact, separated, and flat, and $g$ quasi-compact, flat, and integral, there exists a unique arrow dashed arrow as in the diagram above which makes the top square Cartesian.
\end{lemma}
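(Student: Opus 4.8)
The plan is to prove uniqueness first, reduce the existence problem to showing that a certain scheme-theoretic image maps isomorphically to $X$, and then let the normality of $X$ together with the integrality of $g$ do the essential work.

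For uniqueness I would begin by recording that $X_U\hookrightarrow X$ is again a schematically dense quasi-compact open immersion: since $f$ is flat and $j\colon U\hookrightarrow S$ is schematically dense and quasi-compact, flat base change for pushforward gives $f^\ast j_\ast\mathcal{O}_U\cong j'_\ast\mathcal{O}_{X_U}$ with $j'\colon X_U\hookrightarrow X$, and applying the exact functor $f^\ast$ to the injection $\mathcal{O}_S\hookrightarrow j_\ast\mathcal{O}_U$ yields $\mathcal{O}_X\hookrightarrow j'_\ast\mathcal{O}_{X_U}$. Since $g$ is integral, hence affine, hence separated, two $S$-morphisms $X\to S'$ agreeing on $X_U$ have equalizer a closed subscheme of $X$ containing the schematically dense open $X_U$, hence equal to $X$ by \stacks{01RB}; this gives uniqueness. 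I would also note here that $X\times_{S'}S'_U=X\times_{S'}(S'\times_S U)=X\times_S U=X_U$, so any $S$-morphism $X\to S'$ restricting on $X_U$ to the composite of $\alpha$ with $S'_U\hookrightarrow S'$ automatically makes the top square Cartesian; thus only existence remains.

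For existence I would consider the morphism $\gamma\colon X_U\to X\times_S S'$ with components the inclusion $X_U\hookrightarrow X$ and $X_U\xrightarrow{\alpha}S'_U\hookrightarrow S'$. As $X\times_S S'\to X$ is affine and $X_U\to X$ is quasi-compact, the standard cancellation properties make $\gamma$ quasi-compact (and quasi-separated), and $\gamma$ factors as the closed immersion $X_U\hookrightarrow(X\times_S S')\times_X X_U$ (a section of the separated morphism $(X\times_S S')\times_X X_U\to X_U$) followed by an open immersion. Let $Z\hookrightarrow X\times_S S'$ be the scheme-theoretic image of $\gamma$. Since scheme-theoretic image of a quasi-compact morphism commutes with restriction to open subschemes (\stacks{01R8}), one gets $Z\times_X X_U=X_U$, so that $X_U$ is a schematically dense open of $Z$; write $\pi\colon Z\to X$ for the map induced by the first projection. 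The arrow I want will be $X\xrightarrow{\pi^{-1}}Z\hookrightarrow X\times_S S'\xrightarrow{\mathrm{pr}_2}S'$, so it all reduces to proving that $\pi$ is an isomorphism.

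To show $\pi$ is an isomorphism: it is integral (the closed immersion $Z\hookrightarrow X\times_S S'$ composed with the base change of the integral $g$), surjective (it is closed with dense image), and $Z$ is reduced, being the scheme-theoretic image of the reduced scheme $X_U$ (an open of the normal, hence reduced, scheme $X$), cf.\ \stacks{056B}. The assertion is local on $X$, and a connected normal scheme is irreducible, so I would reduce to $X=\Spec A$ with $A$ a normal domain with fraction field $\kappa$; then $X_U$ is a nonempty dense open of the integral scheme $\Spec A$, hence integral with function field $\kappa$, and so $Z$ (reduced, with dense irreducible open $X_U$) is integral with function field $\kappa$. Writing $Z=\Spec B$, we have $A\hookrightarrow B$ with $B$ integral over $A$ and $B\subseteq\kappa$, whence $B=A$ since $A$ is integrally closed in $\kappa$; thus $\pi$ is an isomorphism. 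It then remains only to check that the resulting $X\to S'$ is a morphism over $S$ (which holds since $g\circ\mathrm{pr}_2=f\circ\mathrm{pr}_1$) and restricts to $\alpha$ on $X_U$ (immediate from the construction of $Z$), and uniqueness was already established. The one non-formal point — and the step I expect to be the main obstacle — is precisely the isomorphy of $\pi$: this is where normality of $X$ is used twice (for reducedness of $X_U$ and for integral closedness in the function field) and where integrality of $g$ is used (to make $\pi$ integral); the remainder is routine manipulation of scheme-theoretic images, flat base change, and the permanence properties of quasi-compact, separated, and affine morphisms.
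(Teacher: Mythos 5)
Your proof is correct and ultimately rests on the same key fact as the paper's (a normal domain is integrally closed in its fraction field), but it is packaged quite differently. The paper's proof, after the same uniqueness-by-schematic-density step, immediately localizes to $S = \Spec A$, $S' = \Spec A'$, $X = \Spec B$ with $B$ a normal domain, and runs a direct element chase: any $a' \in A'$ satisfies a monic polynomial over $A$, hence its image in $\mc{O}(X_U) \subseteq \operatorname{Frac}(B)$ satisfies a monic polynomial over $B$, so it lies in $B$ by normality; then \stacks{01P9} gives the Cartesian claim. You instead build the dashed arrow globally: form the graph $\gamma: X_U \to X \times_S S'$, take its scheme-theoretic image $Z$, and prove the projection $\pi: Z \to X$ is an isomorphism by reducing to the same normality statement. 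Your route is more geometric and makes the mechanism visible (one literally sees why integrality of $g$ and normality of $X$ are the operative hypotheses — they force the integral, surjective, birational $\pi$ to be an isomorphism), at the cost of needing the small machinery of scheme-theoretic images and the observation that $\gamma$ restricted over $X_U$ is a closed immersion (a section of the separated $X_U \times_S S' \to X_U$), which you handle correctly. One small imprecision: you assert "a connected normal scheme is irreducible," which can fail without a Noetherian or finiteness hypothesis; but since $\pi$ is affine, being an isomorphism is local on $X$, and localizing to a stalk $\mc{O}_{X,x}$ (always a normal domain) fixes this without further ado. The paper's proof has the same implicit reduction and the same minor caveat.
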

\begin{proof}
    By \stacks{081H}, $X_U \subseteq X$ is schematically dense, so the uniqueness follows from \stacks{01RH}. In turn, by uniqueness, we may localize on $S$ and $X$ to prove existence. Using that $g$ is affine, we may then assume without loss of generality that $S = \Spec(A)$, $S' = \Spec(A')$ and $A \to A'$ is integral. Moreover, localizing on $X$ we may further assume $X = \Spec(B)$ with $B$ a normal domain. We then have the following diagram of rings
    \begin{center}
        \begin{tikzcd}
            A
                \arrow[r, "g"] \arrow[rr, bend left=20, "f"] \arrow[d]
            & A' 
                \arrow[r, dashed] \arrow[d]
            & B 
                \arrow[d]
            \\ \mc{O}(U) 
                \arrow[r]
            & \mc{O}(S_U')
                \arrow[r, "\alpha"] 
            & \mc{O}(X_U)
        \end{tikzcd}
    \end{center}
    The center and right-hand vertical arrows are injections by flatness of $f$ and $g$ along with \stacks{081H} and \stacks{01RE}. 
    
    We claim that $\alpha(A') \subseteq B$. Indeed, let $a' \in A'$. Then by integrality of $A \to A'$, there exists a monic polynomial $p(T)$ in $A[T]$ such that $p(a') = 0$. But then $\alpha(a')$ in $\mc{O}(X_U)$ satisfies the monic polynomial $f(p)(T)$ in $B[T]$. Thus $\alpha(a')$ must lie in $B$ by normality of $B$.

    It follows that we have a map $\Spec(\alpha): \Spec(B) \to \Spec (A')$. We claim that the diagram
    \begin{center}
        \begin{tikzcd}
            X_U 
                \arrow[r, hook] \arrow[d, "\alpha"]
            & X = \Spec(B) 
                \arrow[d, "\Spec(\alpha)"]
            \\ S'_U 
                \arrow[r, hook]
            & S' = \Spec(A')
        \end{tikzcd}
    \end{center}
    is Cartesian. For this it is enough to show that $\Spec(\alpha) \res_{X_U} = \alpha$. Since $U \hookrightarrow X$ is quasi-compact, $U$ is quasi-compact, and therefore the same is true of $S_U'$ and $X_U$ by quasi-compactness of $f$ and $g$. Since $\Spec(\alpha)$ and $\alpha$ induce the same map $\mc{O}(S'_U) \to \mc{O}(X_U)$, the result now follows from \stacks{01P9}.
\end{proof}

\subsubsection{Two constructions for very good Hodge-type data}

We record here the following basic compatibility between the constructions made in \S\ref{ss:KPZ-construction}. Let $\mf{d}=(\mb{G},\mb{X},\mc{G})$ be a very good parahoric Shimura datum of Hodge type.

\begin{prop}\label{prop:HT-basic-model-comparison} There is a canonical $\mb{G}(\A^p_f)$-equivariant identification
\begin{equation*}
    \ms{S}^\mf{d}_{\mathsf{K}_p}(\mb{G},\mb{X})\isomto \ms{S}^\vn_{\mathsf{K}_p},(\mb{G},\mb{X}),
\end{equation*}
extending the identity in the generic fiber.
\end{prop}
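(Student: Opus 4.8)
The plan is to unwind the construction of $\ms{S}^\mf{d}_{\mathsf{K}_p}(\mb{G},\mb{X})$ from \S\ref{ss:KPZ-construction} in the degenerate case where $\mf{d}=(\mb{G},\mb{X},\mc{G})$ is taken well-adapted to itself via $\alpha=\id_{\mb{G}^\der}$ --- this is legitimate, since then $\mb{E}'=\mb{E}\mb{E}=\mb{E}$ is trivially split over $\mb{E}$ at every prime over $p$ and the building condition is automatic --- and to recognize the resulting scheme as the output of the procedure of \cite[\S2.7]{DeligneModulaire} reconstructing a Shimura variety from a connected component, applied to $\pmb{\ms{S}}^\vn_{\mathsf{K}_p}(\mb{G},\mb{X})$ itself. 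With $\mb{G}_1=\mb{G}$, $\mb{X}_1=\mb{X}$, $\mc{G}_1=\mc{G}$ and $\alpha=\id$ all of the twisting in \hyperref[subsub:Step-3-Construction]{\textbf{Step 3}} collapses: $\mb{E}'=\mb{E}$, $\ms{A}(\bm{\mc{G}}_1)=\ms{A}(\bm{\mc{G}})$, $\ms{A}(\bm{\mc{G}}_1)^\circ=\ms{A}(\bm{\mc{G}})^\circ$, $\ms{E}_{\mb{E}'}(\bm{\mc{G}}_1)=\ms{E}(\bm{\mc{G}})$, the map \eqref{eq:msE-map} is the identity, \eqref{eq:E-to-A} is the projection $\ms{E}(\bm{\mc{G}})\to\ms{A}(\bm{\mc{G}})$, and \hyperref[subsub:Step-4-Construction]{\textbf{Step 4}} involves no choice. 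Writing $R=\mc{O}_{\mb{E}^p}\otimes_{\mc{O}_\mb{E}}\mc{O}_{(v)}$, one is left with
\[
\pmb{\ms{S}}^\mf{d}_{\mathsf{K}_p}(\mb{G},\mb{X})_R=\Big([\,\pmb{\ms{S}}^\vn_{\mathsf{K}_p}(\mb{G},\mb{X})^+_R\times\ms{A}(\bm{\mc{G}})\,]\big/\ms{A}(\bm{\mc{G}})^\circ\Big)^{J},
\]
and $\ms{S}^\mf{d}_{\mathsf{K}_p}(\mb{G},\mb{X})$ is the base change to $\mc{O}_E$ of the $\Gal(\mb{E}^p/\mb{E})$-descent of the right-hand side.

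Next I would produce the comparison morphism. Because $(\mb{G},\mb{X},\mc{G})$ is very good, $\pmb{\ms{S}}^\vn_{\mathsf{K}_p}(\mb{G},\mb{X})$ carries an action of $\ms{A}(\bm{\mc{G}})$ extending the $\mb{G}(\A_f^p)$-action (see the end of \hyperref[subsub:Step-2-Construction]{\textbf{Step 2}}), and by the very definition of $\ms{E}(\bm{\mc{G}})$ the subgroup $\ms{E}(\bm{\mc{G}})\subseteq\ms{A}(\bm{\mc{G}})\times\Gal(\mb{E}^p/\mb{E})$ stabilizes the connected component $\pmb{\ms{S}}^\vn_{\mathsf{K}_p}(\mb{G},\mb{X})^+$. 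The action morphism $(s,a)\mapsto s\cdot a$ from $\pmb{\ms{S}}^\vn_{\mathsf{K}_p}(\mb{G},\mb{X})^+_R\times\ms{A}(\bm{\mc{G}})$ to $\pmb{\ms{S}}^\vn_{\mathsf{K}_p}(\mb{G},\mb{X})_R$ is invariant for the $\ms{A}(\bm{\mc{G}})^\circ$-action of \eqref{eq:action} (indeed $(s\cdot\gamma)\cdot(\gamma^{-1}a)=s\cdot a$), and a direct check using \eqref{eq:msE-action}, \eqref{eq:msA-action} and the compatibility of the $\ms{A}(\bm{\mc{G}})$-action with the descent datum shows it is moreover equivariant for $\ms{A}(\bm{\mc{G}})\rtimes\ms{E}(\bm{\mc{G}})$. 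Assembling over the $J$-indexed factors it descends to an $\ms{A}(\bm{\mc{G}})\times\Gal(\mb{E}^p/\mb{E})$-equivariant morphism $\Phi\colon\pmb{\ms{S}}^\mf{d}_{\mathsf{K}_p}(\mb{G},\mb{X})_R\to\pmb{\ms{S}}^\vn_{\mathsf{K}_p}(\mb{G},\mb{X})_R$.

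Then I would show $\Phi$ is an isomorphism. On the piece of the source built from $\pmb{\ms{S}}^\vn_{\mathsf{K}_p}(\mb{G},\mb{X})^+\times\{1\}$, where the $\ms{A}(\bm{\mc{G}})^\circ$-action is free on the second coordinate, the quotient is identified with $\pmb{\ms{S}}^\vn_{\mathsf{K}_p}(\mb{G},\mb{X})^+$ via $[(s,1)]\mapsto s$, and under this identification $\Phi$ becomes the identity; hence $\Phi$ restricts to an isomorphism onto the component $\pmb{\ms{S}}^\vn_{\mathsf{K}_p}(\mb{G},\mb{X})^+$, and by $\ms{A}(\bm{\mc{G}})$- and $\Gal(\mb{E}^p/\mb{E})$-equivariance onto every connected component of $\pmb{\ms{S}}^\vn_{\mathsf{K}_p}(\mb{G},\mb{X})_R$ in the orbit of $\pmb{\ms{S}}^\vn_{\mathsf{K}_p}(\mb{G},\mb{X})^+$. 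So it remains only to see that $\Phi$ is bijective on $\pi_0$. Here Lemma \ref{lem:normal-components} together with Lemma \ref{lem:pi0-omnibus} identifies $\pi_0$ of each side --- with its $\ms{A}(\bm{\mc{G}})\times\Gal(\mb{E}^p/\mb{E})$-action --- with the corresponding set of connected components of the generic fiber, and the claimed bijectivity then reduces to Deligne's description of $\pi_0(\Sh_{\mathsf{K}_p}(\mb{G},\mb{X}))$ as an $\ms{A}(\mb{G})\times\Gal$-set (\cite[\S2.1]{DeligneModulaire}) together with the definition of the finite set $J$; this is exactly the bookkeeping already carried out in \cite[\S4.6]{KP2018} and \cite[\S5.2]{KZ21} to prove that $\ms{S}^\mf{d}_{\mathsf{K}_p}(\mb{G},\mb{X})$ is a model of $\Sh_{\mathsf{K}_p}(\mb{G},\mb{X})_E$. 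I expect this $\pi_0$-matching --- reconciling the components produced by $J$ with those of $\pmb{\ms{S}}^\vn_{\mathsf{K}_p}(\mb{G},\mb{X})$ --- to be the only genuine obstacle; the rest is formal.

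Finally, the $\ms{A}(\bm{\mc{G}})\times\Gal(\mb{E}^p/\mb{E})$-equivariance of $\Phi$ lets it descend along $\Gal(\mb{E}^p/\mb{E})$ and then base change to $\mc{O}_E$, producing the desired $\mb{G}(\A_f^p)$-equivariant isomorphism $\ms{S}^\mf{d}_{\mathsf{K}_p}(\mb{G},\mb{X})\isomto\ms{S}^\vn_{\mathsf{K}_p}(\mb{G},\mb{X})$. On generic fibers $\Phi$ is Deligne's reassembly morphism, which under the canonical identifications of both generic fibers with $\Sh_{\mathsf{K}_p}(\mb{G},\mb{X})_E$ (cf.\ \cite[Lemma 4.6.13]{KP2018}) is the identity, so the isomorphism extends the identity there; quotienting by neat compact open $\mathsf{K}^p\subseteq\mb{G}(\A_f^p)$ then yields the statement at every finite level.
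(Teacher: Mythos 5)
Your construction of the comparison morphism $\Phi$ is identical to the paper's (the action map $(s,a)\mapsto s\cdot a$, specialized to the degenerate case $\mf{d}=(\mb{G},\mb{X},\mc{G})$, descended through the $\ast$-product machinery), but the concluding argument that $\Phi$ is an isomorphism takes a genuinely different route. You argue component-by-component: identifying $\Phi$ with the inclusion on the $[1]$-slice of $[\pmb{\ms{S}}^\vn_{\eK_p}(\mb{G},\mb{X})^+\times\ms{A}(\bm{\mc{G}})]/\ms{A}(\bm{\mc{G}})^\circ$, propagating by $\ms{A}(\bm{\mc{G}})\times\Gal$-equivariance across the orbit, and then invoking $\pi_0$-bookkeeping (Deligne, $J$, KP \S4.6) to conclude. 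The paper instead observes that $\Phi$ is birational (via \cite[Lemma 4.6.13]{KP2018}) and quasi-finite; with the target normal at each finite level, Zariski's main theorem (\cite[Corollary 12.88]{GortzWedhorn} together with Lemma~\ref{lem:normal-components}) gives that $\Phi$ is an open immersion; surjectivity then follows from the extension property of Theorem~\ref{thm:KPZ-model-properties}(2) by a valuative-criterion lifting argument, so $\Phi$ is an isomorphism. The paper's approach thus outsources the combinatorial component-matching to the generic-fiber identification of \cite[Lemma 4.6.13]{KP2018} (already cited) and handles surjectivity by a clean geometric argument, sidestepping exactly the $\pi_0$-bookkeeping you flag as the ``only genuine obstacle''. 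Your route is sound, and arguably more explicit about what $\Phi$ does on each component, but requires verifying that the $\ms{A}(\bm{\mc{G}})\times\Gal(\mb{E}^p/\mb{E})$-action is transitive on $\pi_0$ of the $J$-indexed construction and that the induced map on $\pi_0$ is a bijection --- exactly the content hidden inside the $J$-definition --- whereas the paper avoids having to state or verify this explicitly.
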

\begin{proof} 
    We first construct a $\mb{G}(\bA_f^p)$-equivariant map
    \begin{equation*}
        \pmb{\ms{S}}_{\mathsf{K}_p}^\mf{d}(\mb{G},\mb{X})_{\mc{O}'^{p}_{(v)}} \to\pmb{\ms{S}}_{\mathsf{K}_p}^\vn(\mb{G},\mb{X})_{\mc{O}'^{p}_{(v)}}.
    \end{equation*}
    Choosing a connected component $\mb{X}^+$ of $\mb{X}$, we obtain an inclusion \begin{equation*}
        \pmb{\ms{S}}_{\eK_p}^\vn(\mb{G},\mb{X})_{\mc{O}'^{p}_{(v)}}^+ \hookrightarrow \pmb{\ms{S}}_{\eK_p}^\vn(\mb{G},\mb{X})_{\mc{O}'^{p}_{(v)}},
    \end{equation*} which extends by the action of $\ms{A}(\bm{\mc{G}})$ to
    \begin{equation}\label{eq:vn-to-d-i}
        \pmb{\ms{S}}_{\eK_p}^\vn(\mb{G},\mb{X})_{\mc{O}'^{p}_{(v)}}^+\times \ms{A}(\bm{\mc{G}}) \to \pmb{\ms{S}}_{\eK_p}^\vn(\mb{G},\mb{X})_{\mc{O}'^{p}_{(v)}}
    \end{equation}
    via $(s,a) \mapsto s\cdot a$. One checks that \eqref{eq:vn-to-d-i} is equivariant for the action of $\ms{A}(\bm{\mc{G}}) \rtimes \ms{E}_{\mb{E}'}(\bm{\mc{G}})$ on $\pmb{\ms{S}}_{\eK_p}^\vn(\mb{G},\mb{X})_{\mc{O}'^{p}_{(v)}}^+\times \ms{A}(\bm{\mc{G}})$, and it induces
    \begin{equation}\label{eq:vn-to-d-ii}
        [\pmb{\ms{S}}_{\eK_p}^\vn(\mb{G},\mb{X})_{\mc{O}'^{p}_{(v)}}^+\times \ms{A}(\bm{\mc{G}})]/\ms{A}(\bm{\mc{G}})^\circ \to \pmb{\ms{S}}_{\eK_p}^\vn(\mb{G},\mb{X})_{\mc{O}'^{p}_{(v)}}.
    \end{equation}
    Indeed, this follows by the definition of the action of $\ms{A}(\bm{\mc{G}}) \rtimes \ms{E}_{\mb{E}'}(\bm{\mc{G}})$ on the product $\pmb{\ms{S}}_{\eK_p}^\vn(\mb{G},\mb{X})_{\mc{O}'^{p}_{(v)}}^+\times \ms{A}(\bm{\mc{G}})$ via \eqref{eq:msE-action} and \eqref{eq:msA-action}, along with the fact that the composition
    \begin{equation*}
        \ms{A}(\bm{\mc{G}})^\circ \to \ms{E}_{\mb{E}'}(\bm{\mc{G}}) \to \ms{E}(\bm{\mc{G}}) \to \ms{A}(\bm{\mc{G}})
    \end{equation*}
    is the inclusion $\ms{A}(\bm{\mc{G}})^\circ \hookrightarrow \ms{A}(\bm{\mc{G}})$. In turn, \eqref{eq:vn-to-d-ii} determines a $\ms{A}(\bm{\mc{G}})\ast_{\ms{A}(\bm{\mc{G}})^\circ} \ms{E}_{\mb{E}'}(\bm{\mc{G}})$-equivariant, and hence $\mb{G}(\bA_f)\times \Gal(\mb{E}'^p/\mb{E})$-equivariant, map
    \begin{equation}\label{eq:vn-to-d}
        \pmb{\ms{S}}_{\mathsf{K}_p}^\mf{d}(\mb{G},\mb{X})_{\mc{O}'^{p}_{(v)}} = \left([\pmb{\ms{S}}_{\eK_p}^\vn(\mb{G},\mb{X})_{\mc{O}'^{p}_{(v)}}^+\times \ms{A}(\bm{\mc{G}})]/\ms{A}(\bm{\mc{G}})^\circ\right)^J \to \pmb{\ms{S}}_{\eK_p}^\vn(\mb{G},\mb{X})_{\mc{O}'^{p}_{(v)}}.
    \end{equation}
    The map \eqref{eq:vn-to-d} is birational, since it is an isomorphism on generic fibers by \cite[Lemma 4.6.13]{KP2018}, and, by its definition, it is quasi-finite. By $\mb{G}(\bA_f^p)$-equivariance, we may first pass to the map \eqref{eq:vn-to-d} after taking the quotient by any neat compact open subgroup $\eK^p$ of $\mb{G}(\bA_f^p)$. In that case, the target is normal, so it follows that the quotient of \eqref{eq:vn-to-d} by $\eK^p$ is an open immersion on connected components by Lemma \ref{lem:normal-components} and \cite[Corollary 12.88]{GortzWedhorn}. Passing to the limit, we deduce that \eqref{eq:vn-to-d} is an open immersion. Moreover, \eqref{eq:vn-to-d} is surjective by part (2) of Theorem \ref{thm:KPZ-model-properties}\footnote{Indeed, it is enough to show \eqref{eq:vn-to-d} is surjective on special fibers. An $\ov{\bb{F}}_p$-point of $\pmb{\ms{S}}_{\eK_p}^\vn(\mb{G},\mb{X})_{\mc{O}'^{p}_{(v)}}$ factors through an $R$-point of $\pmb{\ms{S}}_{\eK_p}^\vn(\mb{G},\mb{X})_{\mc{O}'^{p}_{(v)}}$, where $R$ is a discrete valuation ring of characteristic $(0,p)$ (see \stacks{0CM2}). This, in turn, induces an $R[\nicefrac{1}{p}]$-point of $\pmb{\ms{S}}_{\eK_p}^\vn(\mb{G},\mb{X})_{\mc{O}'^{p}_{(v)}}$. We can pull this point back along the isomorphism on generic fibers, and the result lifts to an $R$-point of $\pmb{\ms{S}}_{\eK_p}^\mf{d}(\mb{G},\mb{X})_{\mc{O}'^{p}_{(v)}}$ by part (2) of Theorem \ref{thm:KPZ-model-properties}. This maps to the original $\ov{\bb{F}}_p$-point of $\pmb{\ms{S}}_{\eK_p}^\vn(\mb{G},\mb{X})_{\mc{O}'^{p}_{(v)}}$ by separatedness, as they generically agree.}, so it is an isomorphism.

\end{proof}

\subsubsection{Kisin--Pappas--Zhou integral models for some products}

Let $\mf{d} = (\mb{G}, \mb{X}, \mc{G})$ be a very good parahoric Shimura datum of Hodge type, and that $(\mb{T}, \{h\}, \mc{T})$ is a parahoric Shimura datum of toral type. Write $\mb{E}'$ for the compositum of the reflex fields of the two Shimura data, which is the reflex field of $(\mb{G}\times \mb{T}, \mb{X}\times\{h\})$, and set $\mathsf{M}_p = \mc{T}(\bZ_p)$.

\begin{prop}\label{prop:KPZ-for-products}  There is a canonical $\mb{G}(\A^p_f)\times \mb{T}(\A_f^p)$-equivariant identification
\begin{equation*}
    \ms{S}^\mf{d}_{\eK_p\times \mathsf{M}_p}(\mb{G}\times \mb{T}, \mb{X}\times\{h\}) \isomto \ms{S}_{\eK_p}^\mf{d}(\mb{G}, \mb{X})_{\mc{O}_{E'}} \times \ms{S}_{\mathsf{M}_p}(\mb{T}, \{h\})_{\mc{O}_{E'}}.
\end{equation*}
\begin{proof}
    The projection map $\mb{G}\times \mb{T} \to \mb{G}$ is an ad-isomorphism, so by Proposition \ref{prop:funct-for-ad-isom}, we have a quotient-finite map
    \begin{equation}\label{eq:GxT-to-G}
        \ms{S}^\mf{d}_{\eK_p\times \mathsf{M}_p}(\mb{G}\times \mb{T}, \mb{X}\times\{h\})\to \ms{S}_{\eK_p}^\mf{d}(\mb{G}, \mb{X})_{\mc{O}_{E'}}
    \end{equation}
    extending the morphism on the generic fibers which is equivariant for the projection $(\mb{G}\times\mb{T})(\bA_f^p) \to \mb{G}(\bA_f^p)$. On the other hand, the map of reductive groups $\mb{G} \times \mb{T} \to \mb{T}$ factors through $(\mb{G} \times \mb{T})^\ab$, which is a torus. Hence Proposition \ref{prop:funct-for-ad-isom} and Proposition \ref{prop:funct-for-ab} combine to furnish us with a map
    \begin{equation*}
         \ms{S}^\mf{d}_{\eK_p\times \mathsf{M}_p}(\mb{G}\times \mb{T}, \mb{X}\times\{h\}) \to \ms{S}_{\mathsf{M}_p}(\mb{T}, \{h\})_{\mc{O}_{E'}}
    \end{equation*}
    extending the morphism on generic fibers, which is equivariant for $(\mb{G} \times \mb{T})(\bA_f^p) \to \mb{T}(\bA_f^p)$. From these, we obtain the map
    \begin{equation}\label{eq:product}
        \ms{S}^\mf{d}_{\eK_p\times \mathsf{M}_p}(\mb{G}\times \mb{T}, \mb{X}\times\{h\}) \to \ms{S}_{\eK_p}^\mf{d}(\mb{G}, \mb{X})_{\mc{O}_{E'}} \times \ms{S}_{\mathsf{M}_p}(\mb{T}, \{h\})_{\mc{O}_{E'}},
    \end{equation}
    which is equivariant for $(\mb{G}\times\mb{T})(\bA_f^p) \isomto \mb{G}(\bA_f^p)\times\mb{T}(\bA_f^p)$. The map \eqref{eq:product} is an isomorphism on generic fibers, and is quasi-finite at each finite level because \eqref{eq:GxT-to-G} is quotient-finite. It follows that \eqref{eq:product} is an isomorphism by the argument at the end of the proof of Proposition \ref{prop:HT-basic-model-comparison}.
\end{proof}
\end{prop}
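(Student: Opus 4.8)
The plan is to assemble the desired identification from the functoriality results already established, and then to recognize it as an isomorphism by the argument used at the end of the proof of Proposition~\ref{prop:HT-basic-model-comparison}. Note first that $(\mb{G}\times\mb{T},\mb{X}\times\{h\},\mc{G}\times\mc{T})$ is an acceptable parahoric Shimura datum of abelian type, with $(\mb{G}\times\mb{T})^\ad\simeq\mb{G}^\ad$ and $(\mb{X}\times\{h\})^\ad\simeq\mb{X}^\ad$, and that $\mf{d}=(\mb{G},\mb{X},\mc{G})$ is well-adapted to it (the adapting isogeny being the identity $\mb{G}^\der\to(\mb{G}\times\mb{T})^\der$, and $\mc{G}\times\mc{T}$ being the parahoric at the evident point of $\ms{B}(\mb{G}\times\mb{T},\bQ_p)=\ms{B}(\mb{G},\bQ_p)\times\{\ast\}$).

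The first projection $\mb{G}\times\mb{T}\to\mb{G}$ is then an ad-isomorphism of parahoric Shimura data, so Proposition~\ref{prop:funct-for-ad-isom} furnishes a canonical quotient-finite étale morphism
\begin{equation*}
    \ms{S}^\mf{d}_{\eK_p\times\mathsf{M}_p}(\mb{G}\times\mb{T},\mb{X}\times\{h\})\longrightarrow \ms{S}^\mf{d}_{\eK_p}(\mb{G},\mb{X})_{\mc{O}_{E'}},
\end{equation*}
equivariant for $(\mb{G}\times\mb{T})(\A_f^p)\to\mb{G}(\A_f^p)$ and extending the corresponding projection of Shimura varieties. For the other factor I would use that $\mb{G}\times\mb{T}\to\mb{T}$ factors as $\mb{G}\times\mb{T}\to(\mb{G}\times\mb{T})^\ab\simeq\mb{G}^\ab\times\mb{T}\to\mb{T}$, where the last arrow is an ad-isomorphism of tori; combining the map of Proposition~\ref{prop:funct-for-ab} with the map of Proposition~\ref{prop:funct-for-ad-isom} for this ad-isomorphism, and identifying the choice-independent toral model via Proposition~\ref{prop:toral-type}, I obtain a canonical morphism
\begin{equation*}
    \ms{S}^\mf{d}_{\eK_p\times\mathsf{M}_p}(\mb{G}\times\mb{T},\mb{X}\times\{h\})\longrightarrow \ms{S}_{\mathsf{M}_p}(\mb{T},\{h\})_{\mc{O}_{E'}}
\end{equation*}
with the analogous properties. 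The product of these two morphisms is the required comparison map; it is equivariant for $(\mb{G}\times\mb{T})(\A_f^p)\isomto\mb{G}(\A_f^p)\times\mb{T}(\A_f^p)$, its construction using the naturality of the isomorphism \eqref{eq:star-isom} in the group homomorphisms involved.

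It then remains to see that this map is an isomorphism, and here I would follow the endgame of the proof of Proposition~\ref{prop:HT-basic-model-comparison} verbatim. On generic fibres the map is an isomorphism because $\Sh$ sends products of Shimura data to products of Shimura varieties, compatibly with the two projections. At each finite prime-to-$p$ level the map is quasi-finite (since the first morphism above is quotient-finite), the source is normal, flat and quasi-projective over $\mc{O}_{E'}$ as a Kisin--Pappas--Zhou model, and the target is normal --- being the product over $\mc{O}_{E'}$ of a normal scheme with a finite étale one, using that $\ms{S}_{\mathsf{M}_p}(\mb{T},\{h\})$ is a disjoint union of spectra of rings of integers of finite unramified extensions and that $E'/E$ is unramified, so that $\ms{S}^\mf{d}_{\eK_p}(\mb{G},\mb{X})_{\mc{O}_{E'}}$ stays normal. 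A birational quasi-finite morphism to a normal scheme is an open immersion on connected components at finite level (Zariski's main theorem), and it is surjective on special fibres by the extension property of part~(2) of Theorem~\ref{thm:KPZ-model-properties}; passing to the limit over the prime-to-$p$ level yields that the comparison map is an isomorphism.

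Since all the genuine content is carried by the earlier functoriality results, I expect the only real obstacle to be bookkeeping: one must make compatible choices of connected components so that both maps into the factors are simultaneously well-defined and equivariant, and one must verify normality of the target before invoking Zariski's main theorem --- which is precisely where one uses that $E'/E$ and the residue field extensions arising in the toral model are unramified.
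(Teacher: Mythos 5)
Your argument is essentially identical to the paper's: the ad-isomorphism $\mb{G}\times\mb{T}\to\mb{G}$ gives the first projection via Proposition~\ref{prop:funct-for-ad-isom}, the factorization of $\mb{G}\times\mb{T}\to\mb{T}$ through the torus $(\mb{G}\times\mb{T})^\ab$ combines Propositions~\ref{prop:funct-for-ab} and~\ref{prop:funct-for-ad-isom} for the second, and the resulting product map is recognized as an isomorphism by the birational--quasi-finite endgame of Proposition~\ref{prop:HT-basic-model-comparison}. The normality of the target that you supply (via the toral model being finite \'etale over $\mc{O}_{E'}$ and $E'/E$ being unramified) is a correct observation filling in a point the paper leaves implicit.
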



\section{The Pappas--Rapoport conjecture}\label{s:PR-conj}

In this section we state the Pappas--Rapoport conjecture and recall the cases in which it has been previously proven.  We also state our main theorem and applications.

As in \S\ref{ss:KPZ-construction}, throughout this section we often use the following notation and data:

\vspace*{3 pt}

\begin{multicols}{2}
\begin{itemize}[leftmargin=.4in]
\item $(\mb{G},\mb{X})$ is a Shimura datum,
\item $\mb{Z}$ is the center of $\mb{G}$,
\item $\mb{E}$ is the reflex field of $(\mb{G},\mb{X})$,
\item $v$ is a $p$-adic place of $\mb{E}$,
\item $E=\mb{E}_v$,
\item $k_E$ is the residue field of $E$,
\item $G=\mb{G}_{\Q_p}$,
\item $\mc{G}=\mc{G}_x^\circ$ is a parahoric model of $G$,
\item $\mathsf{K}_p\defeq \mc{G}(\bb{Z}_p)$,
\item $\eK^p\subseteq\mb{G}(\A^f_p)$ is a neat compact open subgroup,
\item $\eK=\eK_p\eK^p$.
\end{itemize}
\end{multicols}

\subsection{The Pappas--Rapoport conjecture}\label{ss:PR-conj}

We now recall the formulation of the Pappas--Rapoport conjecture as given in \cite{PR2021} and extended in \cite{DanielsToral}. 

\medskip

\subsubsection{The group $\mc{G}^c$} In constrast to the case of Shimura varieties of Hodge type discussed in \cite{PR2021}, to obtain shtukas on arbitrary Shimura varieties of abelian type, one must consider a modification $\mc{G}^c$ of the parahoric group $\mc{G}$.

Let $\mb{T}$ be a multiplicative group over $\Q$. We denote by $\mb{T}_{\mr{ac}}$ the anti-cuspidal part of $\mb{T}$ as in \cite[Definition 1.5.4]{KSZ}. More precisely, if $\mb{T}_a$ denotes the largest anisotropic subtorus of $\mb{T}$, then $\mb{T}_\mr{ac}$ is the largest subtorus of $\mb{T}_a$ whose base change to $\bb{R}$ contains the maximal split subtorus of $(\mb{T}_a)_\bb{R}$. 

We write $\eG^c$ for the quotient $\eG^c / \eZ_{\mr{ac}}$, and if $\eH$ is a subgroup of $\eG(\bA_f)$, we write $\eH^c$ for the image of $\eH$ under $\eG(\bA_f) \to \eG^c(\bA_f)$. We also denote by $G^c$ the base change of $\eG^c$ to $\bQ_p$. The following lemma will be used below. 

\begin{lemma}\label{lem:Gc-functoriality} If $\alpha\colon (\mb{G}_1,\mb{X}_1)\to (\mb{G},\mb{X})$ is a morphism of Shimura data then $\alpha(\mb{Z}_{1,\mr{ac}})\subseteq \mb{Z}_\mr{ac}$ and so $\alpha$ induces a map $\alpha\colon \mb{G}_1^c\to \mb{G}^c$. Consequently, if $(\mb{G},\mb{X})$ is of Hodge type, then the natural maps $\mb{G}\to\mb{G}^c$ and $\mb{G}^\ab\to (\mb{G}^\ab)^c$ are isomorphisms.
\end{lemma}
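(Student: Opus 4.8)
The plan is to prove the two assertions in turn. For the first, I would recall the construction of $\mb{T}_\mr{ac}$: it is a subtorus of the largest anisotropic subtorus $\mb{T}_a$, characterized by a maximality property relating the real points to split subtori. A morphism $\alpha\colon\mb{G}_1\to\mb{G}$ of Shimura data carries $\mb{Z}_1=Z(\mb{G}_1)$ into $\mb{Z}=Z(\mb{G})$ (a morphism of reductive groups sends center into center after passing to images, but here one should be slightly careful: $\alpha$ is a map of Shimura data, hence in particular respects the $h$'s; I would use that the center of $\mb{G}$ is determined by the adjoint action and that $\alpha$ is compatible with the $\bb{S}$-actions via $h_1\mapsto\alpha\circ h_1$). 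Given $\alpha(\mb{Z}_1)\subseteq\mb{Z}$, I would check $\alpha(\mb{Z}_{1,a})\subseteq\mb{Z}_a$ (the image of an anisotropic torus under a homomorphism is anisotropic, since anisotropy is equivalent to having no nontrivial characters defined over $\Q$, which is preserved under quotients of the image). Then the key point is the real-split condition: one must verify $\alpha(\mb{Z}_{1,\mr{ac}})\subseteq\mb{Z}_\mr{ac}$, i.e. that the image lands in the \emph{largest} subtorus of $\mb{Z}_a$ whose base change to $\bb{R}$ contains the maximal split subtorus of $(\mb{Z}_a)_\bb{R}$. Here I expect the cleanest argument is to cite the relevant functoriality already recorded in \cite[\S1.5]{KSZ} for the anti-cuspidal part, or else to argue directly: the maximal split subtorus of $(\mb{Z}_{1,a})_\bb{R}$ maps into the maximal split subtorus of $(\mb{Z}_a)_\bb{R}$ (image of a split torus is split), so $\alpha$ restricted to $\mb{Z}_{1,\mr{ac}}$ has image a subtorus of $\mb{Z}_a$ whose real base change contains (the image of) the maximal split subtorus, hence by maximality this image is contained in $\mb{Z}_\mr{ac}$. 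This gives $\alpha(\mb{Z}_{1,\mr{ac}})\subseteq\mb{Z}_\mr{ac}$ and therefore a well-defined induced map $\mb{G}_1^c=\mb{G}_1/\mb{Z}_{1,\mr{ac}}\to\mb{G}/\mb{Z}_\mr{ac}=\mb{G}^c$.

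For the consequence, I would specialize to $(\mb{G},\mb{X})$ of Hodge type, so there is a Hodge embedding $\iota\colon(\mb{G},\mb{X})\to(\mr{GSp}(\mb{V}),\mf{h}^\pm)$. The point is to show $\mb{Z}_\mr{ac}$ is trivial. The center of $\mr{GSp}(\mb{V})$ is $\bb{G}_m$, which is split, hence its anisotropic part is trivial, so $(\mr{GSp}(\mb{V}))_\mr{ac}$ is trivial; but the first part of the lemma does not immediately transfer triviality backwards along $\iota$ since $\iota$ need not carry all of $\mb{Z}_\mr{ac}$ isomorphically. Instead I would use the standard fact (e.g. as in \cite{KSZ} or \cite{Milne2005}) that for a Shimura datum of Hodge type, the weight cocharacter is defined over $\Q$ and central, which forces a splitness constraint on the relevant part of the center; concretely, the condition (SV3) together with the Hodge embedding implies that $\mb{Z}_a$, the maximal anisotropic subtorus of $\mb{Z}$, has trivial real-split part, i.e. $(\mb{Z}_a)_\bb{R}$ is already anisotropic over $\bb{R}$ (it is compact), so its maximal split subtorus is trivial, and hence $\mb{Z}_\mr{ac}$, being contained in the torus whose real base change contains this trivial split subtorus \emph{and} being the largest such, could a priori be all of $\mb{Z}_a$ — so I must be more careful. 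The correct statement: for Hodge type, $\mb{Z}_\mr{ac}$ is trivial because $\mb{Z}_a$ is $\bb{R}$-anisotropic and the anti-cuspidal part is defined (see \cite[Definition 1.5.4]{KSZ}) so that $\mb{T}_\mr{ac}$ is trivial exactly when $\mb{T}_a$ is $\bb{R}$-anisotropic; this $\bb{R}$-anisotropy of $\mb{Z}_a$ for Hodge-type data is precisely the statement that $\mb{Z}(\bb{R})$ modulo its maximal compact is handled by the symplectic similitude factor, which is not anisotropic. I would cite this directly from \cite{KSZ}. Granting $\mb{Z}_\mr{ac}=1$, we get $\mb{G}=\mb{G}^c$, and passing to abelianizations (which sends $\mb{Z}_\mr{ac}$ to $(\mb{Z}^\ab)_\mr{ac}=(\mb{G}^\ab)_\mr{ac}$ — here $\mb{Z}^\ab$ meaning the center of $\mb{G}^\ab$, which is $\mb{G}^\ab$ itself since $\mb{G}^\ab$ is a torus) gives $\mb{G}^\ab=(\mb{G}^\ab)^c$.

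\textbf{Main obstacle.} I expect the genuinely delicate point to be the verification that $\alpha(\mb{Z}_{1,\mr{ac}})\subseteq\mb{Z}_\mr{ac}$ handles the interaction between the ``anisotropic'' and the ``real-split'' conditions correctly — in particular making sure that the maximality defining $\mb{T}_\mr{ac}$ is compatible with taking images of tori, since images of tori under isogenies or quotients can behave subtly with respect to split/anisotropic decompositions over $\Q$ versus over $\bb{R}$. The cleanest route is almost certainly to invoke the functoriality of the anti-cuspidal construction already established in \cite[\S1.5]{KSZ} rather than reprove it; if that is unavailable in the exact form needed, I would fall back on the direct argument sketched above, checking the split-subtorus containment on $\bb{R}$-points. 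The second assertion is then essentially bookkeeping modulo the cited fact that Hodge-type data have $\bb{R}$-anisotropic (equivalently, $\mb{Z}_\mr{ac}$-trivial) center in the relevant sense.
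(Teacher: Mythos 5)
There is a genuine gap in your treatment of the first claim, which is where all of the content of the lemma lives. Your fallback ``direct argument'' opens by asserting that a morphism of Shimura data carries $\mb{Z}_1=Z(\mb{G}_1)$ into $\mb{Z}=Z(\mb{G})$. This is false in general: a Hodge embedding of a toral datum $(\mb{T},\{h\})\hookrightarrow(\mr{GSp}(\mb{V}),\mf{h}^\pm)$ sends the center $\mb{T}$ of the source into $\mr{GSp}(\mb{V})$, whose center is only the scalars $\bb{G}_m$. (The parenthetical ``a morphism of reductive groups sends center into center after passing to images'' only gives that $\alpha(\mb{Z}_1)$ centralizes $\alpha(\mb{G}_1)$, not that it is central in $\mb{G}$.) The entire point of the first claim is that the \emph{anti-cuspidal} part of the center --- unlike the center itself --- is carried into the center of the target, indeed into its anti-cuspidal part, and this genuinely uses the Shimura axioms; it is not formal. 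Since everything downstream in your argument ($\alpha(\mb{Z}_{1,a})\subseteq\mb{Z}_a$, the maximality step) is conditioned on this false premise, the direct argument collapses. Your instinct to cite the functoriality rather than reprove it is the right one --- the paper simply quotes \cite[Lemma 4.7]{ImaiKatoYoucis} --- but a proof that only works if the citation happens to exist, and whose backup is broken, has a real hole. (Even granting centrality of the image, your containment step does not close: you only show that $\alpha(\mb{Z}_{1,\mr{ac}})_\bb{R}$ contains the \emph{image} of the maximal split subtorus of $(\mb{Z}_{1,a})_\bb{R}$, which is not the containment of the maximal split subtorus of $(\mb{Z}_a)_\bb{R}$ that the defining extremal property of $\mb{Z}_\mr{ac}$ requires.)

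For the second claim you talk yourself out of the correct argument, which is the paper's. A Hodge embedding $\iota\colon(\mb{G},\mb{X})\to(\mr{GSp}(\mb{V}),\mf{h}^\pm)$ is by definition a \emph{closed} embedding, so it \emph{is} injective on $\mb{Z}_\mr{ac}$; the first claim (applied with the Hodge datum as source and the Siegel datum as target) gives $\iota(\mb{Z}_\mr{ac})\subseteq Z(\mr{GSp}(\mb{V}))_\mr{ac}$, and the latter is trivial because $Z(\mr{GSp}(\mb{V}))=\bb{G}_m$ is split and hence has trivial anisotropic part. So $\mb{Z}_\mr{ac}=1$ immediately, with no need to invoke the $\bb{R}$-anisotropy of $\mb{Z}_a$ for Hodge-type data from \cite{KSZ}; your detour through that fact is workable if properly sourced, but the reasoning you give for it is muddled and the dismissal of the direct route rests on a misreading of what a Hodge embedding is. Your handling of $\mb{G}^\ab$ (pushing $\mb{Z}_\mr{ac}$ forward along the surjection $\mb{Z}\to\mb{G}^\ab$ and identifying the image with $(\mb{G}^\ab)_\mr{ac}$) agrees with the paper's, though the surjectivity of $\mb{Z}_\mr{ac}\to(\mb{G}^\ab)_\mr{ac}$ deserves the one-line check rather than a bare assertion.
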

\begin{proof} The first claim is \cite[Lemma 4.7]{ImaiKatoYoucis}. The second claim then follows from the first by embedding an arbitrary Hodge type datum $(\mb{G}, \mb{X})$ into one of Siegel type $(\mb{G}_1, \mb{X}_1)$, where it can be explicitly checked that $\mb{Z}_{1,\mr{ac}}$ is trivial. For the claim concerning $\mb{G}^\ab$, we observe that the map $\delta\colon \mb{G}\to \mb{G}^\ab$ restricts to a surjection $\delta\colon \mb{Z}\to \mb{G}^\ab$ (e.g., see \cite[Example 19.25]{Milne2017}). From here, it's easy to check that $\delta$ surjects $\mb{Z}_\mr{ac}$ onto $\mb{G}^\mr{ab}_\mr{ac}$, and the claim follows.
\end{proof}

We set $\mc{G}^c$ to be the parahoric model of $\mb{G}$ induced by $\mc{G}$ in the sense of \S\ref{sss:parahoric-central-quotient}. By construction, we see that $G \to G^c$ extends to a morphism of group $\bZ_p$-schemes $\cG \to \cG^c$. For a conjugacy class of cocharacters $\mbbmu$ of $G_{\ov{\Q}_p}$, we denote by $\mbbmu^c$ the image of this conjugacy class under $G_{\ov{\Q}_p}\to G^c_{\ov{\Q}_p}$.

\medskip

\subsubsection{The shtuka over $\mathrm{Sh}_\eK\gx_E$.}

We now recall the existence of a ``universal shtuka'' which lives over the Shimura variety $\Sh_\eK\gx_E$, as in \cite[\textsection 4.1]{PR2021} and \cite[\S4.2]{DanielsToral}.

For a normal compact open subgroup $\eK_p' \subseteq \eK_p$, we temporarily write $\eK'$ for the product $\eK_p'\eK^p$. Then the transition morphism
\begin{align}
    \pi_{\mathsf{K}',\mathsf{K}}\colon \Sh_{\eK'}\gx_E \to \Sh_\eK\gx_E
\end{align}
is a finite \'etale Galois cover with Galois group $\mathsf{K}_p/(\mathsf{K}_p'\mb{Z}(\bb{\Q})^{-}_\mathsf{K})$, where $\mb{Z}(\bb{\Q})^{-}_\mathsf{K}$ is the closure of $\mb{Z}(\bb{Q})\cap\mathsf{K}$ in $\mathsf{K}$ (see \cite[\S1.5.8]{KSZ}). We consider the infinite-level Shimura variety
\begin{align*}
    \Sh_{\eK^p}\gx_E \defeq \varprojlim_{\eK_p' \subseteq \eK_p} \Sh_{\eK_p'\eK^p}\gx_E,
\end{align*}
which exists as an $E$-scheme as each $\pi_{\mathsf{K}',\mathsf{K}}$ is affine (see \stacks{01YX}), and forms a $\mathsf{K}_p/\mb{Z}(\Q)_\mathsf{K}^{-}$-torsor for the pro-etale site (in the sense of \cite{BhattScholzeProetale}) on $\Sh_{\mathsf{K}}(\mb{G},\mb{X})_E$.

The map $\mathsf{K}_p\to\mc{G}^c(\Z_p)$ factorizes through $\mathsf{K}_p/\mb{Z}(\Q)_\mathsf{K}^{-}$ (see \cite[\S1.5.8]{KSZ}). Thus, as in \cite[\textsection 4.2]{DanielsToral} or \cite[\S2.1.5 and \S 4.3]{ImaiKatoYoucis}, from the $\mathsf{K}_p/\mb{Z}(\bb{Q})_\mathsf{K}^{-}$-torsor
\begin{align*}
	\Sh_{\eK^p}(\eG,X)_E \to \Sh_{\eK_p\eK^p}(\eG,X)_E,
\end{align*}
one constructs a $\cG^c(\bZ_p)$ torsor $\bP_\eK$ on the pro-\'etale site (see \cite{Scholze2013}) of $\Sh_\eK(\eG,X)^\an_E$.

\begin{prop}[{\cite[Proposition 4.1.4]{PR2021} and \cite[Proposition 4.4]{DanielsToral}}]\label{prop:generic-fiber-shtuka}
    There exists a $\cG^c$-shtuka $\ms{P}_{\eK,E}$ over $\Sh_\eK\gx_E^\lozenge \to \Spd(E)$ with one leg bounded by $\mbbmu_h^c$ which is associated to $\bP_\eK$ in the sense of \textup{\cite[\textsection 2.5]{PR2021}}. Moreover, if $\mathsf{K}'=\mathsf{K}_p{\mathsf{K}^p}'$ and $g$ is in $\mb{G}(\A_f^p)$ is such that $g^{-1}\mathsf{K}^p g\subseteq {\mathsf{K}^p}'$, there exists compatible isomorphisms
    \begin{equation}\label{eq:pullback-compatability}
        t_{\mathsf{K},\mathsf{K}'}(g)^\ast(\ms{P}_{\mathsf{K}',E})\isomto \ms{P}_{\mathsf{K},E}.
    \end{equation}
\end{prop}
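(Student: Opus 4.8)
The plan is to combine the functor attaching $p$-adic shtukas to de Rham local systems from \cite[\textsection 2.5]{PR2021} with a verification that the pro-\'etale $\cG^c(\Z_p)$-torsor $\bP_\eK$ is de Rham of type $\mbbmu_h^c$, and then to deduce the compatibility \eqref{eq:pullback-compatability} formally from the functoriality of both constructions. Note that the torsor $\bP_\eK$ itself is already available: its construction (via the factorization $\mathsf{K}_p\to\mathsf{K}_p/\mb{Z}(\Q)_\mathsf{K}^-\to\cG^c(\Z_p)$ and pushforward of the infinite-level tower) was recalled in the excerpt, so nothing new is needed there.

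First I would recall the shtuka machinery in the form I need it: given a smooth rigid-analytic space $X$ over $E$ and a pro-\'etale $\cG^c(\Z_p)$-torsor $\bP$ on $X^\an$ which is \emph{de Rham of type $\mbbmu$} in the sense of \cite[\textsection 2.5]{PR2021} — i.e.\@ for one (equivalently any) faithful representation the associated $\Z_p$-local system is de Rham, and the filtered $G^c$-bundle produced on geometric points via the de Rham comparison has type $\mbbmu$ — one obtains a $\cG^c$-shtuka over $X^\lozenge\to\Spd(E)$ with one leg bounded by $\mbbmu$, and this association is functorial for morphisms of rigid spaces and compatible with pullback. Granting this, I would apply it to $X=\Sh_\eK\gx_E^\an$ and $\bP=\bP_\eK$ (once the de Rham claim below is in hand) to produce $\ms{P}_{\eK,E}$, and then read off \eqref{eq:pullback-compatability} from the identities $t_{\mathsf{K},\mathsf{K}'}(g)^\ast\bP_{\mathsf{K}'}\isomto\bP_\eK$ — which are immediate from the compatibility of the towers $\{\Sh_{\eK^p}\gx_E\}$ as $\eK^p$ varies, the level $\mathsf{K}_p$ being fixed throughout — together with the functoriality of the shtuka functor; the requisite cocycle identities follow from uniqueness of the shtuka attached to a given torsor.

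The heart of the matter is therefore that $\bP_\eK$ is de Rham of type $\mbbmu_h^c$. Since this can be tested on a faithful representation of $\cG^c$, and since de Rhamness together with Hodge--Tate/de Rham type is stable under subquotients, duals, tensor products, twists, pullbacks, and finite \'etale descent, I would reduce to the two extreme cases using the construction of $\Sh_\eK\gx$ recalled in \textsection\ref{ss:KPZ-construction}: on each connected component $\Sh_\eK\gx^+$ the local system $\bP_\eK$ is assembled, through the $\ast$-product formalism and Galois descent, from (i) the restriction to a connected component of the universal local system on a Hodge-type datum $(\mb{G}_1,\mb{X}_1)$ adapted to $\gx$, which is a subquotient of the first relative \'etale cohomology of the universal abelian scheme and hence de Rham of type $\mbbmu_{h,1}$ by \cite[Proposition 4.1.4]{PR2021}, and (ii) data coming from the toral datum $(\mb{G}^\ab,\mb{X}^\ab)$, which is de Rham of the expected type by \cite[Proposition 4.4]{DanielsToral} via the theory of CM points and algebraic Hecke characters; tracking the cocharacters through the construction identifies the resulting type with $\mbbmu_h^c$.

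The main obstacle I anticipate is precisely this last reduction. Matching $\bP_\eK$ on $\Sh_\eK\gx_E$ with the Hodge-type and toral data requires careful bookkeeping of the quotient by $\mb{Z}_\mr{ac}$ — so that one genuinely lands in $\cG^c$ rather than in $\cG$ — as well as of the $\ast$-products, the descent from $\mb{E}'^p$ down to $E$, and the interaction between the property "de Rham of type $\mbbmu$" and pushforward along the isogeny $\mb{G}_1^\der\to\mb{G}^\der$ and along $\mb{G}\to\mb{G}^\ab$. In the two situations where the proposition is ultimately used — Hodge type and toral type — no such reduction is needed, and one invokes \cite[Proposition 4.1.4]{PR2021} and \cite[Proposition 4.4]{DanielsToral} directly.
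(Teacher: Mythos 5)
The paper does not prove this proposition: it quotes it from \cite[Proposition 4.1.4]{PR2021} and \cite[Proposition 4.4]{DanielsToral}. Your high-level plan — establish that $\bP_\eK$ is de Rham of type $\mbbmu_h^c$, run it through the shtuka-from-de-Rham-torsor machinery of \cite[\textsection 2.5]{PR2021}, and deduce \eqref{eq:pullback-compatability} from functoriality and uniqueness — is the correct shape, so no disagreement there. One correction on the middle step: the $\ast$-product assembly in \textsection\ref{ss:KPZ-construction} builds the \emph{integral} models $\ms{S}^\mf{d}_\eK(\mb{G},\mb{X})$; the rational Shimura variety $\Sh_\eK\gx$ over $E$ is Deligne's canonical model, and the comparison of its connected components with Hodge-type ones is the rational-level theory of \cite{DeligneModulaire}, recalled here only indirectly through the $\ms{A}(\eG)$ formalism of \textsection\ref{ss:Shim-vars-and-A-group}.

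The substantive issue is your last sentence: the proposition is \emph{not} invoked only for Hodge and toral data. In the proof of Theorem~\ref{prop:construction-of-shtuka}, Step 2 uses the generic-fiber ${\mc{B}_1'}^c$-shtuka $\ms{P}_{\mathsf{L}'_{p,1},E}$ over $\Sh_{\mathsf{L}'_{p,1}}(\mb{B}_1',\mb{Y}_1')$, and $(\mb{B}_1',\mb{Y}_1')$ is a genuinely abelian-type datum (neither Hodge nor toral); Step 3 uses $\ms{P}_{\mathsf{K}_p,E^\ur}$ for the original abelian-type $(\mb{G},\mb{X})$ as the target that the constructed integral shtuka must model. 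So the reduction you flagged as the main obstacle cannot be sidestepped by citing the two extreme cases directly; the general statement, as established in \cite[Proposition 4.4]{DanielsToral}, is what the paper actually depends on, and a self-contained proof would need to carry out the $\mb{Z}_\mr{ac}$- and $\ast$-product bookkeeping that you deferred.
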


\medskip

\subsubsection{Integral local Shimura varieties}
Let us define (see \cite[Definition 24.1.1]{SW2020}), a \emph{parahoric local Shimura datum} to be a triple $(\mc{G},b,\mu)$ where
\begin{itemize}
\item $\mc{G}$ is a parahoric group $\Z_p$-scheme with generic fiber $G$,
\item $\mbbmu$ is a conjugacy class of minuscule cocharacters of $G_{\ov{\bb{Q}}_p}$,
\item and $b$ is an element of $G(\breve{\bQ}_p)$ inducing an element of $B(G,\mbbmu^{-1})$ (see \cite[Definition 2.3]{RV2014}).
\end{itemize}
The reflex field of $(\mc{G},b,\mu)$, usually denoted $E$, is the field of definition of $\mbbmu$. A morphism $f\colon (\mc{G}_1,b_1,\mbbmu_1)\to (\mc{G},b,\mbbmu)$ of parahoric local Shimura datum is is a morphism of $\Z_p$-group schemes $f\colon \mc{G}_1\to \mc{G}$ carrying $\mbbmu_1$ to $\mbbmu$ and $b_1$ to $b$. If such a morphism exists, then $E_1\supseteq E$. We say that $f$ is an \emph{ad-isomorphism} if $f\colon G_1\to G$ is an ad-isomorphism.

Given a parahoric local Shimura datum $(\mc{G},b,\mbbmu)$ with reflex field $E$, we obtain a presheaf
\begin{equation*}
    \mc{M}_{\cG,b,{\mbbmus}}^\mr{int}\colon \mathbf{Perf}_{\mc{O}_{\breve{E}}}\to \mathbf{Set}
\end{equation*}
assigning to any perfectoid space $S$ in characteristic $p$ the set of isomorphism classes of tuples $(S^\sharp, \sP, \phi_\sP, i_r)$, where:
\begin{itemize}
    \item $S^\sharp$ is the untilt of $S$ over $\mc{O}_{\breve{E}}$ associated with $S \to \Spd(\mc{O}_{\breve{E}})$,
    \item $(\sP,\phi_\sP)$ is a $\cG$-shtuka on $S$ with one leg along $S^\sharp$ which is bounded by $\mbbmu$ (see \cite[Definition 2.4.3]{PR2021}), 
    \item and $i_r$ is an isomorphism of $\cG$-torsors $\cG \res_{\mc{Y}_{[r,\infty)}(S)} \isomto \sP \res_{\mc{Y}_{[r,\infty)}(S)}$ for large enough $r$, under which $\phi_\sP$ is identified with $b \times \mr{Frob}_S$ (see \cite[\textsection 25.1]{SW2020}).
\end{itemize}
By \cite[\textsection 25.1]{SW2020} and \cite[Proposition 2.23]{Gleason2020}, the presheaf $\mc{M}_{\cG,b,{\mbbmus}}^\mr{int}$ is a small $v$-sheaf (in the sense of \cite[Definition 12.1]{Scholze2017}), which is called the integral local Shimura variety associated to the local Shimura datum $(\mc{G},b,\mbbmu)$.

If $k_E$ denotes the residue field of $E$, then by \cite[Proposition 2.30]{Gleason2022}, we have a natural identification
\begin{equation}\label{eq:int-sht-special-fiber}
     \mc{M}_{\cG,b,{\mbbmus}}^\mr{int}(\Spd(\bar{k}_E)) = X_\cG(b,\mbbmu^{-1})(\bar{k}_E),
\end{equation}
where $X_\cG(b,\mbbmu^{-1})$ denotes the affine Deligne-Lusztig variety associated to the triple $(\cG, b,\mbbmu^{-1})$ (see \cite[Definition 3.3.1]{PR2021}). For any point $x$ of $\mc{M}^\mr{int}_{\cG,b,\mbbmus}(\Spd(\bar{k}_E))$ we write $(\mc{M}^\mr{int}_{\cG,b,{\mbbmus}})^\wedge_{/x}$ for the formal completion of $ \mc{M}_{\cG,b,{\mbbmus}}^\mr{int}$ at $x$ in the sense of \cite{Gleason2022}.  

\medskip

\subsubsection{The conjecture of Pappas and Rapoport}
We now state the Pappas--Rapoport conjecture. For details, see \cite[\textsection 4.2]{PR2021} and \cite[\textsection 4.3]{DanielsToral}.

Suppose we have a normal flat $\mc{O}_E$-model $\sS_\eK$ of $\Sh_\eK\gx_E$, equipped with an extension $\sP_\eK$ of the $\cG^c$-shtuka $\sP_{\eK,E}$ to a $\cG^c$-shtuka defined over $\sS_\eK^{\lozenge /}$. Note that $\sP_\eK$ is necessarily bounded by $\mbbmu_h^c$ by \cite[Lemma 2.1]{DanielsToral}. 

For any point $x$ of $\sS_\eK(\bar{k}_E)$, the pullback $x^\ast \sP_\eK$ defines a $\cG^c$-shtuka over $\Spd(\bar{k}_E)$. In turn, by \cite[Example 2.4.9]{PR2021}, $x^\ast \sP_\eK$ determines a pair $(\mc{P}_x, \phi_x)$ consisting of a $\cG^c$-torsor $\mc{P}_x$ over $\Spec(W(\bar{k}_E))$ along with an isomorphism
\begin{align*}
    \phi_x: \phi^\ast (\mc{P})[1/p] \isomto \mc{P}_x[1/p].
\end{align*}
Here $\phi$ denotes the Frobenius homomorphism for $W(\bar{k}_E)$. After choosing a trivialization of $\mc{P}_x$, we obtain an element $b_x$ in $G^c(\brqp)$, and changing the trivialization will change $b_x$ to some $\sigma$-conjugate of $b_x$. Therefore we obtain a well-defined conjugacy class $[b_x]$ in $B(G^c)$. 

Since the shtuka $\sP_\eK$ is bounded by $\mbbmu_h^c$, the same holds for $(\mc{P}_x, \phi_x)$, so the pair $(\mc{P}_x,\id)$ determines a point $x_0$ of $X_\cG^c(b_x,(\mbbmu^c_h)^{-1})(\bar{k}_E)$. Hence $X_\cG(b_x,\mbbmu_h^{-1})(\bar{k}_E)$ is nonempty, so by \cite[Theorem A]{HeKRConj}, it follows that $[b_x]$ belongs to $B(G^c, (\mbbmu_h^c)^{-1})$. Then the triple $(\mc{G}^c, b_x, \mbbmu_h^c)$ defines a parahoric local Shimura datum, and we can consider the integral local Shimura variety $\mc{M}^\mr{int}_{\cG^c,b_x,{\mbbmus}_h^c}$ along with the base point $x_0$ (which makes sense by \eqref{eq:int-sht-special-fiber}).

\begin{defn}[{\cite[Conjecture 4.2.2]{PR2021} and \cite[Conjecture 4.5]{DanielsToral}}]\label{Def:PRAxioms}
    Consider a system $\{\sS_{\eK}\gx\}_{\eK^p}$ of normal flat $\mc{O}_E$-models $\sS_\eK$ of $\Sh_\eK\gx$ for $\eK = \eK_p\eK^p$ with $\eK^p$ varying over all sufficiently small compact open subgroups of $\mb{G}(\bA_f^p)$.
    
    We say the system $\{\sS_\eK\gx\}_{\eK^p}$ is an \textit{canonical integral model for} $\{\Sh_{\eK}\gx\}_{\eK^p}$ if the following properties are satisfied.
    \begin{enumerate}[(i)]
        \item For every discrete valuation ring $R$ of characteristic $(0,p)$ over $\mathcal{O}_E$, 
        \begin{align}
            \Sh_{\eK_p}\gx(R[1/p]) = \left(\varprojlim_{\eK^p} \sS_\eK\gx\right)(R).
        \end{align}
        \item For every sufficiently small $\eK^p \subseteq \eG(\bA_f^p)$ and ${\eK'}^p \subseteq \eG(\bA_f^p)$, and element $g$ of $\eG(\bA_f^p)$ with $g^{-1}{\eK}^pg \subseteq \eK^p$, there are finite \'etale morphisms 
        \begin{equation*}
            t_{\mathsf{K},\mathsf{K}'}(g)\colon  \mathscr{S}_{\eK}\gx \to \mathscr{S}_{\eK'}\gx
        \end{equation*}
        extending the corresponding maps on the generic fiber.
        \item The $\mathcal{G}^c$-shtuka $\mathscr{P}_{\eK,E}$ extends to a $\mathcal{G}^c$-shtuka $\mathscr{P}_\eK$ on $\sS_\eK\gx^{\lozenge /}$ for every sufficiently small $\eK^p \subseteq \mb{G}(\bA_f^p)$.
        \item Consider $x$ in $\sS_\eK\gx(\bar{k}_E)$ with corresponding element $b_x$ of $G(\brqp)$, and let $x_0$ be the natural base point in $\mc{M}_{\cG^c,b_x,{\mbbmus}_h^c}^\mr{int}(\bar{k}_E)$. Then there is an isomorphism of $v$-sheaves
        \begin{align}
            \Theta_x: \left(\mc{M}^{\mr{int}}_{\cG^c,b_x,{\mbbmus_h^c}}\right)^\wedge_{/x_0} \isomto \left(\sS_\eK\gx^\wedge_{/x}\right)^\lozenge,
        \end{align}
        such that $\Theta_x^\ast(\mathscr{P}_\eK)$ agrees with the universal $\mc{G}^c$-shtuka $\mathscr{P}^\mathrm{univ}$ on $\mc{M}_{\cG^c,b_x,{\mbbmus_h^c}}^\mr{int}$.
    \end{enumerate}
\end{defn}

We observe the following uniqueness properties concerning a system of models satisfying the Pappas--Rapoport conjecture.

\begin{prop}[{cf.\@ \cite[Corollary 2.7.10 and Theorem 4.2.4]{PR2021}}]\label{prop:uniqueness} An canonical integral model of $\{\Sh_{\eK}\gx\}_{\eK^p}$ and the $\mc{G}^c$-shtukas $\ms{P}_\mathsf{K}$ on these canonical integral models, are unique up to unique isomorphism $($if they exist$)$.
\end{prop}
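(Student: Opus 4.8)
The plan is to adapt the arguments of \cite[Corollary 2.7.10 and Theorem 4.2.4]{PR2021}, which go through with only cosmetic changes once one works with $\mc{G}^c$ in place of $\mc{G}$ and with the extension property as recorded in Definition \ref{Def:PRAxioms}(i). First I would dispose of the uniqueness of the shtuka: for a fixed normal flat $\mc{O}_E$-model $\sS_\eK$ of $\Sh_\eK\gx_E$, any two $\mc{G}^c$-shtukas on $\sS_\eK^{\lozenge /}$ whose restrictions to $\Sh_\eK\gx_E^{\lozenge /}$ are identified with $\ms{P}_{\eK,E}$ are themselves identified by a unique isomorphism compatible with that restriction, because the restriction functor $\mc{G}^c\text{-}\mb{Sht}(\sS_\eK) \to \mc{G}^c\text{-}\mb{Sht}(\Sh_\eK\gx_E)$ is fully faithful; this uses only that $\sS_\eK$ is normal, flat over $\mc{O}_E$, and has schematically dense generic fibre, and is exactly the content of \cite[\S 2.7]{PR2021}. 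Granting this, uniqueness of a $\mc{G}(\bA_f^p)$-equivariant isomorphism between two systems of canonical integral models is purely formal: two $\mc{O}_E$-morphisms $\sS_\eK\gx \to \sS'_\eK\gx$ which agree on the schematically dense open $\Sh_\eK\gx_E$ coincide because $\sS'_\eK\gx$ is separated, and any such isomorphism automatically intertwines the shtukas by the first point. So the real content is the \emph{existence} of such an isomorphism.

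For existence I would, for each $\eK^p$, form $\sT_\eK\gx$, the normalisation of the Zariski closure of the diagonally embedded $\Sh_\eK\gx_E$ inside $\sS_\eK\gx \times_{\mc{O}_E} \sS'_\eK\gx$, with its two projections $p\colon \sT_\eK\gx \to \sS_\eK\gx$ and $p'\colon \sT_\eK\gx \to \sS'_\eK\gx$, each of which restricts to the identity on generic fibres. It then suffices to prove that $p$ (and by symmetry $p'$) is an isomorphism, since then $p'\circ p^{-1}$ furnishes the desired map; its $\mc{G}(\bA_f^p)$-equivariance and compatibility with the transition maps $t_{\eK,\eK'}(g)$ come for free because $\sT_\eK\gx$ is built functorially in $\eK^p$ and $g$, using that these transition maps are finite étale on both systems by Definition \ref{Def:PRAxioms}(ii). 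To see that $p$ is an isomorphism I would show it is proper and quasi-finite, hence finite, and then conclude by Zariski's main theorem using that $\sS_\eK\gx$ is normal and $p$ is birational. For properness I would use the valuative criterion together with the extension property: given a discrete valuation ring $R$ with a compatible pair $\Spec R \to \sS_\eK\gx$, $\Spec\mathrm{Frac}(R)\to\sT_\eK\gx$, the composite $\Spec\mathrm{Frac}(R)\to\sS'_\eK\gx$ extends over $\Spec R$ --- trivially in equal characteristic $0$, and by Definition \ref{Def:PRAxioms}(i) applied to the system $\{\sS'_\eK\gx\}$ in mixed characteristic --- after which the resulting $\Spec R \to \sS_\eK\gx \times_{\mc{O}_E}\sS'_\eK\gx$ factors through the diagonal closure and hence lifts to $\sT_\eK\gx$ by normality.

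The hard part will be the equal-characteristic-$p$ case of this valuative criterion, together with quasi-finiteness of $p$: these phenomena are concentrated in the special fibres, where there is no projectivity to exploit and the ambient product $\sS_\eK\gx\times_{\mc{O}_E}\sS'_\eK\gx$ is not proper over $\sS_\eK\gx$. It is exactly here that Definition \ref{Def:PRAxioms}(iv) is indispensable. By the uniqueness of the shtuka established above, $\sT_\eK\gx$ carries a canonical $\mc{G}^c$-shtuka restricting to $\ms{P}_\eK$ along $p$ and to $\ms{P}'_\eK$ along $p'$; comparing the associated Frobenius structures over closed points shows that a closed point $t$ of $\sT_\eK\gx$ lying over $x\in\sS_\eK\gx(\bar{k}_E)$ and $x'\in\sS'_\eK\gx(\bar{k}_E)$ satisfies $[b_x]=[b_{x'}]$ in $B(G^c)$, so that Definition \ref{Def:PRAxioms}(iv) identifies both $\left(\sS_\eK\gx^\wedge_{/x}\right)^\lozenge$ and $\left(\sS'_\eK\gx^\wedge_{/x'}\right)^\lozenge$ with $\left(\mc{M}^{\mr{int}}_{\cG^c,b_x,\mbbmus_h^c}\right)^\wedge_{/x_0}$, compatibly with the universal shtuka. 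I would then chase these identifications --- using the moduli description of the integral local Shimura variety $\mc{M}^{\mr{int}}_{\cG^c,b_x,\mbbmus_h^c}$ as a $v$-sheaf of framed shtukas --- to conclude that $p$ induces an isomorphism of $v$-sheaves on formal completions at each closed point; this forces $p$ to be quasi-finite along the special fibre and supplies the missing equal-characteristic-$p$ extension. Combined with the rest, this yields that $p$ is finite, hence an isomorphism, and finally one assembles the level-wise isomorphisms and passes to the limit over $\eK^p$.
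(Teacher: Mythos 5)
Your sketch is a faithful reconstruction of the Pappas--Rapoport argument that this proposition defers to without re-proving: the paper gives only the citation, and the graph-closure construction combined with the formal-completion comparison via axiom (iv) is exactly how [PR2021, Theorem 4.3.1] (applied with the identity map) yields Theorem 4.2.4 there, with the $\mc{G}^c$-modification being cosmetic. The step you flag as ``the hard part'' --- upgrading the $v$-sheaf isomorphism of formal completions to scheme-theoretic quasi-finiteness and to the equal-characteristic-$p$ case of the valuative criterion --- is indeed where the bulk of PR2021's technical work on $v$-sheaf completions of normal flat $\mc{O}_E$-schemes is spent, so while your proposal does not redo that work, it correctly locates it.
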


The following is then what we refer to as the \emph{Pappas--Rapoport conjecture}.

\begin{conj}[Pappas-Rapoport]\label{conj:PR}
    For any parahoric Shimura datum $(\mc{G},\mb{G},\mb{X})$, there exists an canonical integral model for $\{\Sh_\eK\gx\}_{\eK^p}$.
\end{conj}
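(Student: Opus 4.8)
The plan is to show that the Kisin--Pappas--Zhou models $\ms{S}^\mf{d}_{\mathsf{K}}(\mb{G},\mb{X})$ of Theorem \ref{thm:KPZ-model-properties} are canonical in the sense of Definition \ref{Def:PRAxioms}; by Remark \ref{rmk:acceptable} this covers every abelian-type Shimura datum when $p>3$ (and the acceptable ones when $p=3$). Axioms (i) and (ii) are exactly the extension property and the finite \'etale transition maps already recorded in Theorem \ref{thm:KPZ-model-properties}, so the real content is the construction of the $\mc{G}^c$-shtuka (axiom (iii)) and the identification of formal completions with integral local Shimura varieties (axiom (iv)). Once (iii) and (iv) are in place, Proposition \ref{prop:uniqueness} upgrades these models to \emph{the} canonical models and yields Corollary \ref{cor-1}.

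First I would set up the ``spanning'' formalism that makes precise the heuristic that abelian-type data are built from Hodge-type and toral-type data, extending Lovering's treatment of Kisin's hyperspecial models. Fix a very good parahoric Shimura datum $\mf{d}=(\mb{G}_1,\mb{X}_1,\mc{G}_1)$ of Hodge type well-adapted to $(\mb{G},\mb{X},\mc{G})$, and realize $(\mb{G},\mb{X},\mc{G})$ inside a diagram of parahoric Shimura data
\begin{equation*}
    (\mb{G}_1,\mb{X}_1,\mc{G}_1)\;\xleftarrow{\ \ }\;(\mb{G}_2,\mb{X}_2,\mc{G}_2)\;\xrightarrow{\ \ }\;(\mb{G},\mb{X},\mc{G}),\qquad \mb{G}_2\hookrightarrow \mb{G}_1\times \mb{T},
\end{equation*}
in which the left arrow is an ad-isomorphism, $\mb{G}_2\to\mb{G}$ is a central quotient with toral kernel, and $(\mb{T},\{h\},\mc{T})$ is of toral type; such a $(\mb{G}_2,\mb{X}_2,\mc{G}_2)$ is produced exactly as in Step 3 of \S\ref{ss:KPZ-construction}, and the $R$-smoothness built into ``very good'' makes Propositions \ref{prop:parahoric-adjoint}, \ref{prop:KZ-R-smooth}, and \ref{prop:fiber-product-parahoric} available for the relevant integral group schemes. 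Combining Propositions \ref{prop:funct-for-ad-isom}, \ref{prop:funct-for-ab}, \ref{prop:KPZ-for-products}, and \ref{prop:HT-basic-model-comparison} then exhibits $\ms{S}^\mf{d}_{\mathsf{K}}(\mb{G},\mb{X})$, \'etale locally and $\mb{G}(\A_f^p)$-equivariantly, as a quotient by a finite group of a piece of a product $\ms{S}^\vn_{\mathsf{K}_1}(\mb{G}_1,\mb{X}_1)\times \ms{S}_{\mathsf{M}_p}(\mb{T},\{h\})$, all over a common reflex field.

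Next I would produce the $\mc{G}^c$-shtuka. On the Hodge-type factor $\mc{G}_1^c=\mc{G}_1$ by Lemma \ref{lem:Gc-functoriality}, and the $\mc{G}_1$-shtuka extending $\ms{P}_{\mathsf{K}_1,E}$ exists by \cite{PR2021} and \cite{DvHKZ}; on the toral factor the shtuka exists by \cite{DanielsToral}. To glue these, note that $\mc{G}^c$ and $\mc{G}_2^c$ fit into Cartesian squares of smooth connected-fibered $\Z_p$-group schemes with their abelianizations, with smooth connected-fibered kernel, by Propositions \ref{prop:parahoric-adjoint} and \ref{prop:fiber-product-parahoric} together with the compatibility of $(-)_\mr{ac}$ with $\delta\colon\mb{G}\to\mb{G}^\ab$ from Lemma \ref{lem:Gc-functoriality}. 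Hence, by Corollary \ref{cor:fiber-product-of-shtukas}, a $\mc{G}^c$-shtuka is the same as a compatible pair of a $\mc{G}^\der$-shtuka and a $(\mc{G}^\ab)^c$-shtuka; the first descends from the Hodge-type factor through the $\ms{A}(\bm{\mc{G}})^\circ$-quotient --- using, as in Proposition \ref{prop:generic-fiber-shtuka}, that $\ms{P}$ is associated to a pro-\'etale $\mc{G}^c(\Z_p)$-local system manifestly equivariant for the covering groups in play --- and the second comes from the toral factor. Descending along the finite (hence $v$-cover) quotients of the spanning diagram produces $\ms{P}_\mathsf{K}$ on $\ms{S}^\mf{d}_{\mathsf{K}}(\mb{G},\mb{X})^{\lozenge /}$, with boundedness by $\mbbmu_h^c$ automatic via \cite[Lemma 2.1]{DanielsToral}; this gives (iii).

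Finally, for (iv), fix $x\in\ms{S}^\mf{d}_\mathsf{K}(\mb{G},\mb{X})(\bar{\bb{F}}_p)$ with associated $b_x\in G^c(\brqp)$ and base point $x_0$. Under the spanning diagram, $x$ lifts to $x_2$ and projects to points $x_1$, $x_T$ on the Hodge- and toral-type models; functoriality of the Kottwitz homomorphism and of $B(-)$ along $G_2^c\to G^c$ and $G_2^c\to G_1\times T$ identifies $[b_x]$ with the images of $[b_{x_1}]$ and $[b_{x_T}]$. The completion $\big(\ms{S}^\mf{d}_\mathsf{K}(\mb{G},\mb{X})\big)^\wedge_{/x}$ agrees, after the finite \'etale covers of the spanning diagram, with $\big(\ms{S}^\vn_{\mathsf{K}_1}(\mb{G}_1,\mb{X}_1)\big)^\wedge_{/x_1}$ (the toral completion being trivial by Proposition \ref{prop:toral-type}), for which the isomorphism with $(\mc{M}^\mr{int}_{\mc{G}_1,b_{x_1},\mbbmu_{h,1}})^\wedge_{/x_{1,0}}$ matching universal shtukas is the Hodge-type case of the conjecture, proved in \cite{PR2021} and \cite{DvHKZ}. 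It then remains to prove an ad-isomorphism/central-quotient invariance statement for the pointed formal completions of integral local Shimura varieties --- that the morphism $(\mc{G}_1,b_{x_1},\mbbmu_{h,1})\to(\mc{G}^c,b_x,\mbbmu_h^c)$ factoring through $(\mc{G}_2,b_{x_2},\mbbmu_{h,2})$ induces a compatible isomorphism on the relevant $(\mc{M}^\mr{int})^\wedge$'s --- which I would obtain by applying the torsor fiber-product formalism of Proposition \ref{prop:torsors-comparison} (together with \eqref{eq:int-sht-special-fiber} and the deformation-theoretic description of the completions from \cite{Gleason2022}) to the moduli description of $\mc{M}^\mr{int}$, and finally check $\Theta_x^\ast\ms{P}_\mathsf{K}\simeq\ms{P}^\mr{univ}$ after the same reductions. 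I expect this last step --- the group-theoretic bookkeeping comparing $\mc{M}^\mr{int}$ for $\mc{G}^c$ with those for $\mc{G}_1$ and the toral piece while tracking the anti-cuspidal modification $\mc{G}^c$ rather than the coarser $\mc{G}^\ad$ --- to be the main obstacle; it is precisely the subtle group theory that the $\mc{G}^\ad$-variant of \cite{vHS} is able to sidestep.
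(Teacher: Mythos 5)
Your overall strategy is the paper's: realize $(\mb{G},\mb{X},\mc{G})$ via the Lovering construction as a bridge between the well-adapted Hodge-type datum and a toral-type datum, construct the $\mc{G}^c$-shtuka by gluing shtukas from those two extremes using the torsor fiber-product formalism, and reduce condition (iv) to the Hodge-type and toral-type cases through ad-isomorphism invariance of formal completions of integral local Shimura varieties. You correctly identify the last invariance statement as the key technical input; in the paper this is not something to be proved from scratch but is cited directly as \cite[Proposition 5.3.1]{PR2021} (Proposition~\ref{prop:shtuka-completion-ad-isom}), so that step is lighter than you anticipate.

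However, there is a genuine gap in your construction of the shtuka. You assert that ``a $\mc{G}^c$-shtuka is the same as a compatible pair of a $\mc{G}^\der$-shtuka and a $(\mc{G}^\ab)^c$-shtuka'' and appeal to Corollary~\ref{cor:fiber-product-of-shtukas}. This is false: $\mc{G}^c$ does not sit in a Cartesian square of the required form. The exact sequence $1\to\mc{G}^\der\to\mc{G}^c\to(\mc{G}^c)^\ab\to 1$ is an extension, not a fiber product; a $\mc{G}^c$-torsor pushes forward to a $(\mc{G}^\ab)^c$-torsor but does not canonically produce a $\mc{G}^\der$-torsor, so the asserted equivalence of groupoids does not exist. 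The correct fiber-product statement, which is Proposition~\ref{prop:Bc}, concerns the Lovering bridge group and reads $\mc{B}_1'^c\simeq\mc{G}_1\times_{\mc{G}_1^\ab}\mc{T}_1'^c$: both legs $\mc{G}_1\to\mc{G}_1^\ab$ and $\mc{T}_1'^c\to\mc{G}_1^\ab$ are surjective with the first having smooth connected-fibered kernel $\mc{G}_1^\der$ (using $R$-smoothness and Proposition~\ref{prop:parahoric-adjoint}), so Corollary~\ref{cor:fiber-product-of-shtukas} genuinely applies. Thus one glues the $\mc{G}_1$-shtuka pulled back from the Hodge-type model with the $\mc{T}_1'^c$-shtuka pulled back from the toral-type model (compatibility over $\mc{G}_1^\ab$ supplied by Lemma~\ref{lem:torsor-isom}) to get a $\mc{B}_1'^c$-shtuka, and then transports it to $\mc{G}^c$ by pushing forward along the ad-isomorphism $\mc{B}_1'^c\to\mc{G}^c$ and descending across the $\ms{A}(\bm{\mc{G}})^\circ$-quotient connected component by connected component, rather than by decomposing $\mc{G}^c$ itself.
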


Conjecture \ref{conj:PR} is known in the following cases, where we have elected to use our notation from \S\ref{ss:KPZ-construction} for maximal clarity.

\begin{thm} [{\cite[Theorem 4.5.2]{PR2021} and \cite[Corollary 4.1.5]{DvHKZ}\label{thm:PR-conj-for-Hodge-type}}]
    For any parahoric Shimura datum $(\mb{G},\mb{X},\mc{G})$ of Hodge type, the system $\{\ms{S}_{\mathsf{K}}^\vn(\mb{G},\mb{X})\}_{\eK^p}$ satisfies the conditions of the Pappas--Rapoport conjecture.
\end{thm}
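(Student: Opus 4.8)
The plan is to reduce the four conditions of Definition~\ref{Def:PRAxioms} to results already available in the literature. The first point to record is that, by Lemma~\ref{lem:Gc-functoriality}, the natural map $\mb{G}\to\mb{G}^c$ is an isomorphism in the Hodge-type case, so that $\mc{G}^c=\mc{G}$, $\mbbmu_h^c=\mbbmu_h$, and $\ms{P}_{\eK,E}$ is a $\mc{G}$-shtuka; hence conditions (iii) and (iv) become statements purely about $\mc{G}$-shtukas and the integral local Shimura varieties $\mc{M}^{\mr{int}}_{\mc{G},b_x,\mbbmu_h}$, which is the form in which they appear in \cite{PR2021}. Conditions (i) and (ii) are then immediate: normality and flatness of $\ms{S}^\vn_{\mathsf{K}}(\mb{G},\mb{X})$ over $\mc{O}_E$ are built into its construction in Step~2 of \S\ref{ss:KPZ-construction}, it being the normalization of a flat Zariski closure, while the extension property (i) and the finite étale transition maps (ii) are precisely the content of Theorem~\ref{thm:KPZ-model-properties}(2) and (1), equivalently of \cite[Theorem 4.6.23]{KP2018}.

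For conditions (iii) and (iv) the plan is to invoke \cite[Theorem 4.5.2]{PR2021}. From the crystalline (prismatic) realization of the universal abelian scheme carried by the Kisin--Pappas model along a Hodge embedding, Pappas and Rapoport produce an extension $\ms{P}_\eK$ of $\ms{P}_{\eK,E}$ to a $\mc{G}$-shtuka over $\ms{S}^\vn_{\mathsf{K}}(\mb{G},\mb{X})^{\lozenge/}$, and they prove that at every point $x$ of $\ms{S}^\vn_{\mathsf{K}}(\mb{G},\mb{X})(\bar{k}_E)$ the induced map on formal completions is an isomorphism $\Theta_x$ onto $\big(\mc{M}^{\mr{int}}_{\mc{G},b_x,\mbbmu_h}\big)^\wedge_{/x_0}$ carrying $\ms{P}_\eK$ to the universal shtuka. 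Since \cite[Theorem 4.5.2]{PR2021} is established possibly subject to a mild additional hypothesis on $(\mb{G},\mc{G})$, one obtains the statement in full generality by instead appealing to \cite[Corollary 4.1.5]{DvHKZ}. The only remaining bookkeeping is to check that the normal flat $\mc{O}_E$-model, the $\mc{G}(\bZ_p)$-local system $\bP_\eK$, and the shtuka $\ms{P}_{\eK,E}$ used in those references coincide with the ones constructed here; this follows from the canonical independence of the Step~2 construction from the auxiliary Hodge embedding, together with the compatibility of the associated-shtuka formalism with that identification.

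The hard part is condition (iv), the local uniformization statement, and it is contained entirely in the cited works: its proof there rests on identifying the complete local ring of the Kisin--Pappas model with the ring arising from the local model, equivalently from the affine Deligne--Lusztig datum $(\mc{G},b_x,\mbbmu_h^{-1})$, together with the Scholze--Weinstein theory of integral local Shimura varieties. Beyond assembling these inputs and performing the identification of models above, no new argument is needed for the present theorem.
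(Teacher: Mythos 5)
Your proposal is correct and takes the same approach as the paper, which does not write out a proof but simply cites \cite[Theorem 4.5.2]{PR2021} and \cite[Corollary 4.1.5]{DvHKZ}. Your bookkeeping — collapsing $\mc{G}^c$ to $\mc{G}$ via Lemma~\ref{lem:Gc-functoriality}, attributing conditions (i) and (ii) to Theorem~\ref{thm:KPZ-model-properties}, deferring (iii) and (iv) to the cited works, and invoking \cite{DvHKZ} to remove the stabilizer-parahoric hypothesis that the paper itself flags in Remark~\ref{rem:Hodge} — matches the intended reading of the citation exactly.
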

\begin{rmk}\label{rem:Hodge}
 In \cite{PR2021}, Pappas and Rapoport prove the conjecture under the stubborn technical assumption that $\eK_p$ is a \textit{stabilizer} parahoric, i.e., where $\eK_p = \cG(\bZ_p)$ for a parahoric group scheme $\cG$ which is also the Bruhat--Tits stabilizer group scheme of a point in the extended Bruhat--Tits building of $G$. This assumption frequently holds; for example every parahoric $\bZ_p$-model of $G$ is of this form if $\pi_1(G)_I$ is torsion free. That says, it eliminates many examples of Shimura varieties of abelian type (e.g., many cases of type $D^\bb{H}$). The theorem was subsequently extended to arbitrary Shimura data of Hodge type in \cite{DvHKZ}.
\end{rmk}

\begin{thm}[{\cite[Theorem A]{DanielsToral}}]\label{thm:PR-conj-for-toral-type}
    If $(\mb{G},\mb{X},\mc{G})$ is a parahoric Shimura datum of toral type, then $\{\ms{S}_{\mathsf{K}}(\mb{G},\mb{X})\}_{\eK^p}$ satisfies the conditions of the Pappas--Rapoport conjecture.
\end{thm}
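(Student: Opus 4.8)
The plan is to verify directly the four conditions of Definition~\ref{Def:PRAxioms}, exploiting the very explicit shape of the models in the toral case. By Proposition~\ref{prop:toral-type}, each member of the system is a finite disjoint union $\ms{S}_\eK(\mb{T},\{h\})=\bigsqcup_{i\in I}\Spec(\mc{O}_{E_i})$ with every $E_i/\Q_p$ finite unramified, and the transition maps $t_{\mathsf{K},\mathsf{K}'}(g)$, being modeled on the $\mb{T}(\A_f^p)$-action on a disjoint union of spectra of rings of integers of fields, are visibly finite \'etale; this is condition~(ii). For condition~(i) I would use the valuative criterion: for a characteristic $(0,p)$ discrete valuation ring $R$ over $\mc{O}_E$, integral closedness of $R$ and of each $\mc{O}_{E_i}$ gives $\Hom_{\mc{O}_E}(\mc{O}_{E_i},R)=\Hom_E(E_i,R[\nicefrac1p])$, and then passing to the filtered limit over $\eK^p$ and summing over components yields $\Sh_{\eK_p}(\mb{T},\{h\})(R[\nicefrac1p])=\big(\varprojlim_{\eK^p}\ms{S}_\eK(\mb{T},\{h\})\big)(R)$.

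The substantive content lies in conditions~(iii) and~(iv). For~(iii), the $\mc{G}^c=\mc{T}^c$-shtuka $\ms{P}_{\eK,E}$ of Proposition~\ref{prop:generic-fiber-shtuka} is the shtuka associated (in the sense of \cite[\S2.5]{PR2021}) to a pro-\'etale $\mc{T}^c(\Z_p)$-torsor $\bP_\eK$, which is crystalline because it arises from the Shimura datum. I would extend it componentwise: since $\mb{T}^c$ is a torus and each $E_i/\Q_p$ is unramified, a crystalline $\mc{T}^c(\Z_p)$-local system on $\Spec(E_i)$ with Hodge cocharacter $\mbbmu_h^c$ admits a canonical $\mc{T}^c$-Breuil--Kisin module, from which one reads off a $\mc{T}^c$-shtuka over $\Spec(\mc{O}_{E_i})^{\lozenge/}$; it is automatically bounded by $\mbbmu_h^c$ by \cite[Lemma~2.1]{DanielsToral}, and the extension is unique by normality of $\mc{O}_{E_i}$. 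Assembling these over $i\in I$ and over the levels $\eK^p$ produces $\ms{P}_\eK$. The point where real care is needed — and which I expect to be the main obstacle — is making this construction functorial in $\eK^p$ and compatible with the pullback isomorphisms~\eqref{eq:pullback-compatability} and the $\mb{T}(\A_f^p)$-action; concretely this comes down to checking that the passage from crystalline torus representations to Breuil--Kisin modules is canonical enough for the purpose, a point which is light in spirit but fiddly to set up correctly.

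For~(iv), I would argue by explicit comparison of two zero-dimensional objects. Fixing $x\in\ms{S}_\eK(\mb{T},\{h\})(\bar k_E)$ lying on a component $\Spec(\mc{O}_{E_i})$, unramifiedness of $E_i/E$ makes $\mc{O}_{E_i}\otimes_{\mc{O}_E}\mc{O}_{\breve E}$ a finite product of copies of $\mc{O}_{\breve E}$, so $\big(\ms{S}_\eK(\mb{T},\{h\})^\wedge_{/x}\big)^\lozenge\cong\Spd(\mc{O}_{\breve E})$. On the other side, for the torus $\mc{T}^c$ every cocharacter is minuscule, the set $X_{\mc{T}^c}(b_x,(\mbbmu_h^c)^{-1})(\bar k_E)$ is finite, and a direct computation from the definition of the integral local Shimura variety (as in \cite[\S25.1]{SW2020}) identifies $\mc{M}^\mr{int}_{\mc{T}^c,b_x,\mbbmus_h^c}$ with a disjoint union of copies of $\Spd(\mc{O}_{\breve E})$, so $\big(\mc{M}^\mr{int}_{\mc{T}^c,b_x,\mbbmus_h^c}\big)^\wedge_{/x_0}\cong\Spd(\mc{O}_{\breve E})$ as well. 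I would take $\Theta_x$ to be the resulting identification, whereupon the compatibility $\Theta_x^\ast\ms{P}_\eK=\ms{P}^\mr{univ}$ reduces to the statement that the two $\mc{T}^c$-shtukas over $\Spd(\mc{O}_{\breve E})$ are both pinned down by the same Breuil--Kisin datum attached to $b_x$ and $\mbbmu_h^c$. Uniqueness of the whole package then follows from Proposition~\ref{prop:uniqueness}.
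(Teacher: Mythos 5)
The paper's proof of this statement is a two-line citation: it invokes \cite[Theorem A]{DanielsToral} --- which already establishes that the explicit zero-dimensional models $\bigsqcup_{i}\Spec(\mc{O}_{E_i})$ are canonical in the Pappas--Rapoport sense --- and then uses Proposition \ref{prop:toral-type} to identify those models with the Kisin--Pappas--Zhou models $\ms{S}_\eK(\mb{T},\{h\})$. You instead attempt to reprove the cited theorem from scratch, and while your outline broadly tracks the structure of the argument in that reference, it leaves the heart of the matter as an unproved assertion.

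The genuine gap is the claim that the pro-\'etale $\mc{T}^c(\Z_p)$-torsor $\bP_\eK$ is crystalline ``because it arises from the Shimura datum.'' For a Shimura variety of Hodge type this is true because $\bP_\eK$ comes from the Tate module of an abelian scheme with good reduction. For a Shimura variety of toral type there is no abelian variety in sight: the relevant Galois representation is produced purely from the Shimura reciprocity law for $\mb{T}$, and establishing its crystallinity requires the Main Theorem of Complex Multiplication, an explicit description of the $p$-adic component of the reciprocity map, and a match of the resulting filtered isocrystal against the Hodge cocharacter $\mbbmu_h^c$. That is precisely the content of the cited Theorem A, not a formal consequence of ``arising from the Shimura datum.'' Condition (iv) has the same issue: identifying the element $b_x\in T^c(\breve{\bQ}_p)$ read off from your Breuil--Kisin module with the one dictated by Shimura reciprocity is a nontrivial CM computation, not a tautology about two zero-dimensional $v$-sheaves coinciding. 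Your checks of (i) and (ii) are fine, and your instinct that the passage from crystalline $\mc{T}^c(\Z_p)$-representations to shtukas is functorial enough is correct, but the crystallinity and the identification of $b_x$ constitute the real theorem and are not established by your sketch.
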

\begin{proof}
    This follows from \cite[Theorem A]{DanielsToral} along with Proposition \ref{prop:toral-type}.
\end{proof}

\begin{thm}[{\cite[Proposition 5.35]{ImaiKatoYoucis}}] If $(\mb{G},\mb{X},\mc{G})$ is a parahoric Shimura datum of abelian type where $\mc{G}$ is reductive, then the canonical integral models $\{\ms{S}_{\mathsf{K}}^{\mathsmaller{\mr{Kis}}}(\mb{G},\mb{X})\}_{\eK^p}$ from \emph{\cite{Kisin2010}} satisfy the conditions of the Pappas--Rapoport conjecture.
\end{thm}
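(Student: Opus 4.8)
The plan is to treat this as the reductive (i.e.\ hyperspecial) special case of the main theorem: the result is already available via \cite{ImaiKatoYoucis}, and the argument is a simplification of the general one presented below, in which the hypothesis that $\mc{G}$ is reductive makes every $R$-smoothness issue vacuous. Concretely, I would fix a Shimura datum $(\mb{G}_1,\mb{X}_1)$ of Hodge type adapted to $(\mb{G},\mb{X})$, so that $\mb{G}_1^\mr{der}\to\mb{G}^\mr{der}$ is a central isogeny and $(\mb{G}_1^\ad,\mb{X}_1^\ad)\isomto(\mb{G}^\ad,\mb{X}^\ad)$, and recall that (after possibly enlarging coefficients) $\ms{S}_{\mathsf{K}}^{\mathsmaller{\mr{Kis}}}(\mb{G},\mb{X})$ is obtained from a connected component of $\ms{S}^{\mathsmaller{\mr{Kis}}}_{\mathsf{K}_{p,1}}(\mb{G}_1,\mb{X}_1)$ by Deligne's $\ast$-product recipe recalled in \S\ref{ss:KPZ-construction} (which, since $\mc{G}$ is reductive, recovers the construction of \cite{Kisin2010}, and in the Hodge-type case coincides with $\ms{S}^\vn_{\mathsf{K}_{p,1}}(\mb{G}_1,\mb{X}_1)$). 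The entire proof then consists of transporting the four properties of Definition \ref{Def:PRAxioms} from the Hodge-type model to its twist.

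Properties (i) and (ii) of Definition \ref{Def:PRAxioms} hold for the Hodge-type model by Theorem \ref{thm:PR-conj-for-Hodge-type}, and I would check that both the extension property and the finiteness and étaleness of the prime-to-$p$ transition maps survive the twisting construction, which is harmless since that construction is a quotient by a finite group of a base change of $\ms{S}^{\mathsmaller{\mr{Kis}}}_{\mathsf{K}_{p,1}}(\mb{G}_1,\mb{X}_1)$ and the varying level is imposed away from $p$.

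For property (iii) I would start from the $\mc{G}_1$-shtuka on $\ms{S}^{\mathsmaller{\mr{Kis}}}_{\mathsf{K}_{p,1}}(\mb{G}_1,\mb{X}_1)^{\lozenge/}$ produced by Theorem \ref{thm:PR-conj-for-Hodge-type} — here $\mb{G}_1\to\mb{G}_1^c$ is an isomorphism by Lemma \ref{lem:Gc-functoriality}, so nothing need be modified on the Hodge-type side — push it forward along $\mc{G}_1\to\mc{G}^c$ (using $\mb{G}^\mr{der}\hookrightarrow\mb{G}\to\mb{G}^c$ and the central isogeny $\mb{G}_1^\mr{der}\to\mb{G}^\mr{der}$) and descend it through the $\ast$-product, via the torsor-pushforward formalism of \S\ref{ss:BT-theory} together with Corollary \ref{cor:fiber-product-of-shtukas}. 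That the resulting $\mc{G}^c$-shtuka restricts to $\ms{P}_{\eK,E}$ on the generic fibre is checked as in \cite[Lemma 4.6.13]{KP2018}, and its boundedness by $\mbbmus_h^c$ is automatic by \cite[Lemma 2.1]{DanielsToral}.

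For property (iv) I would exploit that the formal completion of $\ms{S}_{\mathsf{K}}^{\mathsmaller{\mr{Kis}}}(\mb{G},\mb{X})$ at an $\bar{k}_E$-point depends only on the connected component through which that point factors, so that the isomorphism $\Theta_x$ can be obtained from the corresponding isomorphism on the Hodge-type model (part of Theorem \ref{thm:PR-conj-for-Hodge-type}) together with the compatibility of integral local Shimura varieties with the central quotient $\mc{G}_1\to\mc{G}^c$ and with ad-isomorphisms, and then match $\Theta_x^\ast\ms{P}_\eK$ with the universal shtuka by pushing forward the Hodge-type statement. The hardest part will be exactly this last bookkeeping — tracking the $\mc{G}^c$-shtuka through the $\ast$-product and matching it with the universal $\mc{G}^c$-shtuka on $\mc{M}^\mr{int}_{\mc{G}^c,b_x,\mbbmus_h^c}$ after the central twist — which amounts to verifying that the $\ast$-product construction is compatible with the $\ms{A}(\bm{\mc{G}})$-action on shtukas; with $\mc{G}$ reductive this compatibility is substantially easier to establish than in the general parahoric case treated below.
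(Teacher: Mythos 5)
First, note that the paper itself offers no proof of this statement: it is quoted verbatim from \cite[Proposition 5.35]{ImaiKatoYoucis} as one of the previously known cases of Conjecture \ref{conj:PR}, so there is nothing internal to compare your argument against. Your sketch instead tries to reprove it by the strategy of the paper's main theorem, and in doing so it hits a genuine gap at the central step. For property (iii) you propose to take the $\mc{G}_1$-shtuka on the Hodge-type model and ``push it forward along $\mc{G}_1\to\mc{G}^c$ (using $\mb{G}^\der\hookrightarrow\mb{G}\to\mb{G}^c$ and the central isogeny $\mb{G}_1^\der\to\mb{G}^\der$).'' No such map exists: for an abelian-type datum one is only given an isogeny $\mb{G}_1^\der\to\mb{G}^\der$, which yields a homomorphism out of $\mb{G}_1^\der$, not out of $\mb{G}_1$ (think of $\mb{G}=B^\times$ for a quaternion algebra and its adapted Hodge-type datum). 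A $\mc{G}_1$-torsor cannot be pushed forward along a map defined only on the derived subgroup, and reducing its structure group to $\mc{G}_1^\der$ is extra data, not something automatic. The hypothesis that $\mc{G}$ is reductive is irrelevant here --- the obstruction lives at the level of the $\bb{Q}$-groups, not the level structure at $p$.

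This is exactly the difficulty that the Lovering construction of \S\ref{ss:auxiliary-Shim-datum} is designed to circumvent, and it is the mechanism used both in the paper's proof of Theorem \ref{thm:main} and in the cited argument of \cite{ImaiKatoYoucis}: one introduces $\mb{B}=\mb{G}\times_{\mb{G}^\ab}\mb{T}_{\mb{E}'}$ and $\mb{B}_1'$ with $\mb{B}_1'^\der=\mb{G}_1^\der$, uses the identification $\mc{B}_1'^c\simeq\mc{G}_1\times_{\mc{G}_1^\ab}\mc{T}_1'^c$ (Proposition \ref{prop:Bc}) together with Corollary \ref{cor:fiber-product-of-shtukas} to glue a $\mc{B}_1'^c$-shtuka from the Hodge-type shtuka and a toral-type shtuka, and only then pushes forward along the honest ad-isomorphism $\mc{B}_1'^c\to\mc{G}^c$ and descends through the $\ast$-product. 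You do invoke Corollary \ref{cor:fiber-product-of-shtukas}, but without specifying the relevant fiber product of groups; absent the auxiliary datum $(\mb{B}_1',\mb{Y}_1')$ there is no fiber-product presentation relating $\mc{G}_1$ to $\mc{G}^c$, so the gluing has nothing to glue. The same omission undermines your treatment of property (iv): matching $\Theta_x^\ast\ms{P}_\eK$ with the universal shtuka requires the closed immersion $\mc{M}^\mr{int}_{\mc{B}_1'^c,b_w,\mbbmus_w}\to\mc{M}^\mr{int}_{\mc{G}_1\times\mc{T}_1'^c,b_{(y,z)},\mbbmus_{(y,z)}}$ and the ad-isomorphism $\mc{B}_1'^c\to\mc{G}^c$, neither of which is available in your setup. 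Your remarks on properties (i) and (ii) are fine, as is the observation that $\mb{G}_1\to\mb{G}_1^c$ is an isomorphism in the Hodge-type case.
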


\medskip

\subsection{Statement of the main result and applications}
We now state our main result and give some applications to the study of Kisin--Pappas--Zhou models.

\begin{thm}\label{thm:main} Let $(\mb{G},\mb{X},\mc{G})$ be an acceptable parahoric Shimura data of abelian type, and let $\mf{d}$ be a well-adapted very good parahoric Shimura datum of Hodge type. Then, the system $\{\ms{S}^\mf{d}_{\mathsf{K}}(\mb{G},\mb{X})\}_{\eK^p}$ satisfies the conditions of the Pappas--Rapoport conjecture.
\end{thm}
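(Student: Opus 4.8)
Conditions (i) and (ii) of Definition \ref{Def:PRAxioms} are precisely parts (2) and (1) of Theorem \ref{thm:KPZ-model-properties}, so the content is to establish (iii) and (iv). The plan is to reduce both to the Hodge-type case (Theorem \ref{thm:PR-conj-for-Hodge-type}) and the toral-type case (Theorem \ref{thm:PR-conj-for-toral-type}), exploiting the construction recalled in \S\ref{ss:KPZ-construction}, which realizes $\ms{S}^\mf{d}_{\mathsf{K}}(\mb{G},\mb{X})$ as a twist of the connected Hodge-type model $\pmb{\ms{S}}^\vn_{\mathsf{K}_{p,1}}(\mb{G}_1,\mb{X}_1)^+$ attached to $\mf{d}=(\mb{G}_1,\mb{X}_1,\mc{G}_1)$ by the group $\ms{A}(\bm{\mc{G}})$, together with the functoriality results of \S\ref{ss:KPZ-construction} (Propositions \ref{prop:funct-for-ad-isom}, \ref{prop:funct-for-ab}, \ref{prop:KPZ-for-products}) and the fiber-product formalism for shtukas (Corollary \ref{cor:fiber-product-of-shtukas}).

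\emph{The formalism.} Extending Lovering's arguments from the hyperspecial case, the key is a mechanism transporting the data $(\ms{P}_\eK,\Theta_x)$ through the twist-construction. I would first reduce, via Proposition \ref{prop:funct-for-ad-isom}, to an auxiliary acceptable parahoric Shimura datum $(\mb{G}_2,\mb{X}_2,\mc{G}_2)$ admitting an ad-isomorphism to $(\mb{G},\mb{X},\mc{G})$, to which $\mf{d}$ remains well-adapted, and whose $c$-group $\mc{G}_2^c$ can be assembled through a chain of fiber products over (semisimple) adjoint groups and over tori, together with central isogenies, controlled by Propositions \ref{prop:fiber-product-properties}, \ref{prop:fiber-product-parahoric} and \ref{prop:KZ-R-smooth}, out of the Hodge-type parahoric $\mc{G}_1=\mc{G}_1^c$ of $\mf{d}$ (equal since $\mf{d}$ is of Hodge type, by Lemma \ref{lem:Gc-functoriality}) and the parahoric model $\mc{G}^{c,\ab}\cong(\mc{G}^\ab)^c$ of the torus $\mb{G}^{c,\ab}_{\Q_p}$. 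Corollary \ref{cor:fiber-product-of-shtukas} then translates such a presentation into a description of $\mc{G}_2^c\text{-}\mb{Sht}(\ms{S})$, for any $\ms{S}$, as an iterated $2$-fiber product of the shtuka categories for $\mc{G}_1$, for $\mc{G}_2^\ad=\mc{G}^\ad$, and for the torus $\mc{G}^{c,\ab}$.

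\emph{Construction of $\ms{P}_\eK$ and verification of (iv).} For (iii): the $\mc{G}_1$-shtuka on $\ms{S}^\vn_{\mathsf{K}_{p,1}}(\mb{G}_1,\mb{X}_1)$ provided by Theorem \ref{thm:PR-conj-for-Hodge-type}, pulled back along $\ms{S}^\mf{d}_{\mathsf{K}_{p,2}}(\mb{G}_2,\mb{X}_2)\to\ms{S}^\mf{d}_{\mathsf{K}_{p,1}}(\mb{G}_1,\mb{X}_1)\cong\ms{S}^\vn_{\mathsf{K}_{p,1}}(\mb{G}_1,\mb{X}_1)$ (Propositions \ref{prop:funct-for-ad-isom} and \ref{prop:HT-basic-model-comparison}), gives the $\mc{G}_1$-factor and, by pushforward, the $\mc{G}^\ad$-factor; the toral-type shtuka on $\ms{S}_{\mathsf{K}_p^\ab}(\mb{G}^\ab,\mb{X}^\ab)$ of Theorem \ref{thm:PR-conj-for-toral-type}, pulled back along the composite of Propositions \ref{prop:funct-for-ad-isom} and \ref{prop:funct-for-ab}, gives the $\mc{G}^{c,\ab}$-factor; and the compatibilities over $\mc{G}^\ad$ hold because the corresponding statements hold on the common generic fiber, where all shtukas in sight are those attached to $\bP_\eK$ and its pushforwards in the sense of \cite[\S 2.5]{PR2021}, and because a shtuka over a normal flat base is determined by its restriction to a dense open (cf.\@ \cite[Lemma 2.1]{DanielsToral}). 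Corollary \ref{cor:fiber-product-of-shtukas} then produces the $\mc{G}_2^c$-shtuka $\ms{P}_{\mathsf{K}_2}$ extending the generic-fiber shtuka $\ms{P}_{\mathsf{K}_2,E_2}$, bounded by $\mbbmu_{h,2}^c$ by \cite[Lemma 2.1]{DanielsToral}. For (iv): given $x$ in $\ms{S}^\mf{d}_{\mathsf{K}_2}(\mb{G}_2,\mb{X}_2)(\ov{\bF}_p)$, the decomposition of $\ms{P}_{\mathsf{K}_2}$ yields compatible decompositions of $b_x$ and of the base point $x_0$, and hence identifies $\mc{M}^{\mr{int}}_{\mc{G}_2^c,b_x,\mbbmu_{h,2}^c}$ with the corresponding $v$-sheaf fiber product of integral local Shimura varieties, via Corollary \ref{cor:fiber-product-of-shtukas} for the universal shtukas and the fact that the integral local Shimura variety of a torus is a discrete $v$-sheaf, so the toral factor only reindexes, compatibly with formal completions in the sense of \cite{Gleason2022}; meanwhile the twist-construction identifies $(\ms{S}^\mf{d}_{\mathsf{K}_2}(\mb{G}_2,\mb{X}_2))^\wedge_{/x}$ with the formal completion of $\ms{S}^\vn_{\mathsf{K}_{p,1}}(\mb{G}_1,\mb{X}_1)$ at the corresponding point, the twist being \'etale-locally trivial. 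Feeding condition (iv) for Hodge type and for toral type through these identifications gives $\Theta_x$ with $\Theta_x^\ast\ms{P}_{\mathsf{K}_2}\cong\ms{P}^{\mr{univ}}$, and transporting back along the ad-isomorphism finishes the proof.

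\emph{Main obstacle.} The principal difficulty is the group theory around the $c$-construction: producing $(\mb{G}_2,\mb{X}_2,\mc{G}_2)$ and the fiber-product/isogeny presentation of $\mc{G}_2^c$ in terms of the Hodge-type parahoric of $\mf{d}$ and a torus, i.e.\@ understanding how the passage $\mb{G}\mapsto\mb{G}^c$ (and to parahoric $\bZ_p$-models) interacts with ad-isomorphisms, abelianization and fiber products. This is exactly the subtle group theory flagged in Remark \ref{rem:vHS}, and is the reason Theorem \ref{thm-1} lies deeper than the coarser $\mc{G}^\ad$-statement of \cite{vHS}. A secondary but still delicate point is checking that the decompositions above are compatible with the $v$-sheaf formal completions of \cite{Gleason2022} and with the $\ms{A}(\bm{\mc{G}})$-twist; granting these structural inputs, the remaining steps, namely descent along the quotient-finite \'etale morphisms, the generic-fiber comparisons, and boundedness, are comparatively routine.
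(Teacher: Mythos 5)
Your high-level strategy is the one the paper uses: reduce conditions (iii) and (iv) to the Hodge-type and toral-type cases by passing to an auxiliary parahoric Shimura datum that is ad-isomorphic to $(\mb{G},\mb{X},\mc{G})$ and whose $c$-parahoric is a fiber product of the Hodge-type parahoric $\mc{G}_1$ with a torus, then glue shtukas via Corollary \ref{cor:fiber-product-of-shtukas} and transport $\Theta_x$ along ad-isomorphisms (the paper's tool for the latter is \cite[Proposition 5.3.1]{PR2021}, quoted as Proposition \ref{prop:shtuka-completion-ad-isom}). However, the step you yourself flag as ``the main obstacle'' is genuinely missing, and the specific groups you propose do not work. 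You take the toral factor to be $(\mb{G}^\ab)^c$ and propose to glue the $\mc{G}_1$-shtuka and the toral shtuka over $\mc{G}^\ad$. Neither choice is viable: a torus maps trivially to an adjoint group, so a fiber product of $\mc{G}_1$ with a torus over $\mc{G}^\ad$ is $Z(\mc{G}_1)\times(\text{torus})$, which is not ad-isomorphic to $G$; and there is no natural homomorphism relating $(\mb{G}^\ab)^c$ to $\mb{G}_1^\ab$ (the groups $\mb{G}$ and $\mb{G}_1$ are related only through their derived/adjoint groups, which induce nothing on abelianizations), so no fiber-product presentation over the abelianization can be built from that torus either.

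The missing device is Lovering's construction (\S\ref{ss:auxiliary-Shim-datum}): one sets $\mb{B}_{\mb{E}'}=\mb{G}\times_{\delta,\mb{G}^\ab,r}\Res_{\mb{E}'/\Q}\bb{G}_m$, where $r$ is the norm map composed with $\Res_{\mb{E}'/\Q}(\delta\circ\mu_h)$; the cocharacter $\mu_h$ enters precisely so that $(h,h_{\mb{T}})$ factors through $\mb{B}_{\mb{E}'}$ and one gets a Shimura datum $(\mb{B}_{\mb{E}'},\mb{Y}_{\mb{E}'})$ with $\mb{B}_{\mb{E}'}^\der=\mb{G}^\der$. The auxiliary datum is then $(\mb{B}'_1,\mb{Y}'_1)$, built the same way from $(\mb{G}_1,\mb{X}_1)$ with $\mb{E}'=\mb{E}\mb{E}_1$; it maps by ad-isomorphisms both to $(\mb{B},\mb{Y})\to(\mb{G},\mb{X})$ and to $(\mb{G}_1\times\mb{T}'_1,\mb{X}_1\times\{h_{\mb{T}'_1}\})$, and the key group-theoretic input is Proposition \ref{prop:Bc}: ${\mc{B}'_1}^c\simeq\mc{G}_1\times_{\mc{G}_1^\ab}{\mc{T}'_1}^c$, with the gluing datum for the two shtukas living over the abelianization $\mc{G}_1^\ab$ (Lemma \ref{lem:torsor-isom}), not over $\mc{G}^\ad$. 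The toral input from Theorem \ref{thm:PR-conj-for-toral-type} is applied to the induced torus $\mb{T}'_1=\Res_{\mb{E}'/\Q}\bb{G}_m$, while $(\mb{G}_1^\ab,\mb{X}_1^\ab)$ serves only as the common target where the compatibility $\delta_\ast a^\ast\ms{P}_{\mathsf{K}_{p,1}}\simeq N_\ast d^\ast\ms{P}_{\mathsf{M}'_{p,1}}$ is checked. Your remaining steps (descent along the finite \'etale quotient by $\Delta$, spreading over components via $\ms{A}(\bm{\mc{G}})$, and the product decomposition of the integral local Shimura variety for (iv)) are correctly identified as comparatively routine once this construction is in place, but without it the reduction does not get off the ground.
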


As an immediate implication, and as an opportunity to reemphasize the mildness of the acceptability condition, we record the following corollary.

\begin{cor} Suppose that $p>3$. Then, the Pappas--Rapoport conjecture holds for all parahoric Shimura data of abelian type.
\end{cor}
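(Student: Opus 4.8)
The plan is to deduce this corollary from Theorem~\ref{thm:main} by unwinding two definitions: that of acceptability, and the notion of a well-adapted very good parahoric Shimura datum of Hodge type. First I would invoke Remark~\ref{rmk:acceptable}: when $p>3$ the acceptability condition is vacuous, so \emph{every} parahoric Shimura datum $(\mb{G},\mb{X},\mc{G})$ of abelian type is automatically acceptable. Thus the hypothesis ``acceptable'' in Theorem~\ref{thm:main} imposes no restriction in this range of $p$.

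Next I would produce the auxiliary datum $\mf{d}$. By the Proposition following ``\textbf{Step 3}'' in \S\ref{ss:KPZ-construction} (i.e.\@ \cite[Proposition 5.2.6]{KZ21}, together with the supplement proved there that $\mb{G}^\der$ may be taken $R$-smooth), any acceptable parahoric Shimura datum of abelian type admits a very good parahoric Shimura datum $(\mb{G}_1,\mb{X}_1,\mc{G}_1)$ of Hodge type which is well-adapted to it. Setting $\mf{d}=(\mb{G}_1,\mb{X}_1,\mc{G}_1)$, Theorem~\ref{thm:main} applies verbatim and shows that the system $\{\ms{S}^\mf{d}_{\mathsf{K}}(\mb{G},\mb{X})\}_{\eK^p}$ satisfies conditions (i)--(iv) of Definition~\ref{Def:PRAxioms}. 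Since the existence of \emph{some} system satisfying these conditions is exactly the assertion of the Pappas--Rapoport conjecture (Conjecture~\ref{conj:PR}) for the datum $(\mb{G},\mb{X},\mc{G})$, this completes the argument for this datum, and since $(\mb{G},\mb{X},\mc{G})$ was an arbitrary parahoric Shimura datum of abelian type, we are done.

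There is essentially no obstacle here: the corollary is a formal consequence of Theorem~\ref{thm:main} plus the two cited inputs (the vacuity of acceptability for $p>3$ and the existence of a well-adapted very good Hodge-type datum). The only point worth spelling out is that Conjecture~\ref{conj:PR} asks merely for the \emph{existence} of a canonical integral model, so one need not worry about which $\mf{d}$ is chosen; a posteriori, Proposition~\ref{prop:uniqueness} guarantees the resulting model is independent of that choice up to unique isomorphism. Concretely, the proof reads: ``By Remark~\ref{rmk:acceptable}, when $p>3$ every parahoric Shimura datum of abelian type is acceptable. By \cite[Proposition 5.2.6]{KZ21} (cf.\@ the discussion in \textbf{Step 3} of \S\ref{ss:KPZ-construction}) such a datum admits a well-adapted very good parahoric Shimura datum $\mf{d}$ of Hodge type. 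Theorem~\ref{thm:main} then shows $\{\ms{S}^\mf{d}_{\mathsf{K}}(\mb{G},\mb{X})\}_{\eK^p}$ is a canonical integral model, proving Conjecture~\ref{conj:PR} in this case.''
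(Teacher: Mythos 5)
Your proposal is correct and follows exactly the implicit argument the paper has in mind: the corollary is stated as an ``immediate implication'' of Theorem~\ref{thm:main}, and you have correctly identified the two inputs (vacuity of acceptability for $p>3$ via Remark~\ref{rmk:acceptable}, and existence of a well-adapted very good Hodge-type datum via \cite[Proposition 5.2.6]{KZ21}) needed to make that deduction formal.
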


In addition, utilizing Proposition \ref{prop:uniqueness}, and the previously known cases of the Pappas--Rapoport conjecture stated above, we can deduce the following omnibus independence result where, again, we are using notation from \S\ref{ss:KPZ-construction}.

\begin{cor}\label{cor:omnibus-independence} Suppose that $(\mb{G},\mb{X},\mc{G})$ is an acceptable parahoric Shimura datum of abelian type. Then, the following statements are true.
\begin{enumerate}[$($1$)$]
    \item The system $\{\ms{S}_{\mathsf{K}}(\mb{G},\mb{X})\}_{\eK^p}$ is canonically independent of the choice of $\mf{d}$.
    \item If $(\mb{G},\mb{X},\mc{G})$ is of Hodge type, then the system $\{\ms{S}^\vn_{\mathsf{K}}(\mb{G},\mb{X})\}_{\eK^p}$ is canonically isomorphic to the system $\{\ms{S}_{\mathsf{K}}^\mf{d}(\mb{G},\mb{X})\}_{\eK^p}$ for any $\mf{d}$.
    \item If $\mc{G}$ is reductive then the system $\{\ms{S}_{\mathsf{K}}^{\mathsmaller{\mr{Kis}}}(\mb{G},\mb{X})\}_{\eK^p}$ is canonically isomorphic to the system $\{\ms{S}^\mf{d}_{\mathsf{K}}(\mb{G},\mb{X})\}_{\eK^p}$ for any $\mf{d}$.
\end{enumerate}
\end{cor}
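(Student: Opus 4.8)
The plan is to deduce all three statements formally from Theorem \ref{thm:main} together with the uniqueness statement of Proposition \ref{prop:uniqueness}; the substantive work is entirely contained in the main theorem, and what remains is bookkeeping.

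For (1): since $(\mb{G},\mb{X},\mc{G})$ is acceptable of abelian type, well-adapted very good parahoric Shimura data $\mf{d}$ of Hodge type exist, and by Theorem \ref{thm:main} the system $\{\ms{S}^\mf{d}_{\mathsf{K}}(\mb{G},\mb{X})\}_{\eK^p}$ together with its extended $\mc{G}^c$-shtuka is a canonical integral model of $\{\Sh_\eK\gx\}_{\eK^p}$. Given two choices $\mf{d}$ and $\mf{d}'$, Proposition \ref{prop:uniqueness} produces a unique isomorphism of systems $\{\ms{S}^\mf{d}_{\mathsf{K}}(\mb{G},\mb{X})\}_{\eK^p}\isomto\{\ms{S}^{\mf{d}'}_{\mathsf{K}}(\mb{G},\mb{X})\}_{\eK^p}$ compatible with the transition maps and the shtukas; applying uniqueness to a third datum $\mf{d}''$ forces the evident cocycle condition, so these identifications are canonical and together define the common object $\{\ms{S}_{\mathsf{K}}(\mb{G},\mb{X})\}_{\eK^p}$.

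For (2): when $(\mb{G},\mb{X},\mc{G})$ is of Hodge type, Theorem \ref{thm:PR-conj-for-Hodge-type} shows $\{\ms{S}^\vn_{\mathsf{K}}(\mb{G},\mb{X})\}_{\eK^p}$ is also a canonical integral model, so Proposition \ref{prop:uniqueness} identifies it canonically with $\{\ms{S}^\mf{d}_{\mathsf{K}}(\mb{G},\mb{X})\}_{\eK^p}$ for every $\mf{d}$ (when $\mf{d}=(\mb{G},\mb{X},\mc{G})$ is itself very good this recovers Proposition \ref{prop:HT-basic-model-comparison}). For (3): when $\mc{G}$ is reductive the theorem of \cite{ImaiKatoYoucis} shows that the Kisin canonical integral models $\{\ms{S}^{\mathsmaller{\mr{Kis}}}_{\mathsf{K}}(\mb{G},\mb{X})\}_{\eK^p}$ satisfy the Pappas--Rapoport conjecture, and again Proposition \ref{prop:uniqueness} furnishes the canonical isomorphism with $\{\ms{S}^\mf{d}_{\mathsf{K}}(\mb{G},\mb{X})\}_{\eK^p}$. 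In all three cases the proof is a pure formality; there is no real obstacle, the only mild point being the compatibility of the identifications in (1), which is immediate from the uniqueness clause of Proposition \ref{prop:uniqueness}.
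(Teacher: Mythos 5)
Your proposal is correct and follows exactly the route the paper intends: the paper gives no separate argument for this corollary, stating only that it follows by combining Theorem \ref{thm:main} (which makes each $\{\ms{S}^\mf{d}_{\mathsf{K}}(\mb{G},\mb{X})\}_{\eK^p}$ a canonical integral model) with the uniqueness statement of Proposition \ref{prop:uniqueness} and the previously established cases (Theorems \ref{thm:PR-conj-for-Hodge-type} and the $\cite{ImaiKatoYoucis}$ result for reductive $\mc{G}$). Your extra remark about the cocycle condition, which makes precise that the identifications in (1) are compatible under composition, is a harmless (and correct) elaboration of what ``unique up to unique isomorphism'' already guarantees.
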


Given this independence result, the notation $\{\ms{S}_{\mathsf{K}}(\mb{G},\mb{X})\}_{\eK^p}$ and $\ms{S}_{\mathsf{K}_p}(\mb{G},\mb{X})$ for any of the systems of models discussed in \S\ref{ss:KPZ-construction} is unambiguous.

\begin{rmk} One implication of Corollary \ref{cor:omnibus-independence} is the fact that the more naively constructed models $\ms{S}_{\mathsf{K}^p}^\vn(\mb{G},\mb{X})$ possess a local model diagram (e.g., see (3) of \cite[Theorem 5.2.12]{KZ21}) even if \cite[Condition (5.1.11.1)]{KZ21} does not hold. Indeed, this follows by combining \cite[Theorem 5.2.12]{KZ21} and (2) of Corollary \ref{cor:omnibus-independence}.
\end{rmk}

Lastly, we observe that Theorem \ref{thm:main} implies functoriality for the systems $\{\ms{S}_{\mathsf{K}}(\mb{G},\mb{X})\}_{\eK^p}$. 

\begin{thm} \label{thm:functoriality} Suppose that $\alpha\colon (\mb{G}_1,\mb{X}_1,\mc{G}_1)\to (\mb{G},\mb{X},\mc{G})$ is a morphism of acceptable parahoric Shimura data of abelian type. Then, there exists a unique morphism
\begin{equation*}
    \alpha\colon \ms{S}_{\mathsf{K}_{p,1}}(\mb{G}_1,\mb{X}_1)\to \ms{S}_{\mathsf{K}_p}(\mb{G},\mb{X})_{\mc{O}_{E_1}},
\end{equation*}
which is equivariant for the map $\alpha\colon \mb{G}_1(\A_f^p)\to \mb{G}(\A_f^p)$, and whose generic fiber recovers the map
\begin{equation*}
    \alpha\colon \Sh_{\mathsf{K}_{p,1}}(\mb{G}_1,\mb{X}_1)_{E_1}\to \Sh_{\mathsf{K}_p}(\mb{G},\mb{X})_{E_1}.
\end{equation*}
\end{thm}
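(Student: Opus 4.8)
The plan is to deduce functoriality for general morphisms of acceptable abelian-type parahoric Shimura data by combining Theorem~\ref{thm:main}, the uniqueness statement in Proposition~\ref{prop:uniqueness}, and the special cases of functoriality already established in \S\ref{ss:KPZ-construction} (Propositions~\ref{prop:funct-for-ad-isom}, \ref{prop:funct-for-ab}, \ref{prop:KPZ-for-products}). The key observation is that, once we know $\{\ms{S}_{\mathsf{K}}(\mb{G},\mb{X})\}_{\eK^p}$ is a canonical integral model in the sense of Definition~\ref{Def:PRAxioms}, the existence of $\alpha$ reduces to a statement about extending a morphism of shtuka-equipped normal flat $\mc{O}_E$-models across the boundary, which is handled by the same kind of normality/closure argument used in Lemma~\ref{lem:normal-lemma} — provided one has an abstract recipe for producing the target's shtuka from the source's.

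First I would factor $\alpha$ through intermediate data. Given $\alpha\colon (\mb{G}_1,\mb{X}_1,\mc{G}_1)\to(\mb{G},\mb{X},\mc{G})$, consider the graph-type factorization $\mb{G}_1\xrightarrow{(\mr{id},\alpha)}\mb{G}_1\times\mb{G}\to\mb{G}$, where the first map identifies $\mb{G}_1$ with a subgroup of the product and the second is projection. The projection $\mb{G}_1\times\mb{G}\to\mb{G}$ is an ad-isomorphism onto... no — rather, one should instead factor as $\mb{G}_1 \to \mb{G}_1\times\mb{G}^{\ab}$ (using $\delta\colon\mb{G}\to\mb{G}^{\ab}$) composed with a map to $\mb{G}$; more cleanly, use that any morphism of Shimura data is, up to the pieces we already control, built from an ad-isomorphism and an abelianization map together with products with toral data, exactly the ingredients appearing in Propositions~\ref{prop:funct-for-ad-isom}--\ref{prop:KPZ-for-products}. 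Concretely: the induced map on derived groups $\mb{G}_1^{\der}\to\mb{G}^{\der}$ need not be an isogeny, but one can interpose $(\mb{G}_1\times_{\mb{G}^{\ad}}\mb{G},\ldots)$ or pass to $\mb{G}_1\to\mb{G}_1\times\mb{G}$; since $\mb{G}_1\times\mb{G}\to\mb{G}$ is faithfully flat with toral kernel-type behaviour on centers and the inclusion $\mb{G}_1\hookrightarrow\mb{G}_1\times\mb{G}$ is an ad-isomorphism onto its image only after correcting by the adjoint group, the honest reduction is: handle ad-isomorphisms (done), handle $\mb{G}\to\mb{G}^{\ab}$ (done), handle products $\mb{G}\times\mb{T}$ (done), and observe every morphism of abelian-type data composes out of these once we are allowed to replace $(\mb{G}_1,\mb{X}_1,\mc{G}_1)$ by a cover to which a well-adapted $\mf{d}$ pulls back — which is legitimate because Corollary~\ref{cor:omnibus-independence}(1), itself a consequence of Theorem~\ref{thm:main} and Proposition~\ref{prop:uniqueness}, tells us the resulting model is independent of $\mf{d}$.

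The cleanest route, and the one I would actually write, bypasses explicit factorization: by Theorem~\ref{thm:main} both $\{\ms{S}_{\mathsf{K}_{p,1}}(\mb{G}_1,\mb{X}_1)\}$ and $\{\ms{S}_{\mathsf{K}_p}(\mb{G},\mb{X})\}$ are canonical integral models carrying their universal $\mc{G}^c$-shtukas. Using Lemma~\ref{lem:Gc-functoriality}, $\alpha$ induces $\alpha\colon\mc{G}_1^c\to\mc{G}^c$ and carries $\mbbmu_{h_1}^c$ to $\mbbmu_h^c$. Base-change everything to $\mc{O}_{E_1}$. On generic fibers we have the known morphism $\alpha\colon\Sh_{\mathsf{K}_{p,1}}(\mb{G}_1,\mb{X}_1)_{E_1}\to\Sh_{\mathsf{K}_p}(\mb{G},\mb{X})_{E_1}$, under which (by Proposition~\ref{prop:generic-fiber-shtuka} and the construction of $\bP_{\eK}$) the $\mc{G}_1^c(\bZ_p)$-torsor $\bP_{\eK_1}$ pushes forward to $\bP_{\eK}$, hence the generic-fiber shtuka of the source maps to that of the target. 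Now one wants to extend $\alpha$ over the boundary: since $\ms{S}_{\mathsf{K}_{p,1}}(\mb{G}_1,\mb{X}_1)$ is normal and flat and $\ms{S}_{\mathsf{K}_p}(\mb{G},\mb{X})_{\mc{O}_{E_1}}$ is separated and flat over $\mc{O}_{E_1}$, a morphism on the schematically dense generic fiber extends to a morphism of schemes if and only if it does so formally-locally at every $\bar{k}_{E_1}$-point. At such a point $x_1$ with image $x$, property (iv) of Definition~\ref{Def:PRAxioms} identifies the completions with $(\mc{M}^{\mr{int}}_{\mc{G}_1^c,b_{x_1},\mbbmus_{h_1}^c})^\wedge_{/x_{1,0}}$ and $(\mc{M}^{\mr{int}}_{\mc{G}^c,b_x,\mbbmus_h^c})^\wedge_{/x_0}$ respectively, and the morphism $\alpha\colon(\mc{G}_1^c,b_{x_1},\mbbmu_{h_1}^c)\to(\mc{G}^c,b_x,\mbbmu_h^c)$ of parahoric local Shimura data — which exists because the generic-fiber shtuka map forces $b_x = \alpha(b_{x_1})$ up to $\sigma$-conjugacy — induces a morphism of integral local Shimura varieties by functoriality of $\mc{M}^{\mr{int}}$ (\cite[Chapter 25]{SW2020}), hence a map on these formal completions compatible with the universal shtukas. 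Uniqueness of the extension, and equivariance for $\alpha\colon\mb{G}_1(\A_f^p)\to\mb{G}(\A_f^p)$, both follow from density of the generic fiber and separatedness, as in Lemma~\ref{lem:normal-lemma} and Proposition~\ref{prop:funct-for-ad-isom}.

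The main obstacle I anticipate is \emph{gluing the formal-local extensions into a genuine morphism of schemes}, i.e.\ upgrading "extends formally-locally at every closed point of the special fiber" to "extends as an algebraic morphism". The generic-fiber morphism plus formal extensions at all special points give a morphism on the formal $p$-adic completion and on the generic fiber, which one glues via a Beauville--Laszlo / faithfully-flat-descent argument (in the spirit of Proposition~\ref{prop:BL-for-al-grps}, but for the schemes themselves rather than group schemes), using normality of the source and flatness of the target; the subtlety is checking the formal and generic pieces agree on the overlap, which again reduces to the density of the generic fiber. A secondary technical point is verifying that the map of local Shimura data is well-defined on the level of the element $b$ (not just its $\sigma$-conjugacy class) after suitable choices of trivializations — this is exactly the kind of bookkeeping carried out in \S\ref{ss:PR-conj} when constructing $b_x$, and it should go through because $\alpha$ carries the source's $\mc{G}_1^c$-torsor-with-Frobenius to the target's. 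Once functoriality is known, compatibility with composition is automatic from the uniqueness clause.
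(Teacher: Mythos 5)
Your ``cleanest route'' takes a genuinely different approach from the paper, and it has a gap precisely where you flag your ``main obstacle.'' The paper's proof is much shorter: it forms the product integral model
\begin{equation*}
\ms{S}'' \defeq \ms{S}_{\mathsf{K}_{p,1}}(\mb{G}_1,\mb{X}_1)\times_{\Spec \mc{O}_{E_1}} \ms{S}_{\mathsf{K}_p}(\mb{G},\mb{X})_{\mc{O}_{E_1}},
\end{equation*}
observes that this is a canonical integral model for the product Shimura datum (by Theorem~\ref{thm:main}), and then invokes \cite[Theorem 4.3.1]{PR2021} applied to the graph morphism $\mb{G}_1 \hookrightarrow \mb{G}_1 \times \mb{G}$, which is a closed embedding. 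That theorem provides the unique extension of the graph morphism to a map $\ms{S}_{\eK_{p,1}}(\mb{G}_1,\mb{X}_1) \to \ms{S}''$; composing with the projection finishes the proof. The graph trick converts the problem of extending an \emph{arbitrary} morphism into the problem of extending a \emph{closed embedding}, where the Zariski-closure-and-normalization argument in \cite{PR2021} is rigid enough to conclude.

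Your direct approach tries to build the morphism $\ms{S}_{\eK_{p,1}} \to \ms{S}_{\eK_p}$ by constructing it on the generic fiber and on the formal completions at $\bar{k}_{E_1}$-points of the special fiber, then gluing. The gluing step is the genuine gap. A morphism on the generic fiber together with pointwise formal-local extensions at every special point does not by itself give an algebraic morphism: you would first need to show the pointwise data assembles into a morphism on the formal $p$-adic completion of the whole special fiber, and then invoke a Beauville--Laszlo-type gluing. Proposition~\ref{prop:BL-for-al-grps} is stated for affine group schemes of finite type and does not directly give descent for morphisms of (non-affine, quasi-projective) $\mc{O}_{E_1}$-schemes; moreover, you would need the extension property (axiom (i) of Definition~\ref{Def:PRAxioms}) to produce the $R$-point valued data that actually drives the closure argument in \cite{PR2021}, and this ingredient is absent from your sketch. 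Both difficulties are exactly what \cite[Theorem 4.3.1]{PR2021} disposes of in the closed-embedding case, which is why the paper reduces to it via the graph rather than arguing directly.

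Your first paragraph (factoring $\alpha$ through ad-isomorphisms, abelianizations, and toral products) is correctly abandoned: a general morphism of abelian-type data does not decompose into those pieces, so Propositions~\ref{prop:funct-for-ad-isom}, \ref{prop:funct-for-ab}, \ref{prop:KPZ-for-products} do not suffice on their own. Those propositions are used in this paper only as inputs to the construction of the shtuka and to Proposition~\ref{prop:complete-local-rings}; the general functoriality statement is obtained \emph{after} Theorem~\ref{thm:main}, via the graph trick.
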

\begin{proof} 
This follows from the arguments of \cite[Corollary 4.3.2]{PR2021}, and \cite[Corollary 4.0.9]{DvHKZ}. We replicate the proof here for completeness.

We first observe, as in \cite[Proof of Corollary 4.3.2]{PR2021}, that the product
\begin{equation*}
    \ms{S}'' \defeq \ms{S}_{\mathsf{K}_{p,1}}(\mb{G}_1,\mb{X}_1)\times_{\Spec \mc{O}_{E_1}} \ms{S}_{\mathsf{K}_p}(\mb{G},\mb{X})_{\mc{O}_{E_1}}
\end{equation*}
determines an integral model of $\Sh_{\eK_{p,1}\times\eK_p}(\mb{G}_1 \times \mb{G}, \mb{X}_1\times\mb{X})$ which satisfies Conjecture \ref{conj:PR}. Then by \cite[Theorem 4.3.1]{PR2021}, the graph of $\alpha$ determines a unique morphism of integral models $\ms{S}_{\eK_{p,1}}(\mb{G}_1, \mb{X}_1) \to \ms{S}''$ which extends the graph of $\alpha$ on the generic fiber. We obtain the desired morphism by composition with the projection $\ms{S}'' \to \ms{S}_{\eK_p}(\mb{G},\mb{X})_{E_1}$.
\end{proof}

\begin{rmk} We remark that Theorem \ref{thm:functoriality} is also proved in \cite{vHS}, by establishing a modified version of Conjecture \ref{conj:PR} using $\mc{G}^\ad$ in place of $\mc{G}^c$; see Remark \ref{rem:vHS}. 
\end{rmk}


\section{The proof of the main result}
This final section is devoted to the proof of our main theorem (Theorem \ref{thm:main}). This will require 
the development of multiple ancillary concepts and results that we hope will be useful in other contexts.  We fix notation as at the beginning of \S\ref{s:PR-conj}.

\subsection{An auxiliary Kisin--Pappas--Zhou model}\label{ss:auxiliary-Shim-datum}

A useful heuristic for Shimura data of abelian type is that they are `spanned' by two (largely orthogonal) extremes: Shimura data of Hodge type and Shimura data of toral type. Finding a precise formalism to realize this heuristic, especially when their integral models are part of the picture, is somewhat subtle. That said, this was achieved at hyperspecial level by Lovering in \cite{Lovering}. The goal of this subsection is to extend this formalism to the Kisin--Pappas--Zhou models of \S\ref{s:Kisin-Zhou-models}.

\medskip

\subsubsection{The Lovering construction}

We begin by recalling the underlying Shimura datum as defined in \cite[\S4.6]{Lovering}. Let us fix an extension $\mb{E}'$ of $\mb{E}$. Define then the group
\begin{equation*}
\mb{B}_{\mb{E}'}\defeq \mb{G}\times_{\delta,\mb{G}^\ab,r}\mb{T}_{\mb{E}'},
\end{equation*}
where $\mb{T}_{\mb{E}'}\defeq \Res_{\mb{E}'/\Q}\,\, \bb{G}_{m,\mb{E}'}$, $\delta$ is the canonical map $\mb{G}\to\mb{G}^\ab$, and $r$ is obtained as the composition
\begin{equation*}
    \Res_{\mb{E}'/\Q}\,\,\bb{G}_{m,\mb{E}'}\xrightarrow{\Res_{\mb{E}'/\Q}\,\,\mu_h}\Res_{\mb{E}'/\Q}\,\mb{G}_{\mb{E}'}^\ab\xrightarrow{N}\mb{G}^\ab.
\end{equation*}
Here $\mu_h\colon \bb{G}_{m,\mb{E}'}\to \mb{G}^\ab_{\mb{E}'}$ denotes the composition $\delta\circ \mu$ for any element $\mu$ of $\mbbmu_h$, which is defined over $\mb{E}'$ by the definition of reflex field and independent of choice since $\mb{G}^\ab_{\bb{C}}$ is abelian, and $N$ denotes the natural norm map. Observe that $\mb{B}_{\mb{E}'}^\der=\mb{G}^\der$, so we have a short exact sequence
\begin{equation*}
    1\to \mb{G}^\der\to \mb{B}_{\mb{E}'}\to \mb{T}_{\mb{E}'}\to 1.
\end{equation*}
Furthermore, we have an exact sequence
\begin{equation*}
    1\to \mb{B}_{\mb{E}'}\to \mb{G}\times\mb{T}_{\mb{E}'}\xrightarrow{(x,y)\mapsto\delta^{-1}(x)r(y)} \mb{G}^\ab\to 1,
\end{equation*}
and thus $\mb{B}_{\mb{E}'}$ is reductive (see \cite[Corollary 21.53]{Milne2017}).

Let $\{\tau\}$ be the set of archimedean places of $\mb{E}'$, and let $\{[\tau]\}$ denote the set of equivalences classes under the relation $\tau\sim \ov{\tau}$. Let $\tau_0\colon \mb{E}'\to\bb{C}$ be the natural inclusion which defines an element $[\tau_0]$ of $\{[\tau]\}$. Then, we have that $\mb{E}'\otimes_\Q\bb{R}$ is isomorphic to $\prod_{[\tau]}\mb{E}'_{[\tau]}$, where $\mb{E}'_{[\tau]}$ is $\bb{R}$ or $\bb{C}$ if $\tau$ is real or complex, respectively. We define 

\begin{equation*}
    h_\mb{T}\colon \bb{S}\to (\mb{T}_{\mb{E}'})_\bb{R}=\prod_{[\tau]} \Res_{\mb{E}'_\tau/\bb{R}}\,\,\bb{G}_{m,\mb{E}'_{[\tau]}},
\end{equation*}
to be the unique morphism having trivial projection $h_{\mb{T}_{\mb{E}'},[\tau]}$ to every $[\tau]$ not equal to $[\tau_0]$, and satisfying
\begin{equation*}
    h_{\mb{T}_{\mb{E}'},[\tau_0]}(z)=\begin{cases}z\ov{z} & \mbox{if}\quad \tau_0\text{ is real},\\ z & \mbox{if} \quad \tau_0\text{ is complex},\end{cases}
\end{equation*}
for $z$ in $\bb{S}(\bb{R})=\bb{C}^\times$.

One may show (see \cite[Proposition 4.6.5]{Lovering}) that for any element $h$ of $\mb{X}$, the pair
\begin{equation}\label{eq:B-maps-to-prod}
    (h,h_{\mb{T}_{\mb{E}'}})\colon \bb{S}\to \mb{G}_{\bb{R}}\times (\mb{T}_{\mb{E}'})_{\bb{R}},
\end{equation}
factorizes through $(\mb{B}_{\mb{E}'})_{\bb{R}}$, that the resulting $\mb{B}_{\mb{E}'}(\bb{R})$-conjugacy class $\mb{Y}_{\mb{E}'}$ is independent of the choice of $h$, and that the pair $(\mb{B}_{\mb{E}'},\mb{Y}_{\mb{E}'})$ is a Shimura datum with reflex field $\mb{E}'$. Moreover, there is a commutative diagram of Shimura data:
\begin{equation}\label{eq:comm-diag-Lovering-data}
  \begin{tikzcd}
	{(\mb{B}_{\mb{E}'},\mb{Y}_{\mb{E}'})} & {(\mb{T}_{\mb{E}'},\{h_{\mb{T}}\})} \\
	{(\mb{G},\mb{X})} & {(\mb{G}^\ab,\mb{X}^\ab),}
	\arrow[from=1-1, to=1-2]
	\arrow[from=1-1, to=2-1]
	\arrow[from=2-1, to=2-2]
	\arrow[from=1-2, to=2-2]
\end{tikzcd}
\end{equation}
where $\mb{B}_{\mb{E}'}\to \mb{G}$, $\mb{B}_{\mb{E}'}\to \mb{T}_{\mb{E}'}$, and $\mb{G}\to\mb{G}^\ab$ are the natural maps, and the map $\mb{T}_{\mb{E}'}\to \mb{G}^\ab$ is the norm map.  We call $(\mb{B}_{\mb{E}'},\mb{Y}_{\mb{E}'})$ the \emph{Lovering construction} applied to $(\mb{G},\mb{X})$. When $\mb{E}'=\mb{E}$, we simplify $\mb{T}_\mb{E}$, $\mb{B}_\mb{E}$, and $\mb{Y}_\mb{E}$ to $\mb{T}$, $\mb{B}$, and $\mb{Y}$, respectively.


We now record the following pleasant functoriality property of the Lovering construction.

\begin{prop}[{\cite[Lemma 4.6.6]{Lovering}}]\label{prop:Lovering-construction-functoriality} Let $(\mb{G}_1,\mb{X}_1)$ be another Shimura datum and let $\mb{E}'$ be a field containing the compositum $\mb{E}_1\mb{E}$. Suppose that $f\colon \mb{G}_1^\der\to\mb{G}^\der$ is an isogeny which induces an isomorphism $f^\mr{ad}\colon (\mb{G}_1^\ad,\mb{X}_1^\ad)\isomto (\mb{G}^\ad,\mb{X}^\ad)$. Then there exists a unique map $g\colon \mb{B}_{1,\mb{E}'}\to\mb{B}$ filling in the following diagram
\begin{equation*}
   \begin{tikzcd}
	1 & {\mb{G}_1^\der} & {\mb{B}_{1,\mb{E}'}} & {\mb{T}_{1,\mb{E}'}} & 1 \\
	1 & {\mb{G}^\der} & {\mb{B}} & {\mb{T}} & 1,
	\arrow[from=1-1, to=1-2]
	\arrow[from=1-2, to=1-3]
	\arrow[from=1-3, to=1-4]
	\arrow[from=1-4, to=1-5]
	\arrow[from=2-1, to=2-2]
	\arrow[from=2-2, to=2-3]
	\arrow[from=2-3, to=2-4]
	\arrow[from=2-4, to=2-5]
	\arrow["f"', from=1-2, to=2-2]
	\arrow["g"', from=1-3, to=2-3]
	\arrow["N"', from=1-4, to=2-4]
\end{tikzcd}
\end{equation*}
where $N$ is the natural norm map. Moreover, $g$ induces a morphism of Shimura data $(\mb{B}_{1,\mb{E}'},\mb{Y}_{1,\mb{E}'})\to (\mb{B},\mb{Y})$. 
\end{prop}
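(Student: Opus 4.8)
The strategy is to construct $g$ by exhibiting $\mb{B}_{1,\mb{E}'}$ as a fiber product relative to a datum built from $\mb{G}_1^\der$ and $\mb{T}_{1,\mb{E}'}$, and then using the universal property of $\mb{B}=\mb{B}_\mb{E}$ as a fiber product. Concretely, recall $\mb{B}_{\mb{E}'}=\mb{G}\times_{\delta,\mb{G}^\ab,r}\mb{T}_{\mb{E}'}$, so to produce $g\colon \mb{B}_{1,\mb{E}'}\to\mb{B}$ it suffices to produce a pair of maps $\mb{B}_{1,\mb{E}'}\to\mb{G}$ and $\mb{B}_{1,\mb{E}'}\to\mb{T}_{\mb{E}'}$ agreeing after composing with $\delta\colon\mb{G}\to\mb{G}^\ab$ and $r\colon\mb{T}_{\mb{E}'}\to\mb{G}^\ab$ respectively. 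For the first map, since $f\colon\mb{G}_1^\der\to\mb{G}^\der$ is an isogeny inducing $f^\mr{ad}\colon(\mb{G}_1^\ad,\mb{X}_1^\ad)\isomto(\mb{G}^\ad,\mb{X}^\ad)$, I will invoke the standard fact (e.g. \cite[\S2.1]{DeligneModulaire}, cf.\@ \cite{Lovering}) that there is a canonical extension of $f$ to a morphism $\mb{B}_{1,\mb{E}'}\to\mb{G}$ — indeed this is essentially how the source $\mb{B}_{1,\mb{E}'}$ is designed; one uses that $\mb{B}_{1,\mb{E}'}\to\mb{T}_{1,\mb{E}'}$ has kernel $\mb{G}_1^\der$ and that $\mb{B}_{1,\mb{E}'}/\mb{G}_1^\der\to\mb{G}_1^\ab$ lands in the "right" $\mb{E}'$-rational datum to glue with $\mb{G}^\der\hookrightarrow\mb{G}$. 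For the second map, the norm map $N\colon\mb{T}_{1,\mb{E}'}=\Res_{\mb{E}'/\Q}\bb{G}_{m,\mb{E}'}\to\mb{T}_{\mb{E}'}=\Res_{\mb{E}'/\Q}\bb{G}_{m,\mb{E}'}$ is simply the identity (both tori being $\Res_{\mb{E}'/\Q}\bb{G}_{m,\mb{E}'}$), composed with $\mb{B}_{1,\mb{E}'}\to\mb{T}_{1,\mb{E}'}$; the labeling "$N$" in the diagram refers to the induced map on the $\mb{G}^\ab$-factor, and one checks it is compatible.

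The key compatibility to verify is that the two composites $\mb{B}_{1,\mb{E}'}\to\mb{G}\xrightarrow{\delta}\mb{G}^\ab$ and $\mb{B}_{1,\mb{E}'}\to\mb{T}_{\mb{E}'}\xrightarrow{r}\mb{G}^\ab$ coincide. This is where the definition of $r$ as the composition $\Res_{\mb{E}'/\Q}\bb{G}_{m,\mb{E}'}\xrightarrow{\Res\mu_h}\Res_{\mb{E}'/\Q}\mb{G}_{\mb{E}'}^\ab\xrightarrow{N}\mb{G}^\ab$ enters: one has $\mu_{h_1}=\delta_1\circ\mu$ for $\mu\in\mbbmu_{h_1}$, and since $f^\ad$ is an isomorphism of adjoint data, $f$ identifies the $\mbbmu_h$-data on the abelianizations after applying the norm, so the diagram commutes. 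I will track this through carefully on the level of cocharacter lattices / character groups, where it becomes a finite computation. Uniqueness of $g$ is immediate: $\mb{B}_{1,\mb{E}'}$ is a fiber product, $g$ is forced on the $\mb{G}$-factor by $f$ being an isogeny (hence $g$ determined on $\mb{G}_1^\der$, and $\mb{B}_{1,\mb{E}'}/\mb{G}_1^\der=\mb{T}_{1,\mb{E}'}$ maps to $\mb{T}_{\mb{E}'}=\mb{B}/\mb{G}^\der$ by $N$), so the whole map is pinned down.

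Finally, for the claim that $g$ induces a morphism of Shimura data $(\mb{B}_{1,\mb{E}'},\mb{Y}_{1,\mb{E}'})\to(\mb{B}_\mb{E},\mb{Y})$: by definition of $\mb{Y}_{\mb{E}'}$ it suffices to check that for one (equivalently any) $h_1\in\mb{X}_1$, the composite $g_\bb{R}\circ(h_1,h_{\mb{T}_{1,\mb{E}'}})$ lies in $\mb{Y}$. But by construction $g$ is compatible with the projections to $\mb{G}$ and to $\mb{T}_{\mb{E}'}$, so this composite projects to $f^\ad$-translate of $h_1$ in $\mb{X}$ (which lies in $\mb{X}$ since $f^\mr{ad}$ is an isomorphism of adjoint data, after possibly adjusting by $\mb{G}^\ad(\bb{R})^+$ — here one invokes that $\mb{X}_1^\ad\isomto\mb{X}^\ad$, so the image is in the correct $\mb{G}(\bb{R})$-conjugacy class) and to $h_{\mb{T}_{\mb{E}'}}$ in $\{h_{\mb{T}}\}$; since $\mb{Y}$ is precisely the conjugacy class of $(h,h_{\mb{T}_{\mb{E}'}})$ for $h\in\mb{X}$, we are done.

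\textbf{Main obstacle.} The genuinely delicate point is the construction of the canonical map $\mb{B}_{1,\mb{E}'}\to\mb{G}$ extending $f\colon\mb{G}_1^\der\to\mb{G}^\der$ — i.e.\@ showing that the obstruction to extending lands in exactly the subgroup $\mb{B}_{1,\mb{E}'}$ of $\mb{G}_1\times\mb{T}_{1,\mb{E}'}$ cut out by the fiber-product condition. This is really an unwinding of the Deligne-style construction (\cite[\S2.0--2.1]{DeligneModulaire}) of reductive groups with prescribed derived group and $\mbbmu$-data, and I expect the cleanest route is to cite \cite[Lemma 4.6.6]{Lovering} directly for the group-theoretic statement and then only verify the Shimura-datum compatibility, which is the routine part. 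If a self-contained argument is wanted, one proceeds via the character-group description: $X^\ast(\mb{B}_{\mb{E}'})$ fits in an explicit pushout/pullback of lattices involving $X^\ast(\mb{G})$, $X^\ast(\mb{T}_{\mb{E}'})$ and $X^\ast(\mb{G}^\ab)$, and the map $g$ is read off from the dual diagram — but this is precisely the kind of bookkeeping best left to the cited reference.
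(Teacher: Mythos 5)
The paper does not prove this proposition; it cites \cite[Lemma 4.6.6]{Lovering} directly, so there is no internal argument to compare against. Your proposal, however, has a genuine gap and a misreading that would both need to be fixed. The gap: your plan is to apply the universal property of the fiber product $\mb{B}=\mb{G}\times_{\mb{G}^\ab}\mb{T}$, and the first input it requires is a homomorphism $\mb{B}_{1,\mb{E}'}\to\mb{G}$ extending $f$ on derived groups. But there is no a priori homomorphism $\mb{G}_1\to\mb{G}$ (only $f\colon\mb{G}_1^\der\to\mb{G}^\der$), so such a map is not available from the projection $\mb{B}_{1,\mb{E}'}\to\mb{G}_1$, and constructing it is exactly the substantive content of the proposition. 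You acknowledge this and propose to cite \cite[Lemma 4.6.6]{Lovering} to supply it, but that is the very statement under proof, so the argument is circular. A self-contained proof would have to produce $g$ directly --- via the description of $\mb{B}$ as a quotient of $\mb{G}^\der\times Z(\mb{B})$ by an antidiagonal $Z(\mb{G}^\der)$, or via the character-lattice bookkeeping you gesture at --- and then recover $\mb{B}_{1,\mb{E}'}\to\mb{G}$ as a component of $g$, rather than the other way around.

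The misreading: you take the norm map's target to be $\mb{T}_{\mb{E}'}$ and conclude $N$ is the identity. By the paper's conventions, $\mb{T}=\mb{T}_{\mb{E}}=\Res_{\mb{E}/\Q}\bb{G}_{m,\mb{E}}$ (the unprimed reflex field), so $N\colon\mb{T}_{1,\mb{E}'}=\Res_{\mb{E}'/\Q}\bb{G}_m\to\mb{T}_{\mb{E}}$ is the genuine relative norm for $\mb{E}'/\mb{E}$, and the fiber product defining $\mb{B}$ is over $\mb{T}_\mb{E}$, not $\mb{T}_{\mb{E}'}$. This is not cosmetic: verifying that the two composites into $\mb{G}^\ab$ agree is precisely where the definition of $r$ as $N\circ\Res\mu_h$ and the hypothesis that $f^\ad$ is an isomorphism of adjoint data must be brought to bear, and your sketch elides exactly this comparison. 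The closing check that $g$ carries $\mb{Y}_{1,\mb{E}'}$ into $\mb{Y}$ is the routine part and is handled adequately, but the existence and uniqueness of $g$ are not established by your argument.
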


\subsubsection{Relationship to Kisin--Pappas--Zhou models}

We now explain how the Lovering construction fits into the theory of Kisin--Pappas--Zhou models.

To begin, let us fix a very good parahoric Shimura of Hodge type $\mf{d}=(\mb{G}_1,\mb{X}_1,\mc{G}_1)$ well-adapted to $(\mb{G},\mb{X},\mc{G})$, and let us explicitly denote by $f$ the implicitly defined isogeny $\mb{G}_1^\der\to\mb{G}^\der$. We further set $\mb{E}'=\mb{E}\mb{E}_1$.

Consider the Lovering constructions $(\mb{B},\mb{Y})$ and $(\mb{B}_{1,\mb{E}'},\mb{Y}_{1,\mb{E}'})$. For notational simplicitly we shorten $(\mb{B}_{1,\mb{E}'},\mb{Y}_{1,\mb{E}'})$ to $(\mb{B}_1',\mb{Y}_1')$, and similarly for other attendant notation. Observe that both of these Shimura data are of abelian type and, in fact, $(\mb{G}_1,\mb{X}_1)$ is an associated very good parahoric Shimura datum of Hodge type adapted to both. Indeed, this follows since
\begin{equation}\label{eq:B-der-ident}
    {\mb{B}'_1}^{\der}=\mb{G}_1^\der \ \ \text{ and } \ \  \mb{B}^\der=\mb{G}^\der,
\end{equation}
and so we may consider the isogenies
\begin{equation*}
    \mb{G}_1^\der\xrightarrow{\mr{id}}{\mb{B}'_1}^{\der} \ \ \text{ and } \ \  \mb{G}_1^\der\xrightarrow{f}\mb{B}^\der,
\end{equation*}
which produce the desired isomorphisms on adjoint Shimura data (cf.\@ Proposition \ref{prop:ad-iso-equiv}).

We would like to upgrade this setup to the level of parahoric Shimura data. To this end, observe that by \eqref{eq:B-der-ident} and Lemma \ref{lemma:buildings-ad-isom} one deduces that there are natural identifications
\begin{equation*}
    \ms{B}(B,\bb{Q}_p)\isomto \ms{B}(G\times T,\bb{\Q}_p),\ \ \text{ and } \ \ \ms{B}(B_1',\bb{Q}_p)\isomto \ms{B}(G_1\times T_1',\bb{\Q}_p).
\end{equation*}
Thus, we may associate parahoric models $\mc{B}$ and $\mc{B}_1'$ to our choice of parahoric models $\mc{G}$ and $\mc{G}_1$, parahoric models, respectively. In fact, it follows from Proposition \ref{prop:fiber-product-parahoric}
\begin{equation*}
    \mc{B}'=\mc{G}\times_{\mc{G}^\ab}\mc{T},\ \ \text{ and } \ \  \mc{B}_1'=\mc{G}_1\times_{\mc{G}_1^\ab}\mc{T}_1',
\end{equation*}
where $\mc{T}$ and $\mc{T}_1'$ are the unique parahoric models of $T$ and $T_1'$, respectively.

Now, applying \eqref{eq:comm-diag-Lovering-data} and Proposition \ref{prop:Lovering-construction-functoriality} one produces a diagram of Shimura data
\begin{equation*}
 \begin{tikzcd}
	{(\mb{T}_1',\{h_{\mb{T}_1}\})} & {(\mb{G}_1^\ab,\mb{X}_1^\ab)} \\
	{(\mb{B}_1',\mb{Y}'_1)} & {(\mb{G}_1,\mb{X}_1)} \\
	{(\mb{B},\mb{Y})} & {(\mb{G},\mb{X}).}
	\arrow[from=1-1, to=1-2]
	\arrow[from=2-1, to=2-2]
	\arrow[from=2-1, to=1-1]
	\arrow[from=2-2, to=1-2]
	\arrow[from=2-1, to=3-1]
	\arrow[from=3-1, to=3-2]
\end{tikzcd}
\end{equation*}
In particular, we obtain a diagram of $\mb{E}'$-schemes
\begin{equation}\label{eq:big-Sh-diag}
  \begin{tikzcd}
	{\Sh_{\mathsf{M}'_{p,1}}(\mb{T}_1',\{h_{\mb{T}'_1}\})_{\mb{E}'}} & {\Sh_{\mathsf{N}_{p,1}}(\mb{G}_1^\mr{ab},\mb{X}_1^\mr{ab})_{\mb{E}'}} \\
	{\Sh_{\mathsf{L}'_{p,1}}(\mb{B}_1',\mb{Y}'_1)_{\mb{E}'}} & {\Sh_{\mathsf{K}_{p,1}}(\mb{G}_1,\mb{X}_1)_{\mb{E}'}} \\
	{\Sh_{\mathsf{L}_{p}}(\mb{B},\mb{Y})_{\mb{E}'}} & {\Sh_{\mathsf{K}_{p}}(\mb{G},\mb{X})_{\mb{E}'}}
	\arrow[from=2-1, to=2-2]
	\arrow[from=2-1, to=3-1]
	\arrow[from=3-1, to=3-2]
	\arrow[from=2-2, to=1-2]
	\arrow[from=2-1, to=1-1]
	\arrow[from=1-1, to=1-2]
\end{tikzcd}
\end{equation}
each map equivariant for the natural map of groups. Here, we write $\mathsf{L}_p=\mc{B}(\Z_p)$ and $\mathsf{L}'_{p,1}=\mc{B}_1'(\Z_p)$, $\mathsf{M}'_{p,1}=\mc{T}_1'(\Z_p)$, and $\mathsf{N}_{p,1}=\mc{G}_1^\ab(\Z_p)$. We then have the following relationship between the Lovering construction and Kisin--Pappas--Zhou models.

\begin{prop}\label{prop:big-integral-model-diagram} There exists a unique diagram
\begin{equation*}
\begin{tikzcd}
	{\ms{S}_{\mathsf{M}'_{p,1}}(\mb{T}_1',\{h_{\mb{T}'_1}\})_{\mc{O}_E}} & {\ms{S}_{\mathsf{N}_{p,1}}(\mb{G}_1^\mr{ab},\mb{X}_1^\mr{ab})_{\mc{O}_E}} \\
	{\ms{S}^\mf{d}_{\mathsf{L}'_{p,1}}(\mb{B}'_1,\mb{Y}'_1)_{\mc{O}_E}} & {\ms{S}^\vn_{\mathsf{K}_{p,1}}(\mb{G}_1,\mb{X}_1)_{\mc{O}_E}} \\
	{\ms{S}^\mf{d}_{\mathsf{L}_{p}}(\mb{B},\mb{Y})} & {\ms{S}^\mf{d}_{\mathsf{K}_{p}}(\mb{G},\mb{X})}
	\arrow[from=2-1, to=2-2]
	\arrow[from=2-1, to=3-1]
	\arrow[from=3-1, to=3-2]
	\arrow[from=2-2, to=1-2]
	\arrow[from=2-1, to=1-1]
	\arrow[from=1-1, to=1-2]
\end{tikzcd}
\end{equation*}
whose maps are equivariant for the appropriate maps of groups, and which models \eqref{eq:big-Sh-diag} pulled back to the completion of $\mb{E}'$ at a prime above $v$. 
\end{prop}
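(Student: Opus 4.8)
The plan is to construct the six arrows of the diagram one at a time, recognizing each edge of the underlying diagram of Shimura data as either an \emph{ad-isomorphism} or an \emph{abelianization} and invoking the corresponding functoriality result (Proposition~\ref{prop:funct-for-ad-isom} or Proposition~\ref{prop:funct-for-ab}); commutativity and uniqueness will then come for free from the density of generic fibres.

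First I would check that every vertex is a well-defined Kisin--Pappas--Zhou model. The Shimura data $(\mb{B}_1',\mb{Y}_1')$ and $(\mb{B},\mb{Y})$ are acceptable of abelian type, since their adjoint groups are $\mb{G}_1^\ad\cong\mb{G}^\ad$, and $\mf{d}=(\mb{G}_1,\mb{X}_1,\mc{G}_1)$ is well-adapted to both $(\mb{B}_1',\mb{Y}_1',\mc{B}_1')$ and $(\mb{B},\mb{Y},\mc{B})$: the relevant isogenies are $\mb{G}_1^\der\xrightarrow{\id}(\mb{B}_1')^\der$ and $\mb{G}_1^\der\xrightarrow{f}\mb{B}^\der=\mb{G}^\der$, while Proposition~\ref{prop:fiber-product-parahoric} identifies $\mc{B}_1'=\mc{G}_1\times_{\mc{G}_1^\ab}\mc{T}_1'$ and $\mc{B}=\mc{G}\times_{\mc{G}^\ab}\mc{T}$ with the parahoric models attached to the building points corresponding, via well-adaptedness of $\mf{d}$ to $(\mb{G},\mb{X},\mc{G})$, to $x_1$ and $x$; the completely-split condition on $\mb{E}'$ is automatic here, as the reflex fields of $(\mb{B}_1',\mb{Y}_1')$ and $(\mb{B},\mb{Y})$ are already $\mb{E}'$. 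Thus $\ms{S}^\mf{d}_{\mathsf{L}'_{p,1}}(\mb{B}_1',\mb{Y}_1')$ and $\ms{S}^\mf{d}_{\mathsf{L}_p}(\mb{B},\mb{Y})$ are defined; and $(\mb{T}_1',\{h_{\mb{T}_1}\})$, $(\mb{G}_1^\ab,\mb{X}_1^\ab)$ being of toral type, their models are independent of the auxiliary datum by Proposition~\ref{prop:toral-type}.

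Next I would treat the edges. Since $(\mb{B}_1')^\der=\mb{G}_1^\der$ and $\mb{B}^\der=\mb{G}^\der$, one has $(\mb{B}_1')^\ab=\mb{T}_1'$ and $\mb{B}^\ab=\mb{T}$, and the maps $(\mb{B}_1',\mb{Y}_1')\to(\mb{T}_1',\{h_{\mb{T}_1}\})$ and $(\mb{G}_1,\mb{X}_1)\to(\mb{G}_1^\ab,\mb{X}_1^\ab)$ are precisely the abelianizations; Proposition~\ref{prop:funct-for-ab} then yields the two arrows into the toral models. The remaining four maps of Shimura data---the projections $(\mb{B}_1',\mb{Y}_1')\to(\mb{G}_1,\mb{X}_1)$ and $(\mb{B},\mb{Y})\to(\mb{G},\mb{X})$ off the fibre products, the map $g\colon(\mb{B}_1',\mb{Y}_1')\to(\mb{B},\mb{Y})$ of Proposition~\ref{prop:Lovering-construction-functoriality}, and the norm map $(\mb{T}_1',\{h_{\mb{T}_1}\})\to(\mb{G}_1^\ab,\mb{X}_1^\ab)$---are ad-isomorphisms of parahoric Shimura data: for the projections and for $g$ because the induced map on derived groups is $\id$, $\id$, resp.\ $f$, all central isogenies (Proposition~\ref{prop:ad-iso-equiv}), and for the norm map because any homomorphism of tori is an ad-isomorphism. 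Proposition~\ref{prop:funct-for-ad-isom} then provides the remaining four arrows. Wherever the source or target is $(\mb{G}_1,\mb{X}_1)$, I would use Proposition~\ref{prop:HT-basic-model-comparison} to replace $\ms{S}^\mf{d}_{\mathsf{K}_{p,1}}(\mb{G}_1,\mb{X}_1)$ by $\ms{S}^\vn_{\mathsf{K}_{p,1}}(\mb{G}_1,\mb{X}_1)$. After base change to the completion $\mc{O}_E$ of $\mb{E}'$ at a prime above $v$ (equal to $\mc{O}_{\mb{E}_v}$ by the completely-split hypothesis), this produces all six arrows, each equivariant for the relevant map of groups and modeling the corresponding arrow of \eqref{eq:big-Sh-diag}.

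Finally, commutativity and uniqueness are formal. Every scheme in the diagram is flat over $\mc{O}_E$ with schematically dense generic fibre (being normal and flat, resp.\ finite and flat, over $\mc{O}_E$), and every target is quasi-projective, hence separated, over $\mc{O}_E$; so a morphism between any two of them is determined by its restriction to the generic fibre (\stacks{01RH}). Since \eqref{eq:big-Sh-diag} commutes and each of its arrows is the prescribed generic fibre, the two composites around any square of the integral-model diagram agree generically, hence agree, and any other diagram of arrows modeling \eqref{eq:big-Sh-diag} coincides with the one constructed. I expect the only genuine content to lie in the first step---keeping track of the well-adaptedness of $\mf{d}$ to the auxiliary Lovering data and of the matching of building points---but by Proposition~\ref{prop:fiber-product-parahoric} this is essentially already contained in the identification of $\mc{B}$ and $\mc{B}_1'$ recorded in the construction above; everything else is bookkeeping together with the density argument.
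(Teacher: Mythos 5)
Your proposal is correct and follows essentially the same strategy as the paper: construct each arrow by recognizing the underlying map of Shimura data as an ad-isomorphism or an abelianization and invoking Proposition \ref{prop:funct-for-ad-isom} or Proposition \ref{prop:funct-for-ab}, then get commutativity and uniqueness from flatness/separatedness and density of the generic fibre. The one place you diverge is the pair of arrows out of $\ms{S}^\mf{d}_{\mathsf{L}'_{p,1}}(\mb{B}'_1,\mb{Y}'_1)$: the paper produces both at once by viewing $\mb{B}_1'\to\mb{G}_1\times\mb{T}_1'$ as an ad-isomorphism, applying Proposition \ref{prop:funct-for-ad-isom} to land in the product model, and then splitting via Proposition \ref{prop:KPZ-for-products} (together with Proposition \ref{prop:HT-basic-model-comparison}); you instead build the two projections separately, treating $\mb{B}_1'\to\mb{G}_1$ as an ad-isomorphism and $\mb{B}_1'\to\mb{T}_1'$ as the abelianization map (legitimate, since $(\mb{B}_1')^\der=\mb{G}_1^\der$ forces $(\mb{B}_1')^\ab=\mb{T}_1'$), which is an equally valid and arguably slightly more economical route that avoids Proposition \ref{prop:KPZ-for-products} for this step.
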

\begin{proof} The existence of an equivariant map 
\begin{equation*}
    {\ms{S}_{\mathsf{M}'_{p,1}}(\mb{T}'_1,\{h_{\mb{T}'_1}\})_{\mc{O}_E}}\to {\ms{S}_{\mathsf{N}_{p,1}}(\mb{G}_1^\mr{ab},\mb{X}_1^\mr{ab})_{\mc{O}_E}}
\end{equation*}
modeling the map on the generic fiber follows from Proposition \ref{prop:toral-type}, while the existence of the equivariant map
\begin{equation*}
    {\ms{S}^\vn_{\mathsf{K}_{p,1}}(\mb{G}_1,\mb{X}_1)_{\mc{O}_E}}\to {\ms{S}_{\mathsf{N}_{p,1}}(\mb{G}_1^\mr{ab},\mb{X}_1^\mr{ab})_{\mc{O}_E}}
\end{equation*}
modeling the map on the generic fiber follows from Proposition \ref{prop:funct-for-ab}. To construct the equivariant maps 
\begin{equation*}
    {\ms{S}^\mf{d}_{\mathsf{L}'_{p,1}}(\mb{B}'_1,\mb{Y}'_1)_{\mc{O}_E}} \to  {\ms{S}^\vn_{\mathsf{K}_{p,1}}(\mb{G}_1,\mb{X}_1)_{\mc{O}_E}},\ \text{ and } \ {\ms{S}^\mf{d}_{\mathsf{L}'_{p,1}}(\mb{B}'_1,\mb{Y}'_1)_{\mc{O}_E}}\to {\ms{S}_{\mathsf{M}'_{p,1}}(\mb{T}'_1,\{h_{\mb{T}'_1}\})_{\mc{O}_E}},
\end{equation*}
modeling the maps on the generic fiber, it suffices by Proposition \ref{prop:KPZ-for-products} and Proposition \ref{prop:HT-basic-model-comparison} to construct a map 
\begin{equation*}
    {\ms{S}^\mf{d}_{\mathsf{L}'_{p,1}}(\mb{B}'_1,\mb{Y}'_1)_{\mc{O}_E}} \to \ms{S}_{\mathsf{K}_{p,1}\times \mathsf{M}'_{p,1}}(\mb{G}_1\times\mb{T}'_1,\mb{X}_1\times\{h_{\mb{T}'_1}\})_{\mc{O}_E}.
\end{equation*}
Because the map $\mb{B}'_1\to \mb{G}_1\times\mb{T}'_1$ is an isomorphism on derived subgroups, we see from Proposition \ref{prop:ad-iso-equiv} that it is an ad-isomorphism, and so the claim follows from Proposition \ref{prop:funct-for-ad-isom}. The fact that this constructed square commutes may be checked on the generic fiber, where it is true by construction. 

The existence of an equivariant map 
\begin{equation*}
    {\ms{S}^\mf{d}_{\mathsf{L}'_{p,1}}(\mb{B}'_1,\mb{Y}'_1)_{\mc{O}_E}}\to {\ms{S}^\mf{d}_{\mathsf{L}_{p}}(\mb{B},\mb{Y})}
\end{equation*}
follows from Proposition \ref{prop:funct-for-ad-isom} as the induced map ${\mb{B}'_1}^{\der}\to\mb{B}^\der$ is identified with the isogeny $f\colon \mb{G}_1^\der\to\mb{G}$ and so Proposition \ref{prop:ad-iso-equiv} applies. We similarly deduce the existence of a map
\begin{equation*}
    {\ms{S}^\mf{d}_{\mathsf{L}_{p}}(\mb{B},\mb{Y})}\to {\ms{S}^\mf{d}_{\mathsf{K}_{p}}(\mb{G},\mb{X})},
\end{equation*}
using the fact that $\mb{B}^\der\to\mb{G}^\der$ is an isomorphism, along with Proposition \ref{prop:funct-for-ad-isom} and Proposition \ref{prop:ad-iso-equiv}.
\end{proof}

\subsection{Construction of the $\mc{G}^c$-shtuka}\label{s:construction-of-shtuka} In this section we establish the following theorem.

\begin{thm}\label{prop:construction-of-shtuka} There exists a unique $\mc{G}^c$-shtuka $\ms{P}_{\mathsf{K}_p}$ on $\ms{S}_{\mathsf{K}_p}^\mf{d}(\mb{G},\mb{X})$ bounded by $\mbbmu_h^c$, which is compatible with the $\mb{G}(\A_f^p)$-action and which models $\ms{P}_{\mathsf{K}_p,E}$.
\end{thm}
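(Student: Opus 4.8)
We prove uniqueness and existence separately; uniqueness will also deliver the $\mb{G}(\A_f^p)$-equivariance and the boundedness. For uniqueness, the essential point is that $\ms{S}_{\mathsf{K}_p}^\mf{d}(\mb{G},\mb{X})$ is normal and flat over $\mc{O}_E$ with schematically dense generic fibre, so that at every finite level $\eK^p$ the restriction functor from $\mc{G}^c$-shtukas on $\ms{S}_{\eK_p\eK^p}^\mf{d}(\mb{G},\mb{X})^{\lozenge/}$ to $\mc{G}^c$-shtukas on $\Sh_{\eK_p\eK^p}(\mb{G},\mb{X})_E^{\lozenge/}$ is fully faithful --- this is the input underlying Proposition \ref{prop:uniqueness}; cf.\@ \cite[\S2.7]{PR2021} and \cite{DvHKZ}. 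Passing to the limit over $\eK^p$, any extension of $\ms{P}_{\mathsf{K}_p,E}$ to a $\mc{G}^c$-shtuka on $\ms{S}_{\mathsf{K}_p}^\mf{d}(\mb{G},\mb{X})^{\lozenge/}$ is unique up to unique isomorphism; comparing, for $g$ in $\mb{G}(\A_f^p)$, the shtukas $g^\ast\ms{P}_{\mathsf{K}_p}$ and $\ms{P}_{\mathsf{K}_p}$, whose generic fibres are canonically identified by the compatibilities \eqref{eq:pullback-compatability}, then produces the $\mb{G}(\A_f^p)$-equivariance, and boundedness by $\mbbmu_h^c$ is automatic by \cite[Lemma 2.1]{DanielsToral}. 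It thus remains to construct $\ms{P}_{\mathsf{K}_p}$.

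The construction runs through the auxiliary data of \S\ref{ss:auxiliary-Shim-datum}: the Lovering constructions $(\mb{B},\mb{Y})$ and $(\mb{B}_1',\mb{Y}_1')$ attached to $(\mb{G},\mb{X})$ and $(\mb{G}_1,\mb{X}_1)$ with $\mb{E}'=\mb{E}\mb{E}_1$, their parahoric models $\mc{B}=\mc{G}\times_{\mc{G}^\ab}\mc{T}$ and $\mc{B}_1'=\mc{G}_1\times_{\mc{G}_1^\ab}\mc{T}_1'$, and the diagram of integral models of Proposition \ref{prop:big-integral-model-diagram}. The first, and deepest, ingredient is group-theoretic: the operation $(-)^c$ is compatible with the Lovering fibre product. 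Since $(\mb{G}_1,\mb{X}_1)$ is of Hodge type, the anti-cuspidal part $\mb{Z}_{\mb{G}_1,\mr{ac}}$ is trivial, hence $\mb{G}_1=\mb{G}_1^c$ and $\mb{G}_1^\ab=(\mb{G}_1^\ab)^c$ by Lemma \ref{lem:Gc-functoriality}; one then shows that the anti-cuspidal subtorus of $Z(\mb{B}_1')=Z(\mb{G}_1)\times_{\mb{G}_1^\ab}\mb{T}_1'$ projects trivially to $\mb{G}_1$ and isomorphically onto the anti-cuspidal part of $\mb{T}_1'$, giving an isomorphism $\mb{B}_1'^c\simeq\mb{G}_1\times_{\mb{G}_1^\ab}(\mb{T}_1')^c$. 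By Proposition \ref{prop:fiber-product-parahoric} this passes to parahoric models --- here $\mc{G}_1\to\mc{G}_1^\ab$ is faithfully flat with smooth connected-fibred kernel $\mc{G}_1^\der$ by Proposition \ref{prop:parahoric-adjoint}, using that $\mb{G}_1^\der$ is $R$-smooth since $(\mb{G}_1,\mb{X}_1)$ is very good --- so $\mc{B}_1'^c\simeq\mc{G}_1\times_{\mc{G}_1^\ab}(\mc{T}_1')^c$ compatibly with the projections, and this Cartesian square satisfies the hypotheses of Corollary \ref{cor:fiber-product-of-shtukas}.

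By Corollary \ref{cor:fiber-product-of-shtukas}, a $\mc{B}_1'^c$-shtuka on $\ms{S}^\mf{d}_{\mathsf{L}'_{p,1}}(\mb{B}_1',\mb{Y}_1')_{\mc{O}_E}$ is the same datum as a $\mc{G}_1$-shtuka, a $(\mc{T}_1')^c$-shtuka, and an isomorphism of their push-forwards to $\mc{G}_1^\ab$-shtukas. For the $\mc{G}_1$-shtuka we pull back the $\mc{G}_1^c=\mc{G}_1$-shtuka supplied by the Hodge-type case of the Pappas--Rapoport conjecture (Theorem \ref{thm:PR-conj-for-Hodge-type}) along the map $\ms{S}^\mf{d}_{\mathsf{L}'_{p,1}}(\mb{B}_1',\mb{Y}_1')\to\ms{S}^\vn_{\mathsf{K}_{p,1}}(\mb{G}_1,\mb{X}_1)$ of Proposition \ref{prop:big-integral-model-diagram}; for the $(\mc{T}_1')^c$-shtuka we pull back the shtuka supplied by the toral-type case (Theorem \ref{thm:PR-conj-for-toral-type}) along $\ms{S}^\mf{d}_{\mathsf{L}'_{p,1}}(\mb{B}_1',\mb{Y}_1')\to\ms{S}_{\mathsf{M}'_{p,1}}(\mb{T}_1',\{h_{\mb{T}_1'}\})$. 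Their two push-forwards to $\mc{G}_1^\ab$-shtukas on $\ms{S}^\mf{d}_{\mathsf{L}'_{p,1}}(\mb{B}_1',\mb{Y}_1')$ agree on the generic fibre: the generic-fibre shtuka of Proposition \ref{prop:generic-fiber-shtuka} is functorial in the Shimura datum, arising from the manifestly functorial pro-\'etale torsor $\bP_\eK$, and the square \eqref{eq:comm-diag-Lovering-data} commutes; by the full faithfulness above they therefore agree on the nose. This produces a $\mc{B}_1'^c$-shtuka on $\ms{S}^\mf{d}_{\mathsf{L}'_{p,1}}(\mb{B}_1',\mb{Y}_1')$ which, by uniqueness applied to $(\mb{B}_1',\mb{Y}_1')$, is equivariant for $\mb{B}_1'(\A_f^p)$ and recovers the tautological $\mc{B}_1'^c$-shtuka on the generic fibre.

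It remains to descend to $(\mb{G},\mb{X})$. The map $\mb{B}_1'\to\mb{B}$ is a central isogeny --- the torus factors $\mb{T}_1'$ and $\mb{T}$ coincide, and the map on derived groups is the isogeny $f$ --- and $\mb{B}\to\mb{G}$ is an ad-isomorphism, and both carry centres into centres, so they induce maps of parahoric models $\mc{B}_1'^c\to\mc{B}^c\to\mc{G}^c$; pushing the $\mc{B}_1'^c$-shtuka forward along the composite yields a $\mc{G}^c$-shtuka on $\ms{S}^\mf{d}_{\mathsf{L}'_{p,1}}(\mb{B}_1',\mb{Y}_1')$. Now both $\ms{S}^\mf{d}_{\mathsf{L}'_{p,1}}(\mb{B}_1',\mb{Y}_1')$ and $\ms{S}^\mf{d}_{\mathsf{K}_p}(\mb{G},\mb{X})$ are assembled, as in \S\ref{ss:KPZ-construction}, from the same connected component $\ms{S}^\vn_{\mathsf{K}_{p,1}}(\mb{G}_1,\mb{X}_1)^+$ --- note that $\ms{A}(\bm{\mc{B}}_1')^\circ=\ms{A}(\bm{\mc{G}}_1)^\circ$, both depending only on the shared derived group --- by a product with $\ms{A}(\bm{\mc{B}}_1')$, resp.\@ $\ms{A}(\bm{\mc{G}})$, a quotient by $\ms{A}(\bm{\mc{G}}_1)^\circ$ and by $\ms{E}_{\mb{E}'}(\bm{\mc{G}}_1)$, passage to a finite power, and base change along $\mc{O}_{(v)}'\to\mc{O}_E$; restricting the $\mc{G}^c$-shtuka to $\ms{S}^\vn_{\mathsf{K}_{p,1}}(\mb{G}_1,\mb{X}_1)^+$ and equipping it with the descent and equivariance data for each of these operations --- which, by full faithfulness, reduces to a check on the generic fibre, where it holds because $\ms{P}_{\mathsf{K}_p,E}$ is built there functorially from $\bP_\eK$ --- then yields $\ms{P}_{\mathsf{K}_p}$ on $\ms{S}^\mf{d}_{\mathsf{K}_p}(\mb{G},\mb{X})$, with generic fibre $\ms{P}_{\mathsf{K}_p,E}$, the $\mb{G}(\A_f^p)$-equivariance coming from uniqueness. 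The main obstacle is the group theory just invoked --- pinning down the anti-cuspidal subtori of $Z(\mb{B}_1')$, $Z(\mb{B})$ and $Z(\mb{G})$, the ensuing isomorphisms, and the induced maps $\mc{B}_1'^c\to\mc{B}^c\to\mc{G}^c$ of parahoric group schemes; this is exactly the subtle group theory that separates the present argument from the $\mc{G}^\ad$-variant of Remark \ref{rem:vHS} --- together with the bookkeeping required to carry the full-faithfulness-based descent and the various $\mb{G}(\A_f^p)$-, $\ms{A}(\bm{\mc{G}}_1)^\circ$-, $\ms{E}_{\mb{E}'}(\bm{\mc{G}}_1)$- and Galois-equivariances coherently through the whole tower.
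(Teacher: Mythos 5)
Your proposal follows essentially the same route as the paper: establish $\mc{B}_1'^c\simeq\mc{G}_1\times_{\mc{G}_1^\ab}{\mc{T}'_1}^c$ (this is Proposition~\ref{prop:Bc}), glue the Hodge-type and toral-type shtukas over $\ms{S}^\mf{d}_{\mathsf{L}'_{p,1}}(\mb{B}_1',\mb{Y}_1')$ via Corollary~\ref{cor:fiber-product-of-shtukas}, push forward along $\mc{B}_1'^c\to\mc{G}^c$, and descend to $\ms{S}^\mf{d}_{\mathsf{K}_p}(\mb{G},\mb{X})$ using the quotient-finite structure of Proposition~\ref{prop:funct-for-ad-isom} plus full faithfulness of restriction to the generic fibre (\cite[Corollary 2.7.10]{PR2021}). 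One small slip worth correcting: $\mb{B}_1'\to\mb{B}$ is generally \emph{not} a central isogeny, since the induced map on the Lovering tori is the norm $N\colon\mb{T}_{1,\mb{E}'}=\Res_{\mb{E}'/\Q}\bb{G}_m\to\mb{T}=\Res_{\mb{E}/\Q}\bb{G}_m$, which has positive-dimensional kernel whenever $\mb{E}'\supsetneq\mb{E}$ --- so ``the torus factors coincide'' is false. What is true, and all you need, is that $\mb{B}_1'\to\mb{B}$ induces the isogeny $f$ on derived groups, hence is an ad-isomorphism by Proposition~\ref{prop:ad-iso-equiv}; Lemma~\ref{lem:Gc-functoriality} then gives the map $\mc{B}_1'^c\to\mc{B}^c\to\mc{G}^c$. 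The final assembly paragraph is compressed relative to the paper, which descends first along the finite quotient $\Delta$ on a single connected component (over $\mc{O}_{E^\ur}$), then spreads to all components by $\ms{A}(\bm{\mc{G}})$-transitivity, then descends the level and the field --- but the underlying mechanism you invoke (equivariance plus full faithfulness plus \cite[Proposition 19.5.3]{SW2020}) is the same.
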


By \cite[Corollary 2.7.10]{PR2021}, there can be at most one shtuka $\ms{P}_{\eK_p}$ modeling $\ms{P}_{\eK_p, E}$. Moreover, such a shtuka is automatically bounded by $\mbbmu_h^c$ by \cite[Lemma 2.1]{DanielsToral}. Thus, it suffices to construct a $\mc{G}^c$-shtuka $\ms{P}_{\mathsf{K}_p}$ which is compatible with the $\mb{G}(\A_f^p)$-action and which models $\ms{P}_{\mathsf{K}_p,E}$. In fact, it suffices to construct a model with $\mb{G}(\A_f^p)$-action of $\ms{P}_{\mathsf{K}_p,E'}$ for any discrete algebraic extension $E'$ of $E$. Indeed, by \cite[Corollary 2.7.10]{PR2021} one may lift the descent datum of $\ms{P}_{\mathsf{K}_p,E'}$ relative to $\ms{P}_{\mathsf{K}_p,E}$ to an integral descent datum, which is effective by \cite[Proposition 19.5.3]{SW2020}.

The proof of the existence of such a model will occupy the rest of this subsection, and will be carried out in several steps.

\medskip

\subsubsection*{Step 1: Group Theory} \label{subsub:Step-1-Proof} We begin by establishing the following group-theoretic result which will be useful in our construction.

\begin{prop}\label{prop:Bc} There is a canonical identification
\begin{equation*}
    {\mc{B}_1'}^{c}\isomto \mc{G}_1\times_{\mc{G}_1^\ab}{\mc{T}'_1}^{c}.
\end{equation*}
\end{prop}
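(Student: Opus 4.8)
The plan is to first pin down the generic fibre $(B_1')^c$ and then transfer the identification to parahoric models. Over $\Q$, I would begin by computing the centre of $\mb{B}_1'=\mb{G}_1\times_{\mb{G}_1^\ab}\mb{T}_1'$: since $\mb{T}_1'$ is abelian, a pair $(g,t)$ is central if and only if $g$ centralises $\mr{pr}_1(\mb{B}_1')$, and as $\mr{pr}_1(\mb{B}_1')\supseteq\mb{G}_1^\der$ while $Z_{\mb{G}_1}(\mb{G}_1^\der)=Z(\mb{G}_1)$, this gives $Z(\mb{B}_1')=Z(\mb{G}_1)\times_{\mb{G}_1^\ab}\mb{T}_1'$, the preimage of $Z(\mb{G}_1)$ under $\mr{pr}_1$ (using that $\delta$ restricts to a surjection $Z(\mb{G}_1)\to\mb{G}_1^\ab$).

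The crux — and the step I expect to be the only genuinely delicate point — is computing the anti-cuspidal part $Z(\mb{B}_1')_\mr{ac}$. The Hodge-type hypothesis on $(\mb{G}_1,\mb{X}_1)$ enters here: by Lemma \ref{lem:Gc-functoriality} one has $Z(\mb{G}_1)_\mr{ac}=(\mb{G}_1^\ab)_\mr{ac}=\{e\}$, i.e.\@ neither $Z(\mb{G}_1)$ nor $\mb{G}_1^\ab$ contains a nontrivial subtorus that is anisotropic over $\Q$ and split over $\bb{R}$. Pushing the subtorus $Z(\mb{B}_1')_\mr{ac}$ forward along $\mr{pr}_1$ into $Z(\mb{G}_1)$, and along $r$ into $\mb{G}_1^\ab$, produces subtori of exactly this forbidden type, which must therefore be trivial; hence $Z(\mb{B}_1')_\mr{ac}\subseteq\{e\}\times\ker(r)$. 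Conversely $(\mb{T}_1')_\mr{ac}\subseteq\ker(r)$ — its image under $r$ would again be a forbidden subtorus of $\mb{G}_1^\ab$ — and $\{e\}\times(\mb{T}_1')_\mr{ac}$ is a $\Q$-anisotropic, $\bb{R}$-split subtorus of $Z(\mb{B}_1')$, so it lies in $Z(\mb{B}_1')_\mr{ac}$. Thus $Z(\mb{B}_1')_\mr{ac}=\{e\}\times(\mb{T}_1')_\mr{ac}$.

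Since $(\mb{T}_1')_\mr{ac}\subseteq\ker(r)$, the map $r$ descends to ${\mb{T}_1'}^c\to\mb{G}_1^\ab$, and the previous step identifies the canonical surjection $\mb{B}_1'\to(\mb{B}_1')^c$ with the projection $\mb{G}_1\times_{\mb{G}_1^\ab}\mb{T}_1'\to\mb{G}_1\times_{\mb{G}_1^\ab}{\mb{T}_1'}^c$ induced by $\mb{T}_1'\to{\mb{T}_1'}^c$. Comparing kernels (both equal $\{e\}\times(\mb{T}_1')_\mr{ac}$) and noting that the target is reductive, hence connected, the natural maps assemble into a canonical isomorphism $(\mb{B}_1')^c\isomto\mb{G}_1\times_{\mb{G}_1^\ab}{\mb{T}_1'}^c$ compatible with the projections to $\mb{G}_1$ and ${\mb{T}_1'}^c$. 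Base-changing to $\Q_p$, this is an ad-isomorphism $(B_1')^c\isomto G_1\times_{G_1^\ab}(T_1')^c$ inducing the identity on $\ms{B}(G_1^\ad,\Q_p)$.

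It remains to match the parahoric models, using $\mc{B}_1'=\mc{G}_1\times_{\mc{G}_1^\ab}\mc{T}_1'$. Applying Proposition \ref{prop:fiber-product-parahoric} to the fibre product $G_1\times_{G_1^\ab}(T_1')^c$ — legitimate because $\mc{G}_1\to\mc{G}_1^\ab$ is faithfully flat with kernel the parahoric $\mc{G}_1^\der$ (as $(\mb{G}_1,\mb{X}_1)$ is very good, by Proposition \ref{prop:parahoric-adjoint}) and $G_1^\ab$, $(T_1')^c$ are tori — the group scheme $\mc{G}_1\times_{\mc{G}_1^\ab}{\mc{T}_1'}^c$ is a parahoric model of $(B_1')^c$, namely the one attached to the point of $\ms{B}((B_1')^c,\Q_p)$ which, under the canonical identification with $\ms{B}(G_1^\ad,\Q_p)$ from Lemma \ref{lemma:buildings-ad-isom}, corresponds to $x_1$, where $\mc{G}_1=\mc{G}_{x_1}^\circ$. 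On the other hand $\mc{B}_1'$ is (again by Proposition \ref{prop:fiber-product-parahoric}) the parahoric of $B_1'$ attached to the point of $\ms{B}(B_1',\Q_p)$ corresponding to $x_1$, and by the definition of $(-)^c$ in \S\ref{sss:parahoric-central-quotient}, ${\mc{B}_1'}^c$ is the parahoric of $(B_1')^c$ attached to the image of that point under $\ms{B}(B_1',\Q_p)\to\ms{B}((B_1')^c,\Q_p)$; since this map is the identity on $\ms{B}(G_1^\ad,\Q_p)$, the image again corresponds to $x_1$. Hence ${\mc{B}_1'}^c$ and $\mc{G}_1\times_{\mc{G}_1^\ab}{\mc{T}_1'}^c$ are the same parahoric model of $(B_1')^c$, and as every identification used is canonical this yields the asserted canonical isomorphism.
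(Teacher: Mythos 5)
Your proposal is correct and follows essentially the same route as the paper: establish the generic-fibre identification ${\mb{B}_1'}^{c}\simeq \mb{G}_1\times_{\mb{G}_1^\ab}{\mb{T}_1'}^{c}$ by computing $Z(\mb{B}_1')$ and its anti-cuspidal part, then match the parahoric models by tracking the attached building points through Lemmas \ref{lemma:buildings-ad-isom} and Proposition \ref{prop:fiber-product-parahoric}. The one minor presentational difference is in how $Z(\mb{B}_1')_{\mr{ac}}$ is computed: the paper isolates and applies a general lemma (that for a fibre product of multiplicative groups $\mb{T}=\mb{T}_1\times_{\mb{T}_3}\mb{T}_2$ with $\mb{T}_1^\circ\to\mb{T}_3^\circ$ surjective, $\mb{T}_{\mr{ac}}$ is the fibre product of the anti-cuspidal parts), whereas you compute it directly in this special case by pushing $Z(\mb{B}_1')_{\mr{ac}}$ forward to $Z(\mb{G}_1)$ and $\mb{G}_1^\ab$ and using the vanishing of $Z(\mb{G}_1)_{\mr{ac}}$ and $(\mb{G}_1^\ab)_{\mr{ac}}$ from Lemma \ref{lem:Gc-functoriality}; both deliver $Z(\mb{B}_1')_{\mr{ac}}=\{e\}\times(\mb{T}_1')_{\mr{ac}}$ and the remainder of the argument is then the same.
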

\begin{proof} Let us begin by verifying that there is a canonical isomorphism 
\begin{equation}\label{eq:rational-c-commutes-with-fiber-products}
    {\mb{B}'_1}^{c}\isomto \mb{G}_1\times_{\mb{G}_1^\ab}{\mb{T}'_1}^{c}.
\end{equation}
Note here that we are using Lemma \ref{lem:Gc-functoriality} to identify $\mb{G}_1$ and $\mb{G}_1^\mr{ab}$ with $\mb{G}_1^c$ and $(\mb{G}_1^\mr{ab})^c$, respectively. 

Observe that since $\mb{B}_1'\to \mb{G}_1\times\mb{T}'_1$ is an ad-isomorphism, we may use Proposition \ref{prop:ad-iso-equiv} to deduce that 
\begin{equation*}
    Z(\mb{B}_1')=Z(\mb{G}_1)\times_{\mb{G}_1^\ab}\mb{T}_1'.
\end{equation*}
We then employ the following simple lemma.

\begin{lemma} Let
\begin{equation*}
    \begin{tikzcd}
	{\mb{T}} & {\mb{T}_1} \\
	{\mb{T}_2} & {\mb{T}_3,}
	\arrow[ from=1-2, to=2-2]
	\arrow[ from=2-1, to=2-2]
	\arrow[ from=1-1, to=2-1]
	\arrow[ from=1-1, to=1-2]
	\arrow["\lrcorner"{anchor=center, pos=0.125}, draw=none, from=1-1, to=2-2]
\end{tikzcd}
\end{equation*}
be a Cartesian diagram, where $\mb{T}_i$, for $i=1,2,3$, are multiplicative groups over $\Q$, and such that $\mb{T}_1^\circ\to \mb{T}_3^\circ$ is surjective. Then, 
\begin{equation*}
    \mb{T}_\mr{ac}=(\mb{T}_1)_\mr{ac}\times_{(\mb{T}_2)_\mr{ac}}(\mb{T}_3)_\mr{ac}.
\end{equation*}
\end{lemma}
\begin{proof}
    This follows easily from Proposition \ref{prop:fiber-product-properties}, and the observation that for a containment $A\subseteq B$ of multiplicative groups over some field $F$, one has that $A_a=(B_a\cap A)^\circ$ and $A_s=(B_s\cap A)^\circ$, where the subscripts $a$ and $s$ denote the maximal anisotropic and split subtori, respectively. Namely, one applies this with $A=\mathbf{T}$, and $B=\mathbf{T}_1\times\mathbf{T}_2$.
\end{proof}

Using this, and the fact that fiber products of groups commute with quotients, we easily deduce the existence of an isomorphism as in \eqref{eq:rational-c-commutes-with-fiber-products}. Note that $\mc{B}_1^c$ is a parahoric group by definition. The same is true of $\mc{G}_1\times_{\mc{G}_1^{\ab}}{\mc{T}'_1}^{c}$. Indeed, this follows from Proposition \ref{prop:fiber-product-parahoric}, since $G_1^\der$ is $R$-smooth (recall that we assume $(\mb{G}_1, \mb{X}_1, \mc{G}_1)$ is very good) and thus Proposition \ref{prop:parahoric-adjoint} applies, implying that $\ker(\mc{G}_1 \to \mc{G}^\ab)$ is $\mc{G}^\der$ which is is smooth with connected fibers. To conclude, it now suffices to show that $\mc{B}_1^c$ and $\mc{G}_1\times_{\mc{G}_1^{\ab}}{\mc{T}'_1}^{c}$ correspond to matching points under the bijection
\begin{equation*}
    \ms{B}({B'_1}^{c},\Q_p)\isomto \ms{B}({G}_1\times_{{G}_1^{\mr{ab}}}{T_1'}^c,\Q_p),
\end{equation*}
coming from \eqref{eq:rational-c-commutes-with-fiber-products}.

We have the following commutative diagram of equivariant bijections (utilizing a combination of Lemma \ref{lemma:buildings-ad-isom} and Proposition \ref{prop:fiber-product-parahoric} several times)
\begin{equation*}
    \begin{tikzcd}
	{\mathscr{B}({G}_1\times_{{G}_1^{\mathrm{ab}}}{T_1'}^c)} & {\mathscr{B}({G}_1\times{T_1'}^c,\mathbb{Q}_p)} & {\mathscr{B}({G}_1\times {T}_1',\mathbb{Q}_p)} \\
	& {\mathscr{B}({B_1'}^c,\mathbb{Q}_p)} & {\mathscr{B}({B}'_1,\mathbb{Q}_p).}
	\arrow[from=2-3, to=1-3]
	\arrow[from=2-3, to=2-2]
	\arrow[from=1-3, to=1-2]
	\arrow[from=1-1, to=1-2]
	\arrow[from=2-2, to=1-1]
\end{tikzcd}
\end{equation*}
Tracing through the definitions, we see that the claim follows as the points we consider ultimately can be induced from the same point $(x,\ast)$ in the top-right corner, where $\mc{G}_1=\mc{G}_x^\circ$. Here we are implicitly using that the formation of the (reduced) Bruhat--Tits building commutes with products.
\end{proof}

\medskip

\subsubsection*{Step 2: Pullback and Descent to the Fiber Product} \label{subsub:Step-2-Proof} Let us now recall the commutative diagram
\begin{equation*}
\begin{tikzcd}
	{\ms{S}_{\mathsf{M}'_{p,1}}(\mb{T}_1',\{h_{\mb{T}'_1}\})_{\mc{O}_E}} & {\ms{S}_{\mathsf{N}_{p,1}}(\mb{G}_1^\mr{ab},\mb{X}_1^\mr{ab})_{\mc{O}_E}} \\
	{\ms{S}^\mf{d}_{\mathsf{L}'_{p,1}}(\mb{B}'_1,\mb{Y}'_1)_{\mc{O}_E}} & {\ms{S}^\vn_{\mathsf{K}_{p,1}}(\mb{G}_1,\mb{X}_1)_{\mc{O}_E}} \\
	{\ms{S}^\mf{d}_{\mathsf{L}_{p}}(\mb{B},\mb{Y})} & {\ms{S}^\mf{d}_{\mathsf{K}_{p}}(\mb{G},\mb{X})}
	\arrow["a", from=2-1, to=2-2]
	\arrow[from=2-1, to=3-1]
	\arrow[from=3-1, to=3-2]
    \arrow["e",from=2-1, to=3-2]
	\arrow["b", from=2-2, to=1-2]
	\arrow["d", from=2-1, to=1-1]
	\arrow["c", from=1-1, to=1-2]
\end{tikzcd}
\end{equation*}
from Proposition \ref{prop:big-integral-model-diagram}, with the arrows labeled for convenience, and where the arrow $e$ is defined to make the lower triangle commute.

Utilizing Theorem \ref{thm:PR-conj-for-Hodge-type} and Theorem \ref{thm:PR-conj-for-toral-type} there exist a $\mb{G}_1(\A_f^p)$-equivariant $\mc{G}_1$-shtuka $\ms{P}_{\mathsf{K}_{p,1}}$ and a $\mb{T}'_1(\A_f^p)$-equivariant ${\mc{T}'_1}^c$-shtuka $\ms{P}_{\mathsf{M}'_{p,1}}$ on ${\ms{S}^\vn_{\mathsf{K}_{p,1}}(\mb{G}_1,\mb{X}_1)_{\mc{O}_E}}$ and ${\ms{S}_{\mathsf{M}'_{p,1}}(\mb{T}'_1,\{h_{\mb{T}'_1}\})_{\mc{O}_E}}$, respectively. We then obtain a $\mc{G}_1$-shtuka and a ${\mc{T}'_1}^{c}$-shtuka on ${\ms{S}^\mf{d}_{\mathsf{L}_{p,1}}(\mb{B}_1',\mb{Y}'_1)_{\mc{O}_E}}$, given by $a^\ast\ms{P}_{\mathsf{K}_{p,1}}$ and $d^\ast\ms{P}_{\mathsf{M}'_{p,1}}$, respectively.

We claim that $a^\ast\ms{P}_{\mathsf{K}_{p,1}}$ and $d^\ast\ms{P}_{\mathsf{M}'_{p,1}}$ are both equivariant relative to $\mb{B}'_1(\A_f^p)$. Indeed, this follows from the equivariance of these maps, as for an element $g$ of $\mb{B}_1'(\A_f^p)$ we have natural isomorphisms
\begin{equation*}
    [g]^\ast a^\ast\ms{P}_{\mathsf{K}_{p,1}}\simeq a^\ast([a(g)]^\ast\ms{P}_{\mathsf{K}_{p,1}})\simeq a^\ast \ms{P}_{\mathsf{K}_{p,1}},
\end{equation*}
using the $\mb{G}_1(\A_f^p)$-equivariance of $\ms{P}_{\mathsf{K}_{p,1}}$, and similarly in the case of $d^\ast\ms{P}_{\mathsf{M}'_{p,1}}$.

Our plan is now to use Corollary \ref{cor:fiber-product-of-shtukas} to glue the shtukas $a^\ast \ms{P}_{\eK_{p,1}}$ and $d^\ast \ms{P}_{\mathsf{M}'_{p,1}}$ in order to obtain a $\mc{B}_1'^c$-shtuka on $\ms{S}^\mf{d}_{\mathsf{L}'_{p,1}}(\mb{B}_1', \mb{Y}_1')_{\mc{O}_E}$ modeling the $\mc{B}_1'^c$-shtuka $\sP_{\mathsf{L}'_{p,1},E}$ living over the generic fiber $\Sh_{\mathsf{L}'_{p,1}}(\mb{B}_1', \mb{Y}_1')_{E'}$, see Proposition \ref{prop:generic-fiber-shtuka}.
\begin{lemma}\label{lem:torsor-isom}
    Let $\delta$ be the natural map $\mc{G}_1\to \mc{G}_1^{{\ab}}$, and let $N\colon {\mc{T}'_1}^{c}\to \mc{G}_1^\ab$ be the $($map induced by$)$ the norm. Then there is a $\mb{B}'_1(\A_f^p)$-equivariant isomorphism of $\mc{G}_1^{{\ab}}$-torsors
\begin{equation*}
    \theta\colon \delta_\ast a^\ast\ms{P}_{\mathsf{K}_{p,1}}\isomto N_\ast d^\ast \ms{P}_{\mathsf{M}_{p,1}}.
\end{equation*}
\end{lemma}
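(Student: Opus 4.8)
The plan is to verify the isomorphism first on the generic fiber, where it follows from the compatibility of the shtukas $\ms{P}_{\mathsf{K}_{p,1}}$ and $\ms{P}_{\mathsf{M}'_{p,1}}$ with the $\ms{P}$-shtukas attached to the étale realizations on the respective Shimura varieties, and then extend it to the integral model by a normality/separatedness argument. More precisely, recall from Proposition \ref{prop:generic-fiber-shtuka} that $\ms{P}_{\mathsf{K}_{p,1}}$ models $\ms{P}_{\mathsf{K}_{p,1},E}$, which is associated to the pro-étale $\mc{G}_1^c(\Z_p)$-torsor $\mathbf{P}_{\mathsf{K}_{p,1}}$ coming from the infinite-level cover of $\Sh_{\mathsf{K}_{p,1}}(\mb{G}_1,\mb{X}_1)$, and similarly for $\ms{P}_{\mathsf{M}'_{p,1}}$. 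Pulling back along $a$ and $d$ and using the commutativity of the diagram \eqref{eq:big-Sh-diag} (specifically that $\delta\circ a$ and $N\circ d$ agree as maps to $\Sh_{\mathsf{N}_{p,1}}(\mb{G}_1^\ab,\mb{X}_1^\ab)$ on the generic fiber), both $\delta_\ast a^\ast\ms{P}_{\mathsf{K}_{p,1},E'}$ and $N_\ast d^\ast\ms{P}_{\mathsf{M}'_{p,1},E'}$ are identified with the pullback of the $(\mc{G}_1^\ab)^c$-shtuka $\ms{P}_{\mathsf{N}_{p,1},E'}$ on $\Sh_{\mathsf{N}_{p,1}}(\mb{G}_1^\ab,\mb{X}_1^\ab)_{E'}$ along this common composite. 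Since the association of a shtuka to a pro-étale torsor (in the sense of \cite[\textsection 2.5]{PR2021}) is functorial, and since the pro-étale torsors underlying these two shtukas are literally the pullback of the same pro-étale $(\mc{G}_1^\ab)^c(\Z_p)$-torsor, one obtains a canonical $\mb{B}'_1(\A_f^p)$-equivariant isomorphism $\theta_{E'}$ on the generic fiber.

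Next I would upgrade $\theta_{E'}$ to an isomorphism over the integral model $\ms{S}^\mf{d}_{\mathsf{L}'_{p,1}}(\mb{B}'_1,\mb{Y}'_1)_{\mc{O}_E}$. The key structural input is that this model is normal and flat over $\mc{O}_E$ (this is part of Theorem \ref{thm:KPZ-model-properties}, and is preserved under the constructions in \S\ref{ss:KPZ-construction}, as is visible from \hyperref[subsub:Step-3-Construction]{\textbf{Step 3}} and \hyperref[subsub:Step-4-Construction]{\textbf{Step 4}}), so that its generic fiber is schematically dense. I would then invoke the full-faithfulness portion of \cite[Proposition 2.7.7]{PR2021} (or the analogous statement, e.g. \cite[Corollary 2.7.10]{PR2021}, used already in this section to guarantee uniqueness of the shtuka $\ms{P}_{\mathsf{K}_p}$), which says precisely that a morphism between $\mc{H}$-shtukas on a normal flat $\mc{O}_E$-scheme is determined by, and extends from, its restriction to the generic fiber. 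Applying this with $\mc{H} = \mc{G}_1^\ab$ to the two $\mc{G}_1^\ab$-torsors $\delta_\ast a^\ast\ms{P}_{\mathsf{K}_{p,1}}$ and $N_\ast d^\ast\ms{P}_{\mathsf{M}'_{p,1}}$ (more precisely to the associated $\mc{G}_1^\ab$-shtukas, noting that pushforward of shtukas along $\delta$ and $N$ is unproblematic since these are maps of parahoric group schemes over $\Z_p$), the generic-fiber isomorphism $\theta_{E'}$ extends uniquely to an isomorphism $\theta$ over the integral model.

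For the $\mb{B}'_1(\A_f^p)$-equivariance of $\theta$: this can be checked on the generic fiber, where it holds because $\theta_{E'}$ is manifestly $\mb{B}'_1(\A_f^p)$-equivariant (being built from functoriality applied to an equivariant pro-étale torsor), and then transported to the integral model using the same full-faithfulness statement — namely, for each $g$ in $\mb{B}'_1(\A_f^p)$ the two morphisms $[g]^\ast\theta$ and $\theta$ (suitably composed with the structural isomorphisms $[g]^\ast a^\ast\ms{P}_{\mathsf{K}_{p,1}}\simeq a^\ast\ms{P}_{\mathsf{K}_{p,1}}$ etc.\@ from \hyperref[subsub:Step-2-Proof]{\textbf{Step 2}}) agree on the generic fiber, hence agree.

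I expect the main obstacle to be bookkeeping rather than conceptual: one must carefully match up (a) the identification $\ms{B}'^{\der}_1 = \mb{G}_1^\der$ and the resulting identification of $\mc{B}'^c_1$ with the fiber product $\mc{G}_1\times_{\mc{G}_1^\ab}{\mc{T}'_1}^c$ from Proposition \ref{prop:Bc}, (b) the commutativity of the various squares in \eqref{eq:big-Sh-diag} at the level of the $\ms{P}$-shtukas (not just the schemes), using the pullback-compatibility isomorphisms \eqref{eq:pullback-compatability} from Proposition \ref{prop:generic-fiber-shtuka}, and (c) the fact that $\delta$ and $N$ are the correct maps of parahoric group schemes so that the pushforwards $\delta_\ast$ and $N_\ast$ land in the right groupoids. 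None of these is deep, but they need to be assembled consistently; the genuinely essential input, the extension from the generic fiber, is handed to us by the normality of the Kisin--Pappas--Zhou models together with \cite[\textsection 2.7]{PR2021}.
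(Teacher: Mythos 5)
Your proposal is correct and follows essentially the same route as the paper: both reduce to the generic fiber via normality and the full faithfulness of restriction from \cite[Corollary 2.7.10]{PR2021}, then compare $\delta_\ast a^\ast$ and $N_\ast d^\ast$ by identifying each with the pullback along $b\circ a = c\circ d$ of the shtuka/\'etale realization on $\Sh_{\mathsf{N}_{p,1}}(\mb{G}_1^\ab,\mb{X}_1^\ab)$ (the paper writes this chain explicitly at the level of $\bb{P}$'s, citing \cite[Equation (4.3.2)]{ImaiKatoYoucis}). The only cosmetic difference is that you spell out the $\mb{B}'_1(\bb{A}_f^p)$-equivariance check separately, which the paper folds into the word ``compatible.''
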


\begin{proof}
To construct such an isomorphism, it suffices construct compatible isomorphisms over $\ms{S}_{{\mathsf{L}'_1}^{p}}^\mf{d}(\mb{B}_1',\mb{Y}'_1)_{\mc{O}_E}$, for each neat compact open subgroup ${\mathsf{L}'_1}^{p}\subseteq \mb{B}'_1(\bb{A}_f^p)$. Furthermore, by \cite[Corollary 2.7.10]{PR2021}, it suffices to construct such compatible isomorphisms on $\Sh_{\mathsf{L}_{p,1}'{\mathsf{L}'_1}^{p}}(\mb{B}_1',\mb{Y}'_1)_E$. 

The shtukas $\ms{P}_{\mathsf{K}_{p,1},E}$ and $\ms{P}_{\mathsf{M}_{p,1}}$ are functorially constructed from the corresponding \'etale realization functors $\bb{P}_{\mathsf{K}_{p,1},E}$ and $\bb{P}_{\mathsf{M}_{p,1}}$, respectively, in a way compatible with pullbacks along morphisms of Shimura varieties and pushforwards along morphisms of groups. Standard compatabilities of \'etale realizations (c.f.\@, \cite[Equation (4.3.2)]{ImaiKatoYoucis}) imply that there are natural isomorphisms 
\begin{equation*}
    \delta_\ast a^\ast\bb{P}_{\mathsf{K}_{p,1}}\simeq (b\circ a)^\ast\bb{P}_{\mathsf{N}_{p,1}}\simeq
    (c\circ d)^\ast \bb{P}_{\mathsf{N}_{p,1}}\simeq N_\ast d^\ast \bb{P}_{\mathsf{M}_{p,1}},
\end{equation*}
from where the claim follows. From this we deduce the existence of canonical isomormorphisms
\begin{equation*}
    \delta_\ast a^\ast\ms{P}_{\mathsf{K}_{p,1}}\simeq  (b\circ a)^\ast\ms{P}_{\mathsf{N}_{p,1}}\simeq
    (c\circ d)^\ast \ms{P}_{\mathsf{N}_{p,1}}\simeq N_\ast d^\ast \ms{P}_{\mathsf{M}_{p,1}},
\end{equation*}
where $\ms{P}_{\mathsf{N}_{p,1}}$ is the unique model of $\ms{P}_{\mathsf{N}_{p,1},E}$ from Theorem \ref{thm:PR-conj-for-toral-type}. The resulting composition $\theta:\delta_\ast a^\ast \ms{P}_{\eK_{p,1}} \isomto N_\ast d^\ast \ms{P}_{\mathsf{M}_{p,1}}$ is the desired isomorphism.
\end{proof}

\begin{prop}
    There exists a $\mc{B}_1'^c$-shtuka $\sP_{\mathsf{L}'_{p,1}}$ on $\ms{S}_{\mathsf{L}'_{p,1}}^\mf{d}(\mb{B}_1', \mb{Y}_1')_{\mc{O}_E}$ which models $\sP_{\mathsf{L}'_{p,1},E}$.
\end{prop}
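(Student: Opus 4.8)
The plan is to combine Proposition \ref{prop:Bc} with Corollary \ref{cor:fiber-product-of-shtukas}, applied to the fiber product square
\begin{equation*}
    \begin{tikzcd}
	{\mc{B}_1'^c} & {\mc{G}_1} \\
	{{\mc{T}'_1}^{c}} & {\mc{G}_1^\ab,}
	\arrow["\delta", from=1-2, to=2-2]
	\arrow["N"', from=2-1, to=2-2]
	\arrow[from=1-1, to=2-1]
	\arrow[from=1-1, to=1-2]
	\arrow["\lrcorner"{anchor=center, pos=0.125}, draw=none, from=1-1, to=2-2]
\end{tikzcd}
\end{equation*}
of smooth group $\Z_p$-schemes with connected fibers. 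First I would verify that this square satisfies the hypotheses of Corollary \ref{cor:fiber-product-of-shtukas}: the map $\delta\colon \mc{G}_1\to\mc{G}_1^\ab$ is faithfully flat and quasi-compact with kernel $\mc{G}_1^\der$ by Proposition \ref{prop:parahoric-adjoint} (using that $(\mb{G}_1,\mb{X}_1,\mc{G}_1)$ is very good, so $G_1^\der$ is $R$-smooth), and $\mc{G}_1^\der$ is a smooth group $\Z_p$-scheme with connected fibers; moreover $\mc{B}_1'^c \simeq \mc{G}_1\times_{\mc{G}_1^\ab}{\mc{T}'_1}^{c}$ by Proposition \ref{prop:Bc}. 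Corollary \ref{cor:fiber-product-of-shtukas} then gives an equivalence of categories
\begin{equation*}
    \mc{B}_1'^c\text{-}\mb{Sht}(\ms{X})\isomto \mc{G}_1\text{-}\mb{Sht}(\ms{X})\times_{\delta_\ast,\mc{G}_1^\ab\text{-}\mb{Sht}(\ms{X}),N_\ast}{\mc{T}'_1}^{c}\text{-}\mb{Sht}(\ms{X}),
\end{equation*}
for $\ms{X}=\ms{S}_{\mathsf{L}'_{p,1}}^\mf{d}(\mb{B}_1',\mb{Y}_1')_{\mc{O}_E}$, and the triple $(a^\ast\ms{P}_{\mathsf{K}_{p,1}}, d^\ast\ms{P}_{\mathsf{M}_{p,1}}, \theta)$ — with $\theta$ the isomorphism of Lemma \ref{lem:torsor-isom} — defines an object of the right-hand side. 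Here one should note that $a^\ast\ms{P}_{\mathsf{K}_{p,1}}$ is identified with a $\mc{G}_1^c$-shtuka via the isomorphism $\mc{G}_1\isomto\mc{G}_1^c$ of Lemma \ref{lem:Gc-functoriality}, and likewise for the abelianization. Pulling this object back through the equivalence produces a $\mc{B}_1'^c$-shtuka $\sP_{\mathsf{L}'_{p,1}}$ on $\ms{X}$.

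It then remains to check that $\sP_{\mathsf{L}'_{p,1}}$ models $\sP_{\mathsf{L}'_{p,1},E}$. The cleanest approach is to appeal again to the same equivalence, now over the generic fiber $\ms{X}_{E'}=\Sh_{\mathsf{L}'_{p,1}}(\mb{B}_1',\mb{Y}_1')_{E'}$, together with the compatibility of $\Psi$ and $\Phi$ with pullback asserted in Proposition \ref{prop:torsors-comparison} (hence in Corollary \ref{cor:fiber-product-of-shtukas}): the restriction of $\sP_{\mathsf{L}'_{p,1}}$ to the generic fiber corresponds under the equivalence to the restriction of the triple, which is $(a^\ast\ms{P}_{\mathsf{K}_{p,1},E'}, d^\ast\ms{P}_{\mathsf{M}_{p,1},E'}, \theta_E)$. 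On the other hand, the generic-fiber shtuka $\sP_{\mathsf{L}'_{p,1},E}$ of Proposition \ref{prop:generic-fiber-shtuka} is, by its construction from the \'etale realization functor and the compatibility of \'etale realizations with pullback along morphisms of Shimura varieties and pushforward along morphisms of groups (as recorded in the proof of Lemma \ref{lem:torsor-isom}), also classified by this same triple. Hence the two agree. One should also record that $\sP_{\mathsf{L}'_{p,1}}$ inherits $\mb{B}_1'(\A_f^p)$-equivariance from the equivariance of $a^\ast\ms{P}_{\mathsf{K}_{p,1}}$, $d^\ast\ms{P}_{\mathsf{M}_{p,1}}$, and $\theta$, using the pullback-compatibility of the equivalence once more; this will be needed in the sequel even if not strictly part of the present statement.

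I expect the only genuine subtlety to be bookkeeping rather than substance: one must be careful that the fiber product defining $\mc{B}_1'^c$ in Proposition \ref{prop:Bc} uses precisely the maps $\delta$ and $N$ appearing in $\theta$, so that the triple $(a^\ast\ms{P}_{\mathsf{K}_{p,1}}, d^\ast\ms{P}_{\mathsf{M}_{p,1}},\theta)$ really is an object of the $2$-fiber product with respect to the functors $\delta_\ast$ and $N_\ast$ — this is exactly how $\theta$ was constructed in Lemma \ref{lem:torsor-isom}, so it goes through, but it is the point that must be stated carefully. The verification that the generic fiber is correct is where one leans hardest on the fact that both the integral and generic constructions are performed via the \emph{same} fiber-product recipe, so that matching them reduces to matching the constituent $\mc{G}_1$- and ${\mc{T}'_1}^{c}$-shtukas, which is Theorem \ref{thm:PR-conj-for-Hodge-type} and Theorem \ref{thm:PR-conj-for-toral-type} respectively together with Proposition \ref{prop:generic-fiber-shtuka}.
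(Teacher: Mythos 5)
Your proposal is correct and matches the paper's proof in all essentials: the paper also constructs $\sP_{\mathsf{L}'_{p,1}}$ by feeding the triple $(a^\ast\ms{P}_{\mathsf{K}_{p,1}}, d^\ast\ms{P}_{\mathsf{M}_{p,1}}, \theta)$ into the equivalence of Corollary \ref{cor:fiber-product-of-shtukas} (justified by Proposition \ref{prop:Bc}), and then verifies the generic fiber by reducing, as you do, to the fact that the fiber-product recipe is compatible with restriction and that the generic-fiber shtuka $\sP_{\mathsf{L}'_{p,1},E}$ is classified by the same triple via the underlying \'etale realizations. The only cosmetic difference is that the paper phrases the generic-fiber check as an isomorphism of pro-\'etale torsors $\bb{P}_{\mathsf{K}_{p,1}}\times_\theta\bb{P}_{\mathsf{M}_{p,1}}\simeq\bb{P}_{\mathsf{L}'_{p,1}}$, deferring to Proposition \ref{prop:torsors-comparison} and an ``elementary calculation,'' whereas you phrase it directly at the level of shtukas; this is the same content.
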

\begin{proof}

Using Lemma \ref{lem:torsor-isom}, we may apply Corollary \ref{cor:fiber-product-of-shtukas} and Proposition \ref{prop:Bc} to deduce the construction of a $\mc{B}_1'^c$-shtuka $\ms{P}_{\mathsf{L}'_{p,1}}$ on ${\ms{S}^\mf{d}_{\mathsf{L}'_{p,1}}(\mb{B}'_1,\mb{Y}'_1)_{\mc{O}_E}}$. By construction this shtuka has an equivariant action of $\mb{B}_1'(\A_f^p)$. As in the last paragraph, to show $\ms{P}_{\mathsf{L}'_{p,1}}$ models the ${\mc{B}_1'}^c$-shtuka $\ms{P}_{\mathsf{L}'_{p,1},E}$, it suffices to show that there is an isomorphism
\begin{equation*}
    \bb{P}_{\mathsf{K}_{p,1}}\times_\theta \bb{P}_{\mathsf{M}_{p,1}}\simeq \bb{P}_{\mathsf{L}'_{p,1}}.
\end{equation*}
Here we are applying Proposition \ref{prop:torsors-comparison}, which makes sense when one views these local systems as torsors as in \cite[\S2.1.1]{ImaiKatoYoucis}. But, in view of Proposition \ref{prop:Bc} this is an elementary calculation (cf.\@ \cite[\S3.4.4]{LoveringFCrystals}).
\end{proof}

\medskip

\subsubsection*{Step 3: Descending to $\ms{S}_{\mathsf{K}_p}^\mf{d}(\mb{G},\mb{X})_{\mc{O}_{E^\ur}}$} \label{subsub:Step-3-Proof}

We now show that there is a $\mb{G}(\A_f^p)$-equivariant $\mc{G}^c$-shtuka on $\ms{S}_{\mathsf{K}_p}^\mf{d}(\mb{G},\mb{X})_{E^\ur}$ which models $\ms{P}_{\mathsf{K}_p,E^\ur}$. As previously noted, this is sufficient to prove Theorem \ref{prop:construction-of-shtuka}. Let us begin by considering the $\mc{G}^c$-shtuka
\begin{equation*}
    \ms{P}'\defeq \ms{P}_{\mathsf{L}'_{p,1},\mc{O}_{E^\ur}}\times^{{\mc{B}_1'}^c}\mc{G}^c,
\end{equation*}
on $\ms{S}_{{\mathsf{L}'_1}^{p}}^\mf{d}(\mb{B}_1',\mb{Y}'_1)_{\mc{O}_{E^\ur}}$. 

Choose a connected component ${\ms{S}'}^{+}$ of $\ms{S}_{{\mathsf{L}'_1}^p}^\mf{d}(\mb{B}_1',\mb{Y}'_1)_{\mc{O}_{E^\ur}}$, and let $\ms{S}^+$ be the connected component of $\ms{S}_{\mathsf{K}_p}^\mf{d}(\mb{G},\mb{X})_{\mc{O}_{E^\ur}}$ containing its image under the map
\begin{equation}\label{eq:Step-3-map-of-Shim-vars}
    \ms{S}_{{\mathsf{L}'_1}^{p}}^\mf{d}(\mb{B}_1',\mb{Y}'_1)_{\mc{O}_{E^\ur}}\to \ms{S}_{\mathsf{K}_p}^\mf{d}(\mb{G},\mb{X})_{\mc{O}_{E^\ur}}.
\end{equation}
Set ${\ms{P}'}^{+}$ to be the pullback of $\ms{P}^+$ to ${\ms{S}'}^{+}$. 
\begin{lemma}
    The $\mc{G}^c$-shtuka ${\ms{P}'}^{+}$ descends uniquely to a $\mc{G}^c$-shtuka $\ms{P}^+$ on $\ms{S}^+$, which models the restriction of $\ms{P}_{\mathsf{K}_{p},E^\ur}$ to $\ms{S}^+_{E^\ur}$
\end{lemma}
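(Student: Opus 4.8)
The statement asks us to descend the $\mc{G}^c$-shtuka ${\ms{P}'}^{+}$ from ${\ms{S}'}^{+}$ to $\ms{S}^+$. The map \eqref{eq:Step-3-map-of-Shim-vars} restricted to connected components, ${\ms{S}'}^{+}\to\ms{S}^+$, should be a torsor under (a quotient of) a group built from $\ms{A}(\bm{\mc{B}}_1')^\circ$ and $\ms{A}(\bm{\mc{G}})^\circ$ — more precisely, by the construction in \textbf{Step 3}--\textbf{Step 4} of \S\ref{ss:KPZ-construction} and the proof of Proposition \ref{prop:funct-for-ad-isom}, the connected component $\ms{S}^+$ is a quotient of ${\ms{S}'}^{+}$ (itself a quotient of $\ms{S}^\vn_{\mathsf{K}_{p,2}}(\mb{G}_2,\mb{X}_2)^+_{\mc{O}_{E^\ur}}$ by $\ker(\ms{A}(\bm{\mc{B}}_1')^\circ\to\ms{A}(\bm{\mc{B}}_1')) = \ms{A}(\bm{\mc{G}}_1)^\circ$-type kernel) by a further finite group $\Delta$, where $\Delta$ acts via deck transformations. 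So the first step is to identify this finite group $\Delta$ acting on ${\ms{S}'}^{+}$ over $\ms{S}^+$, and to observe that (by the standard fact that an object on a $G$-torsor descends iff it carries a $G$-equivariant structure) it suffices to equip ${\ms{P}'}^{+}$ with a $\Delta$-equivariant structure, uniquely.

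**The equivariant structure.** The group $\Delta$ acting on ${\ms{S}'}^{+}$ is realized inside the equivariant data already in play: elements of $\Delta$ come from elements of $\ms{A}(\bm{\mc{G}})$ (acting by right translation on the $\ms{A}(\bm{\mc{G}})$-factor in the fiber-product description of $\pmb{\ms{S}}^\mf{d}_{\mathsf{K}_p}$) together with the $\mb{B}_1'(\A_f^p)$-action coming from the tower. Concretely, I would argue that each $\delta\in\Delta$ is induced by an element $g_\delta$ of $\mb{B}_1'(\A_f^p)$ (or of $\ms{A}(\bm{\mc{G}})$ acting through the quotient); since $\ms{P}_{\mathsf{L}'_{p,1}}$ carries a $\mb{B}_1'(\A_f^p)$-equivariant structure (established in \textbf{Step 2}), pushing forward along ${\mc{B}_1'}^c\to\mc{G}^c$ gives $\ms{P}'=\ms{P}_{\mathsf{L}'_{p,1}}\times^{{\mc{B}_1'}^c}\mc{G}^c$ a $\mb{B}_1'(\A_f^p)$-equivariant structure (using that pushforward of torsors along a group homomorphism commutes with pullback), and restricting to ${\ms{S}'}^{+}$ and to the subgroup generating $\Delta$ produces the desired $\Delta$-equivariant structure on ${\ms{P}'}^{+}$. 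The compatibility of these isomorphisms (cocycle condition) is automatic from the $\mb{B}_1'(\A_f^p)$-equivariance, which is a genuine action. Uniqueness of the descent then follows from \cite[Corollary 2.7.10]{PR2021}: any two descents agree on the generic fiber (where both model $\ms{P}_{\mathsf{K}_p,E^\ur}|_{\ms{S}^+_{E^\ur}}$, by the generic-fiber compatibility of all the pushforward/pullback operations and of the Lovering diagram \eqref{eq:big-Sh-diag}), hence agree.

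**Checking it models $\ms{P}_{\mathsf{K}_p,E^\ur}$.** That $\ms{P}^+$ restricts on the generic fiber to $\ms{P}_{\mathsf{K}_p,E^\ur}|_{\ms{S}^+_{E^\ur}}$ reduces, again by \cite[Corollary 2.7.10]{PR2021}, to a statement about the associated \'etale realizations (pro-\'etale $\mc{G}^c(\Z_p)$-local systems): one must check that $\bb{P}_{\mathsf{L}'_{p,1}}\times^{{\mc{B}_1'}^c(\Z_p)}\mc{G}^c(\Z_p)$, pushed along the map of Shimura varieties, recovers $\bb{P}_{\mathsf{K}_p}$. This is a standard compatibility of \'etale realization functors along morphisms of Shimura data, exactly as in \cite[Equation (4.3.2)]{ImaiKatoYoucis} and the reference \cite[\S3.4.4]{LoveringFCrystals} already invoked in \textbf{Step 2}; the map $\mb{B}_1'\to\mb{G}$ induces $\mb{G}_1^c = \mb{B}_1'^c \to \mb{G}^c$ compatibly (using Lemma \ref{lem:Gc-functoriality}), and one traces the torsor constructions through.

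**Main obstacle.** The technical heart is the first step: pinning down the group $\Delta$ by which ${\ms{S}'}^{+}\to\ms{S}^+$ is a quotient, and, crucially, exhibiting every element of $\Delta$ as coming from the $\mb{B}_1'(\A_f^p)$-action (or from $\ms{A}(\bm{\mc{G}})$ in a way compatible with the equivariant structure on $\ms{P}'$). The subtlety is that $\Delta$ is a quotient of a group involving both the $\ms{A}$-groups \emph{and} the Galois-type data $\ms{E}_{\mb{E}'}$, and one needs the $\Delta$-equivariant structure on ${\ms{P}'}^{+}$ to be compatible with the $\mb{G}(\A_f^p)$-equivariance we ultimately want on $\ms{P}^+$ — this forces us to be careful about which elements act through which factor in $\ms{A}(\bm{\mc{G}})\ast_{\ms{A}(\bm{\mc{G}}_1)^\circ}\ms{E}_{\mb{E}'}(\bm{\mc{G}}_1)$. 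I expect this bookkeeping, rather than any conceptual difficulty, to be where the real work lies; the descent formalism and the generic-fiber uniqueness via \cite[Corollary 2.7.10]{PR2021} are routine once $\Delta$ and its lift are in hand.
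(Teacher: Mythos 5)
Your identification of the finite group $\Delta$ and your framing via descent data along ${\ms{S}'}^{+}\to\ms{S}^{+}$ both agree with the paper. The gap is in how you construct the $\Delta$-descent datum on ${\ms{P}'}^{+}$. You propose to realize each $\delta\in\Delta$ as coming from an element of $\mb{B}_1'(\A_f^p)$ and then invoke the $\mb{B}_1'(\A_f^p)$-equivariant structure on $\ms{P}'$. But $\Delta$ is a subquotient of $\ms{A}(\bm{\mc{G}}_1)^\circ$, which contains a factor coming from the adjoint group $\bm{\mc{G}}_1^\ad(\Z_{(p)})^+$ acting by conjugation, and there is no reason those elements lift to $\mb{B}_1'(\A_f^p)$; your parenthetical fallback ``or of $\ms{A}(\bm{\mc{G}})$ acting through the quotient'' is incoherent since by definition every element of $\Delta$ maps to the identity in $\ms{A}(\bm{\mc{G}})$. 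So the adelic equivariance you already have in hand does not supply the required cocycle on ${\ms{P}'}^{+}$, and your own ``Main obstacle'' paragraph correctly flags exactly where the difficulty sits --- but the escape route you propose does not close it.

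The paper sidesteps this entirely by using \cite[Corollary 2.7.10]{PR2021} not only for uniqueness but for the \emph{construction} of the descent datum. That corollary says a shtuka on an integral model of a Shimura variety is determined by its restriction to the generic fiber; consequently a $\Delta$-descent datum on the $\mc{G}^c(\Z_p)$-local system ${\bb{P}'}^{+}$ over ${\ms{S}'}^{+}_{E^\ur}$ lifts uniquely to one on ${\ms{P}'}^{+}$ (the cocycle condition also lifts, by the same uniqueness). Over the generic fiber the descent datum is free: by \cite[Equation (4.3.2)]{ImaiKatoYoucis}, the local system $\bb{P}'$ is literally the pullback of the $\mc{G}^c(\Z_p)$-local system $\bb{P}_{\mathsf{K}_p}$ along $\Sh_{\mathsf{L}'_{p,1}}(\mb{B}_1',\mb{Y}_1')_{E^\ur}\to\Sh_{\mathsf{K}_p}(\mb{G},\mb{X})_{E^\ur}$, and any object presented as a pullback along a quotient map carries a tautological descent datum. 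Once the descent datum is in hand, \cite[Proposition 19.5.3]{SW2020} descends the shtuka, and the ``models $\ms{P}_{\mathsf{K}_p,E^\ur}$'' assertion is again a generic-fiber check. No lift of $\Delta$ into the adelic group is needed or attempted.
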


\begin{proof}

As noted in the proof of Proposition \ref{prop:funct-for-ad-isom}, the map ${\ms{S}'}^{+}\to\ms{S}^{+}$ is a quotient by the group
\begin{equation*}
    \Delta=\ker(\ms{A}(\bm{\mc{G}}_1)^\circ\to \ms{A}(\bm{\mc{G}}))/\ker(\ms{A}(\bm{\mc{G}}_1)^\circ\to \ms{A}(\bm{\mc{B}}'_1)),
\end{equation*}
which acts through a finite quotient asn in \cite[\S E.6]{Kisin2010}. Thus, by \cite[Proposition 19.5.3]{SW2020}, it suffices to show that there is a $\Delta$-action on ${\ms{P}'}^{+}$ which gives a descent datum. Arguing as in \hyperref[subsub:Step-2-Construction]{\textbf{Step 2}}, it suffices to verify that there is a $\Delta$-descent datum on the $\mc{G}^c(\Z_p)$-local system ${\bb{P}'}^{+}$ on ${\ms{S}'}^{+}_{E^\ur}$, whose definition is, mutatis mutandis, the same as that of ${\ms{P}'}^{+}$. But, this is clear as the $\mc{G}^c(\Z_p)$-local system $\bb{P}'$ on $\Sh_{\mathsf{L}'_{p,1}}(\mb{B}'_1,\mb{Y}'_1)_{E^\ur}$ is the pullback of a $\mc{G}^c(\Z_p)$-local system on $\Sh_{\mathsf{K}_p}(\mb{G},\mb{X})_{E^\ur}$ by \cite[Equation (4.3.2)]{ImaiKatoYoucis}.

Thus we obtain a $\mc{G}^c$-shtuka $\ms{P}^+$ on $\ms{S}^+$. That this shtuka models the restriction of $\ms{P}_{\mathsf{K}_{p},E^\ur}$ to $\ms{S}^+_{E^\ur}$ is clear by design. Indeed, $\ms{P}^+$ is isomorphic to the $\mc{G}^c$-shtuka obtained as the descent of ${\ms{P}'}^{+}$ with respect to the $\Delta$-action which, as noted before, agrees with that of the restriction to $\ms{S}^+_{E^\ur}$ of the pullback of $\ms{P}_{\mathsf{K}_p,E^\ur}$ along the map in \eqref{eq:Step-3-map-of-Shim-vars}.
\end{proof}

Using the $\ms{A}(\bm{\mc{G}})$-equivariant bijection 
\begin{equation}
    \pi_0(\Sh_{\mathsf{K}_{p}}(\mb{G},\mb{X})_{{E^\ur}})\isomto \pi_0({\ms{S}}^\mf{d}_{\mathsf{K}_{p}}(\mb{G},\mb{X})_{\mc{O}_{E\ur}}),
\end{equation}
(see Lemma \ref{lem:pi0-omnibus} and Lemma \ref{lem:normal-components}) and \cite[Lemma 4.6.13]{KP2018}, the group $\ms{A}(\bm{\mc{G}})$ acts transitively on $\pi_0(\ms{S}_{\mathsf{K}_p}^\mf{d}(\mb{G},\mb{X})_{\mc{O}_{E^\ur}})$. Thus, for any component $\ms{T}^+$ of $\ms{S}_{\mathsf{K}_p}^\mf{d}(\mb{G},\mb{X})_{\mc{O}_{E^\ur}}$ we may choose some $g$ in $\ms{A}(\bm{\mc{G}})$ such that
\begin{equation*}
    [g]\colon \ms{S}^+\isomto \ms{T}^+.
\end{equation*}
We define 
\begin{equation*}
    \ms{P}_{\mathsf{K}_p,\mc{O}_{E^\ur},\ms{T}^+}\defeq [g]^\ast\ms{P}^+.
\end{equation*}
To see that this is canonically independent of the choice of $g$ we again note by \cite[Corollary 2.7.10]{PR2021} that it suffices to verify this on the generic fiber. There, the statement follows from the fact that $\ms{P}^+$ models the restriction of $\ms{P}_{\mathsf{K}_p,E^\ur}$ to $\ms{S}^+_{E^\ur}$, and the $\ms{A}(\bm{\mc{G}})$-equivariance of $\ms{P}_{\mathsf{K}_{p},E^\ur}$. Moreover, by the same logic we see that $\ms{P}_{\mathsf{K}_p,\mc{O}_{E^\ur},\ms{T}^+}$ models  the restriction of $\ms{P}_{\mathsf{K}_{p,1},E^\ur}$ to $\ms{T}^+_{E^\ur}$.

For each neat compact open subgroup $\mathsf{K}^p\subseteq \mb{G}(\A_f^p)$ and each connected component $\ms{T}_{\mathsf{K}^p}^+$ of $\ms{S}_{\mathsf{K}^p}^\mf{d}(\mb{G},\mb{X})_{\mc{O}_{E^\ur}}$ we may choose any connected component $\ms{T}^+$ of $\ms{S}_{\mathsf{K}_p}^\mf{d}(\mb{G},\mb{X})_{\mc{O}_{E^\ur}}$ mapping to it. The fact here that the map
\begin{equation*}
   \pi_0(\ms{S}_{\mathsf{K}_p}^\mf{d}(\mb{G},\mb{X})_{\mc{O}_{E^\ur}})\to  \pi_0(\ms{S}_{\mathsf{K}_p\mathsf{K}^p}^\mf{d}(\mb{G},\mb{X})_{\mc{O}_{E^\ur}})
\end{equation*}
is surjective follows from Lemma \ref{lem:pi0-omnibus}. 
\begin{lemma}
    The shtuka $\ms{P}_{\eK_p, \mc{O}_{E^\ur}, \ms{T}^+}$ descends along the map $\ms{T}^+ \to \ms{T}_{\eK^p}^+$ to a shtuka $\ms{P}_{\eK^p, \mc{O}_{E^\ur},\ms{T}^+_{\eK^p}}$ which models the restriction of $\ms{P}_{\eK^p, E^\ur}$ to $\ms{T}^+_{\eK^p,E^\ur}$. 
\end{lemma}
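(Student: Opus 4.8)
The plan is to realize $q\colon\ms{T}^+\to\ms{T}^+_{\mathsf{K}^p}$ as a torsor under an open compact subgroup of $\mathsf{K}^p$, and then to descend $\ms{P}\defeq\ms{P}_{\mathsf{K}_p,\mc{O}_{E^\ur},\ms{T}^+}$ by producing a descent datum which, via the usual reduction to the generic fibre, is forced by the $\mb{G}(\A_f^p)$-equivariance already available in characteristic $0$.

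First I would pin down the geometry of $q$. Let $H\defeq\mathrm{Stab}_{\mathsf{K}^p}(\ms{T}^+)$ be the stabiliser of the component $\ms{T}^+$ for the $\mathsf{K}^p$-action on $\pi_0(\ms{S}^\mf{d}_{\mathsf{K}_p}(\mb{G},\mb{X})_{\mc{O}_{E^\ur}})$; it is an open compact subgroup of $\mathsf{K}^p$. Since $\ms{S}^\mf{d}_{\mathsf{K}_p}(\mb{G},\mb{X})/\mathsf{K}^p=\ms{S}^\mf{d}_{\mathsf{K}_p\mathsf{K}^p}(\mb{G},\mb{X})$ and $\ms{T}^+_{\mathsf{K}^p}$ is connected, the preimage of $\ms{T}^+_{\mathsf{K}^p}$ in $\ms{S}^\mf{d}_{\mathsf{K}_p}(\mb{G},\mb{X})_{\mc{O}_{E^\ur}}$ is exactly the $\mathsf{K}^p$-orbit of $\ms{T}^+$, so $q$ presents $\ms{T}^+_{\mathsf{K}^p}$ as the quotient $\ms{T}^+/H$. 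Writing $\ms{T}^+=\varprojlim_{\mathsf{K}^{p\prime}}\ms{T}^+_{\mathsf{K}^{p\prime}}$ over neat compact open $\mathsf{K}^{p\prime}\subseteq\mathsf{K}^p$ (with $\ms{T}^+_{\mathsf{K}^{p\prime}}$ the component of $\ms{S}^\mf{d}_{\mathsf{K}_p\mathsf{K}^{p\prime}}(\mb{G},\mb{X})_{\mc{O}_{E^\ur}}$ through which $\ms{T}^+$ maps), and using that each $\ms{T}^+_{\mathsf{K}^{p\prime}}\to\ms{T}^+_{\mathsf{K}^p}$ is a finite \'etale surjection of connected schemes by Theorem \ref{thm:KPZ-model-properties} (a finite \'etale map onto a connected target being surjective), one checks — invoking neatness of $\mathsf{K}^p$ to see the relevant finite group actions are free — that $q$ is a pro-(finite \'etale) $H$-torsor; in particular $q^{\lozenge/}$ is a $v$-cover of $v$-sheaves.

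Next, since $\mc{G}^c$-shtukas satisfy descent along $v$-covers (see \cite[\S2.3]{PR2021}; equivalently, iterate \cite[Proposition 19.5.3]{SW2020} over a cofinal family of open normal subgroups of $H$), descending $\ms{P}$ along $q$ amounts to equipping $\ms{P}$ with an $H$-descent datum, i.e.\ a continuous action of $H$ lying over the $H$-action on $\ms{T}^+$ and satisfying the cocycle condition. Here I would invoke \cite[Corollary 2.7.10]{PR2021}: the restriction functor from $\mc{G}^c$-shtukas on the normal flat $\mc{O}_{E^\ur}$-scheme $\ms{T}^+$ to $\mc{G}^c$-shtukas on $\ms{T}^+_{E^\ur}$ is fully faithful, so it suffices to build the datum — and check its cocycle condition — on $\ms{P}|_{\ms{T}^+_{E^\ur}}$. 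But by the construction of $\ms{P}_{\mathsf{K}_p,\mc{O}_{E^\ur},\ms{T}^+}$ above, $\ms{P}|_{\ms{T}^+_{E^\ur}}$ is canonically identified with the restriction of $\ms{P}_{\mathsf{K}_p,E^\ur}$ to $\ms{T}^+_{E^\ur}$, and that identification is $H$-equivariant, since the independence-of-$g$ argument there rests precisely on the $\ms{A}(\bm{\mc{G}})$-equivariance of $\ms{P}_{\mathsf{K}_p,E^\ur}$ together with $H\subseteq\mathsf{K}^p\subseteq\mb{G}(\A_f^p)\to\ms{A}(\bm{\mc{G}})$. As $\ms{T}^+_{E^\ur}$ is $H$-stable, the $\mb{G}(\A_f^p)$-equivariant structure on $\ms{P}_{\mathsf{K}_p,E^\ur}$ restricts to such an $H$-descent datum on $\ms{P}_{\mathsf{K}_p,E^\ur}|_{\ms{T}^+_{E^\ur}}$, whose descent along $\ms{T}^+_{E^\ur}\to\ms{T}^+_{\mathsf{K}^p,E^\ur}$ is, by the compatibility isomorphisms of Proposition \ref{prop:generic-fiber-shtuka}, exactly the restriction of $\ms{P}_{\mathsf{K}^p,E^\ur}$. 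Transporting this datum across the identification gives the desired datum on $\ms{P}$; lifting it to $\ms{T}^+$ by full faithfulness and then applying $v$-descent along $q$ produces $\ms{P}_{\mathsf{K}^p,\mc{O}_{E^\ur},\ms{T}^+_{\mathsf{K}^p}}$, and the modeling assertion is then automatic, since its generic fibre is by construction the descent of $\ms{P}_{\mathsf{K}_p,E^\ur}|_{\ms{T}^+_{E^\ur}}$, namely the restriction of $\ms{P}_{\mathsf{K}^p,E^\ur}$.

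The step I expect to be the main obstacle is organisational rather than conceptual: one must be careful that $q$ genuinely is an $H$-torsor in the pro-finite-\'etale sense — so that descent applies beyond the finite-\'etale situation handled earlier — which rests on neatness of $\mathsf{K}^p$ together with careful tracking of connected components through the transition maps, and one must verify that the identification $\ms{P}|_{\ms{T}^+_{E^\ur}}\cong\ms{P}_{\mathsf{K}_p,E^\ur}|_{\ms{T}^+_{E^\ur}}$ is truly $\mathsf{K}^p$-equivariant rather than merely an abstract isomorphism, since this is what licences transporting the descent datum. Both are handled by unwinding the construction of $\ms{P}_{\mathsf{K}_p,\mc{O}_{E^\ur},\ms{T}^+}$ above together with the uniqueness statement \cite[Corollary 2.7.10]{PR2021}.
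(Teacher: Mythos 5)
Your proof is correct and takes essentially the same route as the paper: reduce to constructing descent data via \cite[Proposition 19.5.3]{SW2020}, transport the generic-fibre descent datum coming from the $\mb{G}(\A_f^p)$-equivariance of $\ms{P}_{\mathsf{K}_p,E^\ur}$ to the integral shtuka using the full faithfulness of \cite[Corollary 2.7.10]{PR2021}, and then read off the modeling assertion from the matching descent data on the generic fibre. The only cosmetic difference is that you isolate a stabilizer subgroup $H$ and speak of a pro-(finite \'etale) $H$-torsor (a claim one should be a little careful about, since central elements may act trivially, which is why the paper contents itself with calling $\ms{T}^+\to\ms{T}^+_{\mathsf{K}^p}$ a profinite \'etale cover — no actual torsor structure is needed for \cite[Proposition 19.5.3]{SW2020} to apply), and the paper additionally invokes \cite[Lemma 2.2.5]{Kisin2010} to record the independence-of-choices, which your argument subsumes via the uniqueness clause of \cite[Corollary 2.7.10]{PR2021}.
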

\begin{proof}
The map $\ms{T}^+\to \ms{T}^+_{\mathsf{K}^p}$ is a profinite \'etale cover, and thus by \cite[Proposition 19.5.3]{SW2020} to show that $\ms{P}_{\mathsf{K}_p,\mc{O}_{E^\ur},\ms{T}^+}$ descends along this map, it suffices to produce descent data. Again by \cite[Corollary 2.7.10]{PR2021}, this data can be obtained using the descent data for the restriction of $\ms{P}_{\mathsf{K}_{p,1},E^\ur}$ to $\ms{T}^+_{E^\ur}$ relative to the map $\ms{T}^+_{E^\ur}\to \ms{T}^+_{\mathsf{K}^p, E^\ur}$. 

It is clear by the definition of $\ms{P}_{\mathsf{K}_p,\mc{O}_{E^\ur},\ms{T}^+}$, \cite[Lemma 2.2.5]{Kisin2010}, and the equivariance of the map $\ms{T}^+\to \ms{T}^+_{\mathsf{K}^p}$ that this descent is independent of all choices, and hence we obtain $\ms{P}_{\mathsf{K}^p,\mc{O}_{E^\ur},\ms{T}^+_{\mathsf{K}^p}}$. We conclude by observing that $\ms{P}_{\mathsf{K}^p,\mc{O}_{E^\ur},\ms{T}^+_{\mathsf{K}^p}}$ models the restriction of $\ms{P}_{\mathsf{K}^p,E^\ur}$ to $\ms{T}^+_{\mathsf{K}^p,E^\ur}$ since, by definition, they have the same descent data relative to the map $\ms{T}^+_{E^\ur}\to \ms{T}^+_{\mathsf{K}^p,E^\ur}$.  
\end{proof}

Finally, we can construct the desired $\mc{G}^c$-shtuka on $\ms{S}_{\eK_p\eK^p}^\mf{d}(\mb{G}, \mb{X})_{\mc{O}_{E^\ur}}$.
\begin{cons}\label{cons:shtuka}

We set
\begin{equation*}
    \ms{P}_{\mathsf{K}^p,\mc{O}_{E^\ur}}=\bigsqcup_{\ms{T}_{\mathsf{K}^p}^+\in\pi_0(\ms{S}_{\eK_p\eK^p}^\mf{d}(\mb{G},\mb{X})_{\mc{O}_{E^\ur}})}\ms{P}_{\mathsf{K}^p,\mc{O}_{E^\ur},\ms{T}_{\mathsf{K}^p}^+}.
\end{equation*}
More precisely, let $k_E$ denote the residue field of $E$. Then, as $\pi_0(\ms{S}_{\eK_p\eK^p}^\mf{d}(\mb{G},\mb{X})_{\mc{O}_{E^\ur}})$ is finite, the set of objects $S$ of $\mb{Perf}_{k_E}$ such that $S\to (\ms{S}_{\eK_p\eK^p}^\mf{d}(\mb{G},\mb{X})_{\mc{O}_{E^\ur}})^{\lozenge /}$ factorizes through some $(\ms{T}_{\mathsf{K}^p}^+)^{\lozenge /}$ is a basis of $\mb{Perf}_{k_E}$, and on such an object we define the $\mc{G}^c$-shtuka over $S$ to be that determined by the (unique) map $S\to (\ms{T}_{\mathsf{K}^p}^+)^{\lozenge /}$.
\end{cons}

\begin{proof}[Proof of Theorem \ref{prop:construction-of-shtuka}] It is clear from our definition of $\ms{P}_{\mathsf{K}^p,\mc{O}_{E^\ur}}$ that the collection $\{\ms{P}_{\eK^p, \mc{O}_{E^\ur}}\}_{\eK^p}$ forms a compatible family of $\mc{G}^c$-shtukas over $\ms{S}_{\mathsf{K}^p}^\mf{d}(\mb{G},\mb{X})_{\mc{O}_{E^\ur}}$. Also, by design, $\ms{P}_{\mathsf{K}^p,\mc{O}_{E^ur}}$ models $\ms{P}_{\mathsf{K}^p,E^\ur}$ since it does so on each connected component. Theorem \ref{prop:construction-of-shtuka} follows.
\end{proof}

\subsection{The completion of the proof}

We are now prepared to prove Theorem \ref{thm:main}. Given Theorem \ref{thm:KPZ-model-properties} and Theorem \ref{prop:construction-of-shtuka} it remains to verify the following proposition. 
\begin{prop}\label{prop:complete-local-rings}
    For any neat compact open subgroup $\mathsf{K}^p\subseteq \mb{G}(\A_f^p)$, and $x$ in $\ms{S}_{\mathsf{K}^p}^\mf{d}(\mb{G},\mb{X})(\ov{k}_E)$, there exists an isomorphism of $v$-sheaves
\begin{equation*}
    \Theta_x\colon \left(\mc{M}^\mr{int}_{\mc{G}^c,b_x,{\mbbmus^c_h}}\right)^\wedge_{/x_0}\isomto \left(\ms{S}_{\mathsf{K}^p}^\mf{d}(\mb{G},\mb{X})^\wedge_{/x}\right)^\lozenge,
\end{equation*}
such that $\Theta_x^\ast(\ms{P}_{\mathsf{K}^p})$ is isomorphic to $\ms{P}^\mr{univ}$, with notation as in Definition \ref{Def:PRAxioms}.
\end{prop}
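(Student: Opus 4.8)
The plan is to reduce the statement, along a chain of finite \'etale morphisms, to the already-known Hodge-type and toral-type cases (Theorem~\ref{thm:PR-conj-for-Hodge-type} and Theorem~\ref{thm:PR-conj-for-toral-type}), and on the integral-local side to the fiber-product decomposition of integral local Shimura varieties coming from Proposition~\ref{prop:Bc} together with a framed analogue of Corollary~\ref{cor:fiber-product-of-shtukas}. It suffices to treat the infinite-level-at-$p$ models $\ms{S}^\mf{d}_{\mathsf{K}_p}(\mb{G},\mb{X})$, since the pro-\'etale transition maps induce isomorphisms on formal completions at $\ov{k}_E$-points; throughout we use that $\mf{d}$ is well-adapted, so that $\mb{E}'$ splits completely over $\mb{E}$ at every place above $p$ and everything is defined over $\mc{O}_E=\mc{O}_{E'}$, and we freely invoke Lemma~\ref{lem:Gc-functoriality}, which gives $\mc{G}_1^c=\mc{G}_1$ and $(\mc{G}_1^\ab)^c=\mc{G}_1^\ab$.

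First I would fix compatible neat compact open subgroups and consider the composite $\mc{O}_E$-morphism $\pi\colon\ms{S}^\mf{d}_{\mathsf{L}'_{p,1}}(\mb{B}_1',\mb{Y}_1')_{\mc{O}_E}\to\ms{S}^\mf{d}_{\mathsf{K}_p}(\mb{G},\mb{X})$ from the diagram of Proposition~\ref{prop:big-integral-model-diagram}, i.e.\ the map attached by Proposition~\ref{prop:funct-for-ad-isom} to the ad-isomorphisms $\mb{B}_1'\to\mb{B}\to\mb{G}$. This morphism is finite \'etale at each finite level (Proposition~\ref{prop:funct-for-ad-isom}), and surjective on special fibers since it is finite, dominant, and both sides are $\mc{O}_E$-flat. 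Choosing a lift $\tilde x$ of $x$, the morphism $\pi$ identifies $\ms{S}^\mf{d}_{\mathsf{K}_p}(\mb{G},\mb{X})^\wedge_{/x}$ with $\ms{S}^\mf{d}_{\mathsf{L}'_{p,1}}(\mb{B}_1',\mb{Y}_1')^\wedge_{/\tilde x}$, and by the construction of $\ms{P}_{\mathsf{K}_p}$ in \S\ref{s:construction-of-shtuka} the pullback $\pi^\ast\ms{P}_{\mathsf{K}_p}$ is the pushforward along ${\mc{B}_1'}^{c}\to\mc{G}^c$ of the ${\mc{B}_1'}^{c}$-shtuka $\ms{P}_{\mathsf{L}'_{p,1}}$; in particular $b_x$ is the image of $b_{\tilde x}$. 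Next, applying Proposition~\ref{prop:funct-for-ad-isom} to the ad-isomorphism $\mb{B}_1'\to\mb{G}_1\times\mb{T}_1'$, together with Proposition~\ref{prop:KPZ-for-products} and Proposition~\ref{prop:HT-basic-model-comparison}, I obtain a further finite \'etale, surjective $\mc{O}_E$-morphism $\ms{S}^\mf{d}_{\mathsf{L}'_{p,1}}(\mb{B}_1',\mb{Y}_1')_{\mc{O}_E}\to\ms{S}^\vn_{\mathsf{K}_{p,1}}(\mb{G}_1,\mb{X}_1)_{\mc{O}_E}\times_{\mc{O}_E}\ms{S}_{\mathsf{M}'_{p,1}}(\mb{T}_1')_{\mc{O}_E}$, which again identifies formal completions and transports the shtuka to a pushforward, reducing us to the completion of this product at the image $(\tilde x_1,\tilde x_2)$ of $\tilde x$.

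Since the toral model $\ms{S}_{\mathsf{M}'_{p,1}}(\mb{T}_1')_{\mc{O}_E}$ is finite over $\mc{O}_E$ (Proposition~\ref{prop:toral-type}), the completion of the product at $(\tilde x_1,\tilde x_2)$ is a finite flat base change of $\ms{S}^\vn_{\mathsf{K}_{p,1}}(\mb{G}_1,\mb{X}_1)^\wedge_{/\tilde x_1}$; I then apply Theorem~\ref{thm:PR-conj-for-Hodge-type} (recalling $\mc{G}_1^c=\mc{G}_1$) to identify $(\ms{S}^\vn_{\mathsf{K}_{p,1}}(\mb{G}_1,\mb{X}_1)^\wedge_{/\tilde x_1})^\lozenge$ with $(\mc{M}^\mr{int}_{\mc{G}_1,b_{\tilde x_1},\mbbmus_{h_1}})^\wedge_{/(\tilde x_1)_0}$ compatibly with the universal shtuka, and Theorem~\ref{thm:PR-conj-for-toral-type} to identify $(\ms{S}_{\mathsf{M}'_{p,1}}(\mb{T}_1')^\wedge_{/\tilde x_2})^\lozenge$ with $(\mc{M}^\mr{int}_{{\mc{T}_1'}^{c},b_{\tilde x_2},\mbbmus_2^c})^\wedge_{/(\tilde x_2)_0}$ compatibly with its universal shtuka. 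On the integral-local side, the identification ${\mc{B}_1'}^{c}\cong\mc{G}_1\times_{\mc{G}_1^\ab}{\mc{T}_1'}^{c}$ of Proposition~\ref{prop:Bc}, fed through a framed version of Corollary~\ref{cor:fiber-product-of-shtukas} (shtukas together with their Frobenius framings glue along fiber products of groups), presents $\mc{M}^\mr{int}_{{\mc{B}_1'}^{c},b_{\tilde x},\mbbmus'}$ as $\mc{M}^\mr{int}_{\mc{G}_1,\ldots}\times_{\mc{M}^\mr{int}_{\mc{G}_1^\ab,\ldots}}\mc{M}^\mr{int}_{{\mc{T}_1'}^{c},\ldots}$; as $\mc{G}_1^\ab$ is a torus, the relevant formal completion of $\mc{M}^\mr{int}_{\mc{G}_1^\ab,\ldots}$ is trivial, so this fiber product becomes the product of completions over $\mc{O}_{\breve E}$, matching the product computed on the Shimura-variety side. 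Finally the ad-isomorphism ${\mc{B}_1'}^{c}\to\mc{G}^c$ induces an isomorphism of formal completions $(\mc{M}^\mr{int}_{{\mc{B}_1'}^{c},b_{\tilde x},\mbbmus'})^\wedge_{/\tilde x_0}\isomto(\mc{M}^\mr{int}_{\mc{G}^c,b_x,\mbbmus_h^c})^\wedge_{/x_0}$ compatible with pushforward of the universal shtuka. Composing all of these identifications and tracking the universal shtuka (a pushforward of the previous one at each stage, terminating at $\pi^\ast\ms{P}_{\mathsf{K}_p}$) yields the desired $\Theta_x$ with $\Theta_x^\ast\ms{P}_{\mathsf{K}_p}\simeq\ms{P}^\mr{univ}$; the matching of base points $x_0$ and of cocharacter conjugacy classes under the various maps is read off from functoriality of the Hodge cocharacter along the diagram of Proposition~\ref{prop:big-integral-model-diagram}, and that of $b_x$ from functoriality of the associated shtukas.

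The main obstacle I expect is the final input above: establishing, \emph{with} the universal-shtuka compatibility, that the formation of the integral local Shimura variety is compatible with these fiber products and ad-isomorphisms of parahoric group schemes at the level of formal completions of base points. The delicate point is that ${\mc{B}_1'}^{c}\to\mc{G}_1\times{\mc{T}_1'}^{c}$, and likewise ${\mc{B}_1'}^{c}\to\mc{G}^c$, is an ad-isomorphism which is \emph{not} a closed immersion, so Proposition~\ref{prop:ad-isomorphism-parahoric} does not apply directly; one must instead argue that the relevant deformation functors --- equivalently, the formal completions of $\mc{M}^\mr{int}$ --- are unchanged under modification of the group scheme by a central ($R$-smooth) torus, for instance via the local-model comparison of \cite{SW2020} and \cite{Gleason2022}, or by extending Corollary~\ref{cor:fiber-product-of-shtukas} to the framed moduli problem defining $\mc{M}^\mr{int}$. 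A secondary, more routine point is to check that all of these formal-completion identifications are compatible with varying $\mathsf{K}^p$ and with the $\mb{G}(\A_f^p)$-actions, which follows formally from the corresponding compatibilities in the Hodge-type and toral-type cases and from the equivariance built into Proposition~\ref{prop:big-integral-model-diagram}.
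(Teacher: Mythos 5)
Your overall strategy matches the paper's: reduce along the finite \'etale morphisms of Proposition \ref{prop:big-integral-model-diagram} to the product $\ms{S}^\vn_{\mathsf{K}_{p,1}}(\mb{G}_1,\mb{X}_1)\times\ms{S}_{\mathsf{M}'_{p,1}}(\mb{T}_1')$ (via Propositions \ref{prop:funct-for-ad-isom}, \ref{prop:KPZ-for-products}, and \ref{prop:HT-basic-model-comparison}), invoke Theorems \ref{thm:PR-conj-for-Hodge-type} and \ref{thm:PR-conj-for-toral-type} there, and transport back. Where you diverge from the paper is precisely on the point you flag as the ``main obstacle'': how $\mc{M}^\mr{int}$ behaves under the relevant modifications of the group scheme.

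You propose a framed fiber-product decomposition of $\mc{M}^\mr{int}$ (a framed analogue of Corollary \ref{cor:fiber-product-of-shtukas}). The paper does not need, and does not prove, such a decomposition. Its key input is Proposition \ref{prop:shtuka-completion-ad-isom} (quoted from \cite[Proposition 5.3.1]{PR2021}): for an \emph{ad-isomorphism} $f\colon(\mc{G}_1,b_1,\mbbmu_1)\to(\mc{G},b,\mbbmu)$ of parahoric local Shimura data, $f$ induces an isomorphism $\bigl(\mc{M}^\mr{int}_{\mc{G}_1,b_1,\mbbmus_1}\bigr)^\wedge_{/x_1}\isomto\bigl(\mc{M}^\mr{int}_{\mc{G},b,\mbbmus}\bigr)^\wedge_{/x}\times_{\Spd\mc{O}_E}\Spd\mc{O}_{E_1^\ur}$ identifying $f^\ast\ms{P}^\mr{univ}_{\mc{G}}$ with $\ms{P}^\mr{univ}_{\mc{G}_1}\times^{\mc{G}_1}\mc{G}$. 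Combined with the (elementary) compatibility of $\mc{M}^\mr{int}$ with \emph{products} of groups, this is applied twice: once for ${\mc{B}_1'}^c\to\mc{G}_1\times{\mc{T}_1'}^c$ (reducing to the product, after noting the toral completion is a point), and once for ${\mc{B}_1'}^c\to\mc{G}^c$. This is exactly the ``modification by a central torus leaves formal completions unchanged'' statement you say you need; it is imported from Pappas--Rapoport, not re-derived via local models or a framed gluing. Then, to deduce $\Theta_w^\ast\ms{P}_{\mathsf{L}'_{p,1}}\simeq\ms{P}^\mr{univ}_w$ from the corresponding identity after pushing forward to $\mc{G}_1\times{\mc{T}_1'}^c$, the paper cites \cite[Proposition 5.1.3]{PR2022}: the map $\mc{M}^\mr{int}_{{\mc{B}_1'}^c,\ldots}\to\mc{M}^\mr{int}_{\mc{G}_1\times{\mc{T}_1'}^c,\ldots}$ is a closed immersion because ${\mc{B}_1'}^c(\Z_p^\ur)=(\mc{G}_1(\Z_p^\ur)\times{\mc{T}_1'}^c(\Z_p^\ur))\cap B_1'^c(\Q_p^\ur)$ (a consequence of Proposition \ref{prop:Bc}).

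One concrete error in your account: you assert that ${\mc{B}_1'}^c\to\mc{G}_1\times{\mc{T}_1'}^c$ is an ad-isomorphism that is \emph{not} a closed immersion. It is a closed immersion. Proposition \ref{prop:Bc} identifies ${\mc{B}_1'}^c$ with $\mc{G}_1\times_{\mc{G}_1^\ab}{\mc{T}_1'}^c$, and the map to $\mc{G}_1\times{\mc{T}_1'}^c$ is the base change of the diagonal of the separated affine group scheme $\mc{G}_1^\ab$. The map that genuinely fails to be a monomorphism is ${\mc{B}_1'}^c\to\mc{G}^c$, and for that one Proposition \ref{prop:shtuka-completion-ad-isom} is the correct tool. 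In short: your skeleton is right and your diagnosis of where the difficulty lies is right, but the route you suggest for filling it (a framed Corollary \ref{cor:fiber-product-of-shtukas}) is not the one taken, and would itself require nontrivial work; the paper instead leans entirely on the two Pappas--Rapoport statements just cited.
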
 

The key to constructing such an isomorphism is the following result of Pappas--Rapoport.

\begin{prop}[{\cite[Proposition 5.3.1]{PR2021}}]\label{prop:shtuka-completion-ad-isom} Suppose that $f\colon (\mc{G}_1,\mbbmu_1,b_1)\to (\mc{G},b,\mbbmu)$ is an ad-isomorphism of local Shimura data, and that $x_1$ is an element of $\mc{M}^\mr{int}_{\mc{G}_1,b_1,{\mbbmus_1}}(\ov{k}_{E_1})$ with image $x$ in $\mc{M}^\mr{int}_{\mc{G},b,{\mbbmus}}(\ov{k}_{E})$ $($note that $\ov{k}_{E_1}=\ov{k}_E$$)$. Then the induced map
\begin{equation*}
    f\colon \left(\mc{M}^\mr{int}_{\mc{G}_1,b_1,{\mbbmus_1}}\right)^\wedge_{/x_1}\to \left(\mc{M}^\mr{int}_{\mc{G},b,{\mbbmus}}\right)^\wedge_{/x} \times_{\Spd(\mc{O}_E)} \Spd(\mc{O}_{E^\ur_1}),
\end{equation*}
is an isomorphism, and there is an identification
\begin{equation*}
    f^\ast(\ms{P}_{\mc{G}}^\mr{univ})\simeq \ms{P}_{\mc{G}_1}^\mr{univ}\times^{\mc{G}_1}\mc{G}.
\end{equation*}
\end{prop}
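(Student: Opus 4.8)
The plan is to construct the map of the statement directly by pushing shtukas out along $f$, to read off the assertion about universal shtukas as a formality, and then to prove that the induced map of formal neighbourhoods is an isomorphism by a deformation-theoretic computation. Concretely, given a perfectoid space $S$ over $\mc{O}_{\breve{E}_1}$ and a point $(S^\sharp,\sP,\phi_\sP,i_r)$ of $\mc{M}^\mr{int}_{\mc{G}_1,b_1,\mbbmus_1}(S)$, I would form the pushout $f_\ast\sP\defeq\sP\times^{\mc{G}_1}\mc{G}$, equipped with the induced Frobenius $\phi_{f_\ast\sP}$ and with the trivialization $f_\ast(i_r)$, which over $\mc{Y}_{[r,\infty)}(S)$ identifies $\phi_{f_\ast\sP}$ with $f(b_1)\times\mr{Frob}_S=b\times\mr{Frob}_S$. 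The one point requiring verification is that $f_\ast\sP$ is bounded by $\mbbmu$: boundedness is a condition at the leg, read off from the relative position in the relevant (Beilinson--Drinfeld / Witt-vector) affine Grassmannian, and since $f$ carries $\mbbmu_1$ to $\mbbmu$ the induced map $\Gr_{\mc{G}_1}\to\Gr_{\mc{G}}$ sends the $\mbbmu_1$-bounded locus into the $\mbbmu$-bounded locus; this may be checked after the faithfully flat map to the common adjoint group $\mc{G}_1^\ad=\mc{G}^\ad$, which disposes of the fact that $f$ is in general neither injective nor surjective as a map of $\Z_p$-group schemes. This produces a morphism of $v$-sheaves $f_\ast\colon\mc{M}^\mr{int}_{\mc{G}_1,b_1,\mbbmus_1}\to\mc{M}^\mr{int}_{\mc{G},b,\mbbmus}\times_{\Spd(\mc{O}_{\breve E})}\Spd(\mc{O}_{\breve E_1})$ sending $x_1$ to $(x,\ast)$, hence, by functoriality of the formal-neighbourhood construction of \cite{Gleason2022}, the map on formal completions in the statement. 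Since the universal $\mc{G}$-shtuka pulls back along any map to exactly the shtuka that map classifies, one gets tautologically $f_\ast^\ast(\ms{P}^\mr{univ}_{\mc{G}})\simeq\ms{P}^\mr{univ}_{\mc{G}_1}\times^{\mc{G}_1}\mc{G}$, which is the second assertion; it remains to prove the isomorphism statement.

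For that, I would reduce to a comparison of deformation rings. By \cite[Lecture 25]{SW2020} and \cite{Gleason2022}, the formal completion $(\mc{M}^\mr{int}_{\mc{G},b,\mbbmus})^\wedge_{/x}$ is represented by $\Spf$ of a complete Noetherian local $\mc{O}_{\breve E}$-algebra $R_{\mc{G},b,x}$ with residue field $\ov{k}_E$, and likewise $(\mc{M}^\mr{int}_{\mc{G}_1,b_1,\mbbmus_1})^\wedge_{/x_1}=\Spf R_{\mc{G}_1,b_1,x_1}$ over $\mc{O}_{\breve E_1}$. The statement is therefore equivalent to the induced map of $\mc{O}_{\breve E_1}$-algebras
\begin{equation*}
    R_{\mc{G},b,x}\,\widehat{\otimes}_{\mc{O}_{\breve E}}\,\mc{O}_{\breve E_1}\longrightarrow R_{\mc{G}_1,b_1,x_1}
\end{equation*}
being an isomorphism.

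To see this, I would use the crystalline description of these rings together with the local model: $\Spf R_{\mc{G},b,x}$ pro-represents the deformation functor of the $\mc{G}$-shtuka $x^\ast\ms{P}^\mr{univ}$; because $\mbbmu$ is minuscule this functor is formally smooth over $\mc{O}_{\breve E}$, and the canonical map to the completion of the local model $\Mloc_{\mc{G},\mbbmus}$ at the corresponding point is formally smooth (and likewise for $\mc{G}_1$). An ad-isomorphism induces, after base change to $\mc{O}_{E_1}$, a canonical isomorphism $\Mloc_{\mc{G}_1,\mbbmus_1}\otimes\mc{O}_{E_1}\isomto\Mloc_{\mc{G},\mbbmus}\otimes\mc{O}_{E_1}$ compatible with the Hodge-filtration data — the local model depends only on the parahoric and on the geometric conjugacy class of the cocharacter, both preserved by $f$ — and $f_\ast$ intertwines the two local-model diagrams. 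Finally, the induced map of relative tangent spaces $\mr{Lie}(\mc{G}_{1,\ov{k}_E})/\mr{Fil}^0\to\mr{Lie}(\mc{G}_{\ov{k}_E})/\mr{Fil}^0$ is an isomorphism: the kernel and cokernel of $\mr{Lie}(f)$ are central, and since $\mbbmu$ is minuscule the centre acts with weight $0$ on the Lie algebra, so $\mr{Lie}(Z)$ lies in $\mr{Fil}^0$ on both sides. Thus both rings are formally smooth over the same base-changed completed local model, with the same tangent space and the same special fibre, and $f_\ast$ identifies them.

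The main obstacle is exactly the input invoked in the previous paragraph: that the formal neighbourhood of $\mc{M}^\mr{int}$ is pro-represented by the deformation ring of the corresponding $\mc{G}$-shtuka, that this ring is formally smooth over the completed local model, and that this entire package is functorial in ad-isomorphisms of parahoric local Shimura data. The residual nuisance that $f\colon\mc{G}_1\to\mc{G}$ is in general neither a closed immersion nor a flat surjection of $\Z_p$-group schemes I would handle by factoring $f$ into a central quotient followed by a closed immersion with torus cokernel, the first factor contributing no new deformations and the second having rigid $\mc{M}^\mr{int}$ in the extra (toric) directions. Everything else — the pushout construction, the boundedness check, and the identity of universal shtukas — is formal.
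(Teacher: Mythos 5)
First, a structural point: the paper does not prove this proposition at all — it is quoted from \cite[Proposition 5.3.1]{PR2021} and used as a black box — so there is no internal proof to compare against; what follows is an assessment of your argument on its own terms. Your first paragraph is essentially fine: the pushout $f_\ast\sP=\sP\times^{\mc{G}_1}\mc{G}$, the induced map of $v$-sheaves and of formal completions, and the tautological identification $f_\ast^\ast(\ms{P}^\mr{univ}_{\mc{G}})\simeq\ms{P}^\mr{univ}_{\mc{G}_1}\times^{\mc{G}_1}\mc{G}$ are all correct and formal. (One quibble: boundedness of the pushforward is not literally ``checked after the map to the adjoint group'' — bounds are not insensitive to the center and the bounded locus in $\Gr_{\mc{G}}$ is not the preimage of the one in $\Gr_{\mc{G}^\ad}$ — but it does follow from functoriality of bounds for any homomorphism carrying $\mbbmu_1$ into $\mbbmu$, by taking flat closures of the generic-fiber containment.)

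The genuine gap is in your proof of the isomorphism statement. You reduce it to a map of complete Noetherian local rings by asserting that $\left(\mc{M}^\mr{int}_{\mc{G},b,\mbbmus}\right)^\wedge_{/x}$ is $\Spf$ of such a ring, pro-representing the deformation functor of $x^\ast\ms{P}^\mr{univ}$ and formally smooth over the completed local model. Neither input is available at this level of generality, and the first is circular in the present context: representability of $(\mc{M}^\mr{int})^\wedge_{/x}$ by a Noetherian formal scheme is precisely what condition (iv) of the Pappas--Rapoport conjecture delivers through $\Theta_x$, and for the groups $\mc{B}_1'^c$ and $\mc{G}^c$ to which this paper applies Proposition \ref{prop:shtuka-completion-ad-isom} that is exactly what is being proved. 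What is known unconditionally from \cite{Gleason2022} is only that $(\mc{M}^\mr{int})^\wedge_{/x}$ is a reasonable $v$-sheaf; a Grothendieck--Messing-type pro-representability statement and a local model diagram for $\mc{M}^\mr{int}$ are not. Two of your supporting claims are also false or inapplicable as stated: minuscularity of $\mbbmu$ does not make the deformation functor formally smooth over $\mc{O}_{\breve E}$ at parahoric level (parahoric local models are singular — this is the whole point of the local model diagram), and the tangent-space computation via $\mr{Lie}/\mr{Fil}^0$ with ``central kernel and cokernel'' is a reductive, characteristic-zero heuristic that does not transfer to the special fibers of parahoric group schemes. The correct mechanism is the one you relegate to a ``residual nuisance'' at the end: factor $f$ through the adjoint group and prove a rigidity statement for the toral directions directly on $(R,R^+)$-points of the $v$-sheaves, without presupposing representability. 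As written, the second half of your argument does not go through.
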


\begin{proof}[Proof of Proposition \ref{prop:complete-local-rings}]Consider the commutative diagram
\begin{equation}\label{eq:big-diag-main-proof-sec}
\begin{tikzcd}
	{\ms{S}_{\mathsf{M}'_{p,1}}(\mb{T}_1',\{h_{\mb{T}'_1}\})_{\mc{O}_E}} & {\ms{S}_{\mathsf{N}_{p,1}}(\mb{G}_1^\mr{ab},\mb{X}_1^\mr{ab})_{\mc{O}_E}} \\
	{\ms{S}^\mf{d}_{\mathsf{L}'_{p,1}}(\mb{B}'_1,\mb{Y}'_1)_{\mc{O}_E}} & {\ms{S}^\vn_{\mathsf{K}_{p,1}}(\mb{G}_1,\mb{X}_1)_{\mc{O}_E}} \\
	{\ms{S}^\mf{d}_{\mathsf{L}_{p}}(\mb{B},\mb{Y})} & {\ms{S}^\mf{d}_{\mathsf{K}_{p}}(\mb{G},\mb{X})}
	\arrow["a", from=2-1, to=2-2]
	\arrow[from=2-1, to=3-1]
	\arrow[from=3-1, to=3-2]
    \arrow["e",from=2-1, to=3-2]
	\arrow["b", from=2-2, to=1-2]
	\arrow["d", from=2-1, to=1-1]
	\arrow["c", from=1-1, to=1-2]
\end{tikzcd}
\end{equation}
from Proposition \ref{prop:big-integral-model-diagram}, where each map is equivariant for the relevant group homomorphism. Choose compact open subgroups 
\begin{equation*}
    {\mathsf{L}'}^{p}\subseteq \mb{B}_1'(\A_f^p),\quad \mathsf{K}^p\subseteq \mb{G}(\A_f^p),\quad {\mathsf{M}'}^{p}_1\subseteq \mb{T}'_1(\A_f^p),\ \text{ and } \ \mathsf{K}^{p}_1\subseteq\mb{G}_1(\A_f^p),
\end{equation*}
such that $(a,e)({\mathsf{L}'}^{p}_1)\subseteq \mathsf{K}_1^{p}\times {\mathsf{M}'_1}^{p}$ and $e({\mathsf{L}'}^{p}_1)\subseteq \mathsf{K}^p$. We then obtain the diagram
\begin{equation*}
   \begin{tikzcd}
	{\ms{S}_{{\mathsf{L}'_1}^{p}}^\mf{d}(\mathbf{B}_1',\mathbf{Y}'_1)_{\mathcal{O}_E}} & {\ms{S}_{\mathsf{K}_{1}^p}^\mf{d}(\mb{G}_1,\mb{X}_1)_{\mc{O}_E}\times \ms{S}_{{\mathsf{M}'_1}^p}(\mb{T}'_1,\{h_{\mb{T}'_1}\})_{\mc{O}_E}} \\
	{\ms{S}_{\mathsf{K}^p}^\mf{d}(\mb{G},\mb{X})} & {\ms{S}_{\mathsf{K}^p_{1}\times{\mathsf{M}'_1}^p}^\mf{d}(\mb{G}_1\times\mb{T}'_1,\mb{X}_1\times\{h_{\mb{T}'_1}\})_{\mc{O}_E},}
	\arrow["e"', from=1-1, to=2-1]
	\arrow["{(a,c)}", from=1-1, to=1-2]
	\arrow["\wr",from=1-2, to=2-2]
\end{tikzcd}
\end{equation*}
from \eqref{prop:big-integral-model-diagram} (excluding some terms) and Proposition \ref{prop:KPZ-for-products}.

Let us now fix a point $w$ of $\ms{S}^\mf{d}_{{\mathsf{L}'_1}^p}(\mb{B}_1',\mb{Y}_1')(\ov{k}_E)$. We set
\begin{equation*}
    x=e(w)\in {\ms{S}_{\mathsf{K}^p}^\mf{d}(\mb{G},\mb{X})}(\ov{k}_E),\qquad y=a(w)\in \ms{S}_{\mathsf{K}_{1}^p}^\mf{d}(\mb{G}_1,\mb{X}_1)_{\mc{O}_E}(\ov{k}_E),
\end{equation*}
and 
\begin{equation*}
    z=c(w)\in \ms{S}_{{\mathsf{M}'_1}^p}(\mb{T}'_1,\{h_{\mb{T}'_1}\})_{\mc{O}_E}(\ov{k}_E).
\end{equation*}
Since the maps
\begin{equation*}
    (\mb{B}'_1)^\der\to \mb{G}^\der\ \text{ and } \ (\mb{B}'_1)^\der\to (\mb{G}_1\times\mb{T}'_1)^\der
\end{equation*}
are isogenies, the maps 
\begin{equation*}
    \mb{B}'_1 \to \mb{G}\ \text{ and } \ \mb{B}'_1 \to \mb{G}_1\times\mb{T}'_1
\end{equation*}
are ad-isomorphisms by Proposition \ref{prop:ad-iso-equiv}. Thus, the maps $e$ and $(a,c)$ are finite \'etale by Proposition \ref{prop:funct-for-ad-isom}, and we obtain induced isomorphisms 
\begin{equation*}
    e\colon \wh{\mc{O}}_x\isomto,\wh{\mc{O}}_w \qquad (a,c)\colon \wh{\mc{O}}_y\otimes_{\mc{O}_{\breve{E}}}\wh{\mc{O}}_z\isomto \wh{\mc{O}}_w.
\end{equation*}
Here we use the obvious notation for the complete local rings of these Kisin--Pappas--Zhou models at these $\ov{k}_E$-points. We also use the fact that $\wh{\mc{O}}_z$ is isomorphic to $\mc{O}_{\breve{E}}$ (see \cite[Lemma 4.2]{DanielsToral}) to relate the local ring $(y,z)$ to the tensor product of local rings.

Now, by Theorem \ref{thm:PR-conj-for-Hodge-type} and Theorem \ref{thm:PR-conj-for-toral-type} we may choose isomorphisms
\begin{equation*}
    \Theta_y\colon \Big(\mc{M}^\mr{int}_{\mc{G}_1,b_y,{\mbbmus_y}}\Big)^\wedge_{/y_0}\isomto \Spf(\wh{\mc{O}}_y)^\lozenge \ \text{ and } \ \Theta_z\colon \Big(\mc{M}^\mr{int}_{{\mc{T}'_1}^c,b_z,{\mbbmus_z}}\Big)^\wedge_{/z_0}\isomto \Spf(\wh{\mc{O}}_z)^\lozenge,
\end{equation*}
such that $\Theta_y^\ast(\ms{P}_{\mathsf{K}^{p}_1})\simeq \ms{P}_y^\mr{univ}$ and $\Theta_z^\ast(\ms{P}_{\mathsf{M}^{'p}_1})\simeq \ms{P}_z^\mr{univ}$, where have used the obvious shortenings for the local cocharacters and universal shtukas. From these, we obtain an isomorphism
\begin{equation*}
    \Theta_y\times\Theta_z\colon \Big(\mc{M}^\mr{int}_{\mc{G}_1,b_y,{\mbbmus_y}}\Big)^\wedge_{/y_0}\times \Big(\mc{M}^\mr{int}_{{\mc{T}'_1}^c,b_z,{\mbbmus_z}}\Big)^\wedge_{/z_0}\isomto \Spf(\wh{\mc{O}}_y)^\lozenge\times\Spf(\wh{\mc{O}}_z)^\lozenge.
\end{equation*}

On the other hand, since ${B}_1'^c\to {G}_1\times{{T}'_1}^c$ is an ad-isomorphism, we see from Proposition \ref{prop:shtuka-completion-ad-isom} that we have an induced isomorphism
\begin{equation*}
    \Big(\mc{M}^\mr{int}_{\mc{B}_1'^c,b_w,{\mbbmus_w}}\Big)^\wedge_{/w_0}\isomto \Big(\mc{M}^\mr{int}_{\mc{G}_1\times{\mc{T}'_1}^{c},b_{(y,z)},{\mbbmus_{(y,z)}}}\Big)^\wedge_{/(y_0,z_0)}.
\end{equation*}
Using the fact that the formation of integral local Shimura varieties commutes with products in the obvious way, we obtain an induced isomorphism
\begin{equation*}
    (\alpha,\gamma)\colon \Big(\mc{M}^\mr{int}_{\mc{B}_1'^c,b_w,{\mbbmus_w}}\Big)^\wedge_{/w_0}\isomto \Big(\mc{M}^\mr{int}_{\mc{G}_1,b_{y},{\mbbmus_y}}\Big)^\wedge_{/y_0}\times \Big(\mc{M}^\mr{int}_{{\mc{T}'_1}^{c},b_{z},{\mbbmus_z}}\Big)^\wedge_{/z_0},
\end{equation*}
where we are again using the fact that $(\mc{M}^\mr{int}_{{\mc{T}'_1}^{c},b_{z},{\mbbmus_z}})^\wedge_{/z_0}$ is a point to pass to the completions.

We may then form the map
\begin{equation*}
    \Theta_w\defeq  (a,c)^{-1}\circ (\Theta_y\times\Theta_z)\circ(\alpha,\gamma)\colon \Big(\mc{M}^\mr{int}_{\mc{B}_1'^c,b_w,{\mbbmus_w}}\Big)^\wedge_{/w_0}\isomto \Spf(\wh{\mc{O}}_w)^\lozenge.
\end{equation*}
Let us observe that by design
\begin{equation}\label{eq:by-design}
    \Theta_w^\ast(\ms{P}_{{\mathsf{L}'_1}^p})\times^{\mc{B}_1'^c}(\mc{G}_1\times{\mc{T}'_1}^c)\simeq \alpha^\ast\ms{P}_{y}^\mr{univ}\times \gamma^\ast\ms{P}_z^\mr{univ}\simeq \ms{P}_w^\mr{univ}\times^{\mc{B}_1'^c}(\mc{G}_1\times{\mc{T}'_1}^c).
\end{equation}
But, since
\begin{equation*}
    \mc{B}_1'^c(\Z_p^\ur)=(\mc{G}_1(\Z_p^\ur)\times{\mc{T}'_1}^c(\Z_p^\ur)) \cap B^{'c}_1(\Q_p^\ur)
\end{equation*}
(as follows from Proposition \ref{prop:Bc}), \cite[Proposition 5.1.3]{PR2022} implies that the map of integral local Shimura varieties
\begin{equation}\label{eq:closed-imm}
    \mc{M}^\mr{int}_{\mc{B}_1'^c, b_w, {\mbbmus_w}} \to \mc{M}^\mr{int}_{\mc{G}_1\times{\mc{T}'_1}^{c},b_{(y,z)},{\mbbmus_{(y,z)}}}
\end{equation}
is a closed immersion. Because the map \eqref{eq:closed-imm} is given at the level of objects by the push forward of shtukas along $\mc{B}_1'^c \to \mc{G}_1 \times {\mc{T}'_1}^c$, it follows from \eqref{eq:by-design} that $\Theta_w^\ast(\ms{P}_{{\mathsf{L}'_1}^p})$ is isomorphic to $\ms{P}_w^\mr{univ}$.

On the other hand, since ${B}_1'^c\to {G}^c$ is an ad-isomorphism, we deduce from Proposition \ref{prop:shtuka-completion-ad-isom} an isomorphism
\begin{equation*}
    \varepsilon\colon \Big(\mc{M}^\mr{int}_{\mc{B}_1'^c,b_w,{\mbbmus_w}}\Big)^\wedge_{/w_0}\isomto \Big(\mc{M}^\mr{int}_{\mc{G},b_x,{\mbbmus_x}}\Big)^\wedge_{/x_0}.
\end{equation*}
We may then define
\begin{equation*}
    \Theta_x\defeq e\circ\Theta_w\circ\varepsilon^{-1}\colon \Big(\mc{M}_{\mc{G},b_x,{\mbbmus_x}}\Big)^\wedge_{/x_0}\isomto \Spf(\wh{\mc{O}}_x)^\lozenge.
\end{equation*}
That $\Theta_x(\ms{P}_{\mathsf{K}^p})$ is isomorphic to $\ms{P}^\mr{univ}_x$ is easy as
\begin{equation*}
    \begin{aligned} \Theta_x(\ms{P}_{\mathsf{K}^p}) &\simeq (\varepsilon^{-1})^\ast(\Theta_w^\ast(e^\ast \ms{P}_{\mathsf{K}^p})))\\ &\simeq (\varepsilon^{-1})^\ast(\Theta_w^\ast(\ms{P}_{{\mathsf{L}'_1}^p}\times^{\mc{B}_1'^c}\mc{G}^c))\\ &\simeq (\varepsilon^{-1})^\ast(\Theta_w^\ast(\ms{P}_{{\mathsf{L}'_1}^p})\times^{\mc{B}_1'^c}\mc{G}^c)\\ &\simeq (\varepsilon^{-1})^\ast(\ms{P}_w^\mr{univ}\times^{\mc{B}_1'^c}\mc{G}^c)\\ & \simeq \ms{P}^\mr{univ}_x.
    \end{aligned}
\end{equation*}
Here the first isomorphism holds by definition, the second holds by our construction of $\ms{P}_{\mathsf{K}^p}$ as in \hyperref[subsub:Step-3-Construction]{\textbf{Step 3}} of \S\ref{s:construction-of-shtuka}, the third holds from abstract nonsense, the fourth holds by our construction of $\Theta_w$, and the final isomorphism holds from the properties of $\varepsilon$ stated in Proposition \ref{prop:shtuka-completion-ad-isom}. 
\end{proof}

The proof of Theorem \ref{thm:main} now follows by combining Theorem \ref{thm:KPZ-model-properties} with Theorem \ref{prop:construction-of-shtuka} and Proposition \ref{prop:complete-local-rings}. 
\bibliographystyle{amsalpha}
\bibliography{Refs}

\end{document}